\newtheorem{theorem}{Theorem}[section]
\newtheorem{assumption}{Assumption}[section]
\newtheorem{lemma}{Lemma}[section]
\newtheorem{corollary}{Corollary}[section]
\newtheorem{proposition}{Proposition}[section]
\newtheorem{remark}{Remark}[section]
\newtheorem{definition}{Definition}[section]
\numberwithin{equation}{section}
\newcommand{\kl}{D_{\mathrm{KL}}}
\newcommand{\bbr}{\mathbb R}
\def \d  {{\rm{d}}}
\def \e  {{\varepsilon}}
\title{Hamilton--Jacobi--Bellman Equations for\\ Maximum Entropy Optimal Control\thanks{This work was supported in part by  the Creative-Pioneering Researchers Program through SNU and  the National Research Foundation of Korea funded by the MSIT(2020R1C1C1009766).}} 
\author{
Jeongho Kim \and Insoon Yang\thanks{Department of Electrical and Computer Engineering, Seoul National University, Seoul 08826, Korea  {\tt\small \{jhkim206, insoonyang\}@snu.ac.kr}.}
}
\date{}
\begin{document}
\maketitle

\pagestyle{myheadings}
\thispagestyle{plain}

\begin{abstract}
Maximum entropy reinforcement learning (RL) methods have been successfully applied to a range of challenging sequential decision-making and control tasks. 
However, most of existing techniques are designed for discrete-time systems. 
As a first step toward their extension to continuous-time systems, 
this paper considers continuous-time deterministic optimal control problems with entropy regularization. 
 Applying the dynamic programming principle, we derive a novel class of Hamilton--Jacobi--Bellman (HJB) equations  and prove that the optimal value function of the maximum entropy control problem corresponds to the unique viscosity solution of the HJB equation. 
Our maximum entropy formulation is  shown to enhance the regularity of the viscosity solution and to be asymptotically consistent as the effect of entropy regularization diminishes. 
A salient feature of the HJB equations is computational tractability. 
Generalized Hopf--Lax formulas can be used to solve the HJB equations in a tractable grid-free manner without the need for numerically optimizing the Hamiltonian. 
We further show that the optimal control is uniquely characterized as Gaussian in the case of control affine systems and that, for linear-quadratic problems, the HJB equation is reduced to a Riccati equation, which can be used to obtain an explicit expression of the optimal control.  
Lastly, we discuss how to extend our results to continuous-time model-free RL by taking an adaptive dynamic programming approach.
To our knowledge, the resulting algorithms are the first data-driven control methods that use an information theoretic exploration mechanism in \emph{continuous time}.
\end{abstract}

\begin{keywords}
Hamilton--Jacobi--Bellman equations, Entropy, Optimal control, Dynamic programming, Reinforcement learning 
\end{keywords}

\section{Introduction}

The idea of using a stochastic policy  with high entropy has attracted great interest in various sequential decision-making problems over the past decade. 
Such randomized behaviors may encourage the exploration of informative regions of state and action spaces.
In reinforcement learning (RL), 
maximum entropy methods have been recognized as a useful exploration mechanism, effectively balancing the \emph{exploration-exploitation tradeoff}~\cite{Haarnoja2017, Hazan2019}. 
Moreover, maximum entropy policies prescribe all the possible ways of performing a task of interest, instead of  having solely the best way to carry out the task. 
Thus, it has been empirically observed that the resulting policies are robust with respect to perturbations in systems or environments~\cite{Ziebart2010, Haarnoja2018}.
Another benefit of using relative entropy or Kullback-Leibler (KL) regularization is to improve computational tractability in particular settings of Markov decision processes (MDPs)~\cite{todorov2009efficient}.

Maximum entropy optimal control methods have been the best studied in discrete-time RL, where balancing the exploration-exploitation tradeoff is critical.  
Discrete-time MDPs with entropy regularization have been considered 
in~\cite{Ziebart2010, Haarnoja2017}, where it was shown that an associated Bellman equation generalizes its standard counterpart, and the optimal policy is in the form of Boltzmann distributions.\footnote{These results have  been further generalized using the Tsallis entropy in~\cite{Lee2019}.} 
 The Bellman equation has been used to devise variants of value iteration and Q-learning, called soft Q-iteration and soft Q-learning, respectively~\cite{Fox2016, Haarnoja2017}.
Deep RL algorithms based on such maximum entropy formulations have been empirically demonstrated to achieve state-of-the-art performances on several benchmark tasks~\cite{Haarnoja2018}. 
Motivated by the success of maximum entropy RL, 
\cite{wang2019exploration} examined the role of entropy regularization in  continuous-time stochastic control, although a concrete RL or data-driven control method was not proposed.
However, all the existing methods focus on stochastic systems, in which it is natural to use a randomized control policy. 
This motivates us to ask, \emph{Is there an analog of maximum entropy  methods for deterministic (possibly nonlinear) systems?} 

This paper answers the question in the affirmative by deriving and analyzing  novel Hamilton--Jacobi--Bellman (HJB) equations for continuous-time deterministic optimal control problems with entropy regularization.
We adopt a \emph{relaxed control} formulation~\cite{young1942generalized,young1969lectures}
to accommodate randomized control inputs in continuous-time deterministic systems. 
Applying the dynamic programming (DP) principle, we derive the HJB equation and the structure of optimal controls for the maximum entropy control problem. 
Interestingly, our Hamiltonian can be considered as the \emph{soft maximum} of its standard counterpart.
This resembles the structure of the Bellman equation for maximum entropy RL~\cite{Ziebart2010, Haarnoja2017}. 
Another analogy is observed in the form of our optimal control, which is shown to be a  Boltzmann distribution. 
From the perspective of statistical mechanics, 
our Hamiltonian and optimal control can further be interpreted as the negative value of the Helmholtz free energy and the corresponding canonical ensemble, respectively. 
We prove that the optimal value function of our maximum entropy control problem corresponds to the unique \emph{viscosity solution} of the HJB equation. 
A useful byproduct of our maximum entropy formulation is the improved regularity of the value function; specifically, its sub- and super-differentials have at most one element. 
This regularity result is useful in optimal controller synthesis. 
We further show that our value function converges uniformly to the value function of the standard optimal control problem without entropy regularization as the temperature parameter $\alpha$ tends to zero (or, equivalently, as the effect of entropy regularization diminishes). 
This observation confirms the asymptotic consistency of our HJB equations for maximum entropy control. 

An important benefit of our maximum entropy control formulation is 
computational tractability.
In the case of control-affine systems and quadratic control costs, we show that the optimal control is uniquely characterized as a normal distribution with a mean corresponding to the optimal control for the standard problem without entropy regularization. 
Using the structural property and the HJB equation, 
we derive an algebraic Riccati equation and
an explicit expression of the optimal control for maximum entropy linear-quadratic problems. 
When considering fully nonlinear systems and cost functions, 
 generalized Hopf--Lax formulas~\cite{chow2019algorithm} can be used to numerically solve our HJB equation without discretizing the state space.
An important observation is that it is more tractable to use generalized Hopf--Lax formulas in the maximum entropy control case than in the standard case. The reasons are twofold. First, our Hamiltonian can be explicitly computed unlike the standard Hamiltonian involving an optimization problem which is possibly nonconvex. 
Second, our Hamiltonian is differentiable when the vector field and the cost function are differentiable in state as opposed to its standard counterpart. 
Thus, in our maximum entropy setting,
it is tractable to use the characteristic ordinary differential equations (ODEs) for generalized Hopf--Lax formulas.

Returning to the main motivation for using maximum entropy methods,
we discuss how to extend the idea of model-free RL with entropy regularization to the continuous-time setting by employing our HJB framework. 
Specifically, we consider linear-quadratic problems with unknown model parameters and propose maximum entropy methods for data-driven control by taking the \emph{adaptive dynamic programming} approach in~\cite{jiang2014robust}. 
This approach guarantees closed-loop stability during the process of learning as well as convergence to the optimal control under a rank condition.
To the best of our knowledge, these are the first RL-based algorithms
that use an information theoretic exploration mechanism in \emph{continuous time}. 
Unlike conventional continuous-time RL methods that use heuristic exploration mechanisms (e.g., $\epsilon$-greedy, injecting an artificial noise), our algorithms enhance the exploration capability of controls by maximizing their entropy in a principled manner. 
The results of our numerical experiments demonstrate that our maximum-entropy method outperforms its standard counterpart in terms of both learning speed and sample efficiency.
Our numerical studies also confirm the importance of weighting the entropy term in balancing the exploration-exploitation tradeoff.

%
%

The rest of this paper is organized as follows:
In Section~\ref{sec:2}, we introduce the maximum entropy control problem and show the existence of optimal solutions. 
Section~\ref{sec:3} presents the main theoretical results about the HJB equations for maximum entropy control. 
In Section~\ref{sec:4}, we provide the tractable methods for solving the maximum entropy control problems.
In Section~\ref{sec:5}, we discuss RL-based algorithms for learning the maximum entropy optimal control in the linear-quadratic setting without knowing model parameters.
Section~\ref{sec:6} presents the results of numerical experiments to demonstrate the performance of our methods.

\section{Maximum Entropy Optimal Control of Deterministic Continuous-Time Systems}\label{sec:2}

\subsection{Notation}
For any measurable space $X$, we denote the set of all probability measures on $X$ by $\mathcal{P}(X)$. For any bounded set $A$,  let $|A|$ denote its volume. 
 Given $x_0 \in \mathbb{R}^n$ and $R > 0$, we let $B(x_0, R)$ denote the Euclidean ball centered at $x_0$ with radius $R$. 
 For symmetric matrices $A$ and $B$ with the same size,
$A \preceq B$ represents that $B - A$ is a positive semidefinite matrix.

\subsection{Problem Setup}

Consider a deterministic continuous-time dynamical system of the form
\begin{equation}\label{dyn1}
\dot{x}(t)=f(x(t),u(t)),\quad x(t)\in \bbr^n,\quad u(t)\in U\subset\bbr^m,\quad t>0,
\end{equation}
where $x(t)$ and $u(t)$ denote the system state and the control input at time $t$, respectively. Here, $U$ is the set of admissible control actions. Given $\bm{x} \in\bbr^n$ and $t \in [0, T]$, we consider the following cost functional of $u$:
\begin{equation}\label{cost}
J_{\bm{x}, t}(u) := \int_t^T r(x(s),u(s))\,\d s+q(x(T)),\quad x(t)= \bm{x}.
\end{equation}
Here, $r:\bbr^n\times U\to\bbr$ and $q:\bbr^n\to\bbr$ denote a running cost and a terminal cost of interest, respectively.  
Given the initial condition $x(0) = \bm{x}$, the standard finite-horizon optimal control problem can then be formulated as
\begin{equation}\label{opt0}
\min_{u \in \mathcal{U}} \; J_{\bm{x}, 0} (u),
\end{equation}
where 
\[
\mathcal{U}:=\{u:[0,T]\to U \mid \mbox{$u$ is measurable}\}
\]
is the set of admissible controls. 
Throughout the paper, we assume the following standard conditions on $f$, $r$ and $q$:	
	\begin{assumption}\label{assumption}
		\begin{enumerate}
			\item[(i)]  $f:\bbr^n\times U\to \bbr^n$ is continuous.
			\item[(ii)] There exists a constant $C > 0$ such that
			\[|f(\bm{x},\bm{u})|\le C(1+|\bm{x}|+|\bm{u}|).\]
			\item[(iii)] There exists a modulus $\omega_f:[0,+\infty)^2\to[0,+\infty)$ such that \[|f(\bm{x},\bm{u})-f(\bm{y},\bm{u})|\le \omega_f(|\bm{x}-\bm{y}|,R) \quad \forall \bm{u} \in U, \; \forall \bm{x},\bm{y}\in B(0,R) \mbox{ and } \forall R>0.\]
 
			\item[(iv)] For all $\bm{x},\bm{y}\in \bbr^n$ and $\bm{u}\in U$,
			\[(f(\bm{x},\bm{u})-f(\bm{y},\bm{u}))\cdot (\bm{x}-\bm{y})\le L|\bm{x}-\bm{y}|^2.\]
			
			\item[(v)] The function $\bm{u} \mapsto r(\bm{x}, \bm{u})$ is lower semicontinuous for each $\bm{x} \in \mathbb{R}^n$.

			\item[(vi)] $r:\bbr^n\times U\to \bbr^n$ is continuous and there exists a modulus $\omega_r:[0,+\infty)\to[0,+\infty)$ such that
			\[|r(\bm{x},\bm{u})-r(\bm{y},\bm{u})|\le \omega_r(|\bm{x}-\bm{y}|) \quad \forall \bm{x},\bm{y}\in\bbr^n.\]
 			
			\item[(vii)] $q:\bbr^n\to\bbr$ is continuous and there exists a modulus $\omega_q:[0,+\infty)\to[0,+\infty)$ such that
			\[|q(\bm{x})-q(\bm{y})|\le \omega_q(|\bm{x}-\bm{y}|) \quad \forall \bm{x},\bm{y}\in\bbr^n.\]
		\end{enumerate}
\end{assumption}
We note that all these conditions, except $(v)$, are  standard in the literature of HJB equations for optimal control (e.g.,~\cite{Bardi97}). The condition $(v)$ will be used to guarantee 
 the existence of  minimizers to the following optimization problem:
\[\min_{\bm{u}\in U} \left\{\bm{p}\cdot f(\bm{x},\bm{u})+r(\bm{x},\bm{u})\right\}\]
for each $\bm{x}, \bm{p} \in \mathbb{R}^n$ in our analysis of HJB equations.

To consider a maximum entropy variant of the optimal control problem, 
we now generalize the notion of controls by taking the \emph{relaxed control} approach. This approach was first introduced by Young \cite{young1942generalized,young1969lectures}, and then widely applied to calculus of variations~\cite{mcshane1967relaxed,warga1962relaxed}, deterministic optimal control~\cite{artstein1978relaxed,warga2014optimal,williamson1976relaxed} and stochastic optimal control~\cite{brzezniak2013optimal,fleming1980measure,haussmann1990existence}. 
Consider a function $\mu:[0,T] \to \mathcal{P}(U)$.
Given $A \subseteq U$, $\mu (t; A)$ is defined as the probability of $u(t)$ being contained in $A$, i.e.,
\[
\mu (t; A) := P (u(t) \in A), \quad A \subseteq U
\]
for each $t \in [0,T]$. 
The time-dependent probability measure $\mu(t; \cdot)$ can be interpreted as a relaxed version of the original control. 
Employing the relaxed control $\mu$, we consider the following modified version of the original dynamical system~\eqref{dyn1}:
\begin{equation}\label{dyn2}
\dot{x}(t) = \int_U f(x(t), \bm{u})\mu(t; \d \bm{u}).
\end{equation}
In words, the rate of changes in $x(t)$ is   the average of $f(x(t), \bm{u})$ with respect to the probability measure $\mu(t; \cdot)$. One may understand the dynamics \eqref{dyn2} as a generalization of its original counterpart \eqref{dyn1}. 
For any classical control $u:[0,T]\to U$, let the relaxed control action be 
the Diract delta measure concentrated at $u(t)$, i.e.,
\[\mu_0(t; \d \bm{u}) = \delta_{u(t)}(\d \bm{u}).\]
Then,  \eqref{dyn2} is reduced to the original dynamical system:
\[
\dot{x}(t)=\int_{{U}} f(x(t), \bm{u})\delta_{u(t)}(\d \bm{u}) = f(x(t),u(t)).
\]
Another interpretation of the relaxed control system, in terms of differential inclusions, can be found in Appendix~\ref{app:relax}.

We are now ready to define the maximum entropy optimal control problem.
As discussed in the introduction, 
we consider the cost functional as a weighted sum of \eqref{cost} and the entropy of the relaxed control $\mu$. 
Recall that the (differential) entropy of the measure $\mu(t; \cdot)\in\mathcal{P}(U)$ is defined as
\[\mathrm{H}(\mu(t,\cdot)):=\begin{cases}
-\int_{{U}}\frac{\d\mu}{\d \bm{u} }\log\frac{\d\mu}{\d \bm{u}}\,\d \bm{u}&\mbox{if }  \mu\ll \d \bm{u}  \\
-\infty  &\mbox{otherwise}.\end{cases}\]
Our new cost functional for maximum entropy optimal control is  defined as follows:
\begin{equation}\label{cost_soft}
J^\alpha_{\bm{x},t}(\mu):=\int_t^T\left(\int_{U}r(x(s), \bm{u})\,\mu(s; \d \bm{u})-\alpha \mathrm{H}(\mu(s; \cdot))\right)\,\d s+q(x(T)),\quad x(t)= \bm{x}.
\end{equation}
Here, the weight $\alpha \in \mathbb{R}$ is called the \emph{temperature parameter}.  
By minimizing this cost functional, we can find a high entropy-control that keeps the original cost sufficiently small. 
Given the initial condition $x(0) = \bm{x}$, 
the finite-horizon maximum entropy optimal control problem is  formulated as
\begin{equation}\label{opt_me}
\min_{\mu \in \mathcal{M}} \; J_{\bm{x}, 0}^\alpha (\mu).
\end{equation}

The set of admissible relaxed controls $\mathcal{M}$ must be carefully chosen taking into account the following conditions. 
First of all, for the system \eqref{dyn1} and the cost functional \eqref{cost_soft} to be well-defined, the probability measure $\mu(t;\d{\bm u})$ for each fixed time $t\in[0,T]$ needs to satisfy 
\[\int_U |f(\bm{x},\bm{u})|\mu(t;\d \bm{u})<+\infty,\quad\int_U |r(\bm{x},\bm{u})|\mu(t;\d \bm{u})<+\infty \quad \forall(t,\bm{x})\in [0,T]\times\bbr^n.\]
Furthermore, the solution to \eqref{dyn2} exists if 
there exist a function $c\in L^1([0,T]; \mathbb{R})$ and a modulus $\omega_f: [0,+\infty)^2 \to [0, +\infty)$ such that 
\begin{align*}
&\int_U |f(\bm{x},\bm{u})|\mu(t;\d {\bm u}) \d {\bm u}\le c(t)(1+|\bm{x}|) \quad \forall \bm{x} \in \mathbb{R}^n,\\
&\left|\int_U f(\bm{x},\bm{u})\mu(t;\d{\bm u})-\int_U f(\bm{y},\bm{u})\mu(t;\d{\bm u})\right|\le \omega_f(|\bm{x}-\bm{y}|,R) \quad \forall \bm{x},\bm{y}\in B(0,R) \mbox{ and }\forall R > 0.
\end{align*}
By  Assumption \ref{assumption} $(ii)$ and $(iii)$, the first condition is reduced to the condition that $ t\mapsto \int_U |\bm{u}|\mu(t;\d\bm{u})$ is integrable on $[0,T]$, while the second condition automatically holds. Lastly, the cost functional \eqref{cost_soft} does not blow up if the maps $t\mapsto \int_U r(x(t),\bm{u})\mu(t;\d\bm{u})$ and $t\mapsto \mathrm{H}(\mu(t;\cdot))$ are also integrable. Putting these together, we define the set of admissible  controls $\mathcal{M}$ as follows.

\begin{definition}
The set of admissible controls $\mathcal{M}$ is defined as a set of time-dependent measures $\mu:[0,T]\to\mathcal{P}(U)$ that satisfy the following conditions:
\begin{enumerate}
	\item For all $(t,\bm{x})\in[0,T]\times\bbr^n$, 
	\[\int_U |f(\bm{x},\bm{u})|\mu(t;\d \bm{u})<+\infty,\quad\int_U |r(\bm{x},\bm{u})|\mu(t;\d \bm{u})<+\infty.\]
	\item The  maps
	\[t\mapsto \int_U |\bm{u}|\mu(t;\d {\bm u}),\quad t\mapsto \int_U r(x(t),\bm{u})\mu(t;\d\bm{u}),\quad t\mapsto \mathrm{H}(\mu(t;\cdot))\]
	are integrable over $[0,T]$.
\end{enumerate}
\end{definition}

Before studying the existence of an optimal solution, 
we introduce another interpretation of the maximum entropy formulation below. 
  The cost functional \eqref{cost_soft} can  be understood from a different perspective using the Kullback--Leibler divergence or the relative entropy. 
  Recall that, given a separable metric space $X$ and  two probability measures $\mu$ and $\nu$ on $X$, the Kullback--Leibler(KL) divergence from $\nu$ to $\mu$ is defined as
	\[\kl (\mu\|\nu):=\begin{cases}
	\displaystyle\int_X \frac{\d\mu}{\d\nu}\log \frac{\d\mu}{\d\nu} \d\nu=\int_X \log \frac{\d\mu}{\d \nu}\d \mu &\mbox{if}\quad \mu\ll \nu\\
	+\infty & \mbox{otherwise}.
	\end{cases}\]
	When $U$ is compact, the entropy  $\mathrm{H}(\mu)$ can be expressed in terms of KL divergence associated with the uniform probability distribution $\mathscr{U}(\d \bm{u}) = \frac{\d\bm{u}}{|U|}$ as follows:
	\begin{align*}
	\mathrm{H}(\mu) &= -\int_{U} \frac{\d\mu}{\d\bm{u}}\log \frac{\d\mu}{\d\bm{u}} \d {\bm u} = -\int_U \log\frac{\d\mu}{|U|\mathscr{U}(\d {\bm u})}\d\mu=-\int_U \left(\log\frac{\d \mu}{\mathscr{U}(\d \bm{u})}-\log|U|\right)\d\mu\\
	&=-\int_U \log\frac{\d \mu}{\mathscr{U}(\d {\bm u})}\d\mu +\log |U|=-\kl (\mu\|\mathscr{U})+\log|U|.
	\end{align*}
  Therefore, the cost functional \eqref{cost_soft} can be rewritten as
	\begin{equation}\label{cost_kl}
	J^\alpha_{\bm{x}, t}(\mu):=\int_t^T\left(\int_{U}r(x(s), \bm{u})\,\mu(s; \d \bm{u})+\alpha \kl (\mu(s; \cdot)\|\mathscr{U})\right)\,\d s+q(x(T))-\alpha(T-t)\log|U|.
	\end{equation}
	Hence, minimizing the cost functional is equivalent to minimizing a weighted sum of the original cost and the KL divergence from the uniform probability measure $\mathscr{U}$ to the relaxed control $\mu$. 
	In other words, the problem is to find a relaxed control $\mu$ that keeps the expected cumulative cost sufficiently small and is not too far from the uniform distribution. 
We can reinterpret the quantity
	\[\int_{U}r(x(s),\bm{u})\,\mu(s; \d \bm{u})+\alpha \kl (\mu(s; \cdot)\|\mathscr{U})\]
	as the sum of the running cost and the cost of choosing the control $\mu$, where the uniform distribution can be  understood as a passive transition, since the uniform distribution is a neutral control in the absence of any information.  
In a series of research studies~\cite{todorov2007linearly, todorov2009efficient, Dvijotham2011}, a similar KL control cost has been considered in discrete-time MDPs, allowing the transition probabilities to be fully controlled.  
For this class of MDPs, the Bellman equation is linear and thus efficiently solvable. 
This result has been extended to online MDPs~\cite{Guan2014} and ODE methods for MDPs~\cite{Busic2018}.
Moreover, as observed in~\cite{Todorov2008, Theodorou2012},
the continuous-time stochastic control problems that can be efficiently solved using \emph{path integrals} are  a special case of these MDPs with KL control costs.
The path integral control problems consider control-affine systems with a particular covariance condition to reformulate the resulting HJB equations as linear~\cite{Kappen2005, Kappen2005b, Theodorou2010}.\footnote{Interestingly, this class of linearly solvable MDPs turns out to be considered as an inference problem~\cite{Kappen2012}.
The duality between discrete-time stochastic control and inference has been further generalized and used to devise a convergent posterior policy iteration  algorithm~\cite{Rawlik2013}.  }
It is worth emphasizing that our problem setting is more general than the linearly solvable MDPs in the sense that we consider fully nonlinear systems and cost functions. 
However, \cite{todorov2007linearly, todorov2009efficient, Dvijotham2011} consider MDPs with fully controlled transition probabilities and cost functions in a particular form. 
Similarly, path integral control uses control-affine systems and cost functions quadratic in $\bm{u}$ under a special condition on covariance matrices.\footnote{The lifting technique in~\cite{Yang2014} can be used to handle a slightly more general class of cost functions.} 
Since our problem formulation does not assume such particular structures, our HJB equation for maximum entropy control is \emph{nonlinear}, unlike theirs. 
Nevertheless, we will show in Section~\ref{sec:4} that our HJB equation is more tractable to solve compared to its standard counterpart.

\begin{remark}\label{rem:den}
The condition $\mathrm{H}(\mu(t; \cdot))\in L^1([0,T])$ in the set of admissible controls
 implies that $\mathrm{H}(\mu(t; \cdot))<+\infty$ for a.e. $t\in[0,T]$. Therefore, $\mu(t; \d \bm{u}) \ll \d{\bm u} $ for a.e. $t\in[0,T]$,
 and  by the Radon-Nikodym theorem, there exists a measurable function $g:[0,T]\times U\to [0,+\infty)$
 such that 
  $\mu(t; \d \bm{u}) = g (t,\bm{u})\d \bm{u}$ a.e $t\in[0,T]$. Moreover, since $\mu(t;\cdot)$ is a probability measure defined on $U$, we directly have $\int_U g(t,\bm{u})\d\bm{u}=1$ for a.e. $t\in [0,T]$, which implies that $g$ is a probability density function on $U$. Thus, in the remainder of the paper, we interchangeably use relaxed control $\mu(t,\d \bm{u})$ and  its probability density $g(t,\bm{u})$. 
We may reformulate the maximum entropy optimal control problem in terms of the density $g$.  Let us define the differential entropy of the density function $g$ as
  \[\mathrm{H}(g(t,\cdot)) = -\int_U g(t,\bm{u})\log g(t,\bm{u})\d\bm{u}.\]
We also introduce the following density version of the set of admissible controls: 
  \[\mathcal{G} := \left\{g:[0,T]\to L^1_{+,1}(U) ~\mid~ g\d\bm{u}\in\mathcal{M}\right\},\]
  where $L^1_{+,1}(U)$ denotes the set of nonnegative integrable function on $U$ whose integration is 1:
  \[L^1_{+,1}(U):=\left\{\phi:U\to\bbr_+~:~\int_{U}\phi(u)\d u=1\right\}.\] 
  Then, the cost functional \eqref{cost_soft} can be rewritten using the density $g$ as
  \begin{equation}\label{cost_soft_2}
  J^\alpha_{\bm{x},t}(g):=\int_t^T\left(\int_{U}r(x(s), \bm{u})\,g(s,\bm{u})\,\d\bm{u}-\alpha \mathrm{H}(g(s, \cdot))\right)\,\d s+q(x(T)),\quad x(t)= \bm{x},
  \end{equation}
  and the finite-horizon maximum entropy optimal control problem can be expressed as
  \begin{equation}\label{opt_me_2}
  \min_{g\in\mathcal{G}} J_{\bm{x},0}^\alpha(g).
  \end{equation}
  From Section \ref{sec:3}, we mainly use the density version \eqref{opt_me_2} of the maximum entropy optimal control problem, which is equivalent to the original version using measures \eqref{opt_me}.
  \end{remark}

\subsection{Existence of Optimal Controls}\label{sec:existence}

One of the most fundamental questions on the maximum entropy control problem is the existence of optimal controls or, equivalently, the minimizers of \eqref{opt_me}. We first study whether there exists a control $\mu^\star\in\mathcal{M}$ that achieves the infimum of the cost functional.

\begin{theorem}\label{thm:exist_finite}  Suppose that Assumption~\ref{assumption} holds. Moreover, we assume that the control set $U$ is compact and $f,r$ and $q$ are Lipschitz continuous in $\bm{x}$.
Then, for each $(\bm{x}, t) \in \mathbb{R}^n \times [0,T]$,	 there exists $\mu^\star\in\mathcal{M}$ such that
	\[J^{\alpha}_{\bm{x}, t} (\mu^\star) = \inf_{\mu\in\mathcal{M}_t}J^{\alpha}_{\bm{x}, t} (\mu),\]
	where $\mathcal{M}_t:=\left\{\mu|_{[t,T]}:[t,T]\to\mathcal{P}(U) \mid \mu\in \mathcal{M}  \right\}$. 
\end{theorem}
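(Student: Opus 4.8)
The plan is to run the direct method of the calculus of variations in the relaxed-control setting. Fix $(\bm x, t) \in \bbr^n \times [0,T]$ and write $m := \inf_{\mu \in \mathcal{M}_t} J^\alpha_{\bm x, t}(\mu)$. First I would check that $m > -\infty$: since $U$ is compact, the differential entropy obeys $\mathrm{H}(\mu(s;\cdot)) \le \log|U|$, and $r$ is bounded on sets of bounded state; together with the a priori trajectory bound obtained below this shows $J^\alpha_{\bm x,t}$ is bounded below (here one uses $\alpha > 0$, the relevant regime). Take a minimizing sequence $(\mu_k) \subset \mathcal{M}_t$. The key compactness observation is to regard each $\mu_k$ as the measure $\d\nu_k(s,\bm u) := \mu_k(s; \d\bm u)\,\d s$ on the \emph{compact} space $[t,T] \times U$; every $\nu_k$ has total mass $T-t$ and time-marginal equal to Lebesgue measure on $[t,T]$. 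By weak-$*$ compactness of bounded measures on a compact metric space, a subsequence (not relabeled) satisfies $\nu_k \rightharpoonup \bar\nu$ weakly-$*$; the time-marginal is preserved under this convergence, so the disintegration theorem yields $\bar\nu = \bar\mu(s;\d\bm u)\,\d s$ for a measurable family $s \mapsto \bar\mu(s;\cdot) \in \mathcal{P}(U)$, which is our candidate optimizer.

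Next I would establish compactness of the associated state trajectories. By Assumption~\ref{assumption}(ii) with $U$ compact, the velocities $\int_U f(x_k(s),\bm u)\,\mu_k(s;\d\bm u)$ are uniformly bounded along the sequence, so the $x_k$ are uniformly bounded and equi-Lipschitz on $[t,T]$; Arzel\`a--Ascoli then gives $x_k \to \bar x$ uniformly (along a further subsequence). Combining the uniform convergence $x_k \to \bar x$, the continuity of $f$, and the weak-$*$ convergence $\nu_k \rightharpoonup \bar\nu$ lets one pass to the limit in the integrated dynamics and conclude that $\bar x$ solves $\dot{\bar x}(s) = \int_U f(\bar x(s),\bm u)\,\bar\mu(s;\d\bm u)$, i.e.\ $\bar x$ is precisely the trajectory driven by $\bar\mu$.

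The remaining ingredient is lower semicontinuity of the cost. Using the KL reformulation \eqref{cost_kl}, write $J^\alpha_{\bm x,t}(\mu) = \int_t^T L(x[\mu](s),\mu(s;\cdot))\,\d s + q(x[\mu](T)) - \alpha(T-t)\log|U|$ with $L(\bm y,\rho) := \int_U r(\bm y,\bm u)\,\rho(\d\bm u) + \alpha\,\kl(\rho\|\mathscr{U})$. For each fixed $\bm y$, the map $\rho \mapsto L(\bm y,\rho)$ is convex (a weak-$*$ continuous linear term plus the convex functional $\kl(\cdot\|\mathscr{U})$) and weak-$*$ lower semicontinuous (lower semicontinuity of $\kl(\cdot\|\mathscr{U})$ follows from the Donsker--Varadhan variational representation), and $L$ is jointly lower semicontinuous in $(\bm y,\rho)$. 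Invoking the classical lower-semicontinuity theorem for integral functionals of this Young-measure form --- whose essential hypothesis is exactly convexity of $L(\bm y,\cdot)$ in the measure argument --- together with the uniform convergence $x_k \to \bar x$, gives $\liminf_k \int_t^T L(x_k(s),\mu_k(s;\cdot))\,\d s \ge \int_t^T L(\bar x(s),\bar\mu(s;\cdot))\,\d s$, while $q(x_k(T)) \to q(\bar x(T))$ by continuity. Hence $m = \liminf_k J^\alpha_{\bm x,t}(\mu_k) \ge J^\alpha_{\bm x,t}(\bar\mu)$. It remains to check $\bar\mu \in \mathcal{M}_t$: integrability of $s \mapsto \int_U |\bm u|\,\bar\mu(s;\d\bm u)$ and of $s \mapsto \int_U r(\bar x(s),\bm u)\,\bar\mu(s;\d\bm u)$ is immediate since $U$ is compact and $\bar x$ is bounded, while the lower-semicontinuity estimate just obtained forces $\int_t^T \kl(\bar\mu(s;\cdot)\|\mathscr{U})\,\d s < \infty$, hence $\kl(\bar\mu(s;\cdot)\|\mathscr{U}) < \infty$ --- equivalently $\mathrm{H}(\bar\mu(s;\cdot)) > -\infty$ --- for a.e.\ $s$, so $s \mapsto \mathrm{H}(\bar\mu(s;\cdot))$ is integrable. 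Therefore $J^\alpha_{\bm x,t}(\bar\mu) = m$, and $\mu^\star := \bar\mu$ is the desired minimizer.

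The step I expect to be the genuine obstacle is the lower semicontinuity of the entropy (equivalently KL) term: weak-$*$ convergence of the occupation measures $\nu_k$ on $[t,T]\times U$ does \emph{not} imply $\mu_k(s;\cdot) \rightharpoonup \bar\mu(s;\cdot)$ for a.e.\ $s$, so lower semicontinuity of the time-integrated nonlinear functional $\mu \mapsto \int_t^T \kl(\mu(s;\cdot)\|\mathscr{U})\,\d s$ cannot simply be read off fiber by fiber; it must come from a bona fide lower-semicontinuity theorem for integral functionals over Young measures, and the convexity of $\kl(\cdot\|\mathscr{U})$ in its first argument is exactly what makes that theorem applicable. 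Everything else --- boundedness below, Gronwall/Arzel\`a--Ascoli compactness of trajectories, passage to the limit in the dynamics, and continuity of the running and terminal cost terms --- is routine.
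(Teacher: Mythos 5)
Your proposal is correct, and its skeleton — minimizing sequence, occupation measures $\nu_k(\d s,\d\bm u)=\mu_k(s;\d\bm u)\,\d s$ on the compact product $[t,T]\times U$, weak-$*$ limit, disintegration into a measurable family $\bar\mu(s;\cdot)$, convergence of trajectories, lower semicontinuity of the cost — is the same as the paper's. The difference lies in how the decisive step, lower semicontinuity of $\mu\mapsto\int_t^T \kl(\mu(s;\cdot)\|\mathscr{U})\,\d s$, is obtained. You (rightly) observe that weak-$*$ convergence of the $\nu_k$ does not give fiberwise convergence $\mu_k(s;\cdot)\rightharpoonup\bar\mu(s;\cdot)$, and you resolve this by invoking a general lower-semicontinuity theorem for integral functionals of Young measures with an integrand convex in the measure variable (Balder/Ioffe-type weak--strong semicontinuity); that is a legitimate route, but it imports a nontrivial external theorem. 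The paper sidesteps the issue entirely with the exact identity $\int_t^T \kl(\mu(s;\cdot)\|\mathscr{U})\,\d s=(T-t)\,\kl(\nu\|\mathscr{U}_{[t,T]})$, valid because the time marginal of every occupation measure is the fixed (uniform) measure on $[t,T]$: the time-integrated fiberwise KL divergence is a \emph{single} KL divergence on the compact product space, so plain lower semicontinuity of $\kl(\cdot\|\mathscr{U}_{[t,T]})$ under weak convergence (Lemma~\ref{LA.2}) suffices, with no Young-measure machinery. Two minor remarks: on the compact space $[t,T]\times U$ your plain weak-$*$ compactness argument is enough (the paper's use of compactness of KL sublevel sets is stronger than needed here, though it is what generalizes); and for admissibility of $\bar\mu$, integrability of $s\mapsto\mathrm{H}(\bar\mu(s;\cdot))$ should be drawn from the bound $\int_t^T\kl(\bar\mu(s;\cdot)\|\mathscr{U})\,\d s<\infty$ together with $|\mathrm{H}|\le \kl+|\log|U||$, not merely from a.e.\ finiteness of the fiberwise KL — you state the integral bound first, so the conclusion stands, but the ``for a.e.\ $s$'' phrasing is the wrong emphasis. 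Your treatment of the limit dynamics (uniform convergence of $x_k$ plus weak-$*$ convergence of $\nu_k$, with the indicator $\mathbf{1}_{[t,\tau]}$ handled since its discontinuity set is $\bar\nu$-null, or by the fixed-marginal/Carath\'eodory-integrand extension) is in fact more explicit than the paper's, which asserts this convergence without detail.
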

Setting $t = 0$ in the theorem implies the existence of an optimal control.
\begin{proof}
Fix $(\bm{x}, t) \in \mathbb{R}^n \times [0,T]$.
Let $\{\mu_i\}_{i=1}^\infty\subset \mathcal{M}_t$ be a sequence of admissible policies such that
\[\lim_{i\to\infty}J^{\alpha}_{\bm{x}, t}(\mu_i)=\inf_{\mu\in\mathcal{M}_t}J^{\alpha}_{\bm{x}, t} (\mu).
\]
Thus, the costs $J^{\alpha}_{\bm{x}, t}(\mu_i)$ are bounded.
 By using the reformulation \eqref{cost_kl}, we observe that
\[\left|\int_t^T\left(\int_{U}r(x_i(s), \bm{u})\,\mu_i (s; \d \bm{u}) +\alpha \kl(\mu_i(s; \cdot)||\mathscr{U})\right)\,\d s+q(x_i(T))-\alpha(T-t)\log|U|\right|<C\]
for some constant $C$ independent of $i$,
where $x_i$ denotes the system state of \eqref{dyn2} when the control $\mu_i$ is employed. Under Assumption~\ref{assumption}, the state $x_i(s)$ is bounded by a constant $C_0$ depending on $f$ and $T$ by Lemma \ref{stability}. Therefore, there exist  constants $C_r$ and $C_q$ such that
\[|r(x_i(s),\bm{u})|<C_r,\quad |q(x_i(T))|<C_q \quad \forall \bm{u}\in U,\quad i=1,2\ldots,\quad t\le s\le T.\]
Then, we can uniformly bound the integral of the KL divergence term as follows:
\[\left|\int_t^T\kl (\mu_i(s; \cdot)||\mathscr{U})\,\d s\right|< \frac{1}{\alpha}\left[ C+(T-t)C_r +C_q+\alpha(T-t)|\log|U||\right ].\]
On the other hand, for each $\mu_i$, we define a probability measure $\nu_i$ on $[t,T]\times U$ as
\[
 \nu_i(\d s,\d \bm{u}):= \frac{1}{T-t}\mu_i(s; \d \bm{u})\d s,
 \]
i.e., the probability measure $\nu_i$ has a density $\frac{1}{T-t}\mu_i(s;\d\bm{u})$ with respect to $s$, and also consider the probability measure $\mathscr{U}_{[t,T]}(\d s,\d \bm{u}):=\frac{1}{(T-t)}\d s \d\bm{u}$. Then, the KL-divergence from $\mathscr{U}_{[t,T]}$ to $\nu_i$ is uniformly bounded by a constant, independent of $i$, as follows:
\begin{align*}
\left|\kl (\nu_i||\mathscr{U}_{[t,T]})\right|&=\left|\int_t^T\int_U \frac{\frac{1}{T-t}\mu_i(s;\d \bm{u})}{\frac{1}{(T-t)|U|}\d\bm{u}}\log\left(\frac{\frac{1}{T-t}\mu_i(s;\d \bm{u})}{\frac{1}{(T-t)|U|}\d\bm{u}}\right)\frac{1}{(T-t)}\d\bm{u} \d s\right|\\
&=\left|\int_t^T\int_U \frac{\mu_i(s;\d \bm{u})}{\frac{1}{|U|}\d\bm{u}}\log\left(\frac{\mu_i(s; \d \bm{u})}{\frac{1}{|U|}\d\bm{u}}\right)\frac{1}{(T-t)}\d\bm{u} \d s\right|\\
&\le\frac{1}{T-t}\left|\int_t^T \kl (\mu_i(s; \cdot)||\mathscr{U})\d s\right|\\
&\le \frac{1}{\alpha (T-t)}(C+(T-t) C_r+C_q+\alpha(T-t)\log|U|)=:M.
\end{align*}

It follows from Lemma \ref{LB.3} that   $\{\nu \in \mathcal{P}([t,T]\times U) \mid \kl(\nu||\mathscr{U}_{[t,T]})\le M\}$ is a compact subset of $\mathcal{P}([t,T]\times U)$, and therefore there exists a subsequence $\{\nu_{i_k}\}_{k=1}^\infty$ such that $\nu_{i_k}\xrightharpoonup{*} \nu\in \mathcal{P}([t,T]\times {U})$. Then, by Lemma \ref{LB.4}, there exists a family of probability measures $\{\rho (s; \d \bm{u})\}_{s\in[t,T]}$, $\rho (s; \cdot) \in \mathcal{P} (U)$, such that  for any measurable function $\phi:[t,T]\times U\to \bbr$,
\[\int_{[t,T]\times {U}} \phi(s, \bm{u})\,\nu(\d s,\d \bm{u}) = \int_t^T\left(\int_{{U}}\phi(s, \bm{u})\rho (s; \d \bm{u})\right)\nu(\pi_1^{-1}(\d s)),\]
where $\pi_1:[t,T]\times U\to[t,T]$ is the projection with respect to the first argument, i.e., $\pi_1(s, \bm{u}) = s$. Since the marginal  of $\nu_i$ on $[t,T]$ is identical to $\frac{1}{T-t}\d s$, the marginal of the weak-$*$ limit $\nu$ should also be the uniform probability measure, i.e., $\nu(\pi_1^{-1}(\d s)) = \frac{1}{T-t} \d s$. Therefore, we have
\[\int_{[t,T]\times {U}} \phi(s, \bm{u})\nu(\d s,\d \bm{u}) = \frac{1}{T-t}\int_t^T\left(\int_{{U}}\phi(s, \bm{u}) \rho (s; \d \bm{u})\right)\d s,\]
which implies that $\frac{1}{T-t}\rho(s;\d u)$ is the density of $\nu$ with respect to the $s$-variable. Therefore, we can express $\nu$  as $\nu(\d s,\d \bm{u}) =\frac{1}{T-t} \rho(s ;\d \bm{u})\,\d s$. Moreover, since $\kl (\nu||\mathscr{U}_{[t,T]})\le M$, we have
\begin{align*}
\int_t^T |\mathrm{H}(\rho (s; \cdot))|\d s &\le \int_t^T \kl (\rho (s; \cdot)||\mathscr{U})\,\d s +(T-t)|\log|U|| \\
&=(T-t) \kl (\nu ||\mathscr{U}_{[t,T]})+(T-t)|\log|U||<+\infty,
\end{align*}
which implies $\rho \in \mathcal{M}_t$. Finally, we show that $\rho$ indeed minimizes $J_{\bm{x}, t}^\alpha$ over $\mathcal{M}_t$. It follows from the choice of $\mu_i$ that
\begin{align*}
&\inf_{\mu\in\mathcal{M}_t}J^{\alpha}_{\bm{x},t} (\mu)\\
&=\liminf_{i\to \infty}\left\{\int_t^T \left(\int_{U} r(x_i(s), \bm{u})\,\mu_i(s; \d \bm{u}) +\alpha \kl (\mu_i(s;\cdot)||\mathscr{U})\right) \d s +q(x_i(T))-\alpha(T-t)\log|U|\right\}\\
&\ge\liminf_{i\to\infty}\left\{\int_t^T \int_{U} r(x_i(s), \bm{u})\mu_i(s;\d \bm{u})\d t\right\} + \alpha\liminf_{i\to\infty} \int_t^T \kl (\mu_i(s;\cdot)||\mathscr{U}) \d s\\
&\quad +\liminf_{i\to\infty} q(x_i(T))-\alpha(T-t)\log|U|\\
&=\int_t^T \int_{U} r(x(s),\bm{u}) \rho(s;\d \bm{u} )\d s+q(x(T))+\alpha(T-t)\liminf_{i\to\infty} \kl (\nu_i||\mathscr{U}_{[t,T]})-\alpha(T-t)\log|U|,
\end{align*}
where the last equality comes from the weak-$*$ convergence of $\nu_i$ to $\nu$ and 
\[\int_t^T \kl(\mu_i(s;\cdot)||\mathscr{U})\,\d s = (T-t)\kl(\nu_i||\mathscr{U}_{[t,T]}).\]
By the lower semicontinuity of the KL divergence (Lemma \ref{LA.2}), we have
\[\liminf_{i\to\infty} \kl (\nu_i||\mathscr{U}_{[t,T]})\ge \kl (\nu||\mathscr{U}_{[t,T]}).\]
Therefore, we conclude that
\begin{align*}
\inf_{\mu\in\mathcal{M}_t}J^{\alpha}_{\bm{x}, t}(\mu) &\ge \int_t^T \int_{U} r(x(s),\bm{u})\rho(s; \d \bm{u})\d s+q(x(T))+\alpha (T-t)\kl (\nu ||\mathscr{U}_{[t,T]})-\alpha(T-t)\log|U|  \\
&=\int_t^T\left(\int_U r(x(s),\bm{u})\rho(s;\d u)+\alpha \kl(\rho(s;\cdot)||\mathscr{U})\right)\d s+q(x(T))-\alpha(T-t)\log|U|\\
&=J^\alpha_{\bm{x}, t}(\rho).
\end{align*}
This implies that $\rho$ is a minimizer of $J_{\bm{x}, t}^\alpha$ over $\mathcal{M}_t$.
\end{proof}

\subsection{Discrete-Time Approximation}\label{sec:dt}

Before introducing an HJB-based method for solving the maximum entropy control problem, we discuss practical issues in implementing relaxed controls to continuous-time dynamical systems. 
There are two major issues. 
First, it is unclear how to use a probability distribution as a control action. 
Second, in practical systems, it may be infeasible to continuously exert the control actions. 
As a means of addressing these practical issues, 
we introduce the following discrete-time stochastic system with sampling interval $\Delta t$:
\begin{equation}\label{sys_dt}
x_{k+1} = x_k + (\Delta t)^2 f(x_k, u_k),\quad u_k\sim \mu(k(\Delta t)^2,\cdot),
\end{equation}
where $\Delta t$ is set to be $\frac{1}{N}$ for some fixed natural number $N$ for convenience. We show that the state of this discrete-time system converges to that of the original relaxed control system, as $\Delta t$ tends to zero.

\begin{proposition}
Suppose that Assumption~\ref{assumption} holds.
We further assume that the control set $U$ is compact, $f$ is Lipschitz continuous with respect to space variable and $\mu:[0,T]\to\mathcal{P}(U)$ is continuous in time in the sense that for any continuous bounded function $\phi:U\to \bbr$,
	\begin{equation}\label{close}
	\left|\int_U \phi(\bm{u}) \mu(t;\d \bm{u})-\int_U \phi(\bm{u})\mu(s; \d \bm{u})\right|\to 0 \quad\mbox{as } |t-s|\to0.
	\end{equation}
	Then, for any natural number $K$, we have
	\[|x(T)-x_{K}|\le |x(0)-x_0|+o(1) \mbox{ a.s.} \quad \mbox{as} \quad\Delta t\to 0,\]
	where $T = K(\Delta t)^2$.
	In particular, if $x(0) = x_0$, then
	\[|x(T)-x_K|=o(1) \mbox{ a.s.} \quad \mbox{as} \quad  \Delta t\to 0.
	\]
\end{proposition}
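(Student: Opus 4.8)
The plan is to write each discrete step as an explicit Euler step for the \emph{averaged} dynamics plus a conditionally mean-zero sampling error, and then to control the deterministic part by a consistency-plus-Gr\"onwall argument and the stochastic part by a martingale estimate. Set $\bar f(t,\bm{x}):=\int_U f(\bm{x},\bm{u})\,\mu(t;\d\bm{u})$; then $x(\cdot)$ is exactly the solution of $\dot x=\bar f(t,x)$ with the given initial datum, $\bar f$ inherits the (global) Lipschitz dependence on $\bm{x}$ from $f$ with the same constant $L_1$, and, since $f(\bm{x},\cdot)$ is continuous and bounded on the compact set $U$ for each $\bm{x}$, hypothesis \eqref{close} makes $t\mapsto\bar f(t,x(t))$ continuous, hence uniformly continuous with some modulus $\omega$, on $[0,T]$. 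I model the samples as independent random variables $u_k$ with law $\mu(t_k;\cdot)$, $t_k:=k(\Delta t)^2$, and put $\mathcal F_k:=\sigma(u_0,\dots,u_k)$. Because $f$ is Lipschitz in $\bm{x}$ and $U$ is compact, $|x_{k+1}|\le(1+L_1(\Delta t)^2)|x_k|+C'(\Delta t)^2$, and discrete Gr\"onwall gives a deterministic bound $|x_k|\le C_0$ for all $0\le k\le K$ and all small $\Delta t$; on the ball $B(0,C_0)$ one has $|f|\le C_1$ for some constant $C_1$.

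The martingale in question is $M_j:=\sum_{k=0}^{j-1}(\Delta t)^2\bigl(f(x_k,u_k)-\bar f(t_k,x_k)\bigr)$. Since $x_k$ is $\mathcal F_{k-1}$-measurable while $u_k$ is independent of $\mathcal F_{k-1}$ and distributed as $\mu(t_k;\cdot)$, we get $\mathbb E[f(x_k,u_k)\mid\mathcal F_{k-1}]=\bar f(t_k,x_k)$, so $(M_j)_j$ is an $(\mathcal F_{j-1})_j$-martingale whose increments are bounded in norm by $2C_1(\Delta t)^2$. Doob's $L^2$ maximal inequality together with $K(\Delta t)^2=T$ then yields $\mathbb E\bigl[\sup_{j\le K}|M_j|^2\bigr]\le 4K(\Delta t)^4(2C_1)^2=16C_1^2T(\Delta t)^2$. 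This is the one place where the $(\Delta t)^2$ time scaling is essential: along $\Delta t=1/N$, Chebyshev gives $\mathbb P\bigl(\sup_{j\le K}|M_j|>\e\bigr)\le 16C_1^2T/(\e^2N^2)$, which is summable in $N$, so Borel--Cantelli gives $\delta_N:=\sup_{j\le K}|M_j|\to0$ almost surely as $N\to\infty$ (with an $O(\Delta t)$ time step one would only get an $O(1/N)$ bound and would have to replace this step by a sharper, e.g.\ Azuma--Hoeffding, concentration estimate).

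It remains to compare $x_K$ with $x(T)$. Introducing the shifted error $e_k:=x_k-M_k-x(t_k)$ and subtracting the identity $x_{k+1}=x_k+(\Delta t)^2\bar f(t_k,x_k)+(M_{k+1}-M_k)$ from $x(t_{k+1})=x(t_k)+\int_{t_k}^{t_{k+1}}\bar f(s,x(s))\,\d s$ gives
\[
e_{k+1}=e_k+(\Delta t)^2\bigl(\bar f(t_k,x_k)-\bar f(t_k,x(t_k))\bigr)+R_k,\qquad R_k:=\int_{t_k}^{t_{k+1}}\bigl(\bar f(t_k,x(t_k))-\bar f(s,x(s))\bigr)\,\d s,
\]
with $\sum_{k<K}|R_k|\le T\,\omega((\Delta t)^2)\to0$ by the uniform continuity above. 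Using $|\bar f(t_k,x_k)-\bar f(t_k,x(t_k))|\le L_1|e_k+M_k|\le L_1(|e_k|+\delta_N)$ and discrete Gr\"onwall, $|e_K|\le e^{L_1T}\bigl(|x_0-x(0)|+L_1T\delta_N+T\omega((\Delta t)^2)\bigr)$, whence $|x_K-x(T)|\le|e_K|+|M_K|\le e^{L_1T}|x_0-x(0)|+o(1)$ almost surely; when $x_0=x(0)$ this is $o(1)$, the asserted conclusion. (To obtain the stated form with constant $1$ in front of $|x_0-x(0)|$ one instead compares $x_K$ with the averaged flow started at $x_0$ by the same martingale argument and then invokes the non-expansiveness / continuous dependence of that flow on its initial datum, using Assumption~\ref{assumption}(iv).)

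I expect the genuine difficulty to be exactly the passage from an $L^2$ (or in-probability) bound on the accumulated sampling noise $\sup_{j\le K}|M_j|$ to an almost sure one; the a priori ball bound, the Euler consistency estimate for the time-dependent averaged ODE, and the closing Gr\"onwall are routine, with the only subtleties being that \eqref{close} is precisely what makes the consistency error vanish uniformly in $\Delta t$ and that compactness of $U$ is used both to bound $f$ along the trajectories and to justify the conditional-mean identity underlying the martingale structure.
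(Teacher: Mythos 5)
Your proposal is correct in substance and proves the statement that matters (the case $x(0)=x_0$), but by a genuinely different route than the paper. The paper argues block-by-block: it compares $x_N$ with $x(\Delta t)$ over one macro-interval of length $\Delta t$ ($N$ micro-steps), introduces auxiliary i.i.d.\ samples $v_k\sim\mu(0;\cdot)$, splits the empirical average into a law-of-large-numbers term, a Lipschitz term using $|x_k-x_0|=o(1)$, and the array $Z_k=f(x_k,v_k)-f(x_k,u_k)$, treats the last with Kolmogorov's strong law together with \eqref{close} (which forces $\mathbb{E}[Z_k]\to0$), and then sums the per-block errors over the $TN$ blocks. You instead collect the sampling error into a single global martingale $M_j$, bound $\sup_j|M_j|$ in $L^2$ via Doob's maximal inequality --- this is where the $(\Delta t)^2$ step size is used, giving variance $O((\Delta t)^2)$ --- and upgrade to almost-sure smallness by Chebyshev plus Borel--Cantelli along $\Delta t=1/N$; the hypothesis \eqref{close} enters only through the deterministic Euler-consistency error for the averaged ODE $\dot x=\bar f(t,x)$ from \eqref{dyn2}, which you close with a discrete Gr\"onwall inequality. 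Your route buys a quantitative rate and a cleaner handling of the fact that the samples form a triangular array indexed by $N$ (the paper's appeal to Kolmogorov's SLLN for the $N$-dependent $Z_k$ is less careful on exactly this point), at the cost of some martingale machinery the paper avoids.

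One caveat on the constant: your Gr\"onwall argument yields $e^{L_1T}|x(0)-x_0|+o(1)$, and the repair you sketch does not restore the constant $1$ claimed in the statement, because Assumption~\ref{assumption}(iv) only gives continuous dependence with factor $e^{LT}$ (Lemma~\ref{stability}), not non-expansiveness. This does not affect the conclusion actually used later ($x(0)=x_0$), and the paper's own bookkeeping is loose at the same spot: its ``$o(1)$'' bound on $\bigl|\int_U f(x(0),\bm{u})\mu(0;\d\bm{u})-\tfrac1N\sum_k f(x_0,v_k)\bigr|$ silently absorbs a term of size $L|x(0)-x_0|$, so iterating its block estimate with a genuine initial mismatch would likewise inflate the constant to $e^{LT}$. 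So flag the discrepancy, but it is not a defect of your argument relative to the paper's.
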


\begin{proof}
Fix an arbitrary natural number $N$ and let $\Delta t :=\frac{1}{N}$.
We compare the two states at time $\Delta t=N(\Delta t)^2$ as 
\begin{align*}
|x(\Delta t) - x_N| &= \left|x(0)+\int_0^{\Delta t} \int_{U}f(x(s), \bm{u})\,\mu(s; \d \bm{u})\d s-x_0-(\Delta t)^2\sum_{k=0}^{N-1} f(x_k,u_k)\right|\\
&\le \left|x(0)+\int_0^{\Delta t} \int_{U}f(x(s),\bm{u})\,\mu(s; \d \bm{u})\d s-x(0)-\Delta t\int_{U}f(x(0),\bm{u})\,\mu(0; \d \bm{u}) \right|\\
&\quad + \left|x(0)+\Delta t\int_{U}f(x(0),\bm{u})\,\mu(0; \d \bm{u})-x_0-(\Delta t)^2\sum_{k=0}^{N-1} f(x_k, u_k)\right|\\
&\le |x(0)-x_0| +\Delta t\underbrace{\left| \int_{U}f(x(0), \bm{u})\,\mu(0;\d \bm{u}) - \frac{1}{N}\sum_{k=0}^{N-1}f(x_k, u_k)\right|}_{=:\mathcal{I}}+o(\Delta t).
\end{align*}
We now choose i.i.d. random variables $v_k$ whose distribution follow $\mu(0; \cdot)$ and then further estimate the term $\mathcal{I}$ as follows:
\begin{align*}
	\mathcal{I}&\le\left|\int_U f(x(0), \bm{u})\mu(0; \d \bm{u}) -\frac{1}{N}\sum_{k=0}^{N-1}f(x_0, v_k)\right|+\left|\frac{1}{N}\sum_{k=0}^{N-1}f(x_0,v_k)-\frac{1}{N}\sum_{k=0}^{N-1}f(x_k,v_k)\right|\\
	&\quad+\left|\frac{1}{N}\sum_{k=0}^{N-1}f(x_k,v_k)-\frac{1}{N}\sum_{k=0}^{N-1}f(x_k, u_k)\right|=o(1) +\left|\frac{1}{N}\sum_{k=0}^{N-1}(f(x_k,v_k)-f(x_k, u_k))\right|,
\end{align*}
where we used the law of large numbers, Lipschitz continuity of $f$ and the fact that $|x_k-x_0|=o(1)$. We now let $Z_k:=f(x_k,v_k)-f(x_k,u_k)$. Then, since $f$ is continuous and the control set $U$ is compact, it is easy to see that the variance  $\mathrm{Var}[Z_k]<C$ for some constant $C$ independent of $k$.
Therefore, by Kolmogorov's strong law of large numbers \cite{feller2008introduction}, we have
\[
\left(\frac{1}{N}\sum_{k=0}^{N-1}Z_k-\frac{1}{N}\sum_{k=0}^{N-1}\mathbb{E}[ Z_k] \right)\to 0\quad\mbox{a.s.}
\]
On the other hand, since  $\mu(0; \cdot)$ and $\mu(k(\Delta t)^2; \cdot)$ are close each other in the sense of \eqref{close}, we have $\mathbb{E}[Z_k] \to0$ as $\Delta t=\frac{1}{N}\to0$. Thus, we arrive at the following convergence result:
\[
\frac{1}{N}\sum_{k=0}^{N-1}Z_k \to 0\quad \mbox{a.s.}
\]
This implies that $\mathcal{I}\le o(1)$, and consequently
\[|x(\Delta t)-x_N|\le|x(0)-x_0|+o(\Delta t) \mbox{ a.s.} \quad \mbox{as $\Delta t\to0$.}\]
Hence, until the finite time $T = K(\Delta t)^2$, we have
\[|x(T)-x_{K}|=|x(T)-x_{TN^2}|\le |x(0)-x_0|+TN \times o(\Delta t)=|x(0)-x_0|+o(1) \mbox{ a.s.} \quad \mbox{as $\Delta t\to0$}.
\]
\end{proof}

To numerically demonstrate the performance and the utililty of our maximum entropy methods, we use the discrete-time approximation~\eqref{sys_dt} in Section~\ref{sec:6}.

\section{Soft Hamilton--Jacobi--Bellman Equations}\label{sec:3}

In this section, we derive the HJB equations for the maximum entropy control problems. 
We then show that the optimal value function, defined by
\[
V_\alpha(t,\bm{x}):=\inf_{g\in \mathcal{G}} J^\alpha_{\bm{x},t}(g),
\]
 corresponds to the unique viscosity solution of the HJB equation. 
We further study some properties of the HJB equations.

\subsection{Dynamic Programming and Soft HJB Equations}

To begin, we apply the dynamic programming principle to the maximum entropy control problem to obtain the following equality, which will be used in deriving the HJB equation.
\begin{lemma}\label{L3.1}
	Suppose that Assumption \ref{assumption} holds. Then, the value function $V_\alpha$ satisfies the following equality:
	\begin{equation}\label{dpp}
	V_\alpha(t,\bm{x})=\inf_{g\in \mathcal{G}} \left\{\int_t^{t+h}\left(\int_{U}r(x(s),\bm{u})g(s,\bm{u})\,\d\bm{u}-\alpha \mathrm{H}(g(s,\cdot))\right)\,\d s+ V_\alpha(t+h,x(t+h))\right\},
	\end{equation}
	where $x(s)$ is the solution to \eqref{dyn2} with control $g\in \mathcal{G}$ and  initial condition $x(t)=\bm{x}$.	
\end{lemma}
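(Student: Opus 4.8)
The plan is to prove the two inequalities ``$V_\alpha(t,\bm{x})\le W(t,\bm{x})$'' and ``$V_\alpha(t,\bm{x})\ge W(t,\bm{x})$'' separately, where $W(t,\bm{x})$ denotes the right-hand side of \eqref{dpp}; this is the classical dynamic-programming argument adapted to the relaxed-control, entropy-regularized setting. Note first that $W(t,\bm{x})$ depends on a control $g\in\mathcal{G}$ only through its restriction to $[t,t+h]$, since both the running term and the state $x(t+h)$ are determined by $g$ on that subinterval. The argument rests on the additive structure of the cost \eqref{cost_soft_2}: for any admissible $g$ with trajectory $x(\cdot)$ from $x(t)=\bm{x}$,
\[
J^\alpha_{\bm{x},t}(g)=\int_t^{t+h}\left(\int_U r(x(s),\bm{u})g(s,\bm{u})\,\d\bm{u}-\alpha\mathrm{H}(g(s,\cdot))\right)\d s+J^\alpha_{x(t+h),\,t+h}\big(g|_{[t+h,T]}\big),
\]
together with well-posedness of \eqref{dyn2}: the averaged field $\bm{x}\mapsto\int_U f(\bm{x},\bm{u})\mu(t;\d\bm{u})$ inherits the one-sided Lipschitz bound of Assumption~\ref{assumption}(iv) upon integrating in $\bm{u}$, so for each admissible control and each initial condition the relaxed dynamics have a \emph{unique} solution (existence being already guaranteed by Assumption~\ref{assumption}(ii)--(iii) and the definition of $\mathcal{M}$).

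For ``$V_\alpha(t,\bm{x})\ge W(t,\bm{x})$'', I would fix an arbitrary $g\in\mathcal{G}$ with trajectory $x(\cdot)$. Its restriction $g|_{[t+h,T]}$ is an admissible control on $[t+h,T]$ (the $L^1$-type requirements defining $\mathcal{M}$, hence $\mathcal{G}$, are inherited by subintervals), so the definition of the value function gives $J^\alpha_{x(t+h),t+h}(g|_{[t+h,T]})\ge V_\alpha(t+h,x(t+h))$. Substituting this into the displayed identity and taking the infimum over $g\in\mathcal{G}$ yields the inequality.

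For ``$V_\alpha(t,\bm{x})\le W(t,\bm{x})$'', the harder direction, I would fix $\e>0$ and an arbitrary $g\in\mathcal{G}$, and let $\bm{y}:=x(t+h)$ be the endpoint of the $g$-trajectory from $\bm{x}$. By the definition of $V_\alpha(t+h,\bm{y})$ there is an admissible control $\tilde g$ on $[t+h,T]$, started at $\bm{y}$, with $J^\alpha_{\bm{y},t+h}(\tilde g)\le V_\alpha(t+h,\bm{y})+\e$. I then form the concatenation $\hat g$ equal to $g$ on $[t,t+h]$ and to $\tilde g$ on $(t+h,T]$: it is admissible on $[t,T]$ because the integrability conditions defining $\mathcal{M}$ add over the two subintervals, and by uniqueness of solutions to \eqref{dyn2} the trajectory generated by $\hat g$ from $\bm{x}$ is precisely the concatenation of the two trajectories, in particular it reaches $\bm{y}$ at time $t+h$. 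The additive identity then gives
\[
V_\alpha(t,\bm{x})\le J^\alpha_{\bm{x},t}(\hat g)\le\int_t^{t+h}\left(\int_U r(x(s),\bm{u})g(s,\bm{u})\,\d\bm{u}-\alpha\mathrm{H}(g(s,\cdot))\right)\d s+V_\alpha(t+h,\bm{y})+\e.
\]
Taking the infimum over $g\in\mathcal{G}$ and then letting $\e\downarrow0$ yields $V_\alpha(t,\bm{x})\le W(t,\bm{x})$.

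The main obstacle is this second inequality, specifically the gluing step: I must verify that the concatenated control remains in $\mathcal{G}$ — which reduces to additivity of the $L^1$-integrability conditions over $[t,t+h]$ and $[t+h,T]$ — and, crucially, that the trajectory of the concatenated control coincides with the concatenation of the two trajectories, which is exactly where uniqueness of solutions to the relaxed ODE \eqref{dyn2}, i.e.\ Assumption~\ref{assumption}(iv), is needed. I would also stress that no measurable-selection theorem is required here: since the midpoint $\bm{y}$ is fixed once $g$ has been chosen, the $\e$-optimal tail $\tilde g$ is selected for that single point, rather than as a measurable function of the endpoint over all head controls simultaneously.
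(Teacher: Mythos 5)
Your proposal is correct and follows essentially the same route as the paper's proof: split the cost at $t+h$, bound the tail by $V_\alpha(t+h,x(t+h))$ for one inequality, and for the other concatenate the given control with an $\e$-optimal tail control started at $x(t+h)$, exactly as the paper does. Your extra remarks on admissibility of the concatenation and uniqueness of the relaxed trajectory (via Assumption~\ref{assumption}(iv)) only make explicit what the paper uses implicitly.
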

\begin{proof}
	Although the proof is almost identical to the standard optimal control case~\cite[Proposition 3.2, Section 3]{Bardi97}, we provide the full proof for the completeness of the paper. 
	Fix an arbitrary $g\in \mathcal{G}$. It follows from the definition of $J^\alpha_{\bm{x},t}(g)$ that
	\begin{align*}
	J^\alpha_{{\bm x},t}(g) &= \int_t^T\left(\int_{U}r(x(s),\bm{u})g(s,\bm{u})\,\d\bm{u}-\alpha \mathrm{H}(g(s,\cdot))\right)\,\d s+q(x(T))\\
	&= \int_t^{t+h}\left(\int_{U}r(x(s),\bm{u})g(s,\bm{u})\,\d\bm{u}-\alpha \mathrm{H}(g(s,\cdot))\right)\,\d s\\
	&\quad +\int_{t+h}^T\left(\int_{U}r(x(s),\bm{u})g(s,\bm{u})\,\d\bm{u}-\alpha \mathrm{H}(g(s,\cdot))\right)\,\d s+q(x(T))\\
	&\ge \int_t^{t+h}\left(\int_{U}r(x(s),\bm{u})g(s,\bm{u})\,\d\bm{u}-\alpha \mathrm{H}(g(s,\cdot))\right)\,\d s + V_\alpha(t+h,x(t+h)).
	\end{align*}
	Now, taking infimum with respect to $g\in\mathcal{G}$ on both sides yields
	\[V_\alpha(t,x) \ge \inf_{g\in \mathcal{G}}\left\{\int_t^{t+h}\left(\int_{U}r(x(s),\bm{u})g(s,\bm{u})\,\d\bm{u}-\alpha \mathrm{H}(g(s,\cdot))\right)\,\d s+ V_\alpha(t+h,x(t+h))\right\}.\]
	To obtain the reverse direction of the inequality, we fix an arbitrary $g\in \mathcal{G}$ and $\e>0$ and let $x(s)$ be the solution to \eqref{dyn2} with  control $g$ for $t\le s\le t+h$. Choose a control $g'\in \mathcal{G}$ satisfying
	\[V_\alpha(t+h,x(t+h))\ge J^\alpha_{x(t+h),t+h}(g')-\e.\]
	We construct another control $\tilde{g}\in \mathcal{G}$  as
	\[\tilde{g}(s)=\begin{cases} g(s) & \mbox{for } t\le s< t+h,\\
	g'(s) & \mbox{for } t+h\le s\le T. \end{cases}\]
	We define $\tilde{x}(s)$ be a solution to \eqref{dyn2} with the control $\tilde{g}$ for the time interval $t\le s\le T$. In particular, $\tilde{x}(s) = x(s)$ for $t\le s\le t+h$. Then, we have 
	\begin{align*}
	V_{\alpha}(t,x)\le J^{\alpha}_{\bm{x},t}(\tilde{g})&= \int_t^{t+h}\left(\int_{U}r(x(s),\bm{u})g(s,\bm{u})\,\d\bm{u}-\alpha \mathrm{H}(g(s,\cdot))\right)\,\d s\\
	&\quad +\int_{t+h}^T\left(\int_{U}r(\tilde{x}(s),\bm{u})g'(s,\bm{u})\,\d\bm{u}-\alpha \mathrm{H}(g'(s,\cdot))\right)\,\d s+q(\tilde{x}(T))\\ 
	&= \int_{t}^{t+h}\left(\int_{U}r(x(s),\bm{u})g(s,\bm{u})\,\d\bm{u}-\alpha \mathrm{H}(g(s,\cdot))\right)\,\d s+J^\alpha_{x(t+h),t+h}(g')\\
	&\le \int_{t}^{t+h}\left(\int_{U}r(x(s),\bm{u})g(s,\bm{u})\,\d\bm{u}-\alpha \mathrm{H}(g(s,\cdot))\right)\,\d s+V_{\alpha}(t+h,x(t+h))+\e. 
	\end{align*}
	We now take an infimum over $g\in \mathcal{G}$ to obtain
	\[V_\alpha(t,x) \le \inf_{g\in \mathcal{G}}\left\{\int_t^{t+h}\left(\int_{U}r(x(s),\bm{u})g(s,\bm{u})\,\d\bm{u}-\alpha \mathrm{H}(g(s,\cdot))\right)\,\d s+ V_\alpha(t+h,x(t+h))\right\}+\e.\]
	Since $\e$ was arbitrarily chosen, the result follows.
\end{proof}

To formally derive the HJB equation that the value function $V_\alpha$ should satisfy, we now temporally assume that  $V_\alpha$ is smooth. 
This assumption will be relaxed in the next subsection by using the viscosity solution framework. 
Rearranging \eqref{dpp}, dividing it by $h$ and letting $h\to 0$, we formally have
\begin{align}
\begin{aligned}\label{HJB}
0&=\lim_{h\to0}\inf_{g\in \mathcal{G}}\left\{\frac{V_\alpha(t+h,x(t+h))-V_\alpha(t,\bm{x})}{h}+\frac{1}{h}\int_t^{t+h}\left(\int_U r(x(s),\bm{u})g(s,\bm{u})\d\bm{u}-\alpha \mathrm{H}(g(s,\cdot))\right)\,\d s\right\}\\
&= \inf_{g\in \mathcal{G}}\left\{\partial_t V_\alpha + \nabla_{\bm{x}} V_\alpha \cdot \dot{x}(t) +\int_U r(\bm{x},\bm{u})g(t,\bm{u})\d\bm{u}-\alpha \mathrm{H}(g(t,\cdot))\right\}\\
&=\partial_t V_\alpha +\inf_{g\in\mathcal{G}} \left\{\nabla_{\bm{x}} V_\alpha \cdot \left(\int_U f(\bm{x},\bm{u})g(t,\bm{u})\d\bm{u}\right)+\int_U r(\bm{x},\bm{u})g(t,\bm{u})\d\bm{u}-\alpha \mathrm{H}(g(t,\cdot))\right\}\\
&=\partial_t V_\alpha(t,\bm{x})+\inf_{g\in \mathcal{G}}\left\{\int_{U}\left(\nabla_{\bm{x}}V_\alpha(t,\bm{x})\cdot f(\bm{x},\bm{u})+r(\bm{x},\bm{u})\right)g(t,\bm{u})\,\d\bm{u}-\alpha \mathrm{H}(g(t,\cdot))\right\}.
\end{aligned}
\end{align}
We can further simplify the HJB equation in a more explicit form,
using the entropy term $\mathrm{H}(g(t,\cdot))$. 
Let
\[L(\bm{x}, \bm{p}, \bm{u}):=   \bm{p} \cdot f(\bm{x},\bm{u})+r(\bm{x},\bm{u}),\]
where $\bm{p} \in \mathbb{R}^n$.
Then, the minimization problem in \eqref{HJB} is of the form
\begin{equation}\label{minimization}
\inf_{g\in \mathcal{G}} \left\{\int_{U} L(\bm{x}, \bm{p}, \bm{u})g(t,\bm{u})\d\bm{u}-\alpha \mathrm{H}(g(t,\cdot))\right\}.
\end{equation}
We show that it admits a closed-form optimal solution, which is in the form of Boltzmann distributions. 
\begin{lemma}\label{L3.2}
	Suppose that $U$ is compact. Then, the unique optimal solution of the minimization problem \eqref{minimization} is given by
	\[g_\alpha^*(\bm{x},\bm{p},\bm{u}):=\frac{\exp(-\frac{1}{\alpha} L(\bm{x}, \bm{p}, \bm{u}))}{\int_{U}\exp\left(-\frac{1}{\alpha} L(\bm{x},\bm{p},\bm{u})\right)\,\d\bm{u}}=\frac{\exp(-\frac{1}{\alpha}(\bm{p}\cdot f(\bm{x},\bm{u})+r(\bm{x},\bm{u})))}{\int_{U}\exp\left(-\frac{1}{\alpha}(\bm{p} \cdot f(\bm{x},\bm{u})+r(\bm{x},\bm{u}))\right)\,\d\bm{u}}.\]
Furthermore, the optimal value of \eqref{minimization} is 
	\[\inf_{g\in \mathcal{G}} \left\{\int_{U} L(\bm{x},\bm{p},\bm{u})g(t,\bm{u})\d\bm{u}-\alpha \mathrm{H}(g(t,\cdot))\right\}=-\alpha\log\int_U \exp\left(-\frac{1}{\alpha} L(\bm{x},\bm{p},\bm{u})\right) \d\bm{u}.\]
\end{lemma}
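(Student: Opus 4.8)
The plan is to reduce \eqref{minimization} to the standard Gibbs/Donsker--Varadhan variational characterization of a Boltzmann density, phrased through the Kullback--Leibler divergence. First I would note that, for fixed $\bm{x},\bm{p}$, the objective depends on $g$ only through the single time slice $g(t,\cdot)$, so the infimum over $\mathcal{G}$ equals $\inf_{\phi}\{\int_U L(\bm{x},\bm{p},\bm{u})\phi(\bm{u})\,\d\bm{u}-\alpha\mathrm{H}(\phi)\}$ over probability densities $\phi\in L^1_{+,1}(U)$; the integrability conditions defining $\mathcal{M}$ (hence $\mathcal{G}$) impose nothing further on a constant-in-time slice once $U$ is compact. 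Since $U$ is compact, $\bm{u}\mapsto f(\bm{x},\bm{u})$ is continuous and $\bm{u}\mapsto r(\bm{x},\bm{u})$ is continuous by Assumption~\ref{assumption}, the map $\bm{u}\mapsto L(\bm{x},\bm{p},\bm{u})=\bm{p}\cdot f(\bm{x},\bm{u})+r(\bm{x},\bm{u})$ is bounded on $U$ (lower boundedness alone already suffices below), so $Z:=\int_U\exp(-\tfrac1\alpha L(\bm{x},\bm{p},\bm{u}))\,\d\bm{u}$ satisfies $0<Z<+\infty$ and $g_\alpha^*=e^{-L/\alpha}/Z$ is a genuine probability density that is bounded above and below by positive constants; in particular $\mathrm{H}(g_\alpha^*)$ is finite and $g_\alpha^*$, viewed as a time-independent control, lies in $\mathcal{G}$, so it is a feasible candidate. (Here $\alpha>0$, as implicit in the maximum entropy formulation.)

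Next I would carry out the key algebraic identity. Because $g_\alpha^*\,\d\bm{u}$ is mutually absolutely continuous with $\d\bm{u}$, every probability density $\phi$ on $U$ satisfies $\phi\,\d\bm{u}\ll g_\alpha^*\,\d\bm{u}$, so $\kl(\phi\|g_\alpha^*)$ is well-defined in $[0,+\infty]$. From $\log g_\alpha^*=-\tfrac1\alpha L-\log Z$ one gets $\log\phi=\log\tfrac{\phi}{g_\alpha^*}-\tfrac1\alpha L-\log Z$; multiplying by $\alpha\phi$, integrating over $U$, and using that $\int_U L\,\phi\,\d\bm{u}$ is finite yields the identity, valid in $(-\infty,+\infty]$,
\begin{equation*}
\int_U L(\bm{x},\bm{p},\bm{u})\,\phi(\bm{u})\,\d\bm{u}-\alpha\,\mathrm{H}(\phi)=\alpha\,\kl(\phi\|g_\alpha^*)-\alpha\log Z .
\end{equation*}
By Gibbs' inequality $\kl(\phi\|g_\alpha^*)\ge 0$, with equality if and only if $\phi=g_\alpha^*$ $\d\bm{u}$-a.e.\ (strict convexity of $s\mapsto s\log s$, equivalently the equality case in Jensen's inequality). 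Hence the infimum in \eqref{minimization} equals $-\alpha\log Z=-\alpha\log\int_U\exp(-\tfrac1\alpha L(\bm{x},\bm{p},\bm{u}))\,\d\bm{u}$, it is attained, and $g_\alpha^*$ is the unique minimizer; in particular any $g\in\mathcal{G}$ attaining the infimum must have $g(t,\cdot)=g_\alpha^*$ a.e.

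I do not expect a deep obstacle here — this is the Gibbs variational principle on the compact set $U$. The points needing care are: (i) that $Z$ is finite and strictly positive, which rests on compactness of $U$ together with boundedness of $f(\bm{x},\cdot)$ and lower boundedness of $r(\bm{x},\cdot)$ on $U$; (ii) that the rearrangement into $\kl(\phi\|g_\alpha^*)$ remains valid in the degenerate case $\mathrm{H}(\phi)=-\infty$, where both sides of the displayed identity are $+\infty$; and (iii) the strictness of Gibbs' inequality, which is what delivers uniqueness. A formal Lagrange-multiplier computation (setting the variation of $\int_U L\phi+\alpha\int_U\phi\log\phi+\lambda\int_U\phi$ with respect to $\phi$ to zero gives $\phi\propto e^{-L/\alpha}$) provides the right guess for $g_\alpha^*$, but I would present the KL-divergence argument since it simultaneously establishes optimality, the value, and uniqueness.
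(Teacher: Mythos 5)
Your proof is correct and takes essentially the same route as the paper: both rest on the Gibbs variational principle expressed through a KL-divergence decomposition, with strict positivity of the divergence giving uniqueness. The only difference is cosmetic — the paper rewrites the entropy via the uniform reference measure $\mathscr{U}$ and invokes Lemma~\ref{LA.1}, whereas you decompose directly against the candidate density $g_\alpha^*$, which amounts to inlining the proof of that lemma (your added checks on $0<Z<\infty$, feasibility of $g_\alpha^*$, and the degenerate case $\mathrm{H}(\phi)=-\infty$ are sound).
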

\begin{proof}
	The optimization problem~\eqref{minimization} can be reformulated as
	\begin{align*}
	\inf_{g\in\mathcal{G}} &\left\{\int_{U}L(\bm{x},\bm{p},\bm{u})g(t,\bm{u})\d\bm{u}-\alpha \mathrm{H}(g(t,\cdot))\right\}\\
	&=\alpha\inf_{g\in\mathcal{G}}\left\{\int_{U} \frac{L(\bm{x},\bm{p},\bm{u})}{\alpha}g(t,\bm{u})\d\bm{u}-\mathrm{H}(g(t,\cdot))\right\}\\
	&=\alpha\inf_{g\in\mathcal{G}}\left\{\int_{U} \frac{L(\bm{x},\bm{p},\bm{u})}{\alpha} g(t,\bm{u})\d\bm{u}+\kl(g(t,\cdot)\d\bm{u}||\mathscr{U})\right\}-\alpha\log|U|,
	\end{align*}
	where $\mathscr{U}$ denotes the uniform probability measure defined by $\mathscr{U}(\d \bm{u}) = \frac{\d\bm{u}}{|U|}$.
Using Lemma \ref{LA.1} with $\phi = \frac{L(\bm{x},\bm{p},\bm{u})}{\alpha}$, $\mu = g(t,\bm{u})\d\bm{u}$ and  $\gamma = \mathscr{U}$, we conclude that the unique optimal solution $g_\alpha^*$ of the minimization problem above is given by
	\[g_\alpha^*(\bm{x},\bm{p},\bm{u}):=\frac{\exp(-\frac{1}{\alpha} L(\bm{x},\bm{p},\bm{u}))}{\int_{U}\exp\left(-\frac{1}{\alpha} L(\bm{x},\bm{p},\bm{u})\right)\,\d\bm{u}},\]
	and the corresponding optimal value  is obtained as
	\[-\alpha\log\int_U \exp\left(-\frac{1}{\alpha}L( \bm{x},\bm{p},\bm{u})\right)\mathscr{U}(\d\bm{u})-\alpha\log|U|=-\alpha\log\int_U \exp\left(-\frac{1}{\alpha}L(\bm{x},\bm{p},\bm{u})\right)\d\bm{u}.\]
\end{proof}

By Lemma~\ref{L3.2}, we can substitute the minimizer $g_\alpha^*$ into the HJB equation \eqref{HJB} to obtain
\begin{align*}
0&=\partial_t V_\alpha(t,\bm{x})+\left(\int_{U} L( \bm{x},\nabla_{\bm{x}}V_\alpha,\bm{u})g_\alpha^*(t,u)\,\d\bm{u}-\alpha \mathrm{H}(g_\alpha^*(t,\cdot))\right)\\
&=\partial_t V_\alpha(t,\bm{x})-\alpha \log\int_{U}\exp\left(-\frac{1}{\alpha}L( \bm{x},\nabla_{\bm{x}}V_\alpha,\bm{u})\right)\,\d\bm{u}\\
&=\partial_t V_\alpha(t,\bm{x})-\alpha \log \int_{U}\exp \left(-\frac{\nabla_{\bm{x}}V_\alpha\cdot f(\bm{x},\bm{u})+r(\bm{x},\bm{u})}{\alpha}\right)\,\d\bm{u}.
\end{align*}
Thus, the HJB equation for maximum entropy control can be written as
\begin{equation}\label{HJB_soft}
\left \{
\begin{array}{ll}
\partial_t V_\alpha(t,\bm{x})-H_\alpha(\bm{x},\nabla_{\bm{x}}V_\alpha)=0 &  \mbox{ on } (0, T) \times \mathbb{R}^n\\
V_\alpha(T,\bm{x}) = q(\bm{x}) & \mbox{ in } \{t = T\} \times \mathbb{R}^n, 
\end{array}
\right.
\end{equation}
where the Hamiltonian is given by
\begin{equation}\label{Ham_soft}
H_\alpha(\bm{x},\bm{p}):=\alpha\log \int_{U}\exp\left(-\frac{\bm{p}\cdot f(\bm{x},\bm{u})+r(\bm{x},\bm{u})}{\alpha}\right)\,\d\bm{u}.
\end{equation}
\begin{remark}
It is worth comparing this Hamiltonian with its standard counterpart 
\[
H_0(\bm{x}, \bm{p}) := \max_{\bm{u} \in U} \{ -\bm{p} \cdot f(\bm{x}, \bm{u}) - r(\bm{x}, \bm{u})\}.
\]
Note that $H_\alpha (\bm{x}, \bm{p})$ can be interpreted as a version of the \emph{soft maximum value} of $-\bm{p} \cdot f(\bm{x}, \bm{u}) - r(\bm{x}, \bm{u})$, which is the objective function in the standard Hamiltonian. 
Motivated by this observation, we refer to the Hamiltonian~\eqref{Ham_soft} and
the HJB equation~\eqref{HJB_soft} as the \emph{soft Hamiltonian} and the
\emph{soft HJB equation}, respectively.
\end{remark}

Before studying the viscosity solution of the soft HJB equation, 
we discuss several properties of the soft Hamiltonian $H_\alpha$.

\begin{proposition}\label{prop:convex}
	Suppose that Assumption \ref{assumption} and  the control set $U$ is compact. Then, the Hamiltonian $H_\alpha(\bm{x},\bm{p})$ has a finite value and is convex in $\bm{p}$.
\end{proposition}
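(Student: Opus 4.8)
The plan is to prove the two assertions separately: finiteness of $H_\alpha(\bm{x},\bm{p})$ for each fixed $(\bm{x},\bm{p})$, and convexity in $\bm{p}$. For finiteness, I would first observe that since $U$ is compact and $f,r$ are continuous (Assumption~\ref{assumption}~$(i),(vi)$), the map $\bm{u}\mapsto \bm{p}\cdot f(\bm{x},\bm{u})+r(\bm{x},\bm{u})$ is continuous on the compact set $U$, hence bounded: there exist constants $m\le M$ (depending on $\bm{x},\bm{p}$) such that $m\le \bm{p}\cdot f(\bm{x},\bm{u})+r(\bm{x},\bm{u})\le M$ for all $\bm{u}\in U$. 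Then $e^{-M/\alpha}\le \exp(-\frac{1}{\alpha}(\bm{p}\cdot f(\bm{x},\bm{u})+r(\bm{x},\bm{u})))\le e^{-m/\alpha}$, so integrating over $U$ (which has finite, and one should note positive, volume $|U|$) gives $0< |U|e^{-M/\alpha}\le \int_U \exp(-\frac{1}{\alpha}(\bm{p}\cdot f+r))\,\d\bm{u}\le |U|e^{-m/\alpha}<+\infty$. Taking $\alpha\log(\cdot)$ of a finite positive number yields a finite value for $H_\alpha(\bm{x},\bm{p})$. (Here I am reading the standing convention that $\alpha>0$; if $\alpha<0$ the same chain of inequalities works with the roles of the bounds swapped.)

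For convexity in $\bm{p}$, I would recognize $H_\alpha(\bm{x},\cdot)$ as (up to the sign and scaling built into the exponent) a \emph{cumulant generating function} / log-integral-exponential, which is a standard convex function. Concretely, fix $\bm{x}$, fix $\bm{p}_0,\bm{p}_1\in\bbr^n$ and $\lambda\in[0,1]$, and set $\bm{p}_\lambda:=(1-\lambda)\bm{p}_0+\lambda\bm{p}_1$. Write $\bm{p}_\lambda\cdot f(\bm{x},\bm{u})+r(\bm{x},\bm{u}) = (1-\lambda)\big(\bm{p}_0\cdot f(\bm{x},\bm{u})+r(\bm{x},\bm{u})\big)+\lambda\big(\bm{p}_1\cdot f(\bm{x},\bm{u})+r(\bm{x},\bm{u})\big)$, so that
\[
\exp\!\Big(-\tfrac{1}{\alpha}(\bm{p}_\lambda\cdot f+r)\Big) = \Big[\exp\!\big(-\tfrac{1}{\alpha}(\bm{p}_0\cdot f+r)\big)\Big]^{1-\lambda}\Big[\exp\!\big(-\tfrac{1}{\alpha}(\bm{p}_1\cdot f+r)\big)\Big]^{\lambda}.
\]
Applying H\"older's inequality with conjugate exponents $\frac{1}{1-\lambda}$ and $\frac{1}{\lambda}$ to the two factors on the right gives
\[
\int_U \exp\!\Big(-\tfrac{1}{\alpha}(\bm{p}_\lambda\cdot f+r)\Big)\d\bm{u} \le \Big(\int_U \exp\!\big(-\tfrac{1}{\alpha}(\bm{p}_0\cdot f+r)\big)\d\bm{u}\Big)^{1-\lambda}\Big(\int_U \exp\!\big(-\tfrac{1}{\alpha}(\bm{p}_1\cdot f+r)\big)\d\bm{u}\Big)^{\lambda}.
\]
Taking $\log$ of both sides, using that $\log$ is increasing and turns the product into a sum, and multiplying by $\alpha$, yields exactly $H_\alpha(\bm{x},\bm{p}_\lambda)\le (1-\lambda)H_\alpha(\bm{x},\bm{p}_0)+\lambda H_\alpha(\bm{x},\bm{p}_1)$, which is convexity. (One should handle the degenerate cases $\lambda\in\{0,1\}$ trivially and note all integrals are finite and positive by the first part, so no $0$ or $\infty$ issues arise in the $\log$.)

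The main obstacle — really the only thing that needs care rather than routine bookkeeping — is making sure the integrals are genuinely finite and strictly positive before taking logarithms, which is precisely what the compactness of $U$ and continuity of $f,r$ buy us; the H\"older step is then entirely standard. An alternative to H\"older, if one prefers, is to note that $H_\alpha(\bm{x},\bm{p}) = \sup_{g}\{\int_U(\bm{p}\cdot f+r)g\,\d\bm{u} - \alpha\mathrm{H}(g)\}$ by Lemma~\ref{L3.2} (interpreting the $\inf$ of $-(\cdot)$ as a $\sup$ with a sign flip) — i.e. $H_\alpha(\bm{x},\cdot)$ is a pointwise supremum of affine functions of $\bm{p}$, hence convex — but the H\"older argument is self-contained and does not require re-deriving the variational representation, so I would present that.
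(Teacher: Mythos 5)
Your proposal is correct and follows essentially the same route as the paper: finiteness by bounding the exponent $\bm{p}\cdot f(\bm{x},\bm{u})+r(\bm{x},\bm{u})$ over the compact set $U$ (the paper invokes Assumption~\ref{assumption}~$(i)$ and $(v)$ to get a finite minimum, which plays the same role as your continuity bound), and convexity in $\bm{p}$ by exactly the same H\"older argument with exponents $\frac{1}{\lambda},\frac{1}{1-\lambda}$. Your explicit check that the integral is strictly positive before taking logarithms is a slightly more careful bookkeeping of a point the paper leaves implicit, but the substance is identical.
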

\begin{proof}
	First, we show that $H_\alpha$ is finite, i.e., $H_\alpha(\bm{x},\bm{p})<+\infty$ for all $(\bm{x},\bm{p})\in\bbr^n\times\bbr^n$. By  Assumption~\ref{assumption} $(i)$ and $(v)$, for each $(\bm{x},\bm{p})\in\bbr^n\times\bbr^n$, 
\[
\min_{\bm{u} \in U} \{\bm{p}\cdot f(\bm{x},\bm{u})+r(\bm{x},\bm{u})\}
\]
admits an optimal solution and has a finite optimal value, say $C=C(\bm{x},\bm{p})$. Therefore, we have
	\[\exp\left(-\frac{\bm{p}\cdot f(\bm{x},\bm{u})+r(\bm{x},\bm{u})}{\alpha}\right)\le \exp\left(-\frac{C}{\alpha}\right),\]
	which implies that $H_\alpha(\bm{x},\bm{p})\le -C(\bm{x},\bm{p})+\alpha\log|U|$. 
	
	We now show that the map $\bm{p}\mapsto H_\alpha(\bm{x},\bm{p})$ is convex. Fix $\bm{x}\in\bbr^n$ and a constant $0<\lambda <1$.
	By the H\"older's inequality for pair $\left(\frac{1}{\lambda},\frac{1}{1-\lambda}\right)$, we obtain
	\begin{align*}
	H_{\alpha}&(\bm{x},\lambda \bm{p}_1+(1-\lambda)\bm{p}_2)\\
	&=\alpha\log\int_{U}\exp\left(-\frac{(\lambda \bm{p}_1+(1-\lambda)\bm{p}_2)\cdot f(\bm{x},\bm{u})+r(\bm{x},\bm{u})}{\alpha}\right)\,\d\bm{u}\\
	&=\alpha\log\int_{U}\exp\left(-\left(\lambda\frac{ \bm{p}_1\cdot f(\bm{x},\bm{u})+r(\bm{x},\bm{u})}{\alpha}+(1-\lambda)\frac{\bm{p}_2\cdot f(\bm{x},\bm{u})+r(\bm{x},\bm{u})}{\alpha}\right)\right)\,\d\bm{u}\\
	&=\alpha \log\int_{U}\exp\left(-\frac{ \bm{p}_1\cdot f(\bm{x},\bm{u})+r(\bm{x},\bm{u})}{\alpha}\right)^\lambda\exp\left(-\frac{\bm{p}_2\cdot f(\bm{x},\bm{u})+r(\bm{x},\bm{u})}{\alpha}\right)^{1-\lambda}\,\d\bm{u}\\
	&\le \alpha \log \left[\left(\int_{U}\exp\left(-\frac{\bm{p}_1\cdot f(\bm{x},\bm{u})+r(\bm{x},\bm{u})}{\alpha}\right)\d\bm{u}\right)^\lambda\left(\int_{U}\exp\left(-\frac{\bm{p}_2\cdot f(\bm{x},\bm{u})+r(\bm{x},\bm{u})}{\alpha}\right)\d\bm{u}\right)^{1-\lambda}\right]\\
	&=\lambda\left(\alpha\log \int_U \exp\left(-\frac{\bm{p}_1\cdot f(\bm{x},\bm{u})+r(\bm{x},\bm{u})}{\alpha}\right)\,\d\bm{u}\right)\\
	&\quad +(1-\lambda)\left(\alpha\log \int_U \exp\left(-\frac{\bm{p}_2\cdot f(\bm{x},\bm{u})+r(\bm{x},\bm{u})}{\alpha}\right)\,\d\bm{u}\right)\\
	&=\lambda H_\alpha(\bm{x},\bm{p}_1)+(1-\lambda)H_\alpha(\bm{x},\bm{p}_2).
	\end{align*}
	Thus, $\bm{p}\mapsto H_\alpha(\bm{x},\bm{p})$ is convex for each $\bm{x} \in \mathbb{R}^n$.
\end{proof}

	We discuss a few notable aspects regarding the convexity of the soft Hamiltonian $H_\alpha$.
	\begin{enumerate}
		\item Instead of using the H\"older's inequality, we can directly calculate the gradient and the Hessian of $H_\alpha$ with respect to $\bm{p}$ as
		\begin{align*}
		\nabla_{\bm{p}} H_\alpha&=\nabla_{\bm{p}}\left(\alpha\log\int_U \exp\left(-\frac{\bm{p}\cdot f+r}{\alpha}\right)\d\bm{u}\right)=\alpha \frac{\nabla_{\bm{p}}\int_{U}\exp\left(-\frac{\bm{p}\cdot f+r}{\alpha}\right)\d\bm{u}}{\int_{U}\exp\left(-\frac{\bm{p}\cdot f+r}{\alpha}\right)\d\bm{u}}\\
		&=-\frac{\int_{U}\exp\left(-\frac{\bm{p}\cdot f+r}{\alpha}\right)f\d\bm{u}}{\int_{U}\exp\left(-\frac{\bm{p}\cdot f+r}{\alpha}\right)\d\bm{u}},
		\end{align*}
		and
		\begin{align*}
		\nabla_{\bm{p}}^2H_\alpha&=\frac{1}{\alpha}\bigg(\left(\int_{U}\exp\left(-\frac{\bm{p}\cdot f+r}{\alpha}\right)f\otimes f\d\bm{u}\right)\left(\int_{U}\exp\left(-\frac{\bm{p}\cdot f+r}{\alpha}\right)\d\bm{u}\right)&\\
		&\hspace{2cm}-\left(\int_{U}\exp\left(-\frac{\bm{p}\cdot f+r}{\alpha}\right)f\d\bm{u}\right)\otimes \left(\int_{U}\exp\left(-\frac{\bm{p}\cdot f+r}{\alpha}\right)f\d\bm{u}\right)\bigg)\\
		&\quad\times \left(\int_{U}\exp\left(-\frac{\bm{p}\cdot f+r}{\alpha}\right)\d\bm{u}\right)^{-2}.
		\end{align*}
Let
		\[g_{\bm{p}}(\bm{u};\bm{x}):=\frac{1}{Z_{\bm{p}}}\exp\left(-\frac{\bm{p}\cdot f(\bm{x},\bm{u})+r(\bm{x},\bm{u})}{\alpha}\right),\quad Z_{\bm{p}} := \int_U \exp\left(-\frac{\bm{p}\cdot f+r}{\alpha}\right)\d\bm{u},\]
		and 
		\[\bar{f}_{\bm{p}}(\bm{x}):=\frac{\int_U \exp(-\frac{\bm{p}\cdot f+r}{\alpha})f(\bm{x},\bm{u})\d\bm{u}}{Z_{\bm{p}}}=\mathbb{E}_{g_{\bm{p}}}[f(\bm{x},\cdot)],\]
		where $\mathbb{E}_{g_{\bm{p}}}[f(\bm{x},\cdot)]$ denotes the expectation of $\bm{u}\mapsto f(\bm{x},\bm{u})$ with respect to the probability density $g_{\bm{p}}(\bm{u};\bm{x})$.
		Then, the gradient and the Hessian of $H_\alpha$ with respect to $\bm{p}$ can be expressed as
		\[\nabla_{\bm{p}} H_\alpha = - \bar{f}_{\bm{p}}(\bm{x}) = -\mathbb{E}_{g_{\bm{p}}}(f(\bm{x},\bm{u})),\]
		and
		\[\nabla_{\bm{p}}^2 H_\alpha = \frac{1}{\alpha}\frac{\int_U(f-\bar{f}_{\bm{p}})\otimes (f-\bar{f}_{\bm{p}})\exp\left(-\frac{\bm{p}\cdot f +r}{\alpha}\right)\d\bm{u}}{Z_{\bm{p}}}=\frac{1}{\alpha}\textup{Cov}_{g_{\bm{p}}}[f(\bm{x},\cdot)],\]
		where $\textup{Cov}_{g_{\bm{p}}}[f(\bm{x},\cdot)]$ denotes the covariance matrix of the map $\bm{u}\mapsto f(\bm{x},\bm{u})$ with respect to the probability density $g_{\bm{p}}$. Therefore, we conclude that $\nabla_{\bm{p}}^2 H_\alpha\succeq 0$. By using this formulation, we may interpret the gradient and the Hessian of the soft Hamiltonian $H_\alpha$ as the expected value and the covariance matrix of $\bm{u}\mapsto f(\bm{x},\bm{u})$ with respect to the probability density $g_{\bm{p}}(\bm{u};\bm{x})$. The probability density $g_{\bm{p}}$ can be interpreted as a Boltzmann distribution  if we consider the quantity $\bm{p}\cdot f(\bm{x},\bm{u})+r(\bm{x},\bm{u})$ as  ``energy" (c.f. Remark \ref{R3.2}). 
		
		\item Furthermore, if we assume that there exists $r>0$ such that\footnote{This condition can be interpreted as the condition that the dynamics at any state $\bm{x}\in\bbr^n$ can be driven to any direction.}
		\begin{equation}\label{control}
		B(0,r)\subset \overline{\mathrm{conv}[f(\bm{x},U)]} \quad \forall \bm{x} \in \mathbb{R}^n,
		\end{equation}
		where $\mathrm{conv}[f(\bm{x},U)]$ denotes the convex hull of $f(\bm{x},U) := \{ f(\bm{x}, \bm{u}) \in \mathbb{R}^n \mid \bm{u} \in U\}$, then we can show that $H_\alpha$ is strictly convex in $\bm{p}$. 
		To see this, suppose not. 
Then, the H\"older's inequality in the proof of Proposition~\ref{prop:convex} holds with equality for some $\bm{x},\bm{p}_1$ and $\bm{p}_2$ in $\mathbb{R}^n$. However, the equality condition for the H\"older's inequality implies that
		\[\exp\left(-\frac{(\bm{p}_1-\bm{p}_2)\cdot f(\bm{x},\bm{u})}{\alpha}\right)\equiv C \quad \forall \bm{u}\in U\]
		for some constant $C$,
		or equivalently
		\[(\bm{p}_1-\bm{p}_2)\cdot f(\bm{x},\bm{u}) \equiv -\alpha\log C =: \tilde{C} \quad \forall \bm{u}\in U,\]
		where $\tilde{C}$ is a constant independent of $\bm{u}$. Therefore, the set $f(\bm{x},U)$ is a subset of the hyperplane
		\[\{\bm{z}\in \bbr^n ~\mid~ (\bm{p}_1-\bm{p}_2)\cdot \bm{z} = \tilde{C}\},\]
		which  contradicts the assumption that  $B(0,r)\subset \textup{conv}~f(\bm{x},U)$. Thus, the H\"older's inequality strictly holds, and $H_\alpha$ is strictly convex in $\bm{p}$. 
	\end{enumerate}

The following proposition shows some regularity of the soft Hamiltonian, which will be used to guarantee the uniqueness of the viscosity solution of HJB equation \eqref{HJB_soft}.

\begin{proposition}\label{P3.2}
	Suppose that Assumption \ref{assumption} holds and   the control set $U$ is compact. Then, the soft Hamiltonian $H_\alpha$ satisfies the following conditions:
	\begin{equation}\nonumber
	\begin{split}
	|H_{\alpha}(\bm{x},\bm{p})-H_{\alpha}(\bm{y},\bm{p})|&\le |\bm{p}|\omega_f(|\bm{x}-\bm{y}|,R)+\omega_r(|\bm{x}-\bm{y}|) \quad \forall \bm{x}, \bm{y} \in \mathbb{R}^n \mbox{ s.t. } |\bm{x}|,|\bm{y}|<R\\
|H_{\alpha}(\bm{x},\bm{p})-H_{\alpha}(\bm{x},\bm{q})|&\le C\left(1+|\bm{x}|+\sup_{\bm{u}\in U}|\bm{u}|\right) |\bm{p}-\bm{q}|.
\end{split}
\end{equation}	
\end{proposition}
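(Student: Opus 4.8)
The plan is to reduce both estimates to a single elementary fact about the ``log-sum-exp'' functional. Define, for a measurable $\phi:U\to\bbr$ with $0<\int_U\exp(-\phi(\bm{u})/\alpha)\,\d\bm{u}<+\infty$,
\[
S[\phi] := \alpha\log\int_U\exp\!\left(-\tfrac{\phi(\bm{u})}{\alpha}\right)\d\bm{u},
\]
so that $H_\alpha(\bm{x},\bm{p})=S[\bm{p}\cdot f(\bm{x},\cdot)+r(\bm{x},\cdot)]$. I claim $S$ is $1$-Lipschitz in the supremum norm (with $\alpha>0$): if $\|\phi_1-\phi_2\|_{L^\infty(U)}\le c$ then $\phi_1\le\phi_2+c$ pointwise, hence $\exp(-\phi_1/\alpha)\ge e^{-c/\alpha}\exp(-\phi_2/\alpha)$; integrating over $U$ and applying $\alpha\log(\cdot)$ gives $S[\phi_1]\ge S[\phi_2]-c$, and exchanging the roles of $\phi_1,\phi_2$ yields $|S[\phi_1]-S[\phi_2]|\le c$. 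Finiteness and strict positivity of the integrals (so that $\log$ is legitimate) follow from compactness of $U$ together with the lower bound on $\bm{p}\cdot f(\bm{x},\bm{u})+r(\bm{x},\bm{u})$ over $\bm{u}\in U$ already established in the proof of Proposition~\ref{prop:convex}.

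For the first inequality I would apply this with $\phi_1=\bm{p}\cdot f(\bm{x},\cdot)+r(\bm{x},\cdot)$ and $\phi_2=\bm{p}\cdot f(\bm{y},\cdot)+r(\bm{y},\cdot)$. For $\bm{x},\bm{y}\in B(0,R)$ and every $\bm{u}\in U$, Assumption~\ref{assumption}(iii) and (vi) give $|\phi_1(\bm{u})-\phi_2(\bm{u})|\le|\bm{p}|\,|f(\bm{x},\bm{u})-f(\bm{y},\bm{u})|+|r(\bm{x},\bm{u})-r(\bm{y},\bm{u})|\le|\bm{p}|\,\omega_f(|\bm{x}-\bm{y}|,R)+\omega_r(|\bm{x}-\bm{y}|)$, so $\|\phi_1-\phi_2\|_{L^\infty(U)}$ is bounded by the right-hand side, and the $1$-Lipschitz property of $S$ closes the argument. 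For the second inequality I would instead take $\phi_1=\bm{p}\cdot f(\bm{x},\cdot)+r(\bm{x},\cdot)$ and $\phi_2=\bm{q}\cdot f(\bm{x},\cdot)+r(\bm{x},\cdot)$, so that $\phi_1(\bm{u})-\phi_2(\bm{u})=(\bm{p}-\bm{q})\cdot f(\bm{x},\bm{u})$; then Cauchy--Schwarz and Assumption~\ref{assumption}(ii) give $|\phi_1(\bm{u})-\phi_2(\bm{u})|\le|\bm{p}-\bm{q}|\,|f(\bm{x},\bm{u})|\le C(1+|\bm{x}|+|\bm{u}|)|\bm{p}-\bm{q}|\le C(1+|\bm{x}|+\sup_{\bm{u}\in U}|\bm{u}|)|\bm{p}-\bm{q}|$, where the last supremum is finite by compactness of $U$, and again $S$ being $1$-Lipschitz finishes it. Alternatively, the second bound can be read off from the gradient formula $\nabla_{\bm{p}}H_\alpha=-\mathbb{E}_{g_{\bm{p}}}[f(\bm{x},\cdot)]$ derived after Proposition~\ref{prop:convex}, since $|\nabla_{\bm{p}}H_\alpha|\le\sup_{\bm{u}\in U}|f(\bm{x},\bm{u})|\le C(1+|\bm{x}|+\sup_{\bm{u}\in U}|\bm{u}|)$, followed by integration along the segment joining $\bm{q}$ to $\bm{p}$.

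There is no genuinely hard step here; the only points requiring a little care are the $1$-Lipschitz contraction estimate for $S$ and the verification that the exponential integrals are finite and strictly positive, both of which are essentially contained in the proof of Proposition~\ref{prop:convex}. I would present the argument in this order: derive the log-sum-exp contraction lemma first, then specialize the perturbation $\phi_1-\phi_2$ in the two ways indicated to obtain the two inequalities in turn.
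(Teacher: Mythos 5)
Your proposal is correct and is essentially the paper's own argument: your $1$-Lipschitz (sup-norm contraction) property of the log-sum-exp functional $S$ is exactly what the paper establishes via the inequality $\int_U F\,\d\bm{u}\le \max_{\bm{u}\in U}\{F/G\}\int_U G\,\d\bm{u}$ applied to $F=\exp(-\phi_1/\alpha)$, $G=\exp(-\phi_2/\alpha)$, after which both proofs bound the pointwise difference using Assumption~\ref{assumption}(iii),(vi) for the $\bm{x}$-estimate and (ii) for the $\bm{p}$-estimate. Packaging the contraction as a standalone lemma (and noting the alternative via $\nabla_{\bm{p}}H_\alpha$) is a clean presentation choice, but the substance matches the paper's proof.
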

\begin{proof}
	It follows from the definition of $H_\alpha$ that
	\begin{align*}
	|H_\alpha(\bm{x},\bm{p})-H_\alpha(\bm{y},\bm{p})|=\alpha\left|\log \frac{\int_{U}\exp\left(-\frac{\bm{p}\cdot f(\bm{x},\bm{u})+r(\bm{x},\bm{u})}{\alpha}\right)\d\bm{u}}{\int_{U}\exp\left(-\frac{\bm{p}\cdot f(\bm{y},\bm{u})+r(\bm{y},\bm{u})}{\alpha}\right)\d\bm{u}}\right|.
	\end{align*}
	However, note that for any measurable functions $F,G\ge0$, we have
	\[\int_U F(\bm{u})\,\d\bm{u}\le  \max_{\bm{u}\in U}\left\{F(\bm{u})/G(\bm{u})\right\}\int_{U} G(\bm{u})\, \d\bm{u}.\]
	Thus, we further estimate $|H_\alpha(\bm{x},\bm{p})-H_\alpha(\bm{y},\bm{p})|$ as
	\[|H_\alpha(\bm{x},\bm{p})-H_{\alpha}(\bm{y},\bm{p})|\le \alpha\left|\log \max_{\bm{u}\in U}\left\{\exp\left(-\frac{\bm{p}\cdot(f(\bm{x},\bm{u})-f(\bm{y},\bm{u}))+r(\bm{x},\bm{u})-r(\bm{y},\bm{u})}{\alpha}\right)\right\}\right|.\]
	Since the exponential function is an increasing function, we can move the maximum operator inside the exponential. Therefore, for any $\bm{x},\bm{y}\in\bbr^n$ such that $|\bm{x}|,|\bm{y}|<R$,  we have
	\begin{align*}
	|H_\alpha(\bm{x},\bm{p})-H_{\alpha}(\bm{y},\bm{p})|&\le \alpha\left| \max_{\bm{u}\in U}\left\{-\frac{\bm{p}\cdot(f(\bm{x},\bm{u})-f(\bm{y},\bm{u}))+r(\bm{x},\bm{u})-r(\bm{y},\bm{u})}{\alpha}\right\}\right|\\
	&=\max_{\bm{u}\in U}\left\{\bm{p}\cdot(f(\bm{y},\bm{u})-f(\bm{x},\bm{u}))+r(\bm{y},\bm{u})-r(\bm{x},\bm{u})\right\}\\
	&\le |\bm{p}|\omega_f(|\bm{x}-\bm{y}|,R)+\omega_r(|\bm{x}-\bm{y}|).
	\end{align*}
	The second assertion can be shown in a similar way as follows:
	\begin{align*}
	|H_\alpha&(\bm{x},\bm{p})-H_\alpha(\bm{x},\bm{q})|\\
	&=\alpha\left|\log \frac{\int_{U}\exp\left(-\frac{\bm{p}\cdot f(\bm{x},\bm{u})+r(\bm{x},\bm{u})}{\alpha}\right)\d\bm{u}}{\int_{U}\exp\left(-\frac{\bm{q}\cdot f(\bm{x},\bm{u})+r(\bm{x},\bm{u})}{\alpha}\right)\d\bm{u}}\right|\le \alpha\left|\log \max_{\bm{u}\in U}\left\{\exp\left(-\frac{(\bm{p}-\bm{q})\cdot f(\bm{x},\bm{u})}{\alpha}\right)\right\}\right|\\
	&=\max_{\bm{u}\in U}\left\{(\bm{q}-\bm{p})\cdot f(\bm{x},\bm{u})\right\}\le |\bm{p}-\bm{q}|\max_{\bm{u}\in U}|f(\bm{x},\bm{u})|=C|\bm{p}-\bm{q}|\left(1+|\bm{x}|+\max_{\bm{u}\in U}|\bm{u}|\right).
	\end{align*}
\end{proof}

\begin{remark}\label{R3.2}
	In the soft HJB equation \eqref{HJB}, the idea of the maximum entropy optimal control is translated into minimizing $\nabla_{\bm{x}} V_\alpha(t,\bm{x})\cdot f(\bm{x},\bm{u})+r(\bm{x},\bm{u})-\alpha \mathrm{H}(g)=L(\bm{x}, \nabla_{\bm{x}} V_\alpha, \bm{u})-\alpha \mathrm{H}(g)$, i.e.,
	\[\min_{g\in\mathcal{G}}\left\{\int_{U}L(\bm{x},\nabla_{\bm{x}} V_\alpha, \bm{u})g(\bm{u})\d\bm{u}-\alpha \mathrm{H}(g)\right\}.\]
	It is remarkable that this optimization problem resembles the minimization of the Helmholtz free energy $F$ in physics, defined as
	\[F:=U-TS,\quad \mbox{$U$: internal energy, $T$: Temperature, $S$: Entropy}.\]
	Thus, if we interpret $L$ as ``internal energy" and the temperature parameter $\alpha$ as a physical temperature, the quantity to be minimized is exactly the same as the Helmholtz free energy. Moreover, the optimal control $g_\alpha^*$ corresponds to the canonical ensemble of the given $L( \bm{x},\nabla_{\bm{x}} V_\alpha, \bm{u})$:
	\[g^*(\bm{x}, \nabla_{\bm{x}} V_\alpha, \bm{u}):=\frac{\exp\left(-\frac{1}{\alpha}L( \bm{x},\nabla_{\bm{x}} V_\alpha, \bm{u})\right)}{\int_{U}\exp\left(-\frac{1}{\alpha}L(\bm{x},\nabla_{\bm{x}} V_\alpha, \bm{u})\right)\,\d\bm{u}}=\frac{\exp\left(-\beta L( \bm{x},\nabla_{\bm{x}} V_\alpha, \bm{u})\right)}{Z},\]
	where $\beta=\frac{1}{\alpha}$ is an inverse temperature in statistical mechanics and $Z$ is a partition function. Then, with the canonical ensemble, the corresponding Helmholtz free energy is given by
	\[F=-\alpha\log Z = -H_\alpha(\bm{x},\nabla_{\bm{x}}V_\alpha),\]
	which is the exact minimum value of the quantity $\int_U L( \bm{x},\nabla_{\bm{x}} V_\alpha, \bm{u})g(\bm{u})\d\bm{u}-\alpha \mathrm{H}(g)$ in Lemma \ref{L3.2}. 
	This observation suggests a connection between maximum entropy optimal control and statistical mechanics. 
	A deeper connection may provide more insights into maximum entropy optimal control from the perspective of statistical mechanics. 
	We leave this as  future research.
\end{remark}

\subsection{Viscosity Solutions}

The soft HJB equation has been derived under the assumption that the value function $V_\alpha$ is continuously differentiable. 
We now relax this assumption and show that $V_\alpha$ 
satisfies the soft HJB equation in the sense of viscosity solutions~\cite{CrandallLions83, CrandallEvansLions84}. 
Recall the definition of the viscosity solution of the HJB equation for the terminal-value problem.
\begin{definition}\label{D3.1}
	A continuous function $V: [0,T] \times \mathbb{R}^n \to \mathbb{R}$ is a viscosity solution of the HJB equation \eqref{HJB_soft} if the following conditions hold:
	\begin{enumerate}
		\item $V(T,\bm{x})=q(\bm{x})$ for all $\bm{x}\in \bbr^n$.
		\item (Subsolution) For any $\phi\in C^1([0,T]\times\bbr^n)$ such that $V-\phi$ has a local maximum at $(t_0,\bm{x}_0)$,
		\[\partial_t \phi(t_0,\bm{x}_0)-H_\alpha(\bm{x}_0,\nabla_{\bm{x}}\phi(t_0,\bm{x}_0))\ge0.\]
		\item (Supersolution) For any $\phi\in C^1([0,T]\times\bbr^n)$ such that $V-\phi$ has a local minimum at $(t_0,\bm{x}_0)$,
		\[\partial_t \phi(t_0,\bm{x}_0)-H_\alpha(\bm{x}_0,\nabla_{\bm{x}}\phi(t_0,\bm{x}_0))\le0.\]
	\end{enumerate}
\end{definition}
Note that the inequalities in the definition of sub- and supersolutions are reversed compared to the ones in their standard definition. This is because our HJB equation is a terminal value problem as opposed to the one in the standard definition. The following theorem states that
the soft HJB equation~\eqref{HJB_soft} has a unique viscosity solution, which corresponds to
 the value function $V_\alpha$ of the maximum entropy control problem.
\begin{theorem}\label{thm:viscosity}
	Suppose that Assumption \ref{assumption} holds and  the control set $U$ is compact. Then, the value function $V_\alpha$ is the unique viscosity solution of the HJB equation \eqref{HJB_soft}.
\end{theorem}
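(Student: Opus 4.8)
The plan is to establish the theorem in two stages: (i) showing that the value function $V_\alpha$ \emph{is} a viscosity solution of the soft HJB equation \eqref{HJB_soft}, and (ii) showing that the viscosity solution is \emph{unique} via a comparison principle. For stage (i), I would first record basic regularity of $V_\alpha$: using Assumption~\ref{assumption}, the stability estimate for the relaxed dynamics (Lemma~\ref{stability}), and the reformulation \eqref{cost_kl}, one checks that $V_\alpha$ is continuous (indeed locally Lipschitz in $\bm{x}$ and continuous in $t$) and satisfies the terminal condition $V_\alpha(T,\bm{x})=q(\bm{x})$ directly from the definition. Then, to verify the subsolution and supersolution inequalities, I take a test function $\phi\in C^1$ with $V_\alpha-\phi$ having a local extremum at $(t_0,\bm{x}_0)$ and plug $\phi$ into the dynamic programming identity \eqref{dpp} of Lemma~\ref{L3.1}. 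Dividing by $h$ and sending $h\to0$, as in the formal derivation leading to \eqref{HJB}, the key point is to interchange the limit with the infimum over $g\in\mathcal{G}$; for the subsolution direction one fixes a constant-in-time density and passes to the limit, and for the supersolution direction one uses near-optimal controls together with the explicit minimizer $g_\alpha^*$ from Lemma~\ref{L3.2}. The closed-form Hamiltonian \eqref{Ham_soft} then appears exactly, giving the two inequalities of Definition~\ref{D3.1}.

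For stage (ii), the uniqueness, I would invoke a standard comparison principle for viscosity solutions of first-order Hamilton--Jacobi equations on $[0,T]\times\bbr^n$ with terminal data (e.g., the machinery in~\cite{Bardi97, CrandallLions83}). The crucial hypotheses that make this classical argument go through are precisely the two estimates proved in Proposition~\ref{P3.2}: the modulus-of-continuity bound $|H_\alpha(\bm{x},\bm{p})-H_\alpha(\bm{y},\bm{p})|\le|\bm{p}|\,\omega_f(|\bm{x}-\bm{y}|,R)+\omega_r(|\bm{x}-\bm{y}|)$ and the local Lipschitz bound in $\bm{p}$, $|H_\alpha(\bm{x},\bm{p})-H_\alpha(\bm{x},\bm{q})|\le C(1+|\bm{x}|+\sup_{\bm u\in U}|\bm u|)|\bm{p}-\bm{q}|$. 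With these in hand, the usual doubling-of-variables technique — introducing the penalization $\phi_\varepsilon(t,\bm{x},\bm{y})=\frac{1}{2\varepsilon}|\bm{x}-\bm{y}|^2$ plus a term controlling growth at infinity, taking a maximum point of $u(t,\bm{x})-v(t,\bm{y})-\phi_\varepsilon$ for a subsolution $u$ and supersolution $v$, and exploiting the regularity of $H_\alpha$ to send $\varepsilon\to0$ — yields $u\le v$ on $[0,T]\times\bbr^n$. Applying this with $u=v=V_\alpha$ and any other viscosity solution $W$ gives $V_\alpha=W$.

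I expect the main obstacle to be the rigorous justification of the limit-infimum interchange in stage (i), because the admissible set $\mathcal{G}$ consists of time-dependent probability densities rather than classical controls, the entropy term $-\alpha\mathrm{H}(g(s,\cdot))$ is only lower semicontinuous and unbounded below a priori, and the map $g\mapsto\mathrm{H}(g)$ is nonlinear. The supersolution inequality is the more delicate of the two: one needs, for each small $h$, a control that is $\varepsilon h$-optimal for the inner problem over $[t_0,t_0+h]$, and one must control the resulting state displacement $x(t_0+h)-\bm{x}_0=O(h)$ uniformly using Assumption~\ref{assumption}~(ii)--(iii) and the boundedness of $U$, while also showing that the time-averaged running cost and entropy converge to their pointwise values at $(t_0,\bm{x}_0)$. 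Here the explicit optimizer $g_\alpha^*$ of Lemma~\ref{L3.2} is essential: substituting it turns the inner infimum into the clean expression $-H_\alpha(\bm{x},\nabla_{\bm{x}}\phi)$ up to an $o(1)$ error, which collapses the argument to a one-dimensional limit in $h$. A secondary technical point is verifying that the comparison principle from the literature applies verbatim despite $H_\alpha$ being defined through an integral over $U$; but since Proposition~\ref{P3.2} packages exactly the structural conditions those theorems require, this should be routine.
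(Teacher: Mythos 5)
Your proposal is correct and follows essentially the same route as the paper: sub- and supersolution inequalities are verified from the dynamic programming identity of Lemma~\ref{L3.1} — a fixed (constant-in-time) density for the subsolution direction and near-optimal controls plus the explicit minimizer of Lemma~\ref{L3.2} for the supersolution direction — and uniqueness is obtained from the standard comparison principle, whose hypotheses are exactly the estimates in Proposition~\ref{P3.2}. The paper phrases the supersolution step as a contradiction argument rather than a direct limit--infimum interchange, but the underlying mechanism you describe is the same.
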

\begin{proof}
The idea of our proof is adopted from the proof of \cite[Theorem 2, Section 10.3]{Evans2010}.
	We first show that $V_\alpha$ satisfies the two conditions in the definition of viscosity solutions. 
	
	\noindent $\bullet$ (Supersolution): Suppose there exists $\phi\in C^1([0,T]\times\bbr^n)$ such that $V_\alpha-\phi$ has a local minimum at $(t_0,\bm{x}_0)$. We need to show that
	\[\partial_t\phi(t_0,\bm{x}_0)-H_\alpha(\bm{x}_0,\nabla_{\bm{x}}\phi(t_0,\bm{x}_0)))\le0.\]
	Suppose the inequality fails to hold. Then, there exists a neighborhood $\mathcal N$ of $(t_0,\bm{x}_0)$ such that
	\begin{equation}\label{cont1}
	\partial_t\phi(t,\bm{x})-H_\alpha(\bm{x},\nabla_{\bm{x}}\phi(t,\bm{x})))\ge \eta>0 \quad  \forall (t,\bm{x})\in \mathcal N.
	\end{equation}
	Then, there exists $\delta>0$ such that for any $(t,\bm{x})\in \mathcal N$ satisfying $|(t,\bm{x})-(t_0,\bm{x}_0)|<\delta$, 
	\[\phi(t,\bm{x})-\phi(t_0,\bm{x}_0)\le V_\alpha(t,\bm{x})-V_\alpha(t_0,\bm{x}_0).\]
	On the other hand, it follows from Lemma \ref{stability}, we can choose a small time interval $h$ such that 
	\[|x(s)-\bm{x}_0|+|s-t_0|<\delta \quad \forall x \in [t_0, t_0+h]
	\]
	for any choice of $g$, where $x(s)$ is a solution to \eqref{dyn2} with  control $g\in \mathcal{G}$. Now, it follows from the definition of the value function, there exists a control $g_1\in\mathcal{G}$ such that
	\[V_\alpha(t_0,\bm{x}_0)+\frac{\eta h}{2} >\int_{t_0}^{t_0+h}\left(\int_{U}r(x(s),\bm{u})g_1(s,\bm{u})\,\d\bm{u}-\alpha \mathrm{H}(g_1(s,\cdot))\right)\,\d s+V_\alpha(t_0+h,x(t_0+h)).\]
We then have
	\begin{align*}
	\frac{\eta h}{2}&>\int_{t_0}^{t_0+h}\left(\int_{U}r(x(s),\bm{u})g_1(s,\bm{u})\,\d\bm{u}-\alpha \mathrm{H}(g_1(s,\cdot))\right)\,\d s+V_\alpha(t_0+h,x(t_0+h))-V_\alpha(t,\bm{x})\\
	&\ge \int_{t_0}^{t_0+h}\left(\int_{U}r(x(s),\bm{u})g_1(s,\bm{u})\,\d\bm{u}-\alpha \mathrm{H}(g_1(s,\cdot))\right)\,\d s+\phi(t_0+h,x(t_0+h))-\phi(t,\bm{x})\\
	&=\int_{t_0}^{t_0+h}\left(\int_{U}r(x(s),\bm{u})g_1(s,\bm{u})\,\d\bm{u}-\alpha \mathrm{H}(g_1(s,\cdot))+\frac{\d}{\d s}\phi(s,x(s))\right)\,\d s\\
	&=\int_{t_0}^{t_0+h}\bigg(\int_{U}r(x(s),\bm{u})g_1(s,\bm{u})\,\d\bm{u}-\alpha \mathrm{H}(g_1(s,\cdot))+\partial_s\phi(s,x(s))\\
	&\hspace{2cm}+\nabla_x\phi(s,x(s))\cdot\left(\int_{U}f(x(s),\bm{u})g(s,\bm{u})\,\d\bm{u}\right)\bigg)\,\d s\\
	&=\int_{t_0}^{t_0+h}\left(\partial_s\phi(s,x(s))+\int_{U}(\nabla_{\bm{x}}\phi(s,x(s))\cdot f(x(s),\bm{u})+r(x(s),\bm{u}))g_1(s,\bm{u})\,\d\bm{u}-\alpha \mathrm{H}(g_1(s,\cdot))\right)\,\d s.
	\end{align*}
Lemma \ref{L3.2} implies that the optimal value of
	\[
	\min_{g \in \mathcal{G}} \bigg \{ \int_{U} \nabla_{\bm{x}} \phi(s,x(s))\cdot f(x(s),\bm{u})+r(x(s),\bm{u}))g(s,\bm{u})\,\d\bm{u}-\alpha \mathrm{H}(g(s,\cdot)) \bigg \}\]
	is equal to $-H_\alpha(x(s),\nabla_{\bm{x}}\phi(s,x(s)))$, which is achieved at	\[g(s,\bm{u})=\frac{\exp(-\frac{1}{\alpha}(\nabla_{\bm{x}}\phi(t,x(s))\cdot f(x(s),\bm{u})+r(x(s),\bm{u})))}{\int_{U}\exp\left(-\frac{1}{\alpha}(\nabla_{\bm{x}} \phi(t,x(s))\cdot f(x(s),\bm{u})+r(x(s),\bm{u}))\right)\,\d\bm{u}}.\]
	Thus, it follows from \eqref{cont1} that
	\begin{align*}
	\frac{\eta h}{2}>\int_{t_0}^{t_0+h}(\partial_s\phi(s,x(s))-H_\alpha(x(s),\nabla_{\bm{x}}\phi(s,x(s))))\,\d s\ge \eta h,
	\end{align*}
	which is  a contradiction. Therefore, we conclude that
	\[\partial_t\phi(t_0,\bm{x}_0)-H_\alpha(\bm{x}_0,\nabla_{\bm{x}}\phi(t_0,\bm{x}_0))\le0.\]

	\noindent $\bullet$ (Subsolution) Similarly, let $\phi\in C^1([0,T]\times \bbr^n)$ such that $V_\alpha-\phi$ has a local maximum at $(t_0,\bm{x}_0)$. Then, for $(t,\bm{x})$ close enough to $(t_0,\bm{x}_0)$,
	\[\phi(t,\bm{x})-\phi(t_0,\bm{x}_0)\ge V_\alpha(t,\bm{x})-V_\alpha(t_0,\bm{x}_0).\]
Choose an arbitrary constant control $g(t,\bm{u}) \equiv g_2(\bm{u})$ and let $x(s)$ be a solution to \eqref{dyn2} with control $g_2$ for $t_0\le s\le t_0+h$. By the dynamic programming equation \eqref{dpp} for $V_\alpha$, 
	\[V_\alpha(t_0,\bm{x}_0)\le \int_{t_0}^{t_0+h}\left(\int_{U}r(x(s),\bm{u})g_2(\bm{u})\,\d\bm{u}-\alpha \mathrm{H}(g_2(\cdot))\right)\,\d s+V_\alpha(t_0+h,x(t_0+h)).\]
	Thus, we have
	\begin{align*}
	0&\le \int_{t_0}^{t_0+h}\left(\int_{U}r(x(s),\bm{u})g_2(\bm{u})\,\d\bm{u}-\alpha \mathrm{H}(g_2(\cdot))\right)\,\d s +V_\alpha(t_0+h,x(t_0+h))-V_\alpha(t_0,\bm{x}_0)\\
	&\le \int_{t_0}^{t_0+h}\left(\int_{U}r(x(s),\bm{u})g_2(\bm{u})\,\d\bm{u}-\alpha \mathrm{H}(g_2(\cdot))\right)\,\d s+\phi(t_0+h,x(t_0+h))-\phi(t_0,\bm{x}_0)\\
	&=\int_{t_0}^{t_0+h}\left(\int_{U}r(x(s),\bm{u})g_2(\bm{u})\,\d\bm{u}-\alpha \mathrm{H}(g_2(\cdot))+\frac{\d}{\d s}\phi(s,x(s))\right)\,\d s\\
	&=\int_t^{t+h}\Bigg(\int_{U}r(x(s),\bm{u})g_2(\bm{u})\,\d\bm{u}-\alpha \mathrm{H}(g_2(\cdot))+\partial_s \phi(s,x(s))\\
	&\hspace{3cm}+\nabla_x\phi(s,x(s))\cdot \left(\int_{U} f(x(s),\bm{u})g_2(\bm{u})\,\d\bm{u}\right)\Bigg)\,\d s.
	\end{align*}
Dividing both sides of inequality by $h$ and letting $h\to0$, we obtain
	\begin{align*}
	0&\le \partial_t \phi(t_0,\bm{x}_0)+\int_{U}(\nabla_{\bm{x}}\phi(t_0,\bm{x}_0)\cdot f(\bm{x}_0,\bm{u})+r(\bm{x}_0,\bm{u}))g_2(\bm{u})\,\d\bm{u} -\alpha \mathrm{H}(g_2(\cdot)).
	\end{align*}
Taking the infimum of both sides with respect to $g_2$ yields
	\[0\le \partial_t\phi(t_0,\bm{x}_0)-H_\alpha(\bm{x},\nabla_{\bm{x}}\phi(t_0,\bm{x}_0))\] 
	by Lemma \ref{L3.2}.
	Therefore, the value function $V_\alpha$ satisfies the two conditions in the definition of viscosity solutions, and it also satisfies the terminal condition. 
This suggests that the value function $V_\alpha$ is a viscosity solution of the soft HJB equation.

By the regularity of the soft Hamiltonian $H_\alpha(\bm{x},\bm{p})$ in Proposition \ref{P3.2}, the uniqueness of the viscosity solution directly follows from the comparison principle of HJB equations (for example, \cite[Theorem 3.15, Section III]{Bardi97}). 
\end{proof}

 Finally, we show that the value function $V_\alpha$ is also a viscosity supersolution of the HJB equation that has the opposite sign compared to \eqref{HJB_soft}. The following proposition will be used in the next subsection, where we present the optimality condition in Proposition \ref{P3.3}.
	
	\begin{proposition}\label{prop:bilateral}
		Suppose that Assumption \ref{assumption} holds and  the control set $U$ is compact. Then, the value function $V_\alpha$ is a viscosity supersolution of the following HJB equation:
		\begin{equation}\label{neg_HJB_soft}
		-\partial_t V_\alpha +H_\alpha(\bm{x},\nabla_{\bm{x}}V_\alpha)=0.
		\end{equation}
	\end{proposition}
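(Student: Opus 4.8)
Unpacking the statement in the sign convention of Definition~\ref{D3.1}, what must be shown is: for every $\phi\in C^1([0,T]\times\bbr^n)$ such that $V_\alpha-\phi$ has a local minimum at an interior point $(t_0,\bm{x}_0)\in(0,T)\times\bbr^n$,
\[
-\partial_t\phi(t_0,\bm{x}_0)+H_\alpha(\bm{x}_0,\nabla_{\bm{x}}\phi(t_0,\bm{x}_0))\le 0,
\qquad\text{equivalently}\qquad
\partial_t\phi(t_0,\bm{x}_0)-H_\alpha(\bm{x}_0,\nabla_{\bm{x}}\phi(t_0,\bm{x}_0))\ge 0 .
\]
The plan is to reproduce the supersolution part of the proof of Theorem~\ref{thm:viscosity}, but run \emph{backward} in time --- which is precisely what the reversed time derivative in \eqref{neg_HJB_soft} encodes. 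I would argue by contradiction: if the displayed inequality fails, then by continuity of $\partial_t\phi$, $\nabla_{\bm{x}}\phi$ and of $H_\alpha$ (Proposition~\ref{P3.2}) there are $\eta>0$ and a neighborhood $\mathcal{N}$ of $(t_0,\bm{x}_0)$ with $\partial_t\phi(t,\bm{x})-H_\alpha(\bm{x},\nabla_{\bm{x}}\phi(t,\bm{x}))<-\eta$ on $\mathcal{N}$, and there is $\delta>0$ such that $V_\alpha(t,\bm{x})-V_\alpha(t_0,\bm{x}_0)\ge\phi(t,\bm{x})-\phi(t_0,\bm{x}_0)$ whenever $|(t,\bm{x})-(t_0,\bm{x}_0)|<\delta$.

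The key step is to propagate the state \emph{backward} from $(t_0,\bm{x}_0)$ using the Boltzmann feedback of Lemma~\ref{L3.2} associated with $\phi$: let $x(\cdot)$ solve
\[
\dot{x}(s)=\int_U f(x(s),\bm{u})\,g_\alpha^*\big(x(s),\nabla_{\bm{x}}\phi(s,x(s)),\bm{u}\big)\,\d\bm{u},\qquad x(t_0)=\bm{x}_0,
\]
on $[t_0-h,t_0]$; the right-hand side is continuous, so a solution exists for small $h>0$ by Peano's theorem (uniqueness is not needed), and by Lemma~\ref{stability} together with the compactness of $U$ the whole arc $(s,x(s))$ stays inside $\mathcal{N}$ and within distance $\delta$ of $(t_0,\bm{x}_0)$ once $h$ is small. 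Applying the dynamic programming equality of Lemma~\ref{L3.1} at time $t_0-h$ with this control gives
\[
V_\alpha(t_0-h,x(t_0-h))\le\int_{t_0-h}^{t_0}\Big(\int_U r(x(s),\bm{u})g_\alpha^*\,\d\bm{u}-\alpha\mathrm{H}(g_\alpha^*(s,\cdot))\Big)\d s+V_\alpha(t_0,\bm{x}_0),
\]
while the local-minimum property gives $V_\alpha(t_0-h,x(t_0-h))-V_\alpha(t_0,\bm{x}_0)\ge\phi(t_0-h,x(t_0-h))-\phi(t_0,\bm{x}_0)$. These two bounds squeeze $V_\alpha(t_0-h,x(t_0-h))$ from both sides, so $\phi(t_0-h,x(t_0-h))-\phi(t_0,\bm{x}_0)\le\int_{t_0-h}^{t_0}(\int_U r g_\alpha^*\,\d\bm{u}-\alpha\mathrm{H}(g_\alpha^*(s,\cdot)))\d s$; writing $\phi(t_0-h,x(t_0-h))-\phi(t_0,\bm{x}_0)=-\int_{t_0-h}^{t_0}\frac{\d}{\d s}\phi(s,x(s))\,\d s$ and rearranging yields
\[
0\le\int_{t_0-h}^{t_0}\Big(\partial_s\phi(s,x(s))+\int_U\big(\nabla_{\bm{x}}\phi(s,x(s))\cdot f(x(s),\bm{u})+r(x(s),\bm{u})\big)g_\alpha^*\,\d\bm{u}-\alpha\mathrm{H}(g_\alpha^*(s,\cdot))\Big)\d s .
\]
By Lemma~\ref{L3.2} the inner expression equals $\partial_s\phi(s,x(s))-H_\alpha(x(s),\nabla_{\bm{x}}\phi(s,x(s)))$ \emph{exactly}, which is $<-\eta$ on the arc, so the right-hand side is $<-\eta h<0$ --- a contradiction. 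This establishes the supersolution inequality, and the terminal condition $V_\alpha(T,\cdot)=q$ holds by the very definition of $V_\alpha$.

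I do not anticipate a substantial obstacle: the whole argument is the forward supersolution proof of Theorem~\ref{thm:viscosity} with the time orientation reversed and with the (sub)optimal control taken to be the \emph{exact} Boltzmann density, so that the bracket in the last display collapses to $\partial_s\phi-H_\alpha$ instead of merely dominating it. The only point requiring genuine care is the well-posedness and confinement of the backward feedback arc, and here it is essential that the argument does not need $f$ or $r$ to be differentiable in $\bm{x}$ --- continuity of the feedback vector field (inherited from Assumption~\ref{assumption}) is enough for Peano existence, and Lemma~\ref{stability} controls the arc uniformly in $h$. (One could equally well fix the arc first and use the associated open-loop control, which is the same object.)
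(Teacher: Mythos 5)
Your proposal is correct, and it rests on the same three pillars as the paper's own proof: a trajectory run backward through $(t_0,\bm{x}_0)$ on $[t_0-h,t_0]$, the easy (``any admissible control is an upper bound'') direction of the dynamic programming principle evaluated at $(t_0-h,x(t_0-h))$, and the local-minimum inequality for $V_\alpha-\phi$, with Lemma~\ref{L3.2} converting the control term into $-H_\alpha$. The execution differs, though, in two ways. The paper argues directly: it fixes an \emph{arbitrary} control $g$, divides the resulting inequality by $h$, lets $h\to 0$ to obtain $\partial_t\phi+\int_U(\nabla_{\bm{x}}\phi\cdot f+r)\,g\,\d\bm{u}-\alpha\mathrm{H}(g)\ge 0$, and only then infimizes over $g$ via Lemma~\ref{L3.2}; no contradiction and no feedback construction are needed, but the passage $h\to0$ implicitly requires the control to be (say) constant in time or $t_0$ to be a Lebesgue point. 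You instead argue by contradiction on an $\eta$-neighborhood and plug in the \emph{exact} Boltzmann feedback, so the integrand collapses pointwise to $\partial_s\phi-H_\alpha<-\eta$ and no difference-quotient limit is taken — closer in spirit to the paper's supersolution argument in Theorem~\ref{thm:viscosity}, except that you replace the near-optimal control $g_1$ there by the minimizer from Lemma~\ref{L3.2}. The price of your route is the extra well-posedness bookkeeping you correctly flag: Peano existence for the feedback arc (continuity of $(s,\bm{x})\mapsto\int_U f\,g_\alpha^*\,\d\bm{u}$, no uniqueness needed), confinement via Lemma~\ref{stability}, and the (minor, but worth stating) check that the induced open-loop density is admissible in $\mathcal{G}$, e.g.\ by extending it by the uniform density outside $[t_0-h,t_0]$; one should also note that applying Lemma~\ref{L3.1} to this open-loop control along the backward arc is legitimate because the arc is itself a solution of \eqref{dyn2} from $x(t_0-h)$ (forward uniqueness, via Lemma~\ref{stability}, makes it \emph{the} solution). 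With those remarks in place your argument is complete and yields exactly the inequality $\partial_t\phi(t_0,\bm{x}_0)-H_\alpha(\bm{x}_0,\nabla_{\bm{x}}\phi(t_0,\bm{x}_0))\ge 0$ that the paper derives.
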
 

\begin{proof}
	 Let $\phi\in C^1([0,T]\times \bbr^n)$ such that $V_\alpha-\phi$ has a local minimum at $(t_0,\bm{x}_0)\in(0,T)\times \bbr^n$, and fix an arbitrary $g\in \mathcal{G}$. Let $x(s)$ be a solution to \eqref{dyn2} with control $g$ and $x(t_0)=\bm{x}_0$. Then, by the dynamic programming principle, we have
	\begin{equation}\nonumber
	V_\alpha(t_0-h,x(t_0-h))\le V_\alpha(t_0,\bm{x}_0)+\int_{t_0-h}^{t_0} \left(\int_U r(x(s),\bm{u})g(s,\bm{u})\d \bm{u} -\alpha \mathrm{H}(g(s,\cdot))\right)\d s.
	\end{equation} 
We can then deduce that
	\begin{align*}
	\phi(t_0,\bm{x}_0)-\phi(t_0-h,x(t_0-h))&\ge V_\alpha(t_0,\bm{x}_0)-V_\alpha(t_0-h,x(t_0-h))\\
	&\ge -\int_{t_0-h}^{t_0}\left(\int_U r(x(s),\bm{u})g(s,\bm{u})\d \bm{u} -\alpha \mathrm{H}(g(s,\cdot))\right)\d s.
	\end{align*}
Dividing both sides by $h$ and letting $h\to0$, we have
	\[\partial_t \phi(t_0,\bm{x}_0) +\nabla_{\bm{x}}\phi(t_0,\bm{x}_0)\cdot \int_U f(\bm{x}_0,\bm{u})g(t_0,\bm{u})\d \bm{u}+\int_U r(\bm{x}_0,\bm{u})g(t_0,\bm{u})\d \bm{u} -\alpha \mathrm{H}(g(t_0,\cdot))\ge0.\]
Minimizing both sides with respect to $g(t_0,\cdot)$ and using Lemma \ref{L3.2} again, we conclude that
	\[\partial_t \phi(t_0,\bm{x}_0) - H_\alpha(\bm{x}_0,\nabla_{\bm{x}}\phi(t_0,\bm{x}_0))\ge 0,\]
	which implies that $V_\alpha$ is a supersolution of \eqref{neg_HJB_soft}.
\end{proof}

\subsection{Conditions for Optimality and Optimal Control Synthesis}\label{sec:opt_con}

We now provide necessary and sufficient conditions for the optimality of control $g\in\mathcal{G}$ in terms of the generalized derivatives of  $V_\alpha$ as in \cite{Bardi97}. For any given $\bm{x}$ and $g\in\mathcal{G}$, we let
\begin{equation}\label{eta}
\eta(t) := V_\alpha(t,x(t)) +\int_0^t \left(\int_Ur(x(s),\bm{u}) g(s,\bm{u})\d\bm{u} -\alpha \mathrm{H}(g(s,\cdot))\right)\d s,
\end{equation}
where $x(t)$ is a solution to \eqref{dyn2} governed by control $g$ with initial data $x(0)=\bm{x}$. Then, it follows from   Lemma \ref{L3.1} that for any $h>0$,
\begin{align*}
	\eta(t+h) -\eta(t) &= \int_t^{t+h}\left(\int_U r(x(s),\bm{u})g(s,\bm{u})\d\bm{u}-\alpha \mathrm{H}(g(s,\cdot))\right)\d s\\
	&\quad+V_\alpha(t+h,x(t+h))-V_\alpha(t,x(t))\ge 0.
\end{align*} 
Therefore, the map $t\mapsto \eta(t)$ is   non-decreasing and  it is constant if and only if  $g\in\mathcal{G}$ is optimal. Suppose for a moment that $V_\alpha$ is continuously differentiable. Then, $\eta$ is constant if and only if
\[\frac{\d\eta}{\d t} =\partial_t V_\alpha(t,x(t))+\nabla_{\bm{x}}V_\alpha (t,x(t))\cdot \int_U f(x(t),\bm{u})g(t,\bm{u})\d\bm{u} +\int_U r(x(t),\bm{u})g(t,\bm{u})\d\bm{u}-\alpha \mathrm{H}(g(t,\cdot))=0.\] 
By the HJB equation \eqref{HJB_soft}, we have
\begin{align*}
\nabla_{\bm{x}}&V_\alpha(t,x(t))\cdot\int_U f(x(t),\bm{u})g(t,\bm{u})\d\bm{u} +\int_U r(x(t),\bm{u})g(t,\bm{u})\d\bm{u}-\alpha \mathrm{H}(g(t,\cdot))\\
&=-H_\alpha(x(t),\nabla_{\bm{x}}V_\alpha(t,x(t))) = -\alpha\log\int_U \exp\left(-\frac{1}{\alpha} \nabla_{\bm{x}}V_\alpha(t,x(t))\cdot f(x(t),\bm{u})+r(x(t),\bm{u})\right)\d \bm{u}.
\end{align*}
Therefore, it follows from Lemma \ref{L3.2} that the optimal control $g$ should be given as
\[g(t,\bm{u}) = \frac{\exp\left(-\frac{1}{\alpha} \nabla_{\bm{x}}V_\alpha(t,x(t))\cdot f(x(t),\bm{u})+r(x(t),\bm{u})\right)}{\int_U\exp\left(-\frac{1}{\alpha} \nabla_{\bm{x}}V_\alpha(t,x(t))\cdot f(x(t),\bm{u})+r(x(t),\bm{u})\right)\d\bm{u}}.\]
However, since the value function is not continuously differentiable in general, we need the following generalized notion of derivatives to characterize the optimality condition. 

\begin{definition}[\cite{Bardi97}]
	Let $V:[0,T]\times\bbr^d\to\bbr$ be a continuous function. Then, the superdifferential $D^+V$ and subdifferential $D^-V$ at $(t_0,\bm{x}_0)$ are defined as
	\begin{align*}
	&D^+V(t_0,\bm{x}_0)\\
	&:=\left\{(\bm{p}_t,\bm{p}_x)\in \bbr^+\times\bbr^n~\Bigg|~ \limsup_{(t,\bm{x})\to(t_0,\bm{x}_0)}\frac{V(t,\bm{x})-V(t_0,\bm{x}_0)-(\bm{p}_t(t-t_0)+\bm{p}_x\cdot(\bm{x}-\bm{x}_0))}{(|t-t_0|+|\bm{x}-\bm{x}_0|)}\le0\right\},
	\end{align*}
	and
	\begin{align*}
	&D^-V(t_0,\bm{x}_0)\\
	&:=\left\{(\bm{p}_t,\bm{p}_x)\in \bbr^+\times\bbr^n~\Bigg|~ \liminf_{(t,\bm{x})\to(t_0,\bm{x}_0)}\frac{V(t,\bm{x})-V(t_0,\bm{x}_0)-(\bm{p}_t(t-t_0)+\bm{p}_x\cdot(x-\bm{x}_0))}{(|t-t_0|+|\bm{x}-\bm{x}_0|)}\ge0\right\}.
	\end{align*}
	Moreover, the (generalized) Dini directional derivatives $\partial^\pm V$ at $(t_0,\bm{x}_0)$ with the direction $(s_0,\bm{y}_0)$ are defined as
	\begin{align*}
		&\partial^+V(t_0,\bm{x}_0;s_0,\bm{y}_0) :=\limsup_{\stackrel{\varepsilon\to0^+}{(s,\bm{y})\to(s_0,\bm{y}_0)}}\frac{V(t_0+\varepsilon s,\bm{x}_0+\varepsilon \bm{y})-V(t_0,\bm{x}_0)}{\varepsilon},\\
		&\partial^-V(t_0,\bm{x}_0;s_0,\bm{y}_0) :=\liminf_{\stackrel{\varepsilon\to0^+}{(s,\bm{y})\to(s_0,\bm{y}_0)}}\frac{V(t_0+\varepsilon s,\bm{x}_0+\varepsilon \bm{y})-V(t_0,\bm{x}_0)}{\varepsilon}.
	\end{align*} 
\end{definition}
Intuitively, the Dini derivatives provide upper and lower bounds on the infinitesimal directional change, particularly when the function $V$ is not differentiable. 
It is well-known  that $(\bm{p}_t,\bm{p}_x)\in D^+V(t_0,\bm{x}_0)$ ($D^-V(t_0,\bm{x}_0)$, respectively) if and only if there exists $\phi\in C^1(\bbr\times\bbr^n)$ such that $\nabla_{(t,\bm{x})}\phi=(\bm{p}_t,\bm{p}_x)$ and $V-\phi$ attains a local maximum (minimum, respectively) at $(t_0,\bm{x}_0)$~\cite{Bardi97}. Therefore, if $V$ is a viscosity solution of the soft HJB equation \eqref{HJB_soft}, we have
\[\bm{p}_t-H_\alpha(\bm{x}_0,\bm{p}_x)\ge 0 \quad \forall(\bm{p}_t,\bm{p}_x)\in D^+V(t_0,\bm{x}_0)\]
and 
\[\bm{p}_t-H_\alpha(\bm{x}_0,\bm{p}_x)\le 0 \quad \forall(\bm{p}_t,\bm{p}_x)\in D^-V(t_0,\bm{x}_0).\]
We record the following properties of the super-, subderivatives and Dini derivatives that will be used in this subsection.

\begin{lemma}[\cite{Bardi97}]\label{L3.3}
	Suppose the map $x:[0,T]\to\bbr^n$ is differentiable at $t$ and $V:[0,T]\times\bbr^n\to\bbr$ is a continuous function. Let $\textup{id}:[0,T]\to[0,T]$ be an identity map, i.e., $\textup{id}(t)=t$ and let the map $V\circ(\textup{id},x):[0,T]\to \bbr$ be defined by $(V\circ(\textup{id},x))(t) = V(t,x(t))$.
	Then,
	\begin{align*}
		&\partial^-(V\circ (\textup{id},x))(t;1) \ge \partial^-V(t,x(t);1,\dot{x}(t)),\\
		&\partial^+(V\circ (\textup{id},x))(t;1) \le \partial^+V(t,x(t);1, \dot{x}(t)).
	\end{align*}
	If $V$ is locally Lipschitz continuous, then both of the inequalities hold with equality.
\end{lemma}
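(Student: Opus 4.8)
The plan is to unwind the definitions of the Dini directional derivatives and to use differentiability of $x$ at $t$ to convert the one-sided difference quotient of $V\circ(\textup{id},x)$ into a difference quotient of $V$ evaluated along a family of spatial directions that converges to $\dot x(t)$. Write $W:=V\circ(\textup{id},x):[0,T]\to\bbr$, so that by definition $\partial^{-}W(t;1)$ and $\partial^{+}W(t;1)$ are, respectively, the $\liminf$ and $\limsup$ of $\frac{W(t+\varepsilon s)-W(t)}{\varepsilon}$ as $\varepsilon\to 0^{+}$ and $s\to 1$, while $\partial^{-}V(t,x(t);1,\dot x(t))$ and $\partial^{+}V(t,x(t);1,\dot x(t))$ are the $\liminf$ and $\limsup$ of $\frac{V(t+\varepsilon s,\,x(t)+\varepsilon\bm y)-V(t,x(t))}{\varepsilon}$ as $\varepsilon\to 0^{+}$, $s\to 1$ and $\bm y\to\dot x(t)$.

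For the two inequalities, I would fix $\varepsilon>0$ and $s$ near $1$, and set $\bm y_{\varepsilon,s}:=\tfrac{1}{\varepsilon}\bigl(x(t+\varepsilon s)-x(t)\bigr)$. Differentiability of $x$ at $t$, written as $x(t+h)=x(t)+h\dot x(t)+h\,\omega(h)$ with $\omega(h)\to 0$ as $h\to 0$, gives $\bm y_{\varepsilon,s}=s\dot x(t)+s\,\omega(\varepsilon s)$, so $\bm y_{\varepsilon,s}\to\dot x(t)$ in the joint limit $(\varepsilon,s)\to(0^{+},1)$. Since
\[\frac{W(t+\varepsilon s)-W(t)}{\varepsilon}=\frac{V\bigl(t+\varepsilon s,\,x(t)+\varepsilon\bm y_{\varepsilon,s}\bigr)-V(t,x(t))}{\varepsilon},\]
the left-hand quantity is exactly one of the difference quotients appearing in the definition of $\partial^{\pm}V(t,x(t);1,\dot x(t))$, but evaluated only along the sub-collection of directions $(s,\bm y_{\varepsilon,s})$. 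A $\liminf$ taken over a larger index set cannot increase and a $\limsup$ over a larger index set cannot decrease, hence $\partial^{-}W(t;1)\ge\partial^{-}V(t,x(t);1,\dot x(t))$ and $\partial^{+}W(t;1)\le\partial^{+}V(t,x(t);1,\dot x(t))$, which are the asserted inequalities.

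To upgrade these to equalities when $V$ is locally Lipschitz near $(t,x(t))$ with constant $K$, I would argue in the opposite direction: take an arbitrary direction $(s,\bm y)\to(1,\dot x(t))$ and compare $V(t+\varepsilon s,\,x(t)+\varepsilon\bm y)$ with $W(t+\varepsilon s)=V\bigl(t+\varepsilon s,\,x(t+\varepsilon s)\bigr)$. Because $x(t+\varepsilon s)=x(t)+\varepsilon s\dot x(t)+\varepsilon s\,\omega(\varepsilon s)$ and $\bm y-s\dot x(t)\to 0$, one gets $\bigl|x(t)+\varepsilon\bm y-x(t+\varepsilon s)\bigr|=\varepsilon\cdot o(1)$ in the joint limit, so the Lipschitz bound yields
\[\left|\frac{V(t+\varepsilon s,\,x(t)+\varepsilon\bm y)-V(t,x(t))}{\varepsilon}-\frac{W(t+\varepsilon s)-W(t)}{\varepsilon}\right|\le\frac{K}{\varepsilon}\bigl|x(t)+\varepsilon\bm y-x(t+\varepsilon s)\bigr|=o(1).\]
Taking the $\liminf$ over $(\varepsilon,s,\bm y)$ of the left-hand side gives $\partial^{-}V(t,x(t);1,\dot x(t))$; on the right-hand side, superadditivity of $\liminf$ and the fact that the error term is $o(1)$ give a lower bound by the $\liminf$ of $\frac{W(t+\varepsilon s)-W(t)}{\varepsilon}$, which, being independent of $\bm y$, equals $\partial^{-}W(t;1)$. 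Hence $\partial^{-}V(t,x(t);1,\dot x(t))\ge\partial^{-}W(t;1)$, and combined with the reverse inequality from the previous paragraph we obtain equality; the $\partial^{+}$ case is identical, using subadditivity of $\limsup$ instead.

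The only delicate point is the uniformity of the remainder estimates in the joint limit $(\varepsilon,s,\bm y)\to(0^{+},1,\dot x(t))$: differentiability of $x$ at the single point $t$ must be used in the quantitative form $x(t+h)-x(t)-h\dot x(t)=h\,\omega(h)$ with $\omega$ a modulus vanishing at $0$, and one must check that replacing the time increment $h$ by $\varepsilon s$ with $s$ bounded near $1$ keeps the remainder $o(\varepsilon)$. Everything else is routine bookkeeping with one-sided limits.
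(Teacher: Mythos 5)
Your proof is correct: the paper does not prove this lemma itself but cites \cite{Bardi97}, and your argument — substituting the difference quotient $\bm{y}_{\varepsilon,s}=\tfrac1\varepsilon\bigl(x(t+\varepsilon s)-x(t)\bigr)$, which converges to $\dot x(t)$ by differentiability of $x$ at $t$, to get the one-sided inequalities, and then using the local Lipschitz bound to show the discrepancy $\bigl|x(t)+\varepsilon\bm{y}-x(t+\varepsilon s)\bigr|=\varepsilon\,o(1)$ forces equality — is essentially the standard proof given in that reference. No gaps; the uniformity point you flag at the end is handled correctly by writing $x(t+h)=x(t)+h\dot x(t)+h\,\omega(h)$ with $h=\varepsilon s$ and $s$ bounded near $1$.
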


\begin{lemma}[\cite{Bardi97}]\label{L3.4}
	Let $V:[0,T]\times \bbr^n\to\bbr$ be a continuous function. Then,
	\begin{align*}
		& D^-V(t_0,x_0) = \left\{(\bm{p}_t,\bm{p}_x)~:~\bm{p}_ts_0+\bm{p}_x\cdot\bm{y}_0\le \partial^-V(t_0,\bm{x}_0;s_0,\bm{y}_0)\quad \forall(s_0,\bm{y}_0)\in[0,T]\times \bbr^d\right\},\\
		& D^+V(t_0,x_0) = \left\{(\bm{p}_t,\bm{p}_x)~:~\bm{p}_ts_0+\bm{p}_x\cdot\bm{y}_0\ge \partial^+V(t_0,\bm{x}_0;s_0,\bm{y}_0)\quad \forall(s_0,\bm{y}_0)\in[0,T]\times\bbr^d\right\}.
	\end{align*}
\end{lemma}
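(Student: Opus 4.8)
The plan is to prove the identity for the superdifferential $D^+V$ only; the one for $D^-V$ then follows by applying the $D^+$-statement to $-V$ together with the elementary identities $D^-(-V)(t_0,\bm{x}_0)=-D^+V(t_0,\bm{x}_0)$ and $\partial^-(-V)(t_0,\bm{x}_0;s_0,\bm{y}_0)=-\partial^+V(t_0,\bm{x}_0;s_0,\bm{y}_0)$, both immediate from the definitions. I would also first record that $\partial^+V(t_0,\bm{x}_0;s_0,\bm{y}_0)$ is positively $1$-homogeneous in $(s_0,\bm{y}_0)$ (substitute $\e'=\lambda\e$ in the defining $\limsup$), so that it is enough to verify the set identity for directions lying on the compact ``unit sphere'' $\Sigma:=\{(s,\bm{y}) : |s|+|\bm{y}|=1\}$.

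For the inclusion ``$\subseteq$'' I would fix $(\bm{p}_t,\bm{p}_x)\in D^+V(t_0,\bm{x}_0)$ and $(s_0,\bm{y}_0)\in\Sigma$, choose $\e_k\downarrow 0$ and $(s_k,\bm{y}_k)\to(s_0,\bm{y}_0)$ realizing the $\limsup$ that defines $\partial^+V(t_0,\bm{x}_0;s_0,\bm{y}_0)$, and set $t_k:=t_0+\e_k s_k$, $\bm{x}_k:=\bm{x}_0+\e_k\bm{y}_k$. Then $(t_k,\bm{x}_k)\to(t_0,\bm{x}_0)$ with $|t_k-t_0|+|\bm{x}_k-\bm{x}_0|=\e_k(|s_k|+|\bm{y}_k|)$ and $|s_k|+|\bm{y}_k|\to1$; inserting this sequence into the difference quotient defining $D^+V$, passing to $\limsup_k$, and rearranging yields
\[
\partial^+V(t_0,\bm{x}_0;s_0,\bm{y}_0)=\lim_{k\to\infty}\frac{V(t_k,\bm{x}_k)-V(t_0,\bm{x}_0)}{\e_k}\ \le\ \bm{p}_t s_0+\bm{p}_x\cdot\bm{y}_0 ,
\]
which is the desired inequality.

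For the reverse inclusion ``$\supseteq$'' I would argue by contradiction: suppose $(\bm{p}_t,\bm{p}_x)$ satisfies $\bm{p}_t s_0+\bm{p}_x\cdot\bm{y}_0\ge\partial^+V(t_0,\bm{x}_0;s_0,\bm{y}_0)$ for every $(s_0,\bm{y}_0)\in\Sigma$, yet $(\bm{p}_t,\bm{p}_x)\notin D^+V(t_0,\bm{x}_0)$. Then there are $\theta>0$ and points $(t_k,\bm{x}_k)\to(t_0,\bm{x}_0)$, distinct from $(t_0,\bm{x}_0)$, along which the difference quotient defining $D^+V$ stays $\ge\theta$. Put $\e_k:=|t_k-t_0|+|\bm{x}_k-\bm{x}_0|\downarrow0$ and $(s_k,\bm{y}_k):=\e_k^{-1}(t_k-t_0,\,\bm{x}_k-\bm{x}_0)\in\Sigma$; by compactness of $\Sigma$ pass to a subsequence with $(s_k,\bm{y}_k)\to(s_0,\bm{y}_0)\in\Sigma$. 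Along it the lower bound rearranges to $\tfrac{V(t_k,\bm{x}_k)-V(t_0,\bm{x}_0)}{\e_k}\ge\theta+\bm{p}_t s_k+\bm{p}_x\cdot\bm{y}_k$, and taking $\limsup_k$ gives $\partial^+V(t_0,\bm{x}_0;s_0,\bm{y}_0)\ge\theta+\bm{p}_t s_0+\bm{p}_x\cdot\bm{y}_0$, contradicting the hypothesis. Hence $(\bm{p}_t,\bm{p}_x)\in D^+V(t_0,\bm{x}_0)$, and the proof is complete.

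I expect no conceptual difficulty here: the whole content is the dictionary between a difference quotient taken along a spatiotemporal path $(t_k,\bm{x}_k)\to(t_0,\bm{x}_0)$ and one taken in a fixed direction. The single step that genuinely needs care is the compactness extraction in ``$\supseteq$'' — one must normalize the increments by their size and use compactness of $\{|s|+|\bm{y}|=1\}$ to isolate one limiting ``bad direction'' contradicting the hypothesis — together with the bookkeeping of $\limsup$'s in ``$\subseteq$'' (one must check that the sequence realizing $\partial^+V$ is admissible in the $\limsup$ defining $D^+V$, which holds because $(t_k,\bm{x}_k)\to(t_0,\bm{x}_0)$). A minor caveat is the degenerate direction $(s_0,\bm{y}_0)=0$, excluded by restricting to $\Sigma$ (and harmless once $V$ is locally Lipschitz, as is $V_\alpha$, since then $\partial^+V(t_0,\bm{x}_0;0,0)=0$). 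This reproduces the argument of~\cite{Bardi97}, adapted to our terminal-value, space--time HJB setting.
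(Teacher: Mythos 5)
The paper does not actually prove this lemma: it is quoted from \cite{Bardi97} and used as a black box, so there is no in-paper argument to compare against. Your proof is a correct, self-contained version of the standard argument: the reduction of the $D^-$ identity to the $D^+$ identity via $-V$, the positive $1$-homogeneity of the Dini derivative in the direction, the realizing-sequence computation for the inclusion $D^+V\subseteq\{\cdots\}$, and the normalize-the-increments/compactness extraction for the reverse inclusion are all sound. Your caveat about the degenerate direction is in fact unnecessary: whenever $D^+V(t_0,\bm{x}_0)\neq\emptyset$, the superdifferential inequality already forces $\partial^+V(t_0,\bm{x}_0;0,0)\le 0$ for merely continuous $V$, so the zero direction never disturbs the set identity.

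One point you should state explicitly rather than pass over: in the reverse inclusion you test against every direction on $\Sigma$, including those with negative time component, whereas the lemma as printed quantifies only over $(s_0,\bm{y}_0)\in[0,T]\times\bbr^d$, i.e. $s_0\ge 0$. The stronger hypothesis is genuinely needed at interior times: for $V(t,\bm{x})=-|t-t_0|$ with $t_0\in(0,T)$, the pair $(\bm{p}_t,\bm{p}_x)=(2,0)$ satisfies $\bm{p}_t s_0+\bm{p}_x\cdot\bm{y}_0\ge \partial^+V(t_0,\bm{x}_0;s_0,\bm{y}_0)=-s_0$ for all $s_0\ge 0$, yet $(2,0)\notin D^+V(t_0,\bm{x}_0)$ because approaches with $t<t_0$ force $|\bm{p}_t|\le 1$; so the identity as literally written fails. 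This is a defect of the transcription in the paper rather than of your argument: in \cite{Bardi97} the directions range over the whole space, and the paper itself invokes the $D^+$ part of Lemma~\ref{L3.4} with the direction $(-1,-\dot{x}(t))$ in the proof of Proposition~\ref{P3.3}. So state the characterization with $(s_0,\bm{y}_0)\in\bbr\times\bbr^n$ (with the obvious one-sided modification at $t_0\in\{0,T\}$, where your realizing sequences and limit directions must stay admissible for the domain $[0,T]\times\bbr^n$); with that reading, your proof is exactly the intended argument.
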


We now present the necessary conditions for optimality. The following proposition is a variation of the necessary conditions presented in \cite[Theorem 3.37, Section 3]{Bardi97} for the standard optimal control problems.

\begin{proposition}\label{P3.3}
	Suppose that Assumption \ref{assumption} holds and  the control set $U$ is compact. We further assume that  control $g \in \mathcal{G}$ is an optimal solution to the maximum entropy control problem~\eqref{opt_me} with initial data $\bm{x}$, and let $x(t)$ be the system trajectory governed by control $g$ with $x(0) = \bm{x}$. Then, we have
	\begin{enumerate}
		\item for a.e. $0\le t\le T$,
		\[\partial^-V_\alpha(t,x(t);1, \dot{x}(t))+\int_U r(x(t),\bm{u})g(t,\bm{u})\d\bm{u} -\alpha \mathrm{H}(g(t,\cdot))\le0;\]
		\item for a.e. $0\le t\le T$,
		\[-\partial^+V_\alpha(t,x(t);-1,- \dot{x}(t))+\int_U r(x(t),\bm{u})g(t,\bm{u})\d\bm{u} -\alpha \mathrm{H}(g(t,\cdot))\le0;\]
		\item for a.e. $0\le t\le T$ and all $(\bm{p}_t,\bm{p}_x)\in D^{\pm}V_\alpha(t,x(t))$,
		\[\bm{p}_t+\int_U \left(\bm{p}_x\cdot f(x(t),\bm{u})+r(x(t),\bm{u})\right)g(t,\bm{u})\d\bm{u}  -\alpha \mathrm{H}(g(t,\cdot))= 0;\]
		\item for a.e. $0\le t\le T$ and all $(\bm{p}_t,\bm{p}_x)\in D^{\pm}V_\alpha(t,x(t))$,
		\begin{equation}\label{optimal_synthesis}
		g(t,\bm{u}) = \frac{\exp\left(-\frac{1}{\alpha}(\bm{p}_x\cdot f(x(t),\bm{u})+r(x(t),\bm{u}))\right)}{\int_U \exp\left(-\frac{1}{\alpha}(\bm{p}_x\cdot f(x(t),\bm{u})+r(x(t),\bm{u}))\right)\d \bm{u}}. 
		\end{equation}
	\end{enumerate}
\end{proposition}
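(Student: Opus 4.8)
The plan is to exploit the monotonicity of the auxiliary function $\eta$ in \eqref{eta}. By Lemma~\ref{L3.1}, $t\mapsto\eta(t)$ is nondecreasing and is constant if and only if $g$ is optimal, so here $\eta\equiv\eta(0)$. Writing $\ell(s):=\int_U r(x(s),\bm{u})g(s,\bm{u})\,\d\bm{u}-\alpha\,\mathrm{H}(g(s,\cdot))$, which lies in $L^1([0,T])$ by the definition of $\mathcal{G}$, we obtain from \eqref{eta} that $W(t):=V_\alpha(t,x(t))=\eta(0)-\int_0^t\ell(s)\,\d s$ is absolutely continuous with $W'(t)=-\ell(t)$ at every Lebesgue point of $\ell$. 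Since $x$ is a Carath\'eodory solution of \eqref{dyn2}, it is also differentiable for a.e.\ $t$ with $\dot{x}(t)=\int_U f(x(t),\bm{u})g(t,\bm{u})\,\d\bm{u}$, whence $\bm{p}_x\cdot\dot{x}(t)=\int_U\bm{p}_x\cdot f(x(t),\bm{u})g(t,\bm{u})\,\d\bm{u}$ for every $\bm{p}_x$. From now on $t$ denotes any time at which both facts hold, which is a.e.\ $t\in[0,T]$.

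For parts~(1) and~(2), I would compare difference quotients of $V_\alpha$ along the trajectory with those of the scalar function $W$. The pair $(t+\e,x(t+\e))$ approaches $(t,x(t))$ along the direction $(1,\dot{x}(t))$ as $\e\to0^+$, so by Lemma~\ref{L3.3} (equivalently, directly from the definition of the Dini derivative),
\[
\partial^-V_\alpha(t,x(t);1,\dot{x}(t))\le\liminf_{\e\to0^+}\frac{V_\alpha(t+\e,x(t+\e))-V_\alpha(t,x(t))}{\e}=W'(t)=-\ell(t),
\]
which rearranges to part~(1); likewise $(t-\e,x(t-\e))$ approaches $(t,x(t))$ along $(-1,-\dot{x}(t))$, so
\[
\partial^+V_\alpha(t,x(t);-1,-\dot{x}(t))\ge\limsup_{\e\to0^+}\frac{V_\alpha(t-\e,x(t-\e))-V_\alpha(t,x(t))}{\e}=-W'(t)=\ell(t),
\]
which is part~(2).

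For part~(3), fix $(\bm{p}_t,\bm{p}_x)\in D^{\pm}V_\alpha(t,x(t))$. The upper bound comes from Lemma~\ref{L3.4}: testing with the direction $(1,\dot{x}(t))$ and using part~(1) when $(\bm{p}_t,\bm{p}_x)\in D^-V_\alpha$, and testing with $(-1,-\dot{x}(t))$ and using part~(2) when $(\bm{p}_t,\bm{p}_x)\in D^+V_\alpha$, one obtains in both cases $\bm{p}_t+\int_U(\bm{p}_x\cdot f(x(t),\bm{u})+r(x(t),\bm{u}))g(t,\bm{u})\,\d\bm{u}-\alpha\,\mathrm{H}(g(t,\cdot))\le0$, after using $\bm{p}_x\cdot\dot{x}(t)=\int_U\bm{p}_x\cdot f(x(t),\bm{u})g(t,\bm{u})\,\d\bm{u}$. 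For the matching lower bound I would use the viscosity solution property. If $(\bm{p}_t,\bm{p}_x)\in D^+V_\alpha(t,x(t))$, the subsolution inequality in Definition~\ref{D3.1} gives $\bm{p}_t\ge H_\alpha(x(t),\bm{p}_x)$; if $(\bm{p}_t,\bm{p}_x)\in D^-V_\alpha(t,x(t))$, the same inequality $\bm{p}_t\ge H_\alpha(x(t),\bm{p}_x)$ follows from Proposition~\ref{prop:bilateral}. Adding $\bm{p}_t\ge H_\alpha(x(t),\bm{p}_x)$ to the bound $\int_U(\bm{p}_x\cdot f(x(t),\bm{u})+r(x(t),\bm{u}))g(t,\bm{u})\,\d\bm{u}-\alpha\,\mathrm{H}(g(t,\cdot))\ge-H_\alpha(x(t),\bm{p}_x)$ from Lemma~\ref{L3.2} gives the reverse inequality, hence the equality claimed in part~(3).

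Finally, part~(4) follows by combining the two estimates of part~(3): on the one hand $\int_U(\bm{p}_x\cdot f+r)g(t,\cdot)\,\d\bm{u}-\alpha\,\mathrm{H}(g(t,\cdot))=-\bm{p}_t$, and on the other hand the one-sided bounds force $\bm{p}_t=H_\alpha(x(t),\bm{p}_x)$, so $g(t,\cdot)$ attains the infimum in the minimization problem of Lemma~\ref{L3.2} taken at the state $x(t)$ with $\bm{p}=\bm{p}_x$; the uniqueness of the minimizer there then identifies $g(t,\bm{u})$ with the Boltzmann density in \eqref{optimal_synthesis}. The main obstacle throughout is the non-smoothness of $V_\alpha$, which forces the chain rule to be routed through Dini derivatives and the sub-/superdifferentials; in particular the $D^-$ branch of part~(3) genuinely requires the ``bilateral'' supersolution statement of Proposition~\ref{prop:bilateral}, since the ordinary supersolution property only yields $\bm{p}_t\le H_\alpha(x(t),\bm{p}_x)$ and would not give the lower bound needed for parts~(3) and~(4).
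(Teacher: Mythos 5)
Your proof is correct and follows essentially the same route as the paper's: constancy of $\eta$ under optimality combined with the Dini-derivative comparisons of Lemma~\ref{L3.3} for parts (1)--(2), Lemma~\ref{L3.4} for the one-sided bound on sub/superdifferential elements, and the viscosity subsolution property together with Proposition~\ref{prop:bilateral} and Lemma~\ref{L3.2} (including uniqueness of its minimizer) for the equality in (3) and the Boltzmann form in (4). Your explicit separation of the ``$\le 0$'' and ``$\ge 0$'' halves of part (3) is exactly what the paper's estimate \eqref{est-p} encodes, so there is no substantive difference.
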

\begin{proof}
	Since $g$ is optimal, $\eta$ in \eqref{eta} is a constant function. We note that $\eta$ can be represented as
	\[\eta(t) = (V_\alpha\circ(\textup{id},x))(t) +\int_0^t\left(\int_U r(x(s),\bm{u})g(s,\bm{u})\d\bm{u}-\alpha \mathrm{H}(g(s,\cdot))\right)\d s.\] By Lemma \ref{L3.3}, we obtain
	\begin{align*}
	0 &= \frac{\d\eta(t)}{\d t} = \partial^-(V_\alpha\circ(\textup{id},x))(t;1)+\int_U r(x(t),\bm{u})g(t,\bm{u})\d\bm{u}-\alpha \mathrm{H}(g(t,\cdot))\\
	&\ge \partial^-V_\alpha(s,x(t);1, \dot{x}(t))+\int_U r(x(t),\bm{u})g(t,\bm{u})\d\bm{u}-\alpha \mathrm{H}(g(t,\cdot)),
	\end{align*}
	which implies the first condition to hold. 
	Using the fact that 
	\[\frac{\d\eta(t)}{\d t} = -\partial^+(V_\alpha\circ(\textup{id},x))(t;-1) +\int_U r(x(t),\bm{u})g(t,\bm{u})\d\bm{u}-\alpha \mathrm{H}(g(t,\cdot)),\]
	we deduce that the second condition holds. 
	
We now use Lemma \ref{L3.4} to obtain
	\[\partial^-V_\alpha(t,x(t);1, \dot{x}(t)) \ge \bm{p}_t + \bm{p}_x\cdot \dot{x}(t)=\bm{p}_t+\int_U \bm{p}_x\cdot f(x(t),\bm{u})g(t,\bm{u})\d \bm{u} \]
	for all $(\bm{p}_t,\bm{p}_x)\in D^-V_\alpha(t,x(t))$. Together with the first condition, we deduce that for all $(\bm{p}_t,\bm{p}_x)\in D^-V_\alpha(t,x(t))$,
	\[\bm{p}_t+\int_U \left(\bm{p}_x\cdot f(x(t),\bm{u})+r(x(t),\bm{u})\right)g(t,\bm{u})\d\bm{u}  -\alpha \mathrm{H}(g(t,\cdot))= 0.\]
	Similarly, the assertion for $D^+V_\alpha$ in Lemma \ref{L3.4} and the second condition  impliy that for all $(\bm{p}_t,\bm{p}_x)\in D^+V_\alpha(t,x(t))$,
	\[\bm{p}_t+\int_U \left(\bm{p}_x\cdot f(x(t),\bm{u})+r(x(t),\bm{u})\right)g(t,\bm{u})\d\bm{u}  -\alpha \mathrm{H}(g(t,\cdot))= 0.\]
	Therefore, the third condition holds. 
	
Lastly, since $V_\alpha$ is a viscosity solution of \eqref{HJB_soft}, we have for all $(\bm{p}_t,\bm{p}_x)\in D^+V_\alpha(t,x(t))$, 
	\begin{equation}\label{est-p}
	0\le \bm{p}_t-H_{\alpha}(x(t),\bm{p}_x)\le\bm{p}_t+\int_U \left(\bm{p}_x\cdot f(x(t),\bm{u})+r(x(t),\bm{u})\right)g(t,\bm{u})\d\bm{u}  -\alpha\mathrm{H}(g(t,\cdot))= 0,
	\end{equation}
	where the second inequality comes from Lemma \ref{L3.2}. Therefore, all the inequalities above should hold with equality. By Lemma \ref{L3.2}, we conclude that 
	\[g(t,\bm{u}) = \frac{\exp\left(-\frac{1}{\alpha}(\bm{p}_x\cdot f(x(t),\bm{u})+r(x(t),\bm{u}))\right)}{\int_U \exp\left(-\frac{1}{\alpha}(\bm{p}_x\cdot f(x(t),\bm{u})+r(x(t),\bm{u}))\right)\d \bm{u}}.\]
	For $(\bm{p}_t,\bm{p}_x)\in D^-V_\alpha(t,x(t))$, we note that the first inequality in \eqref{est-p} holds with equality by Proposition \ref{prop:bilateral}. Therefore, the same conclusion holds for $(\bm{p}_t,\bm{p}_x)\in D^-V_\alpha(t,x(t))$.
\end{proof}


 The first and second conditions   in Proposition \ref{P3.3} indicate that the sum of the infinitesimal change of $V_\alpha$ along the trajectory and the infinitesimal cost should be less than 0, which  implies the quantity $\eta$ is non-increasing when the optimal control $g$ is implied. When the set $D^\pm V_\alpha(t,x(t))$ is non-empty, the third condition implies that the quantity $\eta$ is a constant function along the trajectory. 

Unlike the standard optimal control case, we have the following improved regularity of the value function $V_\alpha$ as a useful byproduct of the necessary conditions for optimality.
\begin{corollary}\label{C3.1}
	Suppose that Assumption~\ref{assumption} holds,  the control set $U$ is compact, and condition \eqref{control} holds. 
	We further assume that the control $g$ is an optimal solution to the maximum entropy control problem~\eqref{opt_me} with initial data $\bm{x}$, and let $x(t)$ be the system trajectory governed by $g$ with $x(0) = \bm{x}$. Then, the set $D^{\pm}V_\alpha(t,x(t))$ has at most one element. 
\end{corollary}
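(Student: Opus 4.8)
The plan is to leverage the necessary optimality conditions already established in Proposition~\ref{P3.3}, together with the controllability hypothesis~\eqref{control}. Fix a time $t$ in the full-measure subset of $[0,T]$ on which all conclusions of Proposition~\ref{P3.3} hold, and suppose that $(\bm{p}_t^1,\bm{p}_x^1)$ and $(\bm{p}_t^2,\bm{p}_x^2)$ both belong to $D^+V_\alpha(t,x(t))$; the case of $D^-V_\alpha(t,x(t))$ is treated identically. By part (4) of Proposition~\ref{P3.3}, each of these covectors reproduces the \emph{same} optimal density $g(t,\cdot)$ via the Boltzmann formula~\eqref{optimal_synthesis}. Equating the two representations and cancelling the common positive factor $\exp(-\tfrac{1}{\alpha}r(x(t),\bm{u}))$ together with the ($\bm{u}$-independent) normalizing constants, I would obtain
\[
\exp\!\left(-\tfrac{1}{\alpha}(\bm{p}_x^1-\bm{p}_x^2)\cdot f(x(t),\bm{u})\right) = C \qquad \text{for a.e. } \bm{u}\in U,
\]
for some constant $C>0$. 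Since $\bm{u}\mapsto f(x(t),\bm{u})$ is continuous by Assumption~\ref{assumption}(i), this identity actually holds for every $\bm{u}\in U$, so $(\bm{p}_x^1-\bm{p}_x^2)\cdot f(x(t),\bm{u})$ is constant on $U$.

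Next I would show that this forces $\bm{p}_x^1=\bm{p}_x^2$. If not, then $f(x(t),U)$ is contained in the affine hyperplane $\{\bm{z}\in\bbr^n : (\bm{p}_x^1-\bm{p}_x^2)\cdot\bm{z} = -\alpha\log C\}$; being closed and convex, this hyperplane must also contain $\overline{\mathrm{conv}[f(x(t),U)]}$. But~\eqref{control} asserts $B(0,r)\subset\overline{\mathrm{conv}[f(x(t),U)]}$ for some $r>0$, and no affine hyperplane in $\bbr^n$ can contain a ball of positive radius — a contradiction. Hence $\bm{p}_x^1=\bm{p}_x^2$. This is essentially the strict-convexity argument given after Proposition~\ref{prop:convex}, now applied pointwise along the optimal trajectory.

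To pin down the time component as well, I would invoke the chain of inequalities~\eqref{est-p} from the proof of Proposition~\ref{P3.3}: since $V_\alpha$ is a viscosity subsolution of~\eqref{HJB_soft}, every $(\bm{p}_t,\bm{p}_x)\in D^+V_\alpha(t,x(t))$ satisfies $\bm{p}_t = H_\alpha(x(t),\bm{p}_x)$, and for $(\bm{p}_t,\bm{p}_x)\in D^-V_\alpha(t,x(t))$ the same identity holds because $V_\alpha$ is in addition a viscosity supersolution of~\eqref{neg_HJB_soft} by Proposition~\ref{prop:bilateral}. Combining this with $\bm{p}_x^1=\bm{p}_x^2$ yields $\bm{p}_t^1=H_\alpha(x(t),\bm{p}_x^1)=H_\alpha(x(t),\bm{p}_x^2)=\bm{p}_t^2$, so $D^{\pm}V_\alpha(t,x(t))$ contains at most one element.

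I do not expect a genuine obstacle here, since the substantive work is already contained in Proposition~\ref{P3.3}; the only points requiring some care are the uniform handling of $D^+$ and $D^-$ (the latter relying on Proposition~\ref{prop:bilateral} to fix the time component $\bm{p}_t$) and the upgrade of the density identity from ``for a.e.\ $\bm{u}$'' to ``for every $\bm{u}$'' via continuity of $f$ and $r$. One should also note that the conclusion is naturally obtained for almost every $t$, inheriting that qualifier from Proposition~\ref{P3.3}.
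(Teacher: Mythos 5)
Your proof is correct and follows essentially the same route as the paper: equate the Boltzmann representations from Proposition~\ref{P3.3}(4), rule out the hyperplane containment using condition~\eqref{control} to get $\bm{p}_x^1=\bm{p}_x^2$, and then fix the time component (the paper does this via condition (3) of Proposition~\ref{P3.3}, while you use the equality $\bm{p}_t=H_\alpha(x(t),\bm{p}_x)$ from~\eqref{est-p} and Proposition~\ref{prop:bilateral} — an equivalent step). Your handling of the $\bm{u}$-independent constant arising from the normalizations is in fact slightly more careful than the paper's wording, but the argument is the same.
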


\begin{proof}
	If $D^\pm V_\alpha(t,x(t))=\emptyset$, then the assertion clearly holds. 
	We now assume that there exist two elements $(\bm{p}^1_t,\bm{p}^1_x),(\bm{p}^2_t,\bm{p}^2_x)\in D^{\pm}V_\alpha (t,x(t))$. By the fourth condition in Proposition \ref{P3.3}, 
	\[g(t,\bm{u}) = \frac{\exp\left(-\frac{1}{\alpha}(\bm{p}_x\cdot f(x(t),\bm{u})+r(x(t),\bm{u}))\right)}{\int_U \exp\left(-\frac{1}{\alpha}(\bm{p}_x\cdot f(x(t),\bm{u})+r(x(t),\bm{u}))\right)\d \bm{u}}\]
	for all $(\bm{p}_t,\bm{p}_x)\in D^{\pm}V_\alpha (t,x(t))$. 
	Then, we have
	\[\bm{p}^1_x\cdot f(x(t),\bm{u}) = \bm{p}^2_x\cdot f(x(t),\bm{u})\quad \forall \bm{u}\in U.\]
By condition \eqref{control}, we conclude that $\bm{p}_x^1=\bm{p}_x^2$. Then, it follows from the third condition  in Proposition \ref{P3.3} that $\bm{p}_t^1=\bm{p}_t^2$. Therefore, $D^{\pm}V_\alpha(t,x(t))$ has at most one element.
\end{proof}

Note that the improved regularity of $V_\alpha$ is due to the explicit representation~\eqref{optimal_synthesis} of optimal control $g$. Therefore, we deduce that the maximum entropy formulation enhances the regularity of the value function. 

We now provide sufficient conditions for optimality, which are extensions of those in the standard optimal control case~\cite[Theorem 3.38, Section 3]{Bardi97}:

\begin{proposition}\label{P3.4}
	Suppose that Assumption \ref{assumption} holds and   the control set $U$ is compact. Let $x(t)$ be the system trajectory governed  by some control $g$ with $x(0) = \bm{x}$. We assume that $V_\alpha$ is locally Lipschitz in a neighborhood of $\{x(t) \mid 0\le t\le T\}$. Then, $g$ is optimal if any of the following conditions holds:
	\begin{enumerate}
		\item for a.e. $0\le t\le T$,
		\[\partial^-V_\alpha(t,x(t);1, \dot{x}(t))+\int_U r(x(t),\bm{u})g(t,\bm{u})\d\bm{u}-\alpha \mathrm{H}(g(t,\cdot))\le 0;\]
		\item for a.e. $0\le t\le T$,
		\[-\partial^+V_\alpha(t,x(t);-1,-\dot{x} (t))+\int_U r(x(t),\bm{u})g(t,\bm{u})\d\bm{u}-\alpha \mathrm{H}(g(t,\cdot))\le 0;\]
		\item for a.e. $0\le t\le T$, there exists $(\bm{p}_t,\bm{p}_x)\in D^{\pm} V(t,x(t))$ such that
		\[\bm{p}_t+\int_U \left(\bm{p}_x\cdot f(x(t),\bm{u})+r(x(t),\bm{u})\right)g(t,\bm{u})\d\bm{u}  -\alpha \mathrm{H}(g(t,\cdot)) \le 0.\]
	\end{enumerate}
\end{proposition}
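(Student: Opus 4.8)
The plan is to work with the auxiliary function $\eta$ introduced in \eqref{eta}. Recall from the discussion preceding Proposition~\ref{P3.3} that $t\mapsto\eta(t)$ is non-decreasing, that $\eta(0)=V_\alpha(0,\bm{x})$ while $\eta(T)=J^\alpha_{\bm{x},0}(g)$, and that $g$ is optimal precisely when $\eta$ is constant. It therefore suffices to prove $\eta(T)\le\eta(0)$ under any one of the three hypotheses. First I would check that $\eta$ is absolutely continuous on $[0,T]$: the spatial trajectory $\{x(t):0\le t\le T\}$ is compact, so $V_\alpha$ is Lipschitz on a fixed neighborhood of it, and Assumption~\ref{assumption}$(ii)$ together with the compactness of $U$ and the a priori bound on $x(\cdot)$ from Lemma~\ref{stability} makes $\dot x(\cdot)=\int_U f(x(\cdot),\bm{u})g(\cdot,\bm{u})\,\d\bm{u}$ essentially bounded; hence $t\mapsto V_\alpha(t,x(t))$ is Lipschitz, while the integral term in \eqref{eta} is absolutely continuous because its integrand lies in $L^1([0,T])$ by the definition of $\mathcal{G}$. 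Consequently $\eta(T)-\eta(0)=\int_0^T\eta'(t)\,\d t$, and the goal reduces to showing $\eta'(t)\le0$ for a.e.\ $t$.

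Next I would identify $\eta'(t)$ pointwise. Set $\phi:=V_\alpha\circ(\textup{id},x)$. At a.e.\ $t$ --- specifically, at every $t$ that is a point of differentiability of $\phi$ and of $x$ and a Lebesgue point of the running-cost and entropy terms --- the chain rule gives
\[
\eta'(t)=\phi'(t)+\int_U r(x(t),\bm{u})g(t,\bm{u})\,\d\bm{u}-\alpha\,\mathrm{H}(g(t,\cdot)).
\]
Because $V_\alpha$ is locally Lipschitz near the trajectory, the equality case of Lemma~\ref{L3.3}, used in the directions $(1,\dot x(t))$ and $(-1,-\dot x(t))$, gives $\phi'(t)=\partial^-V_\alpha(t,x(t);1,\dot x(t))$ and $\phi'(t)=-\partial^+V_\alpha(t,x(t);-1,-\dot x(t))$. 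Substituting the first identity shows that hypothesis~(1) forces $\eta'(t)\le0$, and substituting the second shows the same under hypothesis~(2).

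For hypothesis~(3) I would first note that, at the a.e.\ $t$ above, \emph{any} $(\bm{p}_t,\bm{p}_x)\in D^{\pm}V_\alpha(t,x(t))$ satisfies $\bm{p}_t+\bm{p}_x\cdot\dot x(t)=\phi'(t)$: for $(\bm{p}_t,\bm{p}_x)\in D^{-}V_\alpha(t,x(t))$, Lemma~\ref{L3.4} applied with direction $(1,\dot x(t))$ gives $\bm{p}_t+\bm{p}_x\cdot\dot x(t)\le\partial^-V_\alpha(t,x(t);1,\dot x(t))=\phi'(t)$, while with $(-1,-\dot x(t))$ it gives $-(\bm{p}_t+\bm{p}_x\cdot\dot x(t))\le\partial^-V_\alpha(t,x(t);-1,-\dot x(t))=-\phi'(t)$, so equality holds; the case $(\bm{p}_t,\bm{p}_x)\in D^{+}V_\alpha(t,x(t))$ is symmetric using $\partial^+$. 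Since $\bm{p}_x\cdot\dot x(t)=\int_U\bm{p}_x\cdot f(x(t),\bm{u})g(t,\bm{u})\,\d\bm{u}$, hypothesis~(3) becomes
\begin{align*}
\eta'(t)&=\phi'(t)+\int_U r(x(t),\bm{u})g(t,\bm{u})\,\d\bm{u}-\alpha\,\mathrm{H}(g(t,\cdot))\\
&=\bm{p}_t+\int_U\bigl(\bm{p}_x\cdot f(x(t),\bm{u})+r(x(t),\bm{u})\bigr)g(t,\bm{u})\,\d\bm{u}-\alpha\,\mathrm{H}(g(t,\cdot))\le0.
\end{align*}
In every case $\eta'\le0$ a.e., so $\eta(T)-\eta(0)=\int_0^T\eta'\,\d t\le0$, which together with $\eta(T)\ge\eta(0)$ yields $\eta(T)=\eta(0)$, i.e.\ $J^\alpha_{\bm{x},0}(g)=V_\alpha(0,\bm{x})$, so $g$ is optimal.

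The step I expect to be the main obstacle is the second one --- justifying the chain-rule expression for $\eta'$ at a.e.\ $t$ and, above all, upgrading the one-sided inequalities of Lemma~\ref{L3.3} to genuine equalities at points of differentiability of $\phi$. This is exactly where the local Lipschitz hypothesis on $V_\alpha$ is indispensable: without it $\phi$ need not be differentiable or even absolutely continuous, $\eta$ need not be absolutely continuous, and the directional Dini derivatives of $V_\alpha$ along the velocity direction need not coincide with those of $\phi$, so none of the three conditions could be converted into a pointwise bound on $\eta'$.
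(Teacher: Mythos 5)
Your proof is correct and follows essentially the same route as the paper's: it reduces optimality to showing that the non-decreasing function $\eta$ of \eqref{eta} is also non-increasing, uses the Lipschitz equality case of Lemma \ref{L3.3} for conditions (1)--(2), and invokes Lemma \ref{L3.4} for condition (3). The only differences are refinements of points the paper leaves implicit --- you justify the absolute continuity of $\eta$ before integrating $\eta'\le 0$, and you handle the $D^-V_\alpha$ case of condition (3) explicitly via the two-direction equality $\bm{p}_t+\bm{p}_x\cdot\dot x(t)=\phi'(t)$, where the paper simply says ``without loss of generality'' and chains one-sided inequalities for $D^+V_\alpha$.
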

\begin{proof}
Recall that the optimality of $g$ is equivalent to the fact that $\eta$ defined by \eqref{eta} is a constant function. Since we already observe that $\eta$ is a non-decreasing function, it suffice to show that $\eta$ is a non-increasing function. 

Suppose  the first condition holds. Then, it follows from Lemma \ref{L3.3} that
	\begin{align*}
	\frac{\d\eta(t)}{\d t} &= \partial^-(V_\alpha\circ (\textup{id},x))(t;1)+\int_U r(x(t),\bm{u})g(t,\bm{u})\d\bm{u}-\alpha\mathrm{H}(g(t,\cdot)) \\
	&= \partial^-V_\alpha(t,x(t);1, \dot{x}(t))+\int_U r(x(t),\bm{u})g(t,\bm{u})\d\bm{u}-\alpha \mathrm{H}(g(t,\cdot))\le0.
	\end{align*}
	Thus, $\eta$ is a non-increasing function and therefore, $g$ is optimal. The optimality of $g$ under the second condition can be shown in a similar manner. 
	
Lastly, we assume that the third condition holds. Without loss of generality, we choose $(\bm{p}_t,\bm{p}_x)\in D^+V(t,x(t))$ satisfying the third condition. Then, 
	\begin{align*}
	\frac{\d\eta(t)}{\d t}&= \partial^-V_\alpha(t,x(t);1,\dot{x}(t))+\int_U r(x(t),\bm{u})g(t,\bm{u})\d\bm{u}-\alpha \mathrm{H}(g(t,\cdot))\\
	&\le \partial^+V_\alpha(t,x(t);1, \dot{x}(t))+\int_U r(x(t),\bm{u})g(t,\bm{u})\d\bm{u}-\alpha \mathrm{H}(g(t,\cdot))\\
	&\le \bm{p}_t + \bm{p}_x \cdot \dot{x}(t)+\int_U r(x(t),\bm{u})g(t,\bm{u})\d\bm{u}-\alpha \mathrm{H}(g(t,\cdot))\le0.
	\end{align*}
This implies that $\eta$ is non-increasing, and therefore $g$ is optimal.
\end{proof}
Finally, we provide a necessary and sufficient condition of optimality, which is a corollary of the preceding two propositions.

\begin{corollary}\label{C3.2}
	Suppose that Assumption \ref{assumption} holds and  the control set $U$ is compact. Let $x(t)$ be the system trajectory governed  by some control $g$ with $x(0) = \bm{x}$. We assume that $V_\alpha$ is locally Lipschitz in a neighborhood of $\{x(t) \mid 0\le t\le T\}$. Then, $g$ is optimal if and only if any of the following conditions holds:
	\begin{enumerate}
		\item For a.e. $0\le t\le T$,
		\[\partial^-V_\alpha(t,x(t);1, \dot{x}(t))+\int_U r(x(t),\bm{u})g(t,\bm{u})\d\bm{u}-\alpha \mathrm{H}(g(t,\cdot))\le 0.\]
		\item For a.e. $0\le t\le T$,
		\[-\partial^+V_\alpha(t,x(t);-1,-\dot{x}(t))+\int_U r(x(t),\bm{u})g(t,\bm{u})\d\bm{u}-\alpha \mathrm{H}(g(t,\cdot))\le 0.\]
	\end{enumerate}
	We further assume that $D^{\pm}V(t,x(t))\neq \emptyset$. Then, $g$ is optimal if and only if there exists $(\bm{p}_t,\bm{p}_x)\in D^{\pm} V(t,x(t))$ for a.e. $0\le t\le T$ such that
	\[\bm{p}_t+\int_U \left(\bm{p}_x\cdot f(x(t),\bm{u})+r(x(t),\bm{u})\right)g(t,\bm{u})\d\bm{u}  -\alpha \mathrm{H}(g(t,\cdot)) \le 0.\]
	In this case, the optimal control $g$ can be represented as 
	\begin{equation}\label{gtu}
	g(t,\bm{u}) = \frac{\exp\left(-\frac{1}{\alpha}(\bm{p}_x\cdot f(x(t),\bm{u})+r(x(t),\bm{u}))\right)}{\int_U \exp\left(-\frac{1}{\alpha}(\bm{p}_x\cdot f(x(t),\bm{u})+r(x(t),\bm{u}))\right)\d \bm{u}}.
	\end{equation}
\end{corollary}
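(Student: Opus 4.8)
The plan is to assemble this corollary directly from Propositions~\ref{P3.3} and~\ref{P3.4}: the former supplies the \emph{necessity} of each listed condition for optimality, and the latter supplies its \emph{sufficiency}; the explicit representation~\eqref{gtu} will then be read off from the fourth item of Proposition~\ref{P3.3}. Throughout I will rely on the standing hypothesis that $V_\alpha$ is locally Lipschitz in a neighborhood of the trajectory $\{x(t)\mid 0\le t\le T\}$, which is precisely what allows me to invoke Proposition~\ref{P3.4}.

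First I would establish the two equivalences for conditions~1 and~2. If $g$ is optimal, then conditions~1 and~2 both hold by items~1 and~2 of Proposition~\ref{P3.3}. Conversely, if either condition~1 or condition~2 holds, then $g$ is optimal by items~1 and~2 of Proposition~\ref{P3.4}. Hence each of conditions~1 and~2 is, on its own, equivalent to optimality of $g$, which gives the ``if and only if any of the following'' claim.

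Next I would treat the statement under the extra assumption $D^\pm V_\alpha(t,x(t))\neq\emptyset$. For the ``only if'' direction, suppose $g$ is optimal; item~3 of Proposition~\ref{P3.3} then yields, for a.e.\ $t$ and \emph{every} $(\bm{p}_t,\bm{p}_x)\in D^\pm V_\alpha(t,x(t))$,
\[\bm{p}_t+\int_U\left(\bm{p}_x\cdot f(x(t),\bm{u})+r(x(t),\bm{u})\right)g(t,\bm{u})\,\d\bm{u}-\alpha\mathrm{H}(g(t,\cdot))=0,\]
and since the set is nonempty we may pick such a pair, for which this equality trivially implies the ``$\le 0$'' inequality demanded by the corollary. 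For the ``if'' direction, the hypothesis that for a.e.\ $t$ there exists $(\bm{p}_t,\bm{p}_x)\in D^\pm V_\alpha(t,x(t))$ satisfying that inequality is exactly the premise of item~3 of Proposition~\ref{P3.4}, so $g$ is optimal. Finally, for the representation, assuming $g$ optimal, item~4 of Proposition~\ref{P3.3} states that for a.e.\ $t$ and all $(\bm{p}_t,\bm{p}_x)\in D^\pm V_\alpha(t,x(t))$ — again nonempty by assumption — the density $g(t,\cdot)$ is the Boltzmann distribution \eqref{optimal_synthesis}, which is identical to \eqref{gtu}.

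I do not expect a substantive obstacle: the corollary is essentially a bookkeeping consolidation of Propositions~\ref{P3.3} and~\ref{P3.4}. The only mild care needed is in the quantifier structure — Proposition~\ref{P3.3}(3)--(4) is phrased as ``for a.e.\ $t$ and \emph{all} $(\bm{p}_t,\bm{p}_x)\in D^\pm V_\alpha$'' (vacuously satisfied where the set is empty), whereas the corollary wants ``\emph{there exists} $(\bm{p}_t,\bm{p}_x)$ for a.e.\ $t$'' — and this gap is bridged precisely by the added nonemptiness assumption $D^\pm V_\alpha(t,x(t))\neq\emptyset$. One should also note that the necessity halves use only Proposition~\ref{P3.3} and hence do not require local Lipschitz regularity, while the sufficiency halves genuinely need it through Proposition~\ref{P3.4}; since local Lipschitzness is assumed in the corollary, both directions go through.
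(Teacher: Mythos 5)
Your proposal is correct and follows exactly the paper's route: the paper's proof of this corollary is a one-line citation of Propositions~\ref{P3.3} and~\ref{P3.4}, and your argument simply spells out that bookkeeping (including the quantifier point bridged by the nonemptiness assumption). No gaps.
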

\begin{proof}
	The proof directly follows from Proposition \ref{P3.3} and Proposition \ref{P3.4}.
\end{proof}

We now discuss how to synthesize an optimal control using the conditions for optimality. 
When the value function $V_\alpha$ is differentiable at every points $(t,\bm{x})\in[0,T]\times\bbr^n$, then it follows from Corollary \ref{C3.2} that an optimal control is uniquely characterized as a feedback map $\Phi:[0,T]\times \bbr^n\to L^1_{+,1}(U)$, defined by
\[(\Phi(t,\bm{x}))(\bm{u}) = \Phi(\bm{u};t,\bm{x})= \frac{\exp\left(-\frac{1}{\alpha}(\nabla_{\bm{x}}V_\alpha(t,\bm{x}) \cdot f(\bm{x},\bm{u})+r(\bm{x},\bm{u}))\right)}{\int_U \exp\left(-\frac{1}{\alpha}(\nabla_{\bm{x}}V_\alpha(t,\bm{x}) \cdot f(\bm{x},\bm{u})+r(\bm{x},\bm{u}))\right)\d \bm{u}}.\]
On the other hand, when the value function $V_\alpha$ is merely continuous but not differentiable,  we introduce the following set of controls  satisfying Condition 1 in Corollary \ref{C3.2}:
\[ S_1(t,\bm{x}):=\Bigg\{\phi\in L^1_{+,1}(U)~:~\partial^-V_\alpha\left(t,\bm{x};1,\int_{U}f(\bm{x},\bm{u})\phi(\bm{u})\d \bm{u}\right)+\int_U r(\bm{x},\bm{u})\phi(\bm{u})\d\bm{u}-\alpha \mathrm{H}(\phi(\cdot))\le 0 \Bigg\}.\]
If $D^\pm V_\alpha(t,\bm{x})\neq \emptyset$ for all $(t,\bm{x})\in[0,T]\times\bbr^n$, we define $S_2$ as the set  of densities that can be expressed as the form \eqref{gtu} in Corollary \ref{C3.2}:
\[S_2(t,\bm{x}):=\Bigg\{\phi\in L^1_{+,1}(U)~:~\phi(\bm{u})=\frac{\exp\left(-\frac{1}{\alpha}(\bm{p}_x \cdot f(\bm{x},\bm{u})+r(\bm{x},\bm{u}))\right)}{\int_U \exp\left(-\frac{1}{\alpha}(\bm{p}_x \cdot f(\bm{x},\bm{u})+r(\bm{x},\bm{u}))\right)\d \bm{u}} \mbox{ for } (\bm{p}_t, \bm{p}_x)\in D^\pm V_\alpha(t,\bm{x})\Bigg\}.\]
By Corollary \ref{C3.1}, the set $D^\pm V_{\alpha}(t,\bm{x})$ is a singleton whenever it is non-empty. Therefore, we note that the set $S_2(t,\bm{x})$ is also a singleton.

Consider feedback controls $\Phi_i$ such that
\[\Phi_i(t,\bm{x})\in S_i(t,\bm{x}),  \quad i=1,2.\]
Then, by Corollary \ref{C3.2}, $\Phi_i$'s are optimal under the same conditions as those in Corollary \ref{C3.2}.
\begin{corollary}
	Suppose that Assumption \ref{assumption} holds and   the control set $U$ is compact. Moreover, we assume that $V_\alpha$ is locally Lipschitz. Then, the feedback control $\Phi_1(t,\bm{x})$ is optimal. We further assume that $D^\pm V_\alpha(t,\bm{x})\neq\emptyset$ for all $(t,\bm{x})$. Then, the feedback control $\Phi_2(t,\bm{x})$ is also optimal. 
\end{corollary}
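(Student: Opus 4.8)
The plan is to reduce both assertions directly to Corollary~\ref{C3.2}, by passing from each feedback law to the open-loop control it induces along the resulting closed-loop trajectory.

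First, for $\Phi_1$: let $x(\cdot)$ be a trajectory of the closed-loop system driven by $g(t,\bm{u}):=(\Phi_1(t,x(t)))(\bm{u})$ with $x(0)=\bm{x}$, so that $\dot{x}(t)=\int_U f(x(t),\bm{u})g(t,\bm{u})\,\d\bm{u}$ for a.e.\ $t$. By hypothesis $\Phi_1(t,x(t))\in S_1(t,x(t))$, and with the identification $\int_U f(x(t),\bm{u})g(t,\bm{u})\,\d\bm{u}=\dot{x}(t)$ the defining inequality of $S_1$ at $(t,x(t))$ becomes precisely Condition~1 of Corollary~\ref{C3.2}. Since $V_\alpha$ is locally Lipschitz in a neighborhood of $\{x(t):0\le t\le T\}$, Corollary~\ref{C3.2} applies and $g$, hence $\Phi_1$, is optimal.

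Turning to $\Phi_2$: proceed similarly with $g(t,\bm{u}):=(\Phi_2(t,x(t)))(\bm{u})$. By definition of $S_2$, for a.e.\ $t$ there is $(\bm{p}_t,\bm{p}_x)\in D^\pm V_\alpha(t,x(t))$ such that $g(t,\cdot)$ is the Boltzmann density~\eqref{gtu} associated with $\bm{p}_x$. By Lemma~\ref{L3.2} this density minimizes $\phi\mapsto\int_U(\bm{p}_x\cdot f(x(t),\bm{u})+r(x(t),\bm{u}))\phi(\bm{u})\,\d\bm{u}-\alpha\mathrm{H}(\phi)$, with minimal value $-H_\alpha(x(t),\bm{p}_x)$ by~\eqref{Ham_soft}; hence the expression on the left of the inequality in Condition~3 of Corollary~\ref{C3.2} collapses to $\bm{p}_t-H_\alpha(x(t),\bm{p}_x)$. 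Because $V_\alpha$ is a viscosity solution of~\eqref{HJB_soft}, the supersolution requirement in Definition~\ref{D3.1}(3) gives $\bm{p}_t-H_\alpha(x(t),\bm{p}_x)\le 0$ for every element of $D^- V_\alpha(t,x(t))$; using Corollary~\ref{C3.1} (and the bilateral supersolution property of Proposition~\ref{prop:bilateral}) to see that $D^\pm V_\alpha(t,x(t))$ is a singleton and that the element picked out by $S_2$ is the one satisfying this bound, Condition~3 of Corollary~\ref{C3.2} holds along $x(\cdot)$, so $\Phi_2$ is optimal.

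The algebraic identifications $\int_U f g\,\d\bm{u}=\dot{x}$ and the substitution from Lemma~\ref{L3.2} are routine. The step requiring genuine care is identifying \emph{which} element of $D^\pm V_\alpha(t,x(t))$ carries the one-sided viscosity inequality with the correct sign, i.e.\ tying the Boltzmann-density element selected by $S_2$ to the supersolution test of Definition~\ref{D3.1}; this is where the uniqueness from Corollary~\ref{C3.1} and the bilateral property of Proposition~\ref{prop:bilateral} are needed. A secondary matter, which I would address only briefly, is that one implicitly assumes a closed-loop trajectory $x(\cdot)$ of the feedback system exists and that the induced control $t\mapsto g(t,\cdot)$ is admissible; this does not affect the optimality argument once such a trajectory is fixed.
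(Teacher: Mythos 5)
Your overall strategy is the same as the paper's: the paper proves this corollary by nothing more than the reduction to Corollary \ref{C3.2}, and your treatment of $\Phi_1$ carries that out correctly --- along the closed-loop trajectory the defining inequality of $S_1$ is exactly Condition~1 of Corollary \ref{C3.2}, and the local Lipschitz hypothesis lets you apply it.

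The $\Phi_2$ half, however, contains a genuine gap as written. After the (correct) reduction via Lemma \ref{L3.2}, what has to be verified is $\bm{p}_t-H_\alpha(x(t),\bm{p}_x)\le 0$ for the element $(\bm{p}_t,\bm{p}_x)\in D^\pm V_\alpha(t,x(t))$ that generates the Boltzmann density. If that element lies in $D^-V_\alpha$, the supersolution part of Definition \ref{D3.1} gives this at once (and Proposition \ref{prop:bilateral} upgrades it to equality), with no further input needed. But if it lies only in $D^+V_\alpha$, the viscosity theory gives the \emph{opposite} inequality $\bm{p}_t-H_\alpha\ge 0$, and your attempted repair does not close this case: Corollary \ref{C3.1} cannot be invoked here, because its hypotheses include condition \eqref{control} (not assumed in the present corollary) and, more importantly, that the trajectory is generated by an \emph{optimal} control --- which is precisely what you are trying to prove, so the appeal is circular. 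Even granting that $D^\pm V_\alpha(t,x(t))$ is a singleton, singleton-ness of $D^+V_\alpha$ alone does not yield the needed sign; that sign is guaranteed only when the element is a subdifferential, or when both $D^+V_\alpha$ and $D^-V_\alpha$ are nonempty (so $V_\alpha$ is differentiable there and the element satisfies the HJB with equality). The clean argument, consistent with the paper's intent, is to take the $S_2$-element from $D^-V_\alpha$ and use the supersolution inequality directly, or to observe that at points where both one-sided differentials are nonempty the element is the derivative and $\bm{p}_t-H_\alpha=0$; as written, your proof leaves the case of an element in $D^+V_\alpha\setminus D^-V_\alpha$ unproved, and the invocation of Corollary \ref{C3.1} is not available.
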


\subsection{Asymptotic Consistency}

It seems reasonable to expect that, as $\alpha \to 0$,
 $V_\alpha$ converges to the value function of the standard optimal control problem~\eqref{opt0}. 
This subsection is devoted to showing this convergence property.

We first formally describe the convergence result.
Recall the Laplace principle \cite{dembo2010largedeviation}: for any measurable function $\phi:U\to\bbr$,
\[\lim_{\alpha\to 0} \left(-\alpha \log\int_{U}\exp\left(-\frac{\phi(\bm{u})}{\alpha}\right)\,\d\bm{u}\right)=\inf_{u\in U}\phi(\bm{u}).\]
Therefore, the Hamiltonian $H_{\alpha}(\bm{x},\bm{p}):= \alpha \log\int_{U}\exp\left(-\frac{\bm{p}\cdot f(\bm{x},\bm{u})+r(\bm{x},\bm{u})}{\alpha}\right)\,\d\bm{u}$  converges pointwisely to
\[H_0(\bm{x},\bm{p}):=-\inf_{\bm{u}\in U}\left\{\bm{p}\cdot f(\bm{x},\bm{u})+r(\bm{x},\bm{u})\right\}.\]
Thus, at the formal level, the soft HJB equation \eqref{HJB_soft} converges to the following HJB equation:
\begin{equation}\label{HJB-0}
\partial_t V_0 -H_0(\bm{x},\nabla_{\bm{x}} V_0):=\partial_t V_0 +\inf_{\bm{u}\in U} \left\{\nabla_{\bm{x}} V_0\cdot f(\bm{x},\bm{u})+r(\bm{x},\bm{u})\right\}=0
\end{equation}
as $\alpha \to 0$. 
It is well-known that this HJB equation admits the unique viscosity solution, which coincides with the value function of the standard optimal control problem~\eqref{opt0}. 
Thus, it is natural to use the HJB equations to establish the desired convergence result regarding the value functions. 

We begin by showing  the following uniform convergence of the Hamiltonian $H_\alpha$ to $H_0$ as $\alpha$ tends to $0$.

\begin{lemma}\label{lem-hamiltonian}
	Suppose that Assumption \ref{assumption} holds and  the control set $U$ is compact. Then, the soft Hamiltonian $H_\alpha:\bbr^{2n}\to \bbr$  converges uniformly to the standard Hamiltonian $H_0:\bbr^{2n}\to\bbr$ on any compact subset of $\bbr^{2n}$ as $\alpha \to0$.
\end{lemma}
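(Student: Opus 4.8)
The plan is to deduce the lemma from two ingredients already in hand: the \emph{pointwise} convergence $H_\alpha(\bm{x},\bm{p})\to H_0(\bm{x},\bm{p})$ for every $(\bm{x},\bm{p})$, and the \emph{equicontinuity} of the family $\{H_\alpha\}_{\alpha>0}$ on compact sets. Together these give locally uniform convergence by the classical Arzel\`a--Ascoli-type argument (an equicontinuous family converging pointwise converges uniformly on compacta).

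\textbf{Pointwise convergence.} Fix $(\bm{x},\bm{p})\in\bbr^{2n}$ and apply the Laplace principle recalled above to the (continuous, hence measurable) function $\phi(\bm{u}):=\bm{p}\cdot f(\bm{x},\bm{u})+r(\bm{x},\bm{u})$ on the compact set $U$. Since
\[
H_\alpha(\bm{x},\bm{p}) = -\Big(-\alpha\log\textstyle\int_U \exp(-\phi(\bm{u})/\alpha)\,\d\bm{u}\Big)\longrightarrow -\inf_{\bm{u}\in U}\phi(\bm{u}) = H_0(\bm{x},\bm{p}) \quad\text{as }\alpha\to0,
\]
pointwise convergence follows at once.

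\textbf{Equicontinuity.} Let $K\subset\bbr^{2n}$ be compact and choose $R>0$ with $K\subset B(0,R)\times B(0,R)$. Combining the two estimates of Proposition~\ref{P3.2} through a triangle inequality, for all $(\bm{x},\bm{p}),(\bm{y},\bm{q})\in K$ and \emph{every} $\alpha>0$,
\[
|H_\alpha(\bm{x},\bm{p})-H_\alpha(\bm{y},\bm{q})|\le R\,\omega_f(|\bm{x}-\bm{y}|,R)+\omega_r(|\bm{x}-\bm{y}|)+C\Big(1+R+\sup_{\bm{u}\in U}|\bm{u}|\Big)\,|\bm{p}-\bm{q}|,
\]
and the right-hand side is independent of $\alpha$ and vanishes as $(\bm{x},\bm{p})\to(\bm{y},\bm{q})$. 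Hence $\{H_\alpha\}_{\alpha>0}$ is uniformly equicontinuous on $K$; consequently the pointwise limit $H_0$ obeys the same modulus and is continuous on $K$ (this also follows directly, $H_0=-\min_{\bm{u}\in U}\{\bm{p}\cdot f+r\}$ being the negative minimum over a compact set of a jointly continuous map). Now the standard argument finishes: given $\e>0$, pick $\delta>0$ from equicontinuity so that $|H_\alpha(\bm{z})-H_\alpha(\bm{z}')|\le\e/3$ and $|H_0(\bm{z})-H_0(\bm{z}')|\le\e/3$ whenever $\bm{z},\bm{z}'\in K$ with $|\bm{z}-\bm{z}'|<\delta$; cover $K$ by finitely many balls $B(\bm{z}_1,\delta),\dots,B(\bm{z}_N,\delta)$ with $\bm{z}_i\in K$; choose $\alpha_0>0$ (from pointwise convergence) so that $|H_\alpha(\bm{z}_i)-H_0(\bm{z}_i)|<\e/3$ for all $i$ and $0<\alpha<\alpha_0$; then for any $\bm{z}\in K$ and any $i$ with $|\bm{z}-\bm{z}_i|<\delta$,
\[
|H_\alpha(\bm{z})-H_0(\bm{z})|\le|H_\alpha(\bm{z})-H_\alpha(\bm{z}_i)|+|H_\alpha(\bm{z}_i)-H_0(\bm{z}_i)|+|H_0(\bm{z}_i)-H_0(\bm{z})|<\e,
\]
so $\sup_{K}|H_\alpha-H_0|\to0$.

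\textbf{Main difficulty.} Given the earlier results there is no genuine obstacle; the only point to be careful about is that the moduli in Proposition~\ref{P3.2} are truly $\alpha$-free, so equicontinuity holds for the whole family simultaneously. A more self-contained route that sidesteps Proposition~\ref{P3.2} is the direct Laplace estimate: with $m(\bm{x},\bm{p}):=\min_{\bm{u}\in U}\{\bm{p}\cdot f(\bm{x},\bm{u})+r(\bm{x},\bm{u})\}$, the bound $\int_U e^{-(\bm{p}\cdot f+r)/\alpha}\,\d\bm{u}\le|U|\,e^{-m/\alpha}$ gives $H_\alpha-H_0\le\alpha\log|U|$ uniformly in $(\bm{x},\bm{p})$, whereas the reverse inequality $H_0-H_\alpha\le\delta-\alpha\log c_\delta$ requires showing that for each $\delta>0$ the sublevel-set measure $\big|\{\bm{u}\in U:\bm{p}\cdot f(\bm{x},\bm{u})+r(\bm{x},\bm{u})<m(\bm{x},\bm{p})+\delta\}\big|$ is bounded below by a positive constant $c_\delta$ uniformly for $(\bm{x},\bm{p})$ in a compact set. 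That uniform lower bound—obtained by a compactness argument using joint continuity of $(\bm{x},\bm{p},\bm{u})\mapsto\bm{p}\cdot f+r$, continuity of $m$, and a mild nondegeneracy of $U$ (e.g. $U$ being the closure of a bounded open set, so nonempty relatively open subsets have positive Lebesgue measure)—is the one nontrivial step in that alternative, and it is precisely what is implicitly built into the statement of the Laplace principle used in the main route.
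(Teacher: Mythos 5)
Your proof is correct, but it reaches local uniform convergence by a different mechanism than the paper. The paper writes $H_\alpha=\tilde H_\alpha+\alpha\log|U|$ with $\tilde H_\alpha$ defined via the uniform probability measure $\mathscr U$, shows by Jensen's inequality that $\partial_\alpha\tilde H_\alpha\le 0$, and then invokes Dini's theorem: monotone pointwise convergence of continuous functions to the continuous limit $H_0$ upgrades to uniform convergence on compacta. You instead bypass monotonicity entirely and combine the Laplace-principle pointwise limit with the $\alpha$-free moduli of Proposition~\ref{P3.2}, which you correctly observe give uniform equicontinuity of the whole family $\{H_\alpha\}_{\alpha>0}$ on any compact $K\subset B(0,R)\times B(0,R)$; the standard $\e/3$ net argument then yields $\sup_K|H_\alpha-H_0|\to0$. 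Both routes lean on the same cited Laplace principle for the pointwise limit (and hence on the implicit nondegeneracy of $U$, i.e.\ $|U|>0$ and positive measure of sublevel neighborhoods, which you rightly flag in your closing remark), so neither is more self-contained on that point. What each buys: the paper's argument is shorter and extracts the structurally interesting fact that $\tilde H_\alpha$ is nonincreasing in $\alpha$, whereas yours recycles the regularity estimates already proved for the comparison principle, avoids differentiating under the integral in $\alpha$, and makes the $\alpha$-uniform modulus of continuity explicit — which is exactly the kind of estimate later reused for $V_\alpha$ in Lemma~\ref{lem-unif_equi}. Your sketched alternative via $H_0\le H_\alpha+\delta-\alpha\log c_\delta\le H_0+\alpha\log|U|+\delta-\alpha\log c_\delta$ would in addition give a quantitative rate, at the cost of the uniform lower bound on $c_\delta$ you describe; as a discussion it is accurate, and nothing in the main argument depends on it.
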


\begin{proof}
Fix an arbitrary $(\bm{x},\bm{p})\in\bbr^{2n}$. We first notice that
	\begin{align*}
	H_{\alpha}(\bm{x},\bm{p}) &= \alpha \log\int_U \exp\left(-\frac{\bm{p}\cdot f(\bm{x},\bm{u})+r(\bm{x},\bm{u})}{\alpha}\right)\d\bm{u} \\
	&=\alpha \log\int_U \exp\left(-\frac{\bm{p}\cdot f(\bm{x},\bm{u})+r(\bm{x},\bm{u})}{\alpha}\right)\mathscr{U}(\d \bm{u})+\alpha\log|U|\\
	&=:\tilde{H}_\alpha(\bm{x},\bm{u})+\alpha\log|U|,
	\end{align*}
	where $\mathscr{U}$ denotes the uniform probability measure defined by $\mathscr{U}(\d \bm{u}) = \frac{\d\bm{u}}{|U|}$.
Differentiating $\tilde{H}_\alpha$ with respect to $\alpha$ yields
	\begin{align*}
	\frac{\partial}{\partial \alpha} \tilde{H}_\alpha(\bm{x},\bm{p})&=\log \int_{U}\exp\left(-\frac{\bm{p}\cdot f(\bm{x},\bm{u})+r(\bm{x},\bm{u})}{\alpha}\right)\mathscr{U}(\d\bm{u})\\
	&\quad +\alpha\frac{\int_{U}\exp\left(-\frac{\bm{p}\cdot f(\bm{x},\bm{u})+r(\bm{x},\bm{u})}{\alpha}\right)\frac{\bm{p}\cdot f(\bm{x},\bm{u})+r(\bm{x},\bm{u})}{\alpha^2}\mathscr{U}(\d\bm{u})}{\int_{U}\exp\left(-\frac{\bm{p}\cdot f(\bm{x},\bm{u})+r(\bm{x},\bm{u})}{\alpha}\right)\mathscr{U}(\d\bm{u})}\\
	&=\log \int_{U}\exp\left(-\frac{\bm{p}\cdot f(\bm{x},\bm{u})+r(\bm{x},\bm{u})}{\alpha}\right)\mathscr{U}(\d\bm{u})\\
	&\quad +\frac{1}{\alpha} \frac{\int_{U}\exp\left(-\frac{\bm{p}\cdot f(\bm{x},\bm{u})+r(\bm{x},\bm{u})}{\alpha}\right)(\bm{p}\cdot f(\bm{x},\bm{u})+r(\bm{x},\bm{u}))\mathscr{U}(\d\bm{u})}{\int_{U}\exp\left(-\frac{\bm{p}\cdot f(\bm{x},\bm{u})+r(\bm{x},\bm{u})}{\alpha}\right)\mathscr{U}(\d\bm{u})}.
	\end{align*}
	For simplicity, we let $F(\bm{u}) = F(\bm{u};\bm{x},\bm{p}):=\exp\left(-\frac{\bm{p}\cdot f(\bm{x},\bm{u})+r(\bm{x},\bm{u})}{\alpha}\right)$ and consider it as a function of $\bm{u}$ since $(\bm{x}, \bm{p})$ is fixed. Then, we have
	\[\frac{\partial}{\partial \alpha}\tilde{H}_\alpha(\bm{x},\bm{p})=\log \left(\int_{U}F(\bm{u})\mathscr{U}(\d\bm{u})\right)-\frac{\int_{U}F(\bm{u})\log F(\bm{u})\mathscr{U}(\d\bm{u})}{\int_{U}F(\bm{u})\,\mathscr{U}(\d\bm{u})}.\]
Since the function $\phi(r):=r\log r$ is convex and $\mathscr{U}(\d\bm{u})$ is a probability measure on $U$,  the Jensen's inequality gives
	\[\left(\int_{U}F(\bm{u})\mathscr{U}(\d\bm{u})\right)\log \left(\int_{U}F(\bm{u})\mathscr{U}(\d\bm{u})\right)\le \int_{U}F(\bm{u})\log F(\bm{u})\,\mathscr{U}(\d\bm{u}).\]
Thus, $\frac{\partial}{\partial \alpha} \tilde{H}_\alpha (\bm{x},\bm{p})\le 0$ for all $(\bm{x},\bm{p})\in\bbr^{2n}$. Moreover, we already observe that the Laplace principle implies the pointwise convergence $\lim_{\alpha\to 0}\tilde{H}_{\alpha}(\bm{x},\bm{p}) = H_0(\bm{x},\bm{p})$. By the monotonic pointwise convergence of $\tilde{H}_\alpha$ to $H_0$, Dini's theorem \cite{rudin1976PMA} implies that $\tilde{H}_{\alpha}$ converges uniformly to $H_0$ as $\alpha\to 0$ on any compact subset of $\bbr^{2n}$. Finally,  the constant $\alpha\log|U|$ converges uniformly to 0 as $\alpha \to 0$. Therefore, we conclude that $H_\alpha $ converges locally uniformly to $H_0$ as $\alpha\to 0$.
\end{proof}

Next, we show that the value function $V_\alpha$ is bounded and Lipschitz continuous, uniformly in $\alpha$.

\begin{lemma}\label{lem-unif_equi}
	Suppose that Assumption \ref{assumption} holds, $U$ is compact, and $0< \alpha\le 1$. Moreover, we assume that $r$ and $q$ are locally Lipschitz continuous in $\bm{x}$, i.e., for every $R>0$ and $\bm{x},\bm{y}\in B(0,R)$, we have
	\[|r(\bm{x},\bm{u})-r(\bm{y},\bm{u})|\le C(R)|\bm{x}-\bm{y}|,\quad |q(\bm{x})-q(\bm{y})|\le C(R)|\bm{x}-\bm{y}|.\] 
	Let $V_\alpha$ be the unique viscosity solution  to \eqref{HJB_soft}. Then, for any compact subset $K$ of $\bbr^n$, 
	\[|V_\alpha(t,\bm{x})|\le C \quad  \forall (t,\bm{x})\in [0,T]\times K,\]
	\[|V_{\alpha}(t,\bm{x})-V_\alpha(s,\bm{y})|\le C(|t-s|+|\bm{x}-\bm{y}|) \quad \forall (t,\bm{x}),(s,\bm{y})\in[0,T]\times K,\]
	where the constant $C$ does not depend on $\alpha$.
\end{lemma}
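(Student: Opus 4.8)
The plan is to argue directly from the definition $V_\alpha(t,\bm{x})=\inf_{g\in\mathcal{G}}J^\alpha_{\bm{x},t}(g)$, exploiting the structural fact that the entropy term $-\alpha\mathrm{H}(g(s,\cdot))$ depends only on the control, not on the state. First I would collect uniform trajectory estimates. By Assumption~\ref{assumption}$(ii)$ and compactness of $U$, the right-hand side of \eqref{dyn2} satisfies $\left|\int_U f(x,\bm{u})g(t,\bm{u})\,\d\bm{u}\right|\le C(1+|x|)$ for every $g\in\mathcal{G}$, so Gr\"onwall's inequality (Lemma~\ref{stability}) gives a bound $|x(s)|\le R$ for all $s\in[t,T]$ with $R$ depending only on $f$, $T$ and $K$ --- in particular not on $\alpha$ or $g$. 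Similarly, Assumption~\ref{assumption}$(iv)$ integrated against $g$ yields $|x(s)-y(s)|\le e^{LT}|\bm{x}-\bm{y}|$ for two trajectories driven by the \emph{same} $g$ from $\bm{x},\bm{y}\in K$, and $|x(t+h)-\bm{x}|\le \Lambda h$ for a constant $\Lambda$ coming only from $f$, $T$, $K$. Since $r,q$ are continuous and $U$ is compact, $|r(\bm{z},\bm{u})|\le C_r$ and $|q(\bm{z})|\le C_q$ for $\bm{z}\in B(0,R)$, $\bm{u}\in U$; and on a compact $U$ one has $\mathrm{H}(g(s,\cdot))\le\log|U|$, hence $-\alpha\mathrm{H}(g(s,\cdot))\ge-\alpha\log|U|\ge-|\log|U||$ whenever $0<\alpha\le1$.

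For boundedness, plugging the constant control $g\equiv\mathscr{U}$ (for which $\mathrm{H}(\mathscr{U})=\log|U|$) into $J^\alpha_{\bm{x},t}$ gives $V_\alpha(t,\bm{x})\le(T-t)C_r+C_q+T|\log|U||$, while for any $g\in\mathcal{G}$ the bounds above give $J^\alpha_{\bm{x},t}(g)\ge-(T-t)(C_r+|\log|U||)-C_q$; hence $|V_\alpha(t,\bm{x})|\le C$ with $C$ independent of $\alpha$. For the Lipschitz estimate in $\bm{x}$, fix $t$ and $\bm{x},\bm{y}\in K$, choose $g\in\mathcal{G}$ with $J^\alpha_{\bm{x},t}(g)\le V_\alpha(t,\bm{x})+\e$, and use the same $g$ from initial point $\bm{y}$. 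The entropy integrals cancel in $J^\alpha_{\bm{y},t}(g)-J^\alpha_{\bm{x},t}(g)$, leaving only differences of running and terminal costs; by the assumed local Lipschitz continuity of $r$ and $q$ in $\bm{x}$ together with $|x(s)-y(s)|\le e^{LT}|\bm{x}-\bm{y}|$, this is bounded by $(T+1)C(R)e^{LT}|\bm{x}-\bm{y}|$. Letting $\e\to0$ and exchanging the roles of $\bm{x}$ and $\bm{y}$ gives $|V_\alpha(t,\bm{x})-V_\alpha(t,\bm{y})|\le C|\bm{x}-\bm{y}|$ with $C$ independent of $\alpha$.

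For the estimate in $t$, take $0\le t<t+h\le T$ and $\bm{x}\in K$ and use the dynamic programming equation \eqref{dpp} on $[t,t+h]$. To bound $V_\alpha(t,\bm{x})-V_\alpha(t+h,\bm{x})$ from above, take $g$ equal to $\mathscr{U}$ on $[t,t+h]$ (its values afterwards being irrelevant to \eqref{dpp}): the integral over $[t,t+h]$ is at most $h(C_r+|\log|U||)$, and by the spatial estimate just proved $V_\alpha(t+h,x(t+h))\le V_\alpha(t+h,\bm{x})+C|x(t+h)-\bm{x}|\le V_\alpha(t+h,\bm{x})+C\Lambda h$. For the reverse direction, for every $g\in\mathcal{G}$ the running-plus-entropy integral over $[t,t+h]$ is at least $-h(C_r+|\log|U||)$ and $V_\alpha(t+h,x^g(t+h))\ge V_\alpha(t+h,\bm{x})-C\Lambda h$, so taking the infimum in \eqref{dpp} gives $V_\alpha(t,\bm{x})\ge V_\alpha(t+h,\bm{x})-Ch$. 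Thus $|V_\alpha(t,\bm{x})-V_\alpha(t+h,\bm{x})|\le Ch$, and combining with the spatial estimate via $|V_\alpha(t,\bm{x})-V_\alpha(s,\bm{y})|\le|V_\alpha(t,\bm{x})-V_\alpha(t,\bm{y})|+|V_\alpha(t,\bm{y})-V_\alpha(s,\bm{y})|$ completes the proof. The only points requiring care are that the trajectory constants $R$, $e^{LT}$, $\Lambda$ are uniform in $\alpha$ and $g$ (they depend only on $f$, $T$, $K$), and that the one-sided bound $-\alpha\mathrm{H}(g)\ge-|\log|U||$, valid on compact $U$ for $0<\alpha\le1$, is precisely what lets the entropy be absorbed harmlessly into the temporal estimate; beyond this bookkeeping there is no genuine obstacle.
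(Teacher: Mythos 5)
Your proposal is correct, and its first two steps (uniform bound via the constant uniform control together with $\mathrm{H}(g)\le\log|U|$, and spatial Lipschitz continuity by reusing a near-optimal control from the other initial point so that the entropy terms cancel and only the stability estimate of Lemma~\ref{stability} plus the local Lipschitz continuity of $r,q$ enter) coincide with the paper's proof. Where you diverge is the time regularity: the paper proves Step 3 by a time-translation argument --- it takes a near-optimal control for $V_\alpha(t,\bm{x})$, delays it to $[\bar t,T]$ so that the two trajectories coincide up to a shift, and the difference of the cost functionals collapses to a tail integral of length $|t-\bar t|$ plus $q(x(T+t-\bar t))-q(x(T))$ --- whereas you invoke the dynamic programming identity \eqref{dpp} on $[t,t+h]$ and combine the one-sided entropy bound with the already-established spatial Lipschitz estimate and the uniform trajectory speed bound $|x(t+h)-\bm{x}|\le\Lambda h$. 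Both routes are sound and yield $\alpha$-independent constants for $0<\alpha\le1$; your version is slightly shorter but has two extra dependencies the paper avoids: it requires Lemma~\ref{L3.1}, and it applies the spatial estimate at the point $x(t+h)$, which need not lie in $K$, so you must (as your constants implicitly do) first run Step 2 on the enlarged compact set $\overline{B}(0,R)$ containing all states reachable from $K$ on $[0,T]$. The paper's delayed-control argument needs neither the DPP nor the spatial estimate, at the price of the bookkeeping with the shifted trajectory and the tail integral.
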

\begin{proof}
Our proof is   similar to that for the standard optimal control case~\cite{Evans2010}. \\
	
	\noindent $\bullet$ (Step 1): We first prove the uniform boundedness of $V_\alpha$.
Choose the uniformly distributed constant control $\bar{g}(t,\bm{u})\equiv\frac{1}{|U|}$. Then, by the definition of $V_\alpha$,
	\begin{align}
	\begin{aligned}\label{ineq1}
	V_\alpha(t,\bm{x})&\le J^\alpha_{\bm{x},t}(\bar{g}) = \int_t^T \int_{U} r(x(s),\bm{u})\frac{1}{|U|}\d\bm{u}\,\d s -\alpha \int_t^T \mathrm{H}(\bar{g}(s,\cdot))+q(x(T))\\
	&\le TC_r+C_q+\alpha T\log|U|\le TC_r+C_q+\alpha T |\log|U||,
	\end{aligned}
	\end{align}
	where we use
	\[\mathrm{H}(\bar{g}(s,\cdot)) = -\int_U \frac{1}{|U|} \log \frac{1}{|U|} \d\bm{u} = \log|U|.\]
	On the other hand, for any control $g \in \mathcal{G}$, 
	\[\int_t^T\left(\int_{U} r(x(s),\bm{u})g(s,\bm{u})\,\d\bm{u} -\alpha \mathrm{H}(g(s,\cdot))\right)\,\d s+q(x(T))\ge -(T-t)C_r-C_q -\alpha\int_t^T \mathrm{H}(g(s,\cdot))\,\d s.\]
We also notice that
	\[\mathrm{H}(g(t,\cdot)) = -\int_U g(t,\bm{u})\log g(t,\bm{u})\d\bm{u} = -\kl(g\d\bm{u}||\mathscr{U})+\log|U|\le\log |U|.
	\]
	Therefore,
	\[\int_t^T\left(\int_{U} r(x(s),\bm{u})g(\bm{u})\,\d\bm{u} -\alpha \mathrm{H}(g(s,\cdot))\right)\,\d s+q(x(T))\ge -(T-t)C_r-C_q -\alpha (T-t)\log |U| .\]
Taking infimum of both sides with respect to $g\in\mathcal{G}$ yields
	\begin{equation}\label{ineq2}
	V_\alpha(t,\bm{x})\ge -(T-t)C_r-C_q -\alpha(T-t)\log|U| \ge -TC_r-C_q-\alpha T|\log|U||.
	\end{equation}
By \eqref{ineq1} and \eqref{ineq2}, we obtain that for $0<\alpha\le 1$,
	\[|V_\alpha(t,\bm{x})|\le TC_r+C_q+\alpha T|\log|U||\le TC_r+C_q+T|\log|U||.\]
	Note that the bound $TC_r+C_q+T|\log|U||$ does not depend on the temperature parameter $\alpha$.\\
	
	\noindent $\bullet$ (Step 2): We now prove the Lipschitz continuity of $V_\alpha$ in $\bm{x}$. Choose any $\e>0$ and $\bm{x},\bm{y}\in K$. Then, there exists $\bar{g}\in\mathcal{G}$ such that
	\[V_\alpha(t,\bm{y}) \ge\int_t^T \left(\int_{U}r(\bar{y}(s),\bm{u})\bar{g}(\bm{u})\,\d\bm{u} -\alpha \mathrm{H}(\bar{g}(s,\cdot))\right)\,\d s+q(\bar{y}(T))-\e,\]
	where $\bar{y}(s)$ is a solution to \eqref{dyn2} with control $\bar{g}$ and initial condition $\bar{y}(t)=\bm{y}$. We also let $\bar{x}(s)$ denote the solution to \eqref{dyn2} with the same control $\bar{g}$ and initial condition $\bar{x}(t)=\bm{x}$. 
By Lemma \ref{stability},	both $\bar{x}(t)$ and $\bar{y}(t)$ are bounded. Therefore, we have
	\begin{align*}
	V_{\alpha}(t,\bm{x})-V_{\alpha}(t,\bm{y})&\le \int_t^T\left(\int_{U}r(\bar{x}(s),\bm{u})\bar{g}(\bm{u})\,\d\bm{u} -\alpha \mathrm{H}(\bar{g}(s,\cdot))\right)\,\d s+q(\bar{x}(T))\\
	&\quad-\int_t^T\left(\int_{U}r(\bar{y}(s),\bm{u})\bar{g}(\bm{u})\,\d\bm{u} -\alpha \mathrm{H}(\bar{g}(s,\cdot))\right)\,\d s-q(\bar{y}(T))+\e\\
	&=\int_t^T\int_{U}(r(\bar{x}(s),\bm{u})-r(\bar{y}(s),\bm{u}))\bar{g}(\bm{u})\,\d\bm{u}\,\d s + (q(\bar{x}(T))-q(\bar{y}(T)))+\e\\
	&\le C\int_t^T |\bar{x}(s)-\bar{y}(s)|\,\d s+|\bar{x}(T)-\bar{y}(T)|+\e,
	\end{align*} 
	where the constant $C$ in the last inequality only depends on the local Lipschtiz constant of $r$ and $q$. On the other hand, again thanks to the stability estimate in Lemma \ref{stability}, we have
	\[|\bar{x}(s)-\bar{y}(s)|\le C|\bar{x}(t)-\bar{y}(t)|=C|\bm{x}-\bm{y}|,\quad t\le s\le T.\]
	Therefore, we have
	\[V_\alpha(t,\bm{x})-V_\alpha(t,\bm{y})\le C|\bm{x}-\bm{y}|+\e,\]
	where the constant $C$ depends on $f,r,q$ and $T$ but it is independent of $\alpha$. We now change the role of $\bm{x}$ and $\bm{y}$ to obtain
	\[|V_\alpha(t,\bm{x})-V_\alpha(t,\bm{y})|\le C|\bm{x}-\bm{y}|+\e.\]
	Since $\e$ was arbitrarily chosen, we conclude that $V_\alpha$ is Lipschitz continuous in $\bm{x}$.\\
	
	\noindent $\bullet$ (Step 3): Lastly, we show the Lipschitz continuity of $V_\alpha$ in $t$. Fix any $\e>0$ and $0\le t<\bar{t}\le T$. Choose $g\in \mathcal{G}$ such that
	\[V_\alpha(t,\bm{x}) \ge \int_t^T\left(\int_{U}r(x(s),\bm{u})g(s,\bm{u})\,\d\bm{u} -\alpha \mathrm{H}(g(s,\cdot))\right)\,\d s+q(x(T))-\e,\]
	where $x(s)$ is the solution to \eqref{dyn2} with $x(t)=\bm{x}$ and control $g$. We define a delayed control $\bar{g}$ as $\bar{g}(s)=g(s+t-\bar{t})$, where $\bar{t}\le s\le T$. Let $\bar{x}(s)$ be the solution to \eqref{dyn2} satisfying $\bar{x}(\bar{t})=\bm{x}$. Note that $\bar{x}(s)=x(s+t-\bar{t})$ for $\bar{t}\le s\le T$. 
We then have
	\begin{align*}
	V_\alpha&(\bar{t},\bm{x})-V_\alpha(t,\bm{x})\\
	&\le\int_{\bar{t}}^T \left(\int_{U}r(\bar{x}(s),\bm{u})\bar{g}(s,\bm{u})\,\d\bm{u} - \alpha \mathrm{H}(\bar{g}(s,\cdot))\right)\,\d s+q(\bar{x}(T))\\
	&\quad -\int_t^T\left(\int_{U}r(x(s),\bm{u})g(s,\bm{u})\,\d\bm{u} -\alpha \mathrm{H}(g(s,\cdot))\right)\,\d s-q(x(T))+\e\\
	&=\int_{\bar{t}}^T \left(\int_{U}r(x(s+t-\bar{t}),\bm{u})\bar{g}(s+t-\bar{t},\bm{u})\,\d\bm{u} - \alpha \mathrm{H}(g(s+t-\bar{t},\cdot))\right)\,\d s+q(x(T+t-\bar{t}))\\
	&\quad -\int_t^T\left(\int_{U}r(x(s),\bm{u})g(s,\bm{u})\,\d\bm{u} -\alpha \mathrm{H}(g(s,\cdot))\right)\,\d s-q(x(T))+\e\\
	&=\int_{t}^{T+t-\bar{t}} \left(\int_{U}r(x(s),\bm{u})g(s,\bm{u})\,\d\bm{u} - \alpha \mathrm{H}(g(s,\cdot))\right)\,\d s+q(x(T+t-\bar{t}))\\
	&\quad -\int_t^T\left(\int_{U}r(x(s),\bm{u})g(s,\bm{u})\,\d\bm{u} -\alpha \mathrm{H}(g(s,\cdot))\right)\,\d s-q(x(T))+\e\\
	&=-\int_{T+t-\bar{t}}^T \left(\int_{U}r(x(s),\bm{u})g(s,\bm{u})\,\d\bm{u} -\alpha \mathrm{H}(g(s,\cdot))\right)\,\d s+q(x(T+t-\bar{t}))-q(x(T))+\e.
	\end{align*}
Recall that
	\[\mathrm{H}(g(s,\cdot)) = -\kl(g\d\bm{u}||\mathscr{U})+\log|U|\le \log|U|.\] 
By the boundedness of   $x(t)$ and the local Lipschitz continuity of $r$ and  $q$, we have
	\[V_\alpha(t,\bm{x})-V_\alpha(\bar{t},\bm{x})\le C|t-\bar{t}|+\e,\]
	where the constant $C$ is independent of $\alpha$. Since  the role of $t$ and $\bar{t}$ can be switched and $\e$ was arbitrarily chosen, we conclude that
	\[|V_\alpha(t,\bm{x})-V_\alpha(\bar{t},\bm{x})|\le C|t-\bar{t}|,\]
	which implies the Lipschitz continuity of $V_\alpha$ in $t$.
\end{proof}

The previous lemma provides the boundedness and the equicontinuity of $V_\alpha$. This leads to the following local uniform convergence result for $V_\alpha$.

\begin{theorem}\label{thm:asymptotic}
	Suppose that Assumption \ref{assumption} holds, $U$ is compact and $0<\alpha\le 1$. Moreover, we assume that $r$ and $q$ are locally Lipschitz continuous in $\bm{x}$. Let $V_\alpha$ and $V_0$ be the unique viscosity solutions to \eqref{HJB_soft} and \eqref{HJB-0} respectively. Then, for any compact subset $K$ of $\bbr^n$, 
	\[V_\alpha \to V_0 \quad \mbox{uniformly on $[0,T]\times K$}.\]
\end{theorem}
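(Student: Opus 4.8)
The plan is to invoke the stability of viscosity solutions together with a compactness argument. First I would recall from Lemma~\ref{lem-unif_equi} that the family $\{V_\alpha\}_{0<\alpha\le 1}$ is uniformly bounded and uniformly (in $\alpha$) Lipschitz continuous on $[0,T]\times K$ for every compact $K\subset\bbr^n$. Hence, given any sequence $\alpha_k\to 0$, the Arzel\`a--Ascoli theorem (applied on an exhaustion of $[0,T]\times\bbr^n$ by compact sets, together with a diagonal argument) yields a subsequence, still denoted $\{\alpha_k\}$, and a continuous function $W:[0,T]\times\bbr^n\to\bbr$ such that $V_{\alpha_k}\to W$ locally uniformly. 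The function $W$ inherits the Lipschitz and boundedness estimates, and since $V_{\alpha_k}(T,\bm{x})=q(\bm{x})$ for every $k$, passing to the limit gives $W(T,\bm{x})=q(\bm{x})$.

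Next I would show that $W$ is a viscosity solution of the limiting HJB equation~\eqref{HJB-0}. This is the standard stability argument. Let $\phi\in C^1([0,T]\times\bbr^n)$ be such that $W-\phi$ has a local maximum at $(t_0,\bm{x}_0)$; after adding a small quartic penalization of $(t,\bm{x})-(t_0,\bm{x}_0)$ to $\phi$ we may assume the maximum is strict and local. Since $V_{\alpha_k}\to W$ locally uniformly, $V_{\alpha_k}-\phi$ attains a local maximum at some $(t_k,\bm{x}_k)\to(t_0,\bm{x}_0)$. Because $V_{\alpha_k}$ is a viscosity subsolution of the soft HJB equation~\eqref{HJB_soft}, $\partial_t\phi(t_k,\bm{x}_k)-H_{\alpha_k}(\bm{x}_k,\nabla_{\bm{x}}\phi(t_k,\bm{x}_k))\ge 0$. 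Now I pass to the limit $k\to\infty$: the points $(t_k,\bm{x}_k)$ and the gradients $\nabla_{\bm{x}}\phi(t_k,\bm{x}_k)$ all lie in a fixed compact subset of $\bbr^{2n}$ on which, by Lemma~\ref{lem-hamiltonian}, $H_{\alpha_k}\to H_0$ uniformly; combining this with the continuity of $\partial_t\phi$ and $\nabla_{\bm{x}}\phi$ yields $\partial_t\phi(t_0,\bm{x}_0)-H_0(\bm{x}_0,\nabla_{\bm{x}}\phi(t_0,\bm{x}_0))\ge 0$. The supersolution inequality follows symmetrically, so $W$ is a viscosity solution of~\eqref{HJB-0} with terminal data $q$.

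Then I would conclude by uniqueness. The Hamiltonian $H_0$ satisfies the same type of structural regularity estimates as $H_\alpha$ in Proposition~\ref{P3.2} (it is the standard optimal-control Hamiltonian under Assumption~\ref{assumption}), so the comparison principle applies and~\eqref{HJB-0} has a unique viscosity solution, namely $V_0$. Hence $W=V_0$. Since every sequence $\alpha_k\to 0$ admits a subsequence along which $V_{\alpha_k}\to V_0$ locally uniformly, the whole family satisfies $V_\alpha\to V_0$ locally uniformly on $[0,T]\times\bbr^n$ as $\alpha\to 0$; restricting to a compact $K\subset\bbr^n$ gives the stated uniform convergence on $[0,T]\times K$.

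The main obstacle is the viscosity-stability step: one must arrange a strict local extremum so that the perturbed maximizers $(t_k,\bm{x}_k)$ genuinely converge (not merely stay bounded), handle the boundary cases $t_0\in\{0,T\}$ where only one-sided test functions are admissible (routine, but the terminal condition must be checked separately, as done above), and verify that $(t_k,\bm{x}_k)$ together with $\nabla_{\bm{x}}\phi(t_k,\bm{x}_k)$ remain in a single compact set so that the \emph{locally} uniform convergence $H_{\alpha_k}\to H_0$ can be applied. Everything else — the Arzel\`a--Ascoli compactness and the appeal to uniqueness — is standard once Lemmas~\ref{lem-hamiltonian} and~\ref{lem-unif_equi} are available.
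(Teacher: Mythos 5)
Your proposal is correct and follows essentially the same route as the paper: uniform boundedness and equicontinuity from Lemma~\ref{lem-unif_equi} plus Arzel\`a--Ascoli to extract a locally uniform limit, the locally uniform convergence $H_\alpha \to H_0$ from Lemma~\ref{lem-hamiltonian} combined with viscosity-solution stability to identify the limit as a viscosity solution of \eqref{HJB-0}, and uniqueness to upgrade subsequential convergence to convergence of the whole family. The only difference is that you spell out the stability-of-viscosity-solutions step, whereas the paper simply cites it (Proposition 2.2, Section II of \cite{Bardi97}).
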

\begin{proof}
Lemma \ref{lem-unif_equi} implies that the value functions $\{V_\alpha\}_{0<\alpha\le 1}$ are uniformly bounded and equicontinuous on $[0,T]\times K$ for every compact subset $K$ of $\bbr^n$. By the Arzel\`a-Ascoli theorem \cite{rudin1976PMA}, there exists a subsequence $\{\alpha_n\}$ and a limit function $V_0$ such that
	\[V_{\alpha_n}\to V_0,\quad \mbox{uniformly on $[0,T]\times K$}.\]
	Therefore, combining this with
	the uniform convergence of $H_\alpha$ to $H_0$ (Lemma~\ref{lem-hamiltonian}), we conclude that $V_0$ is  a viscosity solution to \eqref{HJB-0} \cite[Proposition 2.2 in Section II]{Bardi97}. Since  \eqref{HJB-0} has a unique viscosity solution, we further obtain that the entire sequence $\{V_\alpha\}_{0< \alpha\le 1}$ converges uniformly  to $V_0$ on $[0,T]\times K$ as $\alpha\to0$.
\end{proof}

\subsection{Infinite-Horizon Case}

In this subsection, we briefly discuss the infinite-horizon case. Consider a cost functional of the form
\[J^\alpha_{\bm x}(\mu):=\int_0^\infty e^{-\lambda t} \left(\int_{U}r(x(t),\bm{u})\d\mu-\alpha \mathrm{H}(\mu(t;\cdot))\right)\,\d t, \]
where $\lambda > 0$ is a discount factor, and $x(t)$ is the solution to \eqref{dyn2} with control $\mu$ and initial condition $x(0)={\bm x}$. We choose the set of admissible controls for the infinite-horizon problem as follows.
\begin{definition}
The set of admissible  controls $\mathcal{M}_\infty$  is defined as the set of time-dependent probability measures $\mu:[0,+\infty)\to\mathcal{P}(U)$ that satisfy the following conditions:
	\begin{enumerate}
		\item For all $(t,\bm{x})\in[0,\infty)\times\bbr^n$, we have
		\[\int_U |f(\bm{x},\bm{u})|\mu(t;\d \bm{u})<+\infty,\quad\int_U |r(\bm{x},\bm{u})|\mu(t;\d \bm{u})<+\infty.\]
		\item The  map
		\[t\mapsto \int_U |\bm{u}|\mu(t;\d {\bm u})\]
		is locally integrable, i.e., integrable over any finite time interval $[0,T]$ with $T<+\infty$.
		\item The  maps
		\[t\mapsto e^{-\lambda t}\int_U r(x(t),\bm{u})\mu(t;\d\bm{u}),\quad t\mapsto e^{-\lambda t}\mathrm{H}(\mu(t;\cdot))\]
		are integrable on $[0,+\infty)$.
	\end{enumerate}
\end{definition}
If an admissible  control $\mu\in\mathcal{M}_\infty$ is executed, the solution $x(t)$ is globally well-posed and the cost functional $J_{\bm x}^\alpha(\mu)$ is well-defined.

We first show that the infinite-horizon maximum entropy control problem has an optimal solution under some conditions similar to those in Theorem \ref{thm:exist_finite}.
 The idea of proof is almost the same as that for Theorem \ref{thm:exist_finite}, with modifications to probability measures. However, since the time horizon is now infinite, we need the boundedness of $f$ and $r$.

	\begin{theorem}\label{thm:exist_infite}
		Suppose that Assumption~\ref{assumption} holds  and the control set $U$ is compact. Moreover, we assume that $f$ and $r$ are bounded and Lipschitz continuous in $\bm{x}$.
		Then, for each $\bm{x} \in \mathbb{R}^n$,	 there exists $\mu^\star\in\mathcal{M}_\infty$ such that
		\[J^{\alpha}_{\bm{x}} (\mu^\star) = \inf_{\mu\in\mathcal{M}_\infty}J^{\alpha}_{\bm{x}} (\mu).\]
	\end{theorem}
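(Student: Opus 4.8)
The plan is to mirror the proof of Theorem~\ref{thm:exist_finite}, with the sole structural change that the normalized time density $\frac{1}{T-t}\,\d s$ is replaced by the normalized discount density $\lambda e^{-\lambda t}\,\d t$, so that the compactness argument again takes place among \emph{probability} measures. Fix $\bm{x}\in\bbr^n$, let $\{\mu_i\}\subset\mathcal{M}_\infty$ be a minimizing sequence, and let $x_i$ be the trajectory of \eqref{dyn2} driven by $\mu_i$ with $x_i(0)=\bm{x}$. Using the identity $-\alpha\mathrm{H}(\mu)=\alpha\kl(\mu\|\mathscr{U})-\alpha\log|U|$, the cost becomes
\[
J^\alpha_{\bm{x}}(\mu_i)=\int_0^\infty e^{-\lambda t}\!\int_U r(x_i(t),\bm{u})\,\mu_i(t;\d\bm{u})\,\d t+\alpha\int_0^\infty e^{-\lambda t}\kl(\mu_i(t;\cdot)\|\mathscr{U})\,\d t-\frac{\alpha\log|U|}{\lambda}.
\]
Since $r$ is bounded, the first term and the last constant are bounded uniformly in $i$; as $\{J^\alpha_{\bm{x}}(\mu_i)\}$ is bounded, this forces $\int_0^\infty e^{-\lambda t}\kl(\mu_i(t;\cdot)\|\mathscr{U})\,\d t\le M$ for some $M$ independent of $i$.

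Next introduce the probability measures $\nu_i(\d t,\d\bm{u}):=\lambda e^{-\lambda t}\mu_i(t;\d\bm{u})\,\d t$ and $\mathscr{U}_\infty(\d t,\d\bm{u}):=\lambda e^{-\lambda t}\frac{\d\bm{u}}{|U|}\,\d t$ on the Polish space $[0,\infty)\times U$. A direct computation gives $\kl(\nu_i\|\mathscr{U}_\infty)=\lambda\int_0^\infty e^{-\lambda t}\kl(\mu_i(t;\cdot)\|\mathscr{U})\,\d t\le\lambda M$, so the $\nu_i$ lie in a fixed sublevel set of the relative entropy with respect to the \emph{fixed} probability measure $\mathscr{U}_\infty$; by Lemma~\ref{LB.3} this set is weak-$*$ compact, and we may extract $\nu_{i_k}\xrightharpoonup{*}\nu$. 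Since the time-marginal of each $\nu_i$ equals $\lambda e^{-\lambda t}\,\d t$, the same holds for $\nu$; disintegrating via Lemma~\ref{LB.4} produces a family $\{\rho(t;\cdot)\}_{t\ge0}\subset\mathcal{P}(U)$ with $\nu(\d t,\d\bm{u})=\lambda e^{-\lambda t}\rho(t;\d\bm{u})\,\d t$. Lower semicontinuity of the KL divergence (Lemma~\ref{LA.2}) gives $\int_0^\infty e^{-\lambda t}|\mathrm{H}(\rho(t;\cdot))|\,\d t\le\frac1\lambda\kl(\nu\|\mathscr{U}_\infty)+\frac{|\log|U||}{\lambda}<\infty$, and compactness of $U$ makes $t\mapsto\int_U|\bm{u}|\,\rho(t;\d\bm{u})$ bounded, hence locally integrable; therefore $\rho\in\mathcal{M}_\infty$.

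It remains to check that $\rho$ is optimal. Because $f$ is bounded, each $x_i$ is $\|f\|_\infty$-Lipschitz and $|x_i(t)|\le|\bm{x}|+\|f\|_\infty t$, so on every interval $[0,n]$ the family $\{x_i\}$ is uniformly bounded and equi-Lipschitz; Arzel\`a--Ascoli together with a diagonal extraction yields (along a further subsequence) $x_{i_k}\to x$ uniformly on every compact subinterval of $[0,\infty)$. Combining this with $\nu_{i_k}\xrightharpoonup{*}\nu$ and the continuity of $f$ on compacts, one verifies that $x$ solves \eqref{dyn2} driven by $\rho$ with $x(0)=\bm{x}$. For the discounted running cost, fix $T$: since the time-marginals are the continuous measure $\lambda e^{-\lambda t}\,\d t$, the restrictions $\nu_{i_k}|_{[0,T]\times U}$ converge weak-$*$ to $\nu|_{[0,T]\times U}$ by the Portmanteau theorem, so with $r$ continuous and bounded and $x_{i_k}\to x$ uniformly on $[0,T]$ we get $\int_0^T e^{-\lambda t}\!\int_U r(x_{i_k},\bm{u})\mu_{i_k}(t;\d\bm{u})\,\d t\to\int_0^T e^{-\lambda t}\!\int_U r(x,\bm{u})\rho(t;\d\bm{u})\,\d t$, while both tails are at most $C_r e^{-\lambda T}/\lambda$; letting $T\to\infty$ gives convergence of the full running-cost term. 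Together with $\liminf_k\kl(\nu_{i_k}\|\mathscr{U}_\infty)\ge\kl(\nu\|\mathscr{U}_\infty)$, exactly the bookkeeping at the end of Theorem~\ref{thm:exist_finite} yields $\inf_{\mu\in\mathcal{M}_\infty}J^\alpha_{\bm{x}}(\mu)\ge J^\alpha_{\bm{x}}(\rho)$, so $\rho=\mu^\star$ is a minimizer.

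I expect the main obstacle to be precisely the noncompactness of the time horizon: unlike the finite-horizon case there is no global bound on the states, so the trajectory compactness must be obtained locally on $[0,T]$ and the limit passage in the cost completed by a tail estimate. This is exactly where the extra hypotheses are used---boundedness of $f$ supplies equi-Lipschitz (hence locally precompact) trajectories, and boundedness of $r$ together with the discount factor makes the tail $\int_T^\infty e^{-\lambda t}(\cdots)\,\d t$ vanish uniformly, so that the weak-$*$ convergence of $\nu_{i_k}$ on $[0,\infty)\times U$ upgrades to convergence of $J^\alpha_{\bm{x}}$.
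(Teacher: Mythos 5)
Your proposal is correct and takes essentially the same route as the paper's proof: pass to discount-weighted occupation measures $\nu_i$ on $[0,\infty)\times U$, use compactness of KL sublevel sets together with disintegration to extract a limit control $\rho$, invoke lower semicontinuity of the KL divergence, and handle the running cost by splitting into $[0,T]$ plus a tail bounded by $\tfrac{2C_r}{\lambda}e^{-\lambda T}$ via the boundedness of $r$. Your explicit Arzel\`a--Ascoli/diagonal argument for the trajectory convergence only spells out a step the paper delegates to the finite-horizon proof of Theorem~\ref{thm:exist_finite}.
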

	\begin{proof}
Fix $\bm{x}\in\bbr^n$ and let $\{\mu_i\}_{i=1}^\infty\subset \mathcal{M}_\infty$ be a sequence of admissible controls such that
		\[\lim_{i\to\infty}J_{\bm{x}}^{\alpha}(\mu_i)=\inf_{\mu\in\mathcal{M}_\infty}J_{\bm{x}}^{\alpha}(\mu).\]
		Thus, the costs $J_{\bm{x}}^{\alpha}(\mu_i)$ are bounded.
Using a cost reformulation  similar to \eqref{cost_kl}, we deduce that
		\[\left|\int_0^\infty e^{-\lambda t}\left(\int_{U}r(x_i(t),\bm{u})\,\mu_i(t;\d \bm{u}) +\alpha \kl(\mu_i(t,\cdot)||\mathscr{U})\right)\,\d t-\frac{\alpha}{\lambda}\log|U|\right|<C\]
		for some constant $C$ independent of $i$, 
		where $x_i$ denotes the solution to \eqref{dyn2} with control $\mu_i$. Then, we can uniformly bound the following integral of the KL-divergence term:
		\[\left|\int_0^\infty e^{-\lambda t}\kl(\mu_i(t;\cdot)||\mathscr{U})\,\d t\right|< \frac{1}{\alpha}\left(C+\frac{\alpha}{\lambda}|\log|U||+\frac{\alpha C_r}{\lambda}\right).\]
As in the proof of Theorem \ref{thm:exist_finite}, we introduce the following probability measure on $[0,+\infty)\times U$ for each $\mu_i$:
		\[\nu_i(\d t,\d \bm{u}):= \frac{e^{-\lambda t}}{\lambda} \mu_i(t;\d \bm{u})\d t\]
		and let $\mathscr{U}_\infty(\d t,\d \bm{u}):=\frac{e^{-\lambda t}}{\lambda|U|}\d t \d\bm{u}$. Then, the KL-divergence from $\mathscr{U}_\infty$ to $\nu_i$ is uniformly bounded by a constant $M$:
		\begin{align*}
		\left|\kl(\nu_i||\mathscr{U}_\infty)\right|&=\left|\int_0^\infty \int_U \frac{\frac{e^{-\lambda t}}{\lambda}\mu_i(t;\d \bm{u})}{\frac{e^{-\lambda t}}{\lambda|U|}\d\bm{u}}\log\left(\frac{\frac{e^{-\lambda t}}{\lambda}\mu_i(t;\d \bm{u})}{\frac{e^{-\lambda t}}{\lambda|U|}\d\bm{u}}\right)\frac{e^{-\lambda t}}{\lambda|U|}\d\bm{u} \d t\right|\\
		&=\left|\int_0^\infty\int_U \frac{\mu_i(t;\d \bm{u})}{\frac{1}{|U|}\d\bm{u}}\log\left(\frac{\mu_i(t;\d \bm{u})}{\frac{1}{|U|}\d\bm{u}}\right)\frac{e^{-\lambda t}}{\lambda|U|}\d\bm{u} \d t\right|\\
		&=\frac{1}{\lambda}\left|\int_0^\infty e^{-\lambda t} \kl(\mu_i(t;\cdot)||\mathscr{U})\d t\right|\le \frac{1}{\alpha \lambda}\left(C+\frac{\alpha}{\lambda}|\log|U||+\frac{\alpha C_r}{\lambda}\right)=:M.
		\end{align*}
		Hence, by the argument in the proof of Theorem \ref{thm:exist_finite}, there exists a subsequence $\{\nu_{i_k}\}_{k=1}^\infty$ such that $\nu_{i_k}\xrightharpoonup{*} \nu\in \mathcal{P}([0,+\infty)\times U)$, and there exists $\rho:[0,\infty)\to\mathcal{P}(U)$ such that $\nu(\d t,\d \bm{u}) = \frac{e^{-\lambda t}}{\lambda}\rho(t;\d \bm{u})\,\d t$. To avoid overload in notation, we let $\{\nu_i\}$ denote the subsequence $\{\nu_{i_k}\}$ from now on.
		
		We now show that $\rho$ minimizes $J_{\bm{x}}^\alpha$ over $\mathcal{M}_\infty$. We notice that
		\begin{align*}
		\inf_{\mu\in\mathcal{M}}J_{\bm{x}}^{\alpha}(\mu)&=\liminf_{i\to \infty}\left\{\int_0^\infty e^{-\lambda t} \int_{U} r(x_i(t),\bm{u})\,\mu_i(t;\d \bm{u})\d t +\frac{\alpha}{\lambda} \kl(\nu_i||\mathscr{U}_\infty)\right\}-\frac{\alpha}{\lambda}\log|U|\\
		&\ge\liminf_{i\to\infty}\left\{\int_0^\infty e^{-\lambda t}\int_{U} r(x_i(t),\bm{u})\mu_i(t;\d \bm{u})\d t\right\} +\frac{\alpha}{\lambda}\liminf_{i\to\infty} \kl(\nu_i||\mathscr{U}_\infty)-\frac{\alpha}{\lambda}\log|U|\\
		&\ge \liminf_{i\to\infty}\left\{\int_0^\infty e^{-\lambda t}\int_{U} r(x_i(t),\bm{u})\mu_i(t;\d \bm{u})\d t\right\} +\frac{\alpha}{\lambda} \kl(\nu||\mathscr{U}_\infty)-\frac{\alpha}{\lambda}\log|U|,
		\end{align*}
		where the lower semi-continuity of the KL-divergence is used. Thus, it suffice to show that
		\[\left|\int_0^\infty e^{-\lambda t} \int_U r(x_i(t),\bm{u})\mu_i(t;\d \bm{u})\d t-\int_0^\infty e^{-\lambda t} \int_U r(x(t),\bm{u})\rho(t;\d \bm{u})\d t\right|\to 0\quad \mbox{as $i\to +\infty$},\]
		where $x(t)$ denotes the system state of \eqref{dyn2} when control $\rho$ is employed. By the triangle inequality, 
		\begin{align*}
		&\left|\int_0^\infty e^{-\lambda t} \int_U r(x_i(t),\bm{u})\mu_i(t;\d \bm{u})\d t-\int_0^\infty e^{-\lambda t} \int_U r(x(t),\bm{u})\rho(t;\d \bm{u})\d t\right|\\
		&\qquad \le \left|\int_0^T e^{-\lambda t} \int_U r(x_i(t),\bm{u})\mu_i(t;\d \bm{u})\d t-\int_0^T e^{-\lambda t} \int_U r(x(t),\bm{u})\rho(t;\d \bm{u})\d t\right|\\
		&\qquad\quad +\left|\int_T^\infty e^{-\lambda t} \int_U r(x_i(t),\bm{u})\mu_i(t;\d \bm{u})\d t-\int_T^\infty e^{-\lambda t} \int_U r(x(t),\bm{u})\rho(t;\d \bm{u})\d t\right|=:\mathcal{I}_1+\mathcal{I}_2.
		\end{align*}
As in Theorem \ref{thm:exist_finite} for any fixed $T>0$,  $\mathcal{I}_1\to0$ as $i\to\infty$. On the other hand, $\mathcal{I}_2$ can be estimated as
		\[\mathcal{I}_2 \le 2C_r\int_T^\infty e^{-\lambda t} \d t=\frac{2C_r}{\lambda}e^{-\lambda T}.\]
		Therefore, for any $\e>0$, there exists $T=T(\e)$ such that $\frac{2C_r}{\lambda}e^{-\lambda T}<\frac{\e}{2}$. For such fixed $T(\e)$, there exists an index $N(\e)$ such that
		\[\left|\int_0^{T(\e)} e^{-\lambda t} \int_U r(x_i(t),\bm{u})\mu_i(t;\d \bm{u})\d t-\int_0^{T(\e)} e^{-\lambda t} \int_U r(x(t),\bm{u})\rho(t;\d \bm{u})\d t\right|<\frac{\e}{2}\quad \mbox{for}\quad i>N(\e).\]
		In conclusion, for any $\e>0$, there exists an index $N(\e)$ such that for $i>N(\e)$,
		\[\left|\int_0^\infty e^{-\lambda t} \int_U r(x_i(t),\bm{u})\mu_i(t;\d \bm{u})\d t-\int_0^\infty e^{-\lambda t} \int_U r(x(t),\bm{u})\rho(t;\d \bm{u})\d t\right|<\e,\]
		which implies the desired convergence property. Thus, we finally have
		\begin{align*}
		\inf_{\mu\in\mathcal{M}} J_{\bm{x}}^\alpha(\mu) &\ge \int_0^\infty e^{-\lambda t}\int_U r(x(t),\bm{u})\rho(t;\d \bm{u})\d t +\frac{\alpha}{\lambda}\kl(\nu||\mathscr{U}_{\infty})-\frac{\alpha}{\lambda}\log|U|\\
		&=\int_0^\infty e^{-\lambda t} \left(\int_U r(x(s),\bm{u})\rho(t;\d \bm{u})+\alpha \kl(\rho(t;\cdot)||\mathscr{U})\right)\d t-\frac{\alpha}{\lambda}\log|U|\\
		&=J_{\bm{x}}^\alpha(\rho).
		\end{align*}
	\end{proof}

As emphasized in Remark~\ref{rem:den}, we can consider  density functions $g$ as control variables instead of measures $\mu$.
Let the set $\mathcal{G}_\infty$ be defined by
\[\mathcal{G}_\infty:=\left\{g:[0,+\infty)\to L^1_{+,1}(U)~\mid~ g\d\bm{u} \in \mathcal{M}_\infty\right\}\]
and consider  the  cost functional 
\[J^\alpha_{\bm{x}}(g):=\int_0^\infty e^{-\lambda t}\left(\int_{U}r(x(t), \bm{u})\,g(t,\bm{u})\,\d\bm{u}-\alpha \mathrm{H}(g(t, \cdot))\right)\,\d t,\quad x(0)= \bm{x}.\]
The corresponding value function is given by
 $V_\alpha(\bm{x}):= \inf_{g\in \mathcal{G}_\infty} J^\alpha_{\bm x}(g)$. 
 By the dynamic programming principle, we  obtain the following lemma.
\begin{lemma}
	The value function $V_{\alpha}$ for the infinite horizon problem satisfies
	\[V_{\alpha}(\bm{x}) = \inf_{g\in \mathcal{G}_\infty} \left\{\int_0^h \left(\int_U r(x(t),\bm{u})g(t,\bm{u})\d\bm{u}-\alpha \mathrm{H}(g(t,\cdot))\right)\,\d t +e^{-\lambda h} V_{\alpha}(x(h))\right\},\]
	where $x(t)$ is the solution to \eqref{dyn2} with initial condition $x(0)=\bm{x}$.
\end{lemma}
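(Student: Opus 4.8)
My plan is to reproduce the two-sided comparison argument used in the proof of Lemma~\ref{L3.1} (the finite-horizon dynamic programming identity), adapting it to the discounted infinite-horizon cost by tracking the factor $e^{-\lambda t}$ under time translation. The two structural facts I will use are: (a) the dynamics \eqref{dyn2} are time-homogeneous, so applying a time-shifted control starting from a shifted state produces the correspondingly shifted trajectory; and (b) $\mathcal{G}_\infty$ is closed both under shifting a control in time and under concatenating the restriction $g|_{[0,h)}$ with a second admissible control, because the three integrability conditions defining $\mathcal{M}_\infty$ are preserved when an integral is split at $t=h$ or when one changes variables by a time shift (using $e^{-\lambda(t+h)}=e^{-\lambda h}e^{-\lambda t}$).

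For the ``$\ge$'' inequality I would fix an arbitrary $g\in\mathcal{G}_\infty$ with trajectory $x(\cdot)$, $x(0)=\bm{x}$, and split $J^\alpha_{\bm{x}}(g)$ into the integral over $[0,h]$ and the tail over $[h,\infty)$. Writing $\tilde g(s):=g(s+h)$ and $\tilde x(s):=x(s+h)$, the pair $(\tilde g,\tilde x)$ solves \eqref{dyn2} with $\tilde x(0)=x(h)$ and $\tilde g\in\mathcal{G}_\infty$, so after factoring $e^{-\lambda h}$ out of the tail it equals $e^{-\lambda h}J^\alpha_{x(h)}(\tilde g)\ge e^{-\lambda h}V_\alpha(x(h))$. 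This yields
\[
J^\alpha_{\bm{x}}(g)\ \ge\ \int_0^h e^{-\lambda t}\Big(\int_U r(x(t),\bm{u})g(t,\bm{u})\,\d\bm{u}-\alpha\mathrm{H}(g(t,\cdot))\Big)\,\d t+e^{-\lambda h}V_\alpha(x(h)),
\]
and taking the infimum over $g$ on both sides gives the inequality.

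For ``$\le$'' I would fix $g\in\mathcal{G}_\infty$ and $\varepsilon>0$, let $x(\cdot)$ be the $g$-trajectory on $[0,h]$ with $x(0)=\bm{x}$, and choose $g'\in\mathcal{G}_\infty$ with $J^\alpha_{x(h)}(g')\le V_\alpha(x(h))+\varepsilon$. Concatenating, set $\hat g:=g$ on $[0,h)$ and $\hat g(s):=g'(s-h)$ for $s\ge h$; then $\hat g\in\mathcal{G}_\infty$, its trajectory $\hat x$ equals $x$ on $[0,h]$, and $t\mapsto\hat x(t+h)$ is the trajectory of \eqref{dyn2} under control $g'$ started at $x(h)$. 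Hence
\[
V_\alpha(\bm{x})\le J^\alpha_{\bm{x}}(\hat g)=\int_0^h e^{-\lambda t}(\cdots)\,\d t+e^{-\lambda h}J^\alpha_{x(h)}(g')\le\int_0^h e^{-\lambda t}(\cdots)\,\d t+e^{-\lambda h}\big(V_\alpha(x(h))+\varepsilon\big);
\]
taking the infimum over $g$ and then letting $\varepsilon\to0$ completes the argument.

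I do not expect any genuinely hard step; the point requiring the most care is the admissibility bookkeeping, i.e.\ checking that the shifted control $g(\cdot+h)$ and the concatenation $\hat g$ really lie in $\mathcal{G}_\infty$, which I would dispatch by splitting each of the three defining integrals at $t=h$ and invoking admissibility of $g$ and $g'$ on the two pieces (together with $e^{-\lambda(t+h)}=e^{-\lambda h}e^{-\lambda t}$ for the shifted piece). A secondary subtlety: absent the boundedness hypotheses of Theorem~\ref{thm:exist_infite}, $V_\alpha$ is a priori only $[-\infty,+\infty]$-valued; the displayed identities remain valid with the usual conventions on sums and infima, but if a finite-valued $V_\alpha$ is desired one may first establish uniform bounds on $J^\alpha_{\bm{x}}$ in the spirit of Lemma~\ref{lem-unif_equi}.
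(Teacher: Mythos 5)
Your proposal is correct and is essentially the paper's intended argument: the paper omits the proof, stating it is almost identical to the finite-horizon dynamic programming lemma (Lemma~\ref{L3.1}), and your two-sided splitting/concatenation argument with the time shift and the factorization $e^{-\lambda(t+h)}=e^{-\lambda h}e^{-\lambda t}$, plus the admissibility bookkeeping for $\mathcal{G}_\infty$, is exactly that adaptation. The only discrepancy is that what you prove (correctly) keeps the discount $e^{-\lambda t}$ inside the integral over $[0,h]$, whereas the lemma as printed drops it --- a harmless slip in the paper, since both forms yield the same HJB equation \eqref{HJB-inf} in the limit $h\to 0$.
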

Since the proof of this lemma is almost identical to that of Lemma \ref{dpp}, we have omitted the proof.

Using this lemma, we can formally derive the following soft HJB equation for the infinite-horizon problem:
\begin{equation}\label{HJB-inf}
\lambda V_\alpha+H_\alpha(\bm{x},\nabla_{\bm{x}}V_\alpha)=0 \quad \mbox{on } \mathbb{R}^n,
\end{equation}
where the soft Hamiltonian, given by
\begin{equation}\nonumber
 H_\alpha(\bm{x},\bm{p})=\alpha\log\int_{U}\exp\left(-\frac{\bm{p}\cdot f(\bm{x},\bm{u})+r(\bm{x},\bm{u})}{\alpha}\right)\,\d\bm{u},
\end{equation}
is identical to that in the finite-horizon case. 
Recall the following standard definition of viscosity solutions to stationary HJ equations: 
\begin{definition}
	A continuous function $V: \mathbb{R}^n \to \mathbb{R}$ is a viscosity solution of \eqref{HJB-inf} provided that
	\begin{enumerate}
		\item (Subsolution) For any $\phi\in C^1(\bbr^n)$ such that $V-\phi$ has a local maximum at $\bm{x}_0$,
		\[\lambda V(\bm{x}_0) +H_{\alpha}(\bm{x}_0,\nabla_{\bm{x}}\phi(\bm{x}_0))\le 0.\]
		\item (Supersolution) For any $\phi\in C^1(\bbr^n)$ such that $V-\phi$ has a local minimum at $\bm{x}_0$,
		\[\lambda V(\bm{x}_0) +H_{\alpha}(\bm{x}_0,\nabla_{\bm{x}}\phi(\bm{x}_0))\ge 0.\]
	\end{enumerate}
\end{definition}

We can show that the soft HJB equation \eqref{HJB-inf} has a unique viscosity solution, which coincides with the value function $V_\alpha$ of the infinite-horizon problem. Since the proof is almost the same as that for the finite-horizon case, we have omitted the proof.
\begin{theorem}
	Suppose that Assumption \ref{assumption} holds  and  the control set $U$ is compact. Then, the value function $V_\alpha$ is the unique viscosity solution of the HJB equation \eqref{HJB-inf}.
\end{theorem}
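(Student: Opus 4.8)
The plan is to reproduce, with the modifications appropriate to a stationary discounted problem, the argument used for the finite-horizon soft HJB equation in Theorem~\ref{thm:viscosity}. First I would check that $V_\alpha$ is finite and continuous, so that the viscosity-solution framework applies; under the boundedness hypotheses of Theorem~\ref{thm:exist_infite} it is in fact bounded, since the uniformly distributed constant control $\bar g\equiv 1/|U|$ gives $V_\alpha(\bm x)\le(C_r+\alpha|\log|U||)/\lambda$ (using $\mathrm{H}(\bar g)=\log|U|$), while $\mathrm{H}(g(t,\cdot))\le\log|U|$ for any admissible $g$ makes the running integrand bounded below, so $V_\alpha(\bm x)\ge-(C_r+\alpha\log|U|)/\lambda$. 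Local Lipschitz --- hence continuous --- dependence of $V_\alpha$ on $\bm x$ follows exactly as in Step~2 of Lemma~\ref{lem-unif_equi}: take a near-optimal control from $\bm y$, run it from $\bm x$ as well, bound $|\bar x(s)-\bar y(s)|\le C|\bm x-\bm y|$ by Lemma~\ref{stability}, and use the modulus of $r$ together with the exponential weight $e^{-\lambda s}$. The dynamic programming identity is recorded in the lemma immediately preceding the theorem, so it can be used directly.

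For the subsolution property, let $\phi\in C^1(\bbr^n)$ with $V_\alpha-\phi$ attaining a local maximum at $\bm x_0$; choose an arbitrary constant control $g(t,\cdot)\equiv g_2\in L^1_{+,1}(U)$ and let $x(\cdot)$ solve \eqref{dyn2} from $x(0)=\bm x_0$. The dynamic programming identity gives $V_\alpha(\bm x_0)\le\int_0^h(\int_U r(x(t),\bm u)g_2(\bm u)\,\d\bm u-\alpha\mathrm{H}(g_2))\,\d t+e^{-\lambda h}V_\alpha(x(h))$. Replacing $V_\alpha$ by $\phi$ near $\bm x_0$, writing $V_\alpha(\bm x_0)-e^{-\lambda h}\phi(x(h))=(1-e^{-\lambda h})\phi(\bm x_0)-e^{-\lambda h}(\phi(x(h))-\phi(\bm x_0))$, dividing by $h$ and sending $h\to0$ --- the factor $(1-e^{-\lambda h})/h\to\lambda$ being responsible for the zeroth-order term --- gives
\[
\lambda V_\alpha(\bm x_0)\le\int_U\big(\nabla_{\bm x}\phi(\bm x_0)\cdot f(\bm x_0,\bm u)+r(\bm x_0,\bm u)\big)g_2(\bm u)\,\d\bm u-\alpha\mathrm{H}(g_2).
\]
Taking the infimum over $g_2$ and invoking Lemma~\ref{L3.2} identifies the right-hand side with $-H_\alpha(\bm x_0,\nabla_{\bm x}\phi(\bm x_0))$, i.e. $\lambda V_\alpha(\bm x_0)+H_\alpha(\bm x_0,\nabla_{\bm x}\phi(\bm x_0))\le0$. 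For the supersolution property I would argue by contradiction, as in the finite-horizon proof: assuming, after adding a constant to $\phi$ so that $\phi=V_\alpha$ at $\bm x_0$ and $\phi\le V_\alpha$ nearby, that $\lambda\phi+H_\alpha(\cdot,\nabla_{\bm x}\phi)\le-\eta<0$ on a neighborhood $\mathcal N$ of $\bm x_0$, one picks for each small $h$ an $\varepsilon h$-optimal control $g_1$ in the dynamic programming identity, uses $e^{-\lambda h}V_\alpha(x(h))\ge e^{-\lambda h}\phi(x(h))$ together with $\frac{\d}{\d t}[e^{-\lambda t}\phi(x(t))]=e^{-\lambda t}(-\lambda\phi(x(t))+\nabla_{\bm x}\phi(x(t))\cdot\int_U f(x(t),\bm u)g_1(t,\bm u)\,\d\bm u)$ to rewrite the inequality as an integral over $[0,h]$, and applies Lemma~\ref{L3.2} pointwise in $t$ to bound the integrand above by $\lambda\phi(x(t))+H_\alpha(x(t),\nabla_{\bm x}\phi(x(t)))\le-\eta$; integrating and dividing by $h$ then contradicts the $\varepsilon h$-optimality once $\varepsilon<\eta$. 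This shows $V_\alpha$ is a viscosity solution.

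Uniqueness follows from the comparison principle for stationary Hamilton--Jacobi equations. The discount coefficient $\lambda>0$ supplies the strict monotonicity in the $V$-variable, and the structural estimates on $H_\alpha$ from Proposition~\ref{P3.2} --- the modulus of continuity in $\bm x$ and the local Lipschitz continuity in $\bm p$ --- provide the remaining hypotheses, so that any (bounded) continuous subsolution lies below any (bounded) continuous supersolution; see, e.g., the stationary comparison theorem in \cite[Section~III]{Bardi97}. Therefore $V_\alpha$ is the unique viscosity solution. I expect the only genuinely delicate point to be the same as in the finite-horizon case: the passage to the limit $h\to0$ in the dynamic programming identity --- justifying $\frac1h\int_0^h(\cdots)\,\d t\to(\cdots)\big|_{t=0}$ for the running cost and the entropy along a possibly time-varying near-optimal control, and correctly isolating the $\lambda V_\alpha$ term generated by the factor $e^{-\lambda h}$ --- together with the preliminary continuity estimate for $V_\alpha$, without which the viscosity-solution framework would not even be applicable.
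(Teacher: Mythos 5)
Your proposal is correct and follows essentially the same route as the paper, which omits the proof precisely because it is "almost the same as that for the finite-horizon case": dynamic programming plus Lemma~\ref{L3.2} for the sub/supersolution verification (constant controls for the subsolution, an $\varepsilon h$-optimal control and a contradiction for the supersolution, with the $(1-e^{-\lambda h})/h\to\lambda$ factor producing the zeroth-order term), and the stationary comparison principle with $\lambda>0$ and the structure conditions of Proposition~\ref{P3.2} for uniqueness. The only caveat, which you already flag, is that boundedness of $V_\alpha$ is not guaranteed under the theorem's stated hypotheses alone (it needs, e.g., the bounded $f,r$ of Theorem~\ref{thm:exist_infite}), so the comparison argument should be invoked in the class of continuous functions with the growth/continuity provided by the structural estimates rather than only bounded ones — a point the paper itself leaves implicit.
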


The conditions for optimality  can be obtained using almost the same argument as in the finite horizon case. In particular, optimal controls can be constructed as $\Phi_i(\bm{x})\in S_i(\bm{x})$,\footnote{When using $S_2$, we further assume that $D^\pm V_\alpha(\bm{x})\neq\emptyset$ for all $\bm{x} \in \mathbb{R}^n$.} 
where
\[ S_1(\bm{x}):=\Bigg\{\phi\in L^1_{+,1}(U)~:~\partial^-V_\alpha\left(\bm{x};\int_{U}f(\bm{x},\bm{u})\phi(\bm{u})\d \bm{u}\right)+\int_U r(\bm{x},\bm{u})\phi(\bm{u})\d\bm{u}-\alpha \mathrm{H}(\phi(\cdot))\le\lambda V_\alpha(\bm{x}) \Bigg\},\] and  \[S_2(\bm{x}):=\Bigg\{\phi\in L^1_{+,1}(U)~:~\phi(\bm{u})=\frac{\exp\left(-\frac{1}{\alpha}(\bm{p}_x \cdot f(\bm{x},\bm{u})+r(\bm{x},\bm{u}))\right)}{\int_U \exp\left(-\frac{1}{\alpha}(\bm{p}_x \cdot f(\bm{x},\bm{u})+r(\bm{x},\bm{u}))\right)\d \bm{u}} \mbox{ for } \bm{p}_x\in D^\pm V_\alpha(\bm{x})\Bigg\}.\]

\section{Tractable Methods for Maximum Entropy Optimal Control}\label{sec:4}

A salient feature of the maximum entropy  control problem 
is its tractability.
In this section, we discuss tractable methods based on the soft HJB equation.

\subsection{Control-Affine Systems}

We consider a control-affine system with 
$f(\bm{x},\bm{u}) = f_1(\bm{x})+f_2(\bm{x})\bm{u}$
and a cost function of the form $r(\bm{x},\bm{u}) = r_1(\bm{x}) +\frac{1}{2}\bm{u}^\top R \bm{u}$. Here, we assume that $f_1(0)=0$, $R$ is a symmetric positive definite matrix, and
$r_1(\bm{x})$ is a  positive definite function, i.e., $r_1(\bm{x})\ge 0$ for all $\bm{x} \in \mathbb{R}^n \setminus \{0\}$ and $r_1(0)=0$.\footnote{The positive definiteness of $r_1$ is needed for the asymptotical stability of the closed-loop system with optimal $g_\alpha^\star$ in Theorem~\ref{T4.1}.}
 As in the standard optimal control problem for control-affine systems, we set $U=\bbr^{m}$. 
The maximum entropy   control problem for  control-affine systems can then be formulated as
 \begin{equation}\label{max_ent_CA}
 \min_{g\in \mathcal{G}_\infty}\int_0^\infty e^{-\lambda t}\left(\int_U \left(r_1(x(t)) +\frac{1}{2}\bm{u}^\top R \bm{u}\right)g(t,\bm{u})\d\bm{u}-\alpha \mathrm{H}(g(t,\cdot))\d t\right),
 \end{equation}
 where $x(t)$ is the solution to
 \[\dot{x}(t) = \int_{U} (f_1(x(t))+f_2(x(t))\bm{u})g(t,\bm{u})\d \bm{u},\quad x(0)=\bm{x}.\]
 Since the control set $U$ is not compact in this case, we cannot directly use the theory in Section \ref{sec:3}. 
 Instead, we consider a different method to obtain an optimal control and compare it with the theory of soft HJB equations in Section \ref{sec:3}. 
The following theorem indicates that the optimal control is uniquely characterized as a normal distribution.
Furthermore, its mean corresponds to the optimal control for the standard problem without an entropy term.

 \begin{theorem}\label{T4.1}
 	Suppose that $V_0$ is a   unique $C^1$-positive definite solution  to the following HJB equation:
 	 \begin{equation}\label{HJB0_CA}
 	 \lambda V_0+ \frac{1}{2}(\nabla_{\bm{x}} V_0)^\top f_2(\bm{x})R^{-1}f_2({\bm{x}})^\top(\nabla_{\bm{x}} V_0) -r_1(\bm{x}) -(\nabla_{\bm{x}}V_0)^\top f_1(\bm{x})=0.
 	 \end{equation}
 	 Then, the optimal control $g^\star_\alpha\in \mathcal{G}_\infty$ for the maximum entropy optimal control problem for the control-affine system \eqref{max_ent_CA} is uniquely given as the probability density function of the normal distribution $\mathcal{N}(-R^{-1}f_2(\bm{x})^\top \nabla_{\bm{x}}V_0(\bm{x}),\alpha R^{-1})$, i.e.,
 	 \[g^\star_\alpha(\bm{x},\bm{u}):=\sqrt{\frac{\det{R}}{(2\pi \alpha)^m}}\exp\left(-\frac{1}{2\alpha}(\bm{u}+R^{-1}f_2(\bm{x})^\top \nabla_{\bm{x}}V_0(\bm{x}))^\top R(\bm{u}+R^{-1}f_2(\bm{x})^\top \nabla_{\bm{x}}V_0(\bm{x}))\right).\]
 	 Note that $V_0$ is the (optimal) value function of the standard optimal control problem.
 \end{theorem}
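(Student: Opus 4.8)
The plan is to exploit the fact that, for this control-affine structure with cost quadratic in $\bm{u}$, the inner minimization over densities in the soft HJB equation can be solved in closed form and the soft Hamiltonian becomes an \emph{explicit Gaussian integral}. Write $L(\bm{x},\bm{p},\bm{u}) = \bm{p}\cdot f_1(\bm{x}) + r_1(\bm{x}) + \bm{p}\cdot f_2(\bm{x})\bm{u} + \tfrac12 \bm{u}^\top R\bm{u}$. Since $R\succ0$, the map $\bm{u}\mapsto L(\bm{x},\bm{p},\bm{u})$ is strictly coercive, so $\bm{u}\mapsto \exp(-L(\bm{x},\bm{p},\bm{u})/\alpha)$ is integrable over $\bbr^m$, and completing the square in $\bm{u}$ gives
\[
\int_{\bbr^m}\exp\!\left(-\tfrac{1}{\alpha}L(\bm{x},\bm{p},\bm{u})\right)\d\bm{u} = \sqrt{\tfrac{(2\pi\alpha)^m}{\det R}}\,\exp\!\left(\tfrac{1}{\alpha}\Big(\tfrac12\bm{p}^\top f_2(\bm{x})R^{-1}f_2(\bm{x})^\top\bm{p} - \bm{p}\cdot f_1(\bm{x}) - r_1(\bm{x})\Big)\right),
\]
so that $H_\alpha(\bm{x},\bm{p}) = \tfrac{\alpha}{2}\log\tfrac{(2\pi\alpha)^m}{\det R} - \bm{p}\cdot f_1(\bm{x}) - r_1(\bm{x}) + \tfrac12\bm{p}^\top f_2(\bm{x})R^{-1}f_2(\bm{x})^\top\bm{p}$.

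Next I would establish the analogue of Lemma~\ref{L3.2} for the non-compact set $U=\bbr^m$. For any density $g$ with finite entropy and finite second moment, a direct manipulation gives $\int_{\bbr^m}L\,g\,\d\bm{u} - \alpha\mathrm{H}(g) = \alpha\,\kl\!\big(g\,\d\bm{u}\,\|\,g^*_\alpha\,\d\bm{u}\big) - H_\alpha(\bm{x},\bm{p})$, where $g^*_\alpha(\bm{x},\bm{p},\bm{u}) \propto \exp(-L(\bm{x},\bm{p},\bm{u})/\alpha)$; finiteness of the partition function (established above) makes $g^*_\alpha$ a bona fide probability density, and nonnegativity of the KL divergence together with its vanishing only at equality shows $g^*_\alpha$ is the \emph{unique} minimizer with minimal value $-H_\alpha(\bm{x},\bm{p})$. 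Completing the square once more identifies $g^*_\alpha(\bm{x},\bm{p},\cdot)$ with the density of $\mathcal N\!\big(-R^{-1}f_2(\bm{x})^\top\bm{p},\ \alpha R^{-1}\big)$.

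Then I would substitute the closed form of $H_\alpha$ into the infinite-horizon soft HJB equation $\lambda V_\alpha + H_\alpha(\bm{x},\nabla_{\bm{x}}V_\alpha)=0$ and observe that it coincides with \eqref{HJB0_CA} up to the additive constant $c_\alpha := -\tfrac{\alpha}{2\lambda}\log\tfrac{(2\pi\alpha)^m}{\det R}$. Hence $V_\alpha = V_0 + c_\alpha$ is a $C^1$ solution with $\nabla_{\bm{x}}V_\alpha = \nabla_{\bm{x}}V_0$. To conclude optimality I would run a verification argument: for any admissible $g\in\mathcal G_\infty$ with trajectory $x(\cdot)$, integrating $\tfrac{\d}{\d t}\big(e^{-\lambda t}V_\alpha(x(t))\big)$ and using the HJB identity together with the variational bound of the previous step yields $V_\alpha(\bm{x}) \le \int_0^T e^{-\lambda t}\big(\int_U r\,g\,\d\bm{u} - \alpha\mathrm{H}(g)\big)\d t + e^{-\lambda T}V_\alpha(x(T))$, with equality at every time iff $g(t,\cdot)=g^*_\alpha\big(x(t),\nabla_{\bm{x}}V_0(x(t)),\cdot\big)$. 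Letting $T\to\infty$, positive definiteness of $r_1$ and of $V_0$ forces the closed loop under $g^*_\alpha$ to be asymptotically stable, so trajectories stay bounded and $e^{-\lambda T}V_\alpha(x(T))\to0$; this gives $V_\alpha(\bm{x})\le J^\alpha_{\bm{x}}(g)$ for all admissible $g$ with equality for $g^*_\alpha$, proving that $g^*_\alpha$ is the unique optimal control and $V_\alpha$ the value function.

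The main obstacle is the non-compactness of $U=\bbr^m$, which puts the problem outside the viscosity-solution framework of Section~\ref{sec:3} and forces the verification-theorem route: one must (i) justify the Gaussian integral computation and the extension of Lemma~\ref{L3.2} to densities on $\bbr^m$ carefully (controlling moments so the KL identity is valid), (ii) check that $g^*_\alpha$ is admissible, i.e.\ lies in $\mathcal G_\infty$ --- routine since Gaussians have all moments finite, once closed-loop boundedness is known --- and (iii) handle the transversality limit $e^{-\lambda T}V_\alpha(x(T))\to0$, which is exactly where the positive-definiteness hypotheses on $r_1$ and $V_0$ (and the discount $\lambda>0$) enter. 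Uniqueness of the optimal control then follows from the strict equality case in the verification inequality, inherited from strict convexity of $\bm{u}\mapsto L$ (equivalently, strict positivity of $\kl(\cdot\,\|\,g^*_\alpha)$ away from $g^*_\alpha$).
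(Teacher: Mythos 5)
Your proposal is essentially correct, but it takes a genuinely different route from the paper's. The paper never works with the soft Hamiltonian over the noncompact set $U=\bbr^m$: it observes that for control-affine dynamics with cost quadratic in $\bm{u}$ both the trajectory and the running cost depend on $g(t,\cdot)$ only through its mean and covariance, so the maximum-entropy property of the Gaussian among distributions with fixed first two moments forces the optimizer to be normal; a pointwise eigenvalue argument then yields the covariance $\alpha R^{-1}$, and the remaining problem in the mean is exactly the standard control-affine problem, whose unique optimal feedback $-R^{-1}f_2(\bm{x})^\top\nabla_{\bm{x}}V_0(\bm{x})$ and closed-loop stability (via positive definiteness of $r_1$) are imported from the classical literature. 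You instead compute $H_\alpha$ in closed form by a Gaussian integral, extend Lemma~\ref{L3.2} to $U=\bbr^m$ through the KL decomposition, observe that $V_\alpha=V_0-\frac{\alpha}{2\lambda}\log\frac{(2\pi\alpha)^m}{\det R}$ is a classical solution of the infinite-horizon soft HJB equation (in effect you prove Proposition~\ref{P:ca} first, whereas the paper derives it \emph{after} Theorem~\ref{T4.1}), and conclude by a verification theorem; this makes the connection to the Section~\ref{sec:3} framework explicit and yields the value function at the same time, at the price of carrying the verification burden yourself. One caveat in that step: positive definiteness of $V_0$ only gives $\liminf_{T\to\infty}e^{-\lambda T}V_\alpha(x(T))\ge 0$, which is what the equality case with $g^\star_\alpha$ needs (there closed-loop stability, coming from positive definiteness of $r_1$, gives the limit $0$), but the inequality $V_\alpha(\bm{x})\le J^\alpha_{\bm{x}}(g)$ for an \emph{arbitrary} admissible $g$ requires the opposite control, e.g.\ $\liminf_{T\to\infty} e^{-\lambda T}V_\alpha(x(T))\le 0$ along a subsequence; this follows from the admissibility requirement that $t\mapsto e^{-\lambda t}\int_U r(x(t),\bm{u})g(t,\bm{u})\,\d\bm{u}$ be integrable combined with a growth comparison such as $V_0(\bm{x})\le C(1+r_1(\bm{x}))$, not from positive definiteness alone. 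The paper sidesteps this technicality only because it delegates the analogous verification step for the reduced standard problem to its cited references, so your argument, with that transversality point tightened, is a legitimate and somewhat more self-contained alternative.
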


 \begin{proof}
 	We split the proof into two steps. First, we show that   $g_\alpha^\star$ is given as the probability density of a normal distribution. Then, we confirm that the mean and the covariance of the normal distribution have the desired form.\\	
 \noindent $\bullet$ (Step 1) : We first highlight that, in the control-affine case, the system \eqref{dyn2} (with $\mu(t;\d\bm{u}) = g(t,\bm{u})\d \bm{u}$) can be simplified as
 \begin{align*}
 \dot{x}(t) &= \int_U (f_1(x(t))+f_2(x(t))\bm{u})g(t,\bm{u}) \d \bm{u} = f_1(x(t)) +f_2(x(t)) \int_U \bm{u}g(t,\bm{u})\d \bm{u} \\
 &= f_1(x(t))+f_2(x(t))\mathbb{E}[g],
 \end{align*}
 where $\mathbb{E}[g]$ denotes the mean of the probability density $g$. Therefore, the system dynamics is completely determined by the mean of $g$. Moreover, the running cost is expressed as
 \begin{align*}
 \int_U r(\bm{x},\bm{u})g(t,\bm{u})\,\d \bm{u} &= \int_U (r_1(\bm{x})+\frac{1}{2}\bm{u}^\top R \bm{u})g(t,\bm{u}) \d u=r_1(\bm{x}) +\frac{1}{2}\int_U (\bm{u}^\top R \bm{u}) g(t,\bm{u})\d \bm{u}\\
 &=r_1(\bm{x}) +\frac{1}{2}\int_U ((\bm{u}-\mathbb{E}[g])^\top R (\bm{u}-\mathbb{E}[g]))g(t,\bm{u})\d \bm{u}+\frac{1}{2}\mathbb{E}[g]^\top R\mathbb{E}[g]\\
 &=r_1(\bm{x}) +\frac{1}{2}\int_U \mbox{tr}((\bm{u}-\mathbb{E}[g])^\top R (\bm{u}-\mathbb{E}[g]))g(t,\bm{u}) \d \bm{u} +\frac{1}{2}\mathbb{E}[g]^\top R\mathbb{E}[g]\\
 &=r_1(\bm{x}) +\frac{1}{2}\int_U \mbox{tr}(R (\bm{u}-\mathbb{E}[g])(\bm{u}-\mathbb{E}[g])^\top) g(t,\bm{u}) \d \bm{u}+\frac{1}{2}\mathbb{E}[g]^\top R\mathbb{E}[g]\\
 &=r_1(\bm{x}) +\frac{1}{2}\mbox{tr}(R\mbox{Cov}[g])+\frac{1}{2}\mathbb{E}[g]^\top R\mathbb{E}[g],
 \end{align*}
 where $\mbox{Cov}[g]$ denotes the covariance of the probability density $g$. Hence, for the control-affine system with the quadratic control cost, the system dynamics as well as the running cost are determined by the mean and the covariance  of the probability density. Therefore, to minimize the total cost, the optimal control $g_\alpha^\star$ should maximize the entropy when the mean and the covariance are fixed. On the other hand, it is well-known that the normal distribution has the maximum differential entropy among all probability distributions with the same mean and covariance. Therefore, the optimal control should be given as a normal distribution. \\
 \noindent $\bullet$ (Step 2): Now, suppose that the optimal control $g^\star_\alpha(t)$ follows the normal distribution $\mathcal{N}(c(t),\Sigma(t))$. Since the entropy of the normal distribution $\mathcal{N}(c,\Sigma)$ is given by $\frac{1}{2}\log\det(2\pi e\Sigma)$, the cost can be explicitly calculated as
 \[\int_U r(\bm{x},\bm{u})g^\star_\alpha(t,\bm{u}) \d \bm{u} -\alpha \mathrm{H}(g(t,\cdot)) = r_1(\bm{x})+\frac{1}{2}\mbox{tr}(R\Sigma(t))+\frac{1}{2}c(t)^\top R c(t) -\frac{\alpha}{2}\log\det(\Sigma(t))-\frac{\alpha m}{2}\log(2\pi e).\]
Note that the system trajectory $x(t)$ does not depend on the covariance matrix. Thus, an optimal covariance matrix $\Sigma(t)$ minimizes
 \[\frac{1}{2}\mbox{tr}(R\Sigma(t))-\frac{\alpha}{2}\log\det(\Sigma(t))\]
 for all $t > 0$.
 To find such $\Sigma$, let $\tilde{\Sigma} = R\Sigma$. Then, we need to find $\tilde{\Sigma}$ that minimizes
 \[  \mbox{tr}(\tilde{\Sigma})-\alpha\log\det(R^{-1}\tilde{\Sigma})= \mbox{tr}(\tilde{\Sigma})-\alpha\log\det(\tilde{\Sigma}) .\]
 Since the trace and the determinant of $\tilde{\Sigma}$ solely depend on the set of eigenvalues, the problem is equivalent to find the set of eigenvalues $\{\lambda_1,\ldots, \lambda_m\}$ of $\tilde{\Sigma}$ that minimizes
 \[\mbox{tr}(\tilde{\Sigma})-\alpha\log\det(\tilde{\Sigma})=\sum_{i=1}^m \lambda_i -\alpha \log \prod_{i=1}^m\lambda_i=\sum_{i=1}^m(\lambda_i-\alpha \log \lambda_i).\]
 This quantity is minimized when $\lambda_i=\alpha$ for all $i=1,2,\ldots, m$. Therefore, $\tilde{\Sigma}$ should be equal to $\alpha I$, and thus $\Sigma = \alpha R^{-1}$. Now, the optimal control problem is reduced to find $c(t)$ that minimizes
 \[\int_0^{\infty} e^{-\lambda t}\left(r_1(x(t)) +\frac{1}{2}c(t)^\top Rc(t) +C\right)\d t,\]
 where $C$ is a constant and $x(t)$ satisfies $\dot{x}(t) = f_1(x(t))+f_2(x(t))c(t)$. However, this is equivalent to the standard optimal control problem with the control-affine system and the quadratic cost. Therefore, if $V_0$ is a $C^1$-positive definite solution to the HJB equation \eqref{HJB0_CA}, it is  unique in the same class~\cite{jiang2014robust}. Furthermore, $c(t)$ should be given as the optimal control of the standard version, which is uniquely given as $-R^{-1}f_2(x(t))^\top \nabla_{\bm{x}} V_0$~\cite{lewis2012optimal}. 
 Note that the control $c(t)$ stabilizes the system  by the positive definiteness of $r_1$.
 In conclusion, the optimal control $g^\star_\alpha\in\mathcal{G}_\infty$ is uniquely given as the normal distribution with mean 
 $c(t) = -R^{-1}f_2(x(t))^\top \nabla_{\bm{x}}V_0(x(t))$ and covariance matrix $\Sigma(t)=\alpha R^{-1}$.
 \end{proof}

Using the optima control $g^\star_\alpha$ identified in Theorem~\ref{T4.1}, we can directly derive the soft HJB equation for the control-affine case. 
 \begin{proposition}\label{P:ca}
 	Suppose that $V_0\in C^1(\bbr^n)$ is a solution of the  HJB equation \eqref{HJB0_CA}. Then, the value function $V_\alpha$ for the maximum entropy optimal control problem \eqref{max_ent_CA} is given by 
 	\[V_\alpha(\bm{x}) = V_0(\bm{x})-\frac{\alpha}{2\lambda}\log\frac{(2\pi\alpha)^m}{\det R}.\]
 	Moreover, it satisfies the following HJB equation in the classical sense (and hence in the  sense of viscosity solutions):
 	 \[\lambda V_\alpha +\frac{1}{2}(\nabla_{\bm{x}}V_\alpha)^\top f_2(\bm{x}) R^{-1} f_2(\bm{x})^\top(\nabla_{\bm{x}}V_\alpha)-r_1(\bm{x})-(\nabla_{\bm{x}}V_\alpha)^\top f_1(\bm{x})+\frac{\alpha}{2}\log\frac{(2\pi\alpha)^m}{\det R}=0.\]
 	 We note that this is exactly the same as the soft HJB equation \eqref{HJB-inf} for the control-affine case.
 \end{proposition}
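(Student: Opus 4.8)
The plan is to read off $V_\alpha$ from the optimal control already identified in Theorem~\ref{T4.1} and then to confirm both forms of the HJB equation by direct substitution. First I would evaluate the cost functional in \eqref{max_ent_CA} along the optimal feedback $g_\alpha^\star(t)=\mathcal{N}\big(c^\star(t),\alpha R^{-1}\big)$ with $c^\star(t)=-R^{-1}f_2(x^\star(t))^\top\nabla_{\bm{x}}V_0(x^\star(t))$, where $x^\star$ denotes the trajectory it generates from $x(0)=\bm{x}$. Reusing the computations in the proof of Theorem~\ref{T4.1}, the integrand at time $t$ equals
\[
r_1(x^\star(t))+\tfrac12\,\mathrm{tr}\!\big(R\,\alpha R^{-1}\big)+\tfrac12\,c^\star(t)^\top R\,c^\star(t)-\tfrac{\alpha}{2}\log\det(\alpha R^{-1})-\tfrac{\alpha m}{2}\log(2\pi e).
\]
Collecting the terms independent of the state, one uses $\tfrac12\mathrm{tr}(R\,\alpha R^{-1})=\tfrac{\alpha m}{2}$ to cancel the $-\tfrac{\alpha m}{2}$ hidden in $-\tfrac{\alpha m}{2}\log(2\pi e)$, and what remains collapses to $-\tfrac{\alpha}{2}\log\frac{(2\pi\alpha)^m}{\det R}$. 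Since the first integral $\int_0^\infty e^{-\lambda t}\big(r_1(x^\star(t))+\tfrac12 c^\star(t)^\top R\,c^\star(t)\big)\,\d t$ is exactly $V_0(\bm{x})$ (by Step~2 of the proof of Theorem~\ref{T4.1}, $c^\star$ is the optimal control of the standard problem) and $\int_0^\infty e^{-\lambda t}\,\d t=1/\lambda$, this yields the claimed formula $V_\alpha=V_0-\frac{\alpha}{2\lambda}\log\frac{(2\pi\alpha)^m}{\det R}$. The only point needing care here is convergence and well-posedness of these infinite-horizon integrals, which follows from the asymptotic stability of the closed loop under $c^\star$ (guaranteed by positive definiteness of $r_1$ and $V_0$, cf.\ the footnote to Theorem~\ref{T4.1}) together with $\lambda>0$.

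Next, since $V_\alpha-V_0$ is a constant, $\nabla_{\bm{x}}V_\alpha\equiv\nabla_{\bm{x}}V_0$. Plugging $V_\alpha=V_0-\frac{\alpha}{2\lambda}\log\frac{(2\pi\alpha)^m}{\det R}$ and $\nabla_{\bm{x}}V_\alpha=\nabla_{\bm{x}}V_0$ into the left-hand side of the asserted HJB equation, the contribution $\lambda\cdot\big(-\tfrac{\alpha}{2\lambda}\log\tfrac{(2\pi\alpha)^m}{\det R}\big)=-\tfrac{\alpha}{2}\log\tfrac{(2\pi\alpha)^m}{\det R}$ cancels the explicit additive term $+\tfrac{\alpha}{2}\log\tfrac{(2\pi\alpha)^m}{\det R}$, and the remainder is precisely \eqref{HJB0_CA}, which holds by hypothesis. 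Because $V_0\in C^1(\bbr^n)$, the identity is satisfied pointwise, hence classically and a fortiori in the viscosity sense.

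Finally, to see that this coincides with the soft HJB equation \eqref{HJB-inf}, I would compute $H_\alpha(\bm{x},\bm{p})$ explicitly for the control-affine data with $U=\bbr^m$. Completing the square gives
\[
\bm{p}\cdot f(\bm{x},\bm{u})+r(\bm{x},\bm{u})=\tfrac12\big(\bm{u}+R^{-1}f_2(\bm{x})^\top\bm{p}\big)^\top R\big(\bm{u}+R^{-1}f_2(\bm{x})^\top\bm{p}\big)-\tfrac12\bm{p}^\top f_2(\bm{x})R^{-1}f_2(\bm{x})^\top\bm{p}+\bm{p}^\top f_1(\bm{x})+r_1(\bm{x}),
\]
so the $\bm{u}$-integral inside $H_\alpha$ factors into the $\bm{u}$-independent exponential times the Gaussian integral $\int_{\bbr^m}\exp\!\big(-\tfrac{\bm{v}^\top R\bm{v}}{2\alpha}\big)\,\d\bm{v}=\sqrt{(2\pi\alpha)^m/\det R}$; taking $\alpha\log$ yields
\[
H_\alpha(\bm{x},\bm{p})=\tfrac12\bm{p}^\top f_2(\bm{x})R^{-1}f_2(\bm{x})^\top\bm{p}-\bm{p}^\top f_1(\bm{x})-r_1(\bm{x})+\tfrac{\alpha}{2}\log\frac{(2\pi\alpha)^m}{\det R}.
\]
Substituting $\bm{p}=\nabla_{\bm{x}}V_\alpha$, the equation $\lambda V_\alpha+H_\alpha(\bm{x},\nabla_{\bm{x}}V_\alpha)=0$ is exactly the displayed one (and letting $\alpha\to0$ it degenerates to \eqref{HJB0_CA}, consistent with Lemma~\ref{lem-hamiltonian}). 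The only real difficulty is bookkeeping: the additive constant $\tfrac{\alpha}{2}\log\tfrac{(2\pi\alpha)^m}{\det R}$ arises from two a priori different sources — the differential entropy of the optimal Gaussian in the first step and the normalization of the Gibbs partition function in the last step — and one must verify that they match; apart from the convergence of the discounted integrals, there is no analytic obstacle.
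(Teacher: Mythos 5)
Your proposal is correct and follows essentially the same route as the paper: evaluate $J^\alpha_{\bm{x}}$ along the optimal Gaussian control from Theorem~\ref{T4.1}, collect the entropy and trace terms into the constant $-\frac{\alpha}{2}\log\frac{(2\pi\alpha)^m}{\det R}$, use that $c(t)$ is the standard optimal control so the remaining discounted integral equals $V_0(\bm{x})$, and then verify the HJB equation by noting $V_\alpha-V_0$ is constant. Your explicit Gaussian computation of $H_\alpha$ via completing the square is a welcome addition, since it substantiates the final remark (that the equation coincides with the soft HJB equation \eqref{HJB-inf}) which the paper's proof asserts without verification.
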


 \begin{proof}
 First, we simplify the cost term as follows:
 \begin{align*}
 \int_U &r(x(t),\bm{u})g^\star_\alpha(t,\bm{u})\d \bm{u}-\alpha \mathrm{H}(g(t,\cdot))\\
  &= r_1(x(t))+\frac{1}{2}c(t)^\top Rc(t)-\frac{m\alpha }{2}\log(2\pi e) +\frac{1}{2}\mbox{tr}(\alpha I) -\frac{\alpha}{2}\log \det(\alpha R^{-1})\\
 &=r_1(x(t))+\frac{1}{2}c(t)^\top Rc(t)-\frac{m\alpha }{2}\log(2\pi e) +\frac{m\alpha}{2}-\frac{\alpha}{2}\log\frac{\alpha^m}{\det R}\\
 &= r_1(x(t))+\frac{1}{2}c(t)^\top Rc(t)-\frac{\alpha}{2}\log\frac{(2\pi\alpha)^m}{\det R},
 \end{align*}
 where $c(t)=-R^{-1}f_2(x(t))^\top\nabla_{\bm{x}}V_0(x(t))$ is the same as in the proof of Theorem \ref{T4.1}. Therefore, the value function $V_\alpha$ can be explicitly calculated as
  \begin{align*}
 V_{\alpha}(\bm{x}) &= J_{\bm{x}}^\alpha(g^\star_\alpha) = \int_0^\infty e^{-\lambda t}\left(\int_U r(x(t),\bm{u})g^\star_\alpha(t,\bm{u})\d\bm{u}-\alpha \mathrm{H}(g^\star_\alpha(t,\cdot))\right)\d t \\
 &=\int_0^\infty e^{-\lambda t} \left(r_1(x(t))+\frac{1}{2}c(t)^\top Rc(t)-\frac{\alpha}{2}\log\frac{(2\pi\alpha)^m}{\det R}\right)\d t\\
 &=\int_0^\infty e^{-\lambda t} \left(r_1(x(t))+\frac{1}{2}c(t)^\top R c(t)\right)\d t -\frac{\alpha}{2\lambda}\log\frac{(2\pi\alpha)^m}{\det R}=V_0(\bm{x}) -\frac{\alpha}{2\lambda}\log\frac{(2\pi\alpha)^m}{\det R},
 \end{align*} 
 where the last equality comes from the fact that $c(t)$ is an optimal control for the standard optimal control problem without an entropy term.  Thus, if $V_0$ is a $C^1$-solution of the HJB equation \eqref{HJB0_CA}, then $V_\alpha$ is a $C^1$-solution of the following HJB equation:
 \[\lambda V_\alpha +\frac{1}{2}(\nabla_{\bm{x}}V_\alpha)^\top f_2(\bm{x}) R^{-1} f_2(\bm{x})^\top(\nabla_{\bm{x}}V_\alpha)-r_1(\bm{x})-(\nabla_{\bm{x}}V_\alpha)^\top f_1(\bm{x})+\frac{\alpha}{2}\log\frac{(2\pi\alpha)^m}{\det R}=0.\]
 
 \end{proof}
 
We now provide several quantitative comparisons between the standard and the maximum entropy optimal control problems for the control-affine case. We can interpret the optimal control $R^{-1}f_2(\bm{x})^\top \nabla_{\bm{x}}V_0(\bm{x})$ for the standard version as a Dirac measure $\mu_0^\star =\delta(\bm{u}+R^{-1}f_2(\bm{x})^\top \nabla_{\bm{x}}V_0(\bm{x}))$.  
Considering the Dirac measure as a normal distribution with zero covariance matrix, the difference between $\mu^\star_\alpha=g^\star_\alpha \d\bm{u}$ and $\mu^\star_0$ can be measured by the 2-Wasserstein distance as
 \[W_2(\mu_\alpha^\star,\mu_0^\star)^2 = \mbox{tr}(\alpha R^{-1}) = \alpha\mbox{tr}(R^{-1}).\]
 Therefore, as $\alpha \to 0$, the convergence of $\mu^\star_\alpha$ to $\mu_0^\star$ is of order $\alpha^{\frac{1}{2}}$. Next, we quantify the effect of the entropy term. Recall that the entropy of the normal distribution $\mathcal{N}(c,\Sigma)$ is $\frac{1}{2}\log\det(2\pi e\Sigma)$, we have
 \[\mathrm{H}(g_\alpha^\star(t,\cdot)) =\frac{1}{2}\log\det(2\pi e \alpha R^{-1}) = \frac{1}{2}\log\left(\frac{(2\pi e\alpha)^m}{\det R}\right)=\frac{1}{2}\log\left(\frac{(2\pi\alpha)^m}{\det R}\right)+\frac{m}{2}.\]
 Therefore, the total entropy in the cost functional $J^\alpha_{\bm{x}}(g_\alpha^\star)$ is equal to
 \[\int_0^\infty e^{-\lambda t}\alpha \mathrm{H}(g_\alpha^\star(t,\cdot))\,\d t=\frac{\alpha}{\lambda}\left(\frac{1}{2}\log\left(\frac{(2\pi\alpha)^m}{\det R}\right)+\frac{m}{2}\right).\]
The pure running cost without the entropy is  then given by
 \[V_\alpha(\bm{x}) +\alpha \int_0^\infty e^{-\lambda t} \mathrm{H}(g_\alpha^\star)\d t = V_0(\bm{x}) -\frac{\alpha}{2\lambda}\log\frac{(2\pi\alpha)^m}{\det R}+\frac{\alpha}{\lambda}\left(\frac{1}{2}\log\left(\frac{(2\pi\alpha)^m}{\det R}\right)+\frac{m}{2}\right)=V_0(\bm{x}) +\frac{m\alpha}{2\lambda}. \]
 Therefore, when using the maximum entropy method,
  the pure optimal running cost is increased by $\frac{m\alpha}{2\lambda}$ compared to the standard optimal cost  $V_0(\bm{x})$. 
  The cost difference is proportional to the temperature parameter $\alpha$ and the input dimension $m$.

\subsection{Linear-Quadratic Problems}\label{sec:lq}
As a special case of the previous subsection, we consider a linear-quadratic problem with $f(\bm{x},\bm{u}) = A\bm{x}+B\bm{u}$ and $r(\bm{x},\bm{u})= \frac{1}{2}\bm{x}^\top Q\bm{x} +\frac{1}{2}\bm{u}^\top R \bm{u}$. It is well known that the value function $V_0$ of the standard linear-quadratic problem without an entropy term can be expressed as $V_0(\bm{x}) = \frac{1}{2}\bm{x}^\top P\bm{x}$, where $P$ is a symmetric positive definite solution to the following algebraic Riccati equation (ARE):
\begin{equation}\label{ARE-0}
	\lambda P+P BR^{-1}B^\top P-Q-PA-A^\top P=0.
\end{equation}
Using Proposition~\ref{P:ca}, the value function $V_\alpha$ of the maximum entropy linear quadratic problem is directly obtained as
\[V_\alpha(\bm{x}) = \frac{1}{2}\bm{x}^\top P \bm{x}-\frac{\alpha}{2\lambda}\log\frac{(2\pi\alpha)^m}{\det R}\]
and the optimal   control is uniquely given as
\[g_\alpha^\star(\bm{x},\bm{u}):=\sqrt{\frac{\det{R}}{(2\pi \alpha)^m}}\exp\left(-\frac{1}{2\alpha}(\bm{u}+R^{-1}B^\top P\bm{x})^\top R(\bm{u}+R^{-1}B^\top P\bm{x})\right).\]

We compare our results for the LQ problem with the results for its stochastic counterpart in \cite{wang2019exploration}. When all the coefficients for the stochastic terms in \cite{wang2019exploration} are zero and the cost function is purely quadratic in state and control variables, the value function and the optimal control in \cite[Theorem 4]{wang2019exploration} are equivalent to $V_\alpha$ and $g_\alpha^\star$, respectively.

We further observe that the pure running cost is calculated as
\[\int_0^\infty e^{-\lambda t} r(x(t),\bm{u}) g_\alpha^\star(x(t),\bm{u})\d\bm{u} = V_{\alpha}(\bm{x})+\alpha\int_0^\infty e^{-\lambda t}\mathrm{H}(g_{\alpha}^\star)\d t =V_0(\bm{x})+\frac{m\alpha}{2\lambda} = \frac{1}{2}\bm{x}^\top P\bm{x} +\frac{m\alpha}{2\lambda},\]
which is consistent with the previous result for stochastic systems \cite[Theorem 8]{wang2019exploration}. Therefore, our results on the maximum entropy linear-quadratic problem are consistent with the results regarding its stochastic counterpart in \cite{wang2019exploration}. Furthermore, we extend those results to the more general setting of control-affine systems.

\subsection{Generalized Hopf--Lax Formula}\label{sec:4.3}

As discussed in Section~\ref{sec:opt_con},
constructing an optimal control $g^\star_\alpha$ requires the viscosity solution of the soft HJB equation \eqref{HJB_soft}. 
Since a general HJ equation does not have a closed-form solution,
it is typical to use numerical methods such as finite-difference methods~\cite{crandall1984two,osher1991high}. However, the computational cost for grid-based methods  grows exponentially with the dimension of the state space, making them impractical even with six- or seven-dimensional state spaces. 
Therefore, there have been some efforts to find  simple representations of the viscosity solution of HJ equations. 
When the Hamiltonian does not depend on the state variable, there is a well-known Hopf--Lax formula \cite{hopf1965generalized}, which can be used for computing the solution without discretizing the state space. In a series of recent works \cite{chow2017algorithm,chow2019algorithm,darbon2016algorithms}, the Hopf--Lax formula has been generalized to handle a larger class of HJ equations. 
 We use the generalized Hopf--Lax formula for the state-dependent Hamiltonian proposed in \cite{chow2019algorithm} to solve the soft HJB equation \eqref{HJB_soft} in a grid-free manner. 

To begin with, we  reformulate the terminal value problem \eqref{HJB_soft} into the initial value problem by letting $W_\alpha(t,\bm{x}) := V_\alpha(T-t,\bm{x})$. Then, $W_\alpha$ is the unique viscosity solution of the initial value problem
\begin{equation}\label{HJB-convert}
	\partial_tW_\alpha +H_\alpha(\bm{x},\nabla_{\bm{x}} W)=0,\quad W(0,\bm{x})=q(\bm{x}).
\end{equation}
The solution $W_\alpha(t,\bm{x})$  can be written as one of the following representations \cite{chow2019algorithm}:
\begin{align}
	\begin{aligned}\label{grid_free1}
		W_\alpha(t,\bm{x}) &= \min_{\bm{v}\in\bbr^n} \bigg\{q(\gamma(\bm{x},\bm{v},0))\\
		&\hspace{1cm}+\int_0^t \left[p(\bm{x},\bm{v},s)\cdot \nabla_{\bm{p}}H_\alpha(\gamma(\bm{x},\bm{v},s),p(\bm{x},\bm{v},s))-H_\alpha(\gamma(\bm{x},\bm{v},s),p(\bm{x},\bm{v},s))\right]\,\d s\bigg\},
	\end{aligned}
\end{align}
or
\begin{align}
	\begin{aligned}\label{grid_free2}
		W_\alpha(t,\bm{x}) &= \max_{\bm{v}\in\bbr^n} \bigg\{\bm{x}\cdot \bm{v}-q^*(p(\bm{x},\bm{v},0))\\
		&\hspace{1cm}-\int_0^t \left[H_\alpha(\gamma(\bm{x},\bm{v},s),p(\bm{x},\bm{v},s))-\gamma(\bm{x},\bm{v},s)\cdot \nabla_{\bm{x}}H_\alpha(\gamma(\bm{x},\bm{v},s),p(\bm{x},\bm{v},s))\right]\,\d s\bigg\},
	\end{aligned}
\end{align}
where $\gamma(\bm{x},\bm{v},s)$ and $p(\bm{x},\bm{v},s)$ are given as the solution to the following characteristic ODEs:\footnote{The bi-characteristic curves $(\gamma,p)$ in \eqref{bichar} may exist only local-in-time for a Hamiltonian which is not Lipschitz continuous.   The generalized Hopf--Lax formula has a fundamental limitation  if  the global existence of the bi-characteristic curves is not guaranteed.
This issue of bi-characteristic curves may occur regardless of entropy regularization.
}
\begin{align}
	\begin{aligned}\label{bichar}
		\dot{\gamma}(\bm{x},\bm{v},s) &= \nabla_{\bm{p}} H_\alpha(\gamma(\bm{x},\bm{v},s),p(\bm{x},\bm{v},s)),\quad \gamma(\bm{x},\bm{v},t)=\bm{x} \\
		\dot{p}(\bm{x},\bm{v},s) &= -\nabla_{\bm{x}}H_\alpha(\gamma(\bm{x},\bm{v},s),p(\bm{x},\bm{v},s)), \quad p(\bm{x},\bm{v},t) = \bm{v}
	\end{aligned}
\end{align}
and $q^*$ is the Legendre-Fenchel transformation of $q$, defined by $q^*(\bm{v}) := \max_{\bm{x}\in\bbr^n} \left\{\bm{x}\cdot \bm{v} - q(\bm{x})\right\}$. In particular, when $f(\bm{x},\bm{u}) = f(\bm{u})$ and $r(\bm{x},\bm{u}) = r(\bm{u})$, the soft HJB equation becomes
\[\partial_t W_\alpha +H_{\alpha} (\nabla_{\bm{x}} W_\alpha) = 0,\quad H_{\alpha}(\bm{p}):=\alpha \log\int_{U}\exp\left(-\frac{\bm{p}\cdot f(\bm{u})+r(\bm{u})}{\alpha}\right)\,\d\bm{u}.\]
In this state-independent case, we have $\nabla_{\bm{x}} H_\alpha \equiv0$, and therefore $p(\bm{x},\bm{v},s)=p(\bm{x},\bm{v},t)=\bm{v}$.
Then, the formula \eqref{grid_free2} is reduced to
\[W_\alpha(t,\bm{x}) = \max_{\bm{v}\in\bbr^n} \left\{\bm{x}\cdot \bm{v} -q^*(\bm{v}) -tH_\alpha(\bm{v})\right\},\]
which can be shown to be equivalent to the classical Hopf--Lax formula.

We note that either  \eqref{grid_free1} or \eqref{grid_free2} can be used only if the Hamiltonian $H_\alpha(\bm{x},\bm{p})$ can be explicitly evaluated for any given $(\bm{x},\bm{p})$. 
As mentioned, in the maximum entropy control problem, we are able to explicitly calculate the Hamiltonian as
\[H_{\alpha}(\bm{x},\bm{p}) = \alpha \log \int_U \exp\left(-\frac{\bm{p}\cdot f(\bm{x},\bm{u})+r(\bm{x},\bm{u})}{\alpha}\right)\,\d\bm{u}.\]
However, in the standard optimal control problem without an entropy term, the Hamiltonian is given by
\begin{equation}\label{ham_st}
H_0(\bm{x},\bm{p})=-\inf_{\bm{u}\in U}\{\bm{p}\cdot f(\bm{x},\bm{u})+r(\bm{x},\bm{u})\},
\end{equation}
which  has no explicit representation in terms of $(\bm{x},\bm{p})$ in general. Even worse, it is challenging to compute the standard Hamiltonian $H_0(\bm{x},\bm{p})$ when the optimization problem above is nonconvex. 
Unlike such standard optimal control cases,
grid-free methods based on generalized Hopf--Lax formulas are applicable to a large class of maximum entropy control problems even when it is impossible to evaluate the standard Hamiltonian. 
Therefore, it is more tractable to use generalized Hopf--Lax formulas in the maximum entropy control case than in the standard case. 
It is also worth emphasizing that, unlike the standard Hamiltonian $H_0$,
the soft Hamiltonian $H_\alpha$ is differentiable with respect to $\bm{p}$ and $\bm{x}$ if $\bm{x} \mapsto f(\bm{x}, \bm{u})$ and $\bm{x} \mapsto r(\bm{x}, \bm{u})$ are differentiable. 
This additional regularity of $H_\alpha$ allows us to use the characteristic curve formulation~\eqref{bichar} in maximum entropy control even when the standard Hamiltonian is not differentiable.\footnote{When the standard Hamiltonian is not differentiable, the remark in \cite{chow2019algorithm} suggests to use its subdifferentials in the characteristic formula \eqref{bichar}, regarding the ODE as a differential inclusion. However, although using subdifferentials is theoretically reasonable, it is challenging to explicitly compute the subdifferential of $H_0$, thereby making the  differential inclusion approach impractical.}

\section{Linear-Quadratic Control with Unknown Model Parameters}\label{sec:5}

Recall that one of important motivations for using the maximum entropy formulation 
is to enhance the exploration capabilities of RL agents, thereby better balancing the exploitation-exploration tradeoff
 when system models are not fully known. 
In discrete-time settings, there have been a number of empirical evidences in the effectiveness of maximum entropy RL methods~\cite{Peters2010, Fox2016, Haarnoja2017, Haarnoja2018, Hazan2019}. 
However, to our knowledge, RL methods for continuous-time dynamical systems adopt heuristic exploration mechanisms, such as $\epsilon$-greedy and injecting an artificial noise signal~(e.g., \cite{Doya2000, Munos2000, Palanisamy2015, Vamvoudakis2017, jiang2014robust, yang2017hamiltonian, Bian2019, kim2020hamilton}).

In this section, we claim that the idea of maximum entropy RL can be extended to the continuous-time setting  using our soft HJB framework.
Specifically, we consider the following maximum entropy linear-quadratic control problem with unknown parameters:
 \[\min_{g \in \mathcal{G}_\infty} \; J^\alpha_{\bm{x}}(g):=\int_0^\infty e^{-\lambda t} \left(\int_U(x(t)^\top Qx(t)+\bm{u}^\top R \bm{u})g(t,\bm{u})\d\bm{u}-\alpha H(g(t,\cdot))\right)\d t,\]
 where $x(t)$ is the state trajectory of 
\begin{equation}\label{linear_relax}
	\dot{x}(t) = Ax(t) + B\int_U \bm{u}g(t,\bm{u})\d \bm{u}, \quad x(0) = \bm{x}.
\end{equation}
In Section~\ref{sec:lq}, we have already shown that the value function is given as a quadratic function $V_\alpha(\bm{x}) = \frac{1}{2}\bm{x}^\top P\bm{x}+c$, where $P$ is a symmetric positive definite solution to the  ARE \eqref{ARE-0}
and the optimal control is given as a normal distribution $g_\alpha^\star = \mathcal{N}(-R^{-1}B^\top P\bm{x},\alpha R^{-1})=:\mathcal{N}(-K\bm{x},\alpha R^{-1})$, where  $K = R^{-1}B^\top P$. Therefore, to obtain the optimal control $g_\alpha^\star$, we need to find a pair of matrices $(P,K)$ by solving the ARE \eqref{ARE-0}, which can be reformulated as
\[(A-BK)^\top P+P(A-BK)-\lambda P +Q+K^\top R K=0.\]

Suppose first that the system matrices $A$ and $B$ are fully known.  Throughout this section, we   assume that the pair $(A-\frac{\lambda}{2}I,B)$ is stabilizable\footnote{This condition automatically holds when the pair $(A,B)$ is stabilizable.} and that the pair $(A-\frac{\lambda}{2}I,Q^{\frac{1}{2}})$ is observable\footnote{By the Popov-Belevitch-Hautus rank test, this is equivalent to the observability of the pair $(A,Q^{\frac{1}{2}})$.}
The policy  iteration method in~\cite{kleinman1968iterative} can be used to numerically solve the ARE. Starting from an arbitrary gain matrix $K_0$ such that $A-\frac{\lambda}{2}I-BK_0$ is Hurwitz, we set $P_k$ as a symmetric positive definite solution to 
\begin{equation}\label{iter-ARE}
	(A-BK_k)^\top P_k + P_k(A-BK_k)-\lambda P_k +Q+K^\top_kRK_{k}=0,\quad k\ge0,
\end{equation}
and update the gain matrix as $K_{k+1}:=R^{-1}B^\top P_k$. Then, 
it directly follows from \cite{kleinman1968iterative} that
the sequence of  $(P_k,K_{k+1})$ converges to $(P,K)$, where $P$ is the unique positive definite solution to the ARE \eqref{ARE-0} and $K=R^{-1}B^\top P$.
This property is summarized as the following convergence result for maximum entropy policy iteration:

\begin{proposition}	Suppose that the initial gain matrix $K_0$ is chosen so that  $A-\frac{\lambda}{2}I-BK_0$ is Hurwitz. Let $\{(P_k,K_{k+1})\}$ be a sequence of matrix pairs constructed by \eqref{iter-ARE}. Then, the matrix $A-\frac{\lambda}{2}I-BK_k$ is Hurwitz, $P \preceq P_{k+1}\preceq P_k$, and $\lim_{k\to \infty} (P_k,K_k) = (P,K)$.
\end{proposition}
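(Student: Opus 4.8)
The plan is to reduce the discounted iteration to the classical Newton--Kleinman iteration for an \emph{undiscounted} algebraic Riccati equation and then replay Kleinman's monotonicity argument~\cite{kleinman1968iterative}. First I would set $\hat A := A - \tfrac{\lambda}{2} I$ and observe that the ARE \eqref{ARE-0} is equivalent to the standard continuous-time algebraic Riccati equation $\hat A^\top P + P \hat A - P B R^{-1} B^\top P + Q = 0$, while the matrix equation \eqref{iter-ARE} is the Lyapunov equation
\[
(\hat A - B K_k)^\top P_k + P_k (\hat A - B K_k) + Q + K_k^\top R K_k = 0, \qquad K_{k+1} = R^{-1} B^\top P_k .
\]
By the standing assumptions $(\hat A, B)$ is stabilizable and $(\hat A, Q^{1/2})$ is observable (equivalently $(A, Q^{1/2})$ is observable); in particular the reduced ARE has a unique positive definite solution $P$, and $\hat A - B K$ with $K = R^{-1} B^\top P$ is Hurwitz.

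Next I would establish, by induction on $k$, that $\hat A - B K_k$ is Hurwitz and that $P_k$ is well defined as the unique positive definite solution of the Lyapunov equation above. The base case holds by hypothesis on $K_0$ together with observability, which forces $P_0 \succ 0$. For the inductive step, a completion-of-squares computation using $B^\top P_k = R K_{k+1}$ gives the identity
\[
(\hat A - B K_{k+1})^\top P_k + P_k (\hat A - B K_{k+1}) = -\bigl[ Q + K_{k+1}^\top R K_{k+1} + (K_k - K_{k+1})^\top R (K_k - K_{k+1}) \bigr].
\]
Since $P_k \succ 0$ and the right-hand side is $\preceq -\bigl(Q + K_{k+1}^\top R K_{k+1}\bigr) \preceq 0$, Lyapunov theory places the spectrum of $\hat A - B K_{k+1}$ in the closed left half-plane, and detectability of $\bigl(\hat A - B K_{k+1}, (Q + K_{k+1}^\top R K_{k+1})^{1/2}\bigr)$ — which follows from observability of $(\hat A, Q^{1/2})$, because an unstable unobservable mode would force $K_{k+1} x = 0$, $Q^{1/2} x = 0$ and $\hat A x = \mu x$ with $\operatorname{Re}\mu \ge 0$ — upgrades this to $\hat A - B K_{k+1}$ being Hurwitz. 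Standard Lyapunov theory then yields existence, uniqueness and positive definiteness of $P_{k+1}$.

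Then I would prove the two monotonicity bounds by subtracting Lyapunov equations. Subtracting the equations for $P_k$ and $P_{k+1}$ gives $(\hat A - B K_{k+1})^\top (P_k - P_{k+1}) + (P_k - P_{k+1})(\hat A - B K_{k+1}) = -(K_k - K_{k+1})^\top R (K_k - K_{k+1}) \preceq 0$, so $P_{k+1} \preceq P_k$ since $\hat A - B K_{k+1}$ is Hurwitz. Rewriting the reduced ARE about the gain $K_{k+1}$ yields $(\hat A - B K_{k+1})^\top P + P(\hat A - B K_{k+1}) + Q + K_{k+1}^\top R K_{k+1} = (K_{k+1} - K)^\top R (K_{k+1} - K) \succeq 0$; subtracting this from the equation for $P_{k+1}$ gives $(\hat A - B K_{k+1})^\top (P_{k+1} - P) + (P_{k+1} - P)(\hat A - B K_{k+1}) = -(K_{k+1} - K)^\top R (K_{k+1} - K) \preceq 0$, hence $P \preceq P_{k+1}$. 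Finally, $\{P_k\}$ is non-increasing in the Loewner order and bounded below by $P$, so $x^\top P_k x$ converges for every $x$ and, by polarization, $P_k \to P_\infty$ for some symmetric $P_\infty \succeq P$; consequently $K_{k+1} = R^{-1} B^\top P_k \to R^{-1} B^\top P_\infty$. Passing to the limit in \eqref{iter-ARE} shows $P_\infty$ solves the reduced ARE, and uniqueness of its positive definite solution forces $P_\infty = P$, so $K_k \to K$.

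I expect the only genuinely delicate point to be the strict Hurwitz claim in the inductive step: the Lyapunov inequality alone only confines the spectrum of $\hat A - B K_{k+1}$ to the closed left half-plane, and one must invoke the observability assumption on $(A, Q^{1/2})$ (equivalently on $A - \tfrac{\lambda}{2} I$) via the detectability argument above to rule out eigenvalues on the imaginary axis. Everything else is routine Newton--Kleinman bookkeeping, and the reduction to the undiscounted ARE makes the citation to \cite{kleinman1968iterative} fully rigorous.
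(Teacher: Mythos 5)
Your proof is correct and follows essentially the same route as the paper: the paper obtains the result by exactly the reduction you perform (replacing $A$ by $A-\tfrac{\lambda}{2}I$ so that \eqref{ARE-0} and \eqref{iter-ARE} become the standard undiscounted ARE and Newton--Kleinman/Lyapunov iteration) and then simply cites \cite{kleinman1968iterative}. Your write-up just makes that citation self-contained by reproducing Kleinman's induction (completion of squares, the detectability/observability argument for strict Hurwitz-ness, monotonicity via subtracted Lyapunov equations, and passage to the limit), and all of these steps check out.
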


However, when the system matrices $A$ and $B$ are unknown, we cannot directly solve \eqref{iter-ARE}. 
Instead, data-driven methods can be used to indirectly perform the iterative procedure above. 
In the following subsections, we present on-policy and off-policy methods   to learn the optimal pair $(P,K)$ using system trajectory data.
The on-policy method uses  sample data generated using the most recent $(P,K)$ pair at every iteration. 
Therefore, one cannot recycle  samples produced by old pairs. 
However, in the off-policy method,  sample data generated by previous estimates of $(P, K)$ are reusable. 
The two methods use the adaptive dynamic programming approach~\cite{jiang2014robust} in our maximum entropy setting. 

\subsection{On-Policy Method}

Let  $\pi_k: \mathbb{R}^n \to L_+^1(U)$ be the Markov policy, at iteration $k$, that maps system state $x(t)$ to input $g_k(t, \cdot)$.
With this policy, the closed-loop system of \eqref{linear_relax} at iteration $k$ can be written as
\begin{align*}
	\dot{x}(t) &= Ax(t) +B\int_U \bm{u} \pi_k(x(t); \bm{u})\,\d \bm{u} = (A-BK_k)x(t) + B\int_{U} \bm{u} \left[\pi_k(x(t); \bm{u})\d \bm{u} -\delta_{-K_kx(t)}(\d \bm{u})\right] \\
	&=:(A-BK_k)x(t) +B\int_U \bm{u} \e_k(t;\d \bm{u}),
\end{align*}
where   the measure $\e_k(t;\d\bm{u})$ is defined as the difference between the Dirac measure $\delta_{-K_kx(t)}(\d\bm{u})$ and $g_k(x(t),\bm{u})\d\bm{u}$. 
Differentiating $e^{-\lambda t}x^\top(t)P_k x(t)$ with respect to $t$ and using \eqref{iter-ARE}, we obtain
\begin{align}
	\begin{aligned}\label{est-0}
		\frac{\d}{\d t}&(e^{-\lambda t}x^\top(t) P_kx(t))\\
		&=-\lambda e^{-\lambda t}x^\top(t) P_k x(t)+e^{-\lambda t}\left((A-BK_k)x(t)+B\int_U\bm{u}\e_k(t;\d \bm{u})\right)^\top P_kx(t) \\
		&\quad+e^{-\lambda t}x^\top(t) P_k\left((A-BK_k)x(t)+B\int_U\bm{u}\e_k(t;\d \bm{u})\right)\\
		& = e^{-\lambda t}x^\top(t) ((A-BK_k)^\top P_k +P_k(A-BK_k)-\lambda P_k)x(t) +2e^{-\lambda t}\left(\int_U\bm{u}\e_k(t;\d\bm{u})\right)^\top B^\top P_kx(t)\\
		&=-e^{-\lambda t}x^\top(t) (Q+K_k^\top RK_k)x(t)+2e^{-\lambda t}\left(\int_U\bm{u}\e_k(t;\d \bm{u})\right)^\top RK_{k+1}x(t).
	\end{aligned}
\end{align}
For given $l$ time intervals $[t_i,t_i+\delta t]$, $i=1,2\ldots, l$, we integrate \eqref{est-0} from $t_i$ to $t_i+\delta t$ to derive the following set of $l$ equations that the pair $(P_k, K_{k+1})$ should satisfy:
\begin{align}
	\begin{aligned}\label{est_0_adp}
		e^{-\lambda(t_i+\delta t)}x^\top(t_i+\delta t) P_k x(t_i+\delta t)& -e^{-\lambda t_i}x(t_i)^\top P_k x(t_i)   -2\int_{t_i}^{t_i+\delta t}e^{-\lambda s} \left(\int_U\bm{u}\e_k(s;\d \bm{u})\right)^\top RK_{k+1}x(s)\d s\\
		&=-\int_{t_i}^{t_i+\delta t} e^{-\lambda s}x^\top(s)(Q+K_k^\top R K_k)x(s)\d s.
	\end{aligned}
\end{align}
These equations can be compactly expressed as a single matrix equation using a vectorization operator \cite{jiang2014robust}. 
For a given $m\times n$ matrix $A=(a_{ij})$, the vectorization $\mbox{vec}(A)$ of the matrix $A$ is defined as
\[\mbox{vec}(A):=(a_{11},\ldots,a_{m1},a_{12},\ldots,a_{m2},\ldots,a_{1n},\ldots,a_{mn})^\top.\]
By the well-known identity $\mbox{vec}(ABC) = (C^\top\otimes A)\mbox{vec}(B)$, \eqref{est_0_adp} can be written as
\begin{equation}\label{adp_0}
	\Theta_k\begin{pmatrix}
		\mbox{vec}(P_k)\\
		\mbox{vec}(K_{k+1})\end{pmatrix} = \Xi_k,
\end{equation}
where
\begin{equation}\label{thetak}
\Theta_k = \begin{pmatrix}
e^{-\lambda t}x^\top(t)\otimes x^\top(t)\Big|_{t_1}^{t_1+\delta t}& -2\int_{t_1}^{t_1+\delta t} e^{-\lambda s}\left(x^\top \otimes \left(\int_U \bm{u} \e_k(s;\d\bm{u})\right)^\top R\right)\d s\\
\vdots& \vdots\\
e^{-\lambda t}x^\top(t)\otimes x^\top(t)\Big|_{t_l}^{t_l+\delta t}& -2\int_{t_l}^{t_l+\delta t} e^{-\lambda s}\left(x^\top \otimes \left(\int_U \bm{u} \e_k(s;\d\bm{u})\right)^\top R\right)\d s
\end{pmatrix}\in \bbr^{l\times (n^2+nm)},
\end{equation}
and
\begin{equation}\label{xik}
\Xi_k :=\begin{pmatrix}
-\int_{t_1}^{t_1+\delta t} e^{-\lambda s} x^\top(s) (Q+K_k^\top RK_k)x(s)\,\d s\\
\vdots\\
-\int_{t_l}^{t_l+\delta t} e^{-\lambda s} x^\top(s) (Q+K_k^\top RK_k)x(s)\,\d s
\end{pmatrix}\in \bbr^{l}. 
\end{equation}
Note that the matrices $\Theta_k$ and $\Xi_k$ can be computed using the system state and input trajectory data even when $A$ and $B$ are unknown. 
Thus, $(P_k, K_{k+1})$ can be found   by solving the linear matrix equation without  knowing $A$ and $B$, under a suitable rank condition on $\Theta_k$.
The following proposition follows directly from \cite[Theorem 2.3.6]{jiang2014robust}.

\begin{proposition}[Convergence of on-policy learning]
	Under the condition that
	\begin{equation}\label{rank_on}
	\textup{rank}\left(\Theta_k\right) = \frac{n(n+1)}{2}+mn,
	\end{equation}
	there exists a unique pair $(P_k, K_{k+1})$ with $P_k=P_k^\top$ satisfying \eqref{adp_0}.  If, in addition, the initial gain matrix $K_0$ is chosen so that  $A-\frac{\lambda}{2}I-BK_0$ is Hurwitz,\footnote{In general, finding such a stabilizing gain matrix $K_0$ may be nontrivial. To address this issue, one may use the value iteration method proposed in \cite{Bian2016}.} then 
	\begin{itemize}
	\item 
	$A-\frac{\lambda}{2}I-BK_{k}$ is Hurwitz;
	
	\item
	the  pair $(P_k, K_{k+1})$ converges to the optimal pair $(P,K)$ as $k\to\infty$.
	\end{itemize}
\end{proposition}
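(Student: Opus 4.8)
The plan is to reduce the statement entirely to the corresponding result for the \emph{standard} linear-quadratic adaptive dynamic programming scheme, namely \cite[Theorem 2.3.6]{jiang2014robust}, by showing that the data equation \eqref{adp_0} in our maximum entropy setting is exactly the same linear system in the unknowns $(P_k,K_{k+1})$ as the one arising in the standard scheme, only with a different (but admissible) ``exploration'' term. Concretely, I would first observe that the closed-loop identity \eqref{est-0} was derived purely algebraically from \eqref{iter-ARE} and the decomposition $\dot x = (A-BK_k)x + B\int_U \bm u\,\e_k(t;\d\bm u)$; the only place where the relaxed/maximum-entropy structure enters is that the probing signal is $\int_U \bm u\,\e_k(t;\d\bm u)$ rather than an injected deterministic dither. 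Since $g_k(x(t),\cdot)=\mathcal N(-K_k x(t),\alpha R^{-1})$ by the characterization in Section~\ref{sec:lq}, the measure $\e_k(t;\d\bm u)$ has mean zero and bounded (constant) covariance $\alpha R^{-1}$, so $\int_U \bm u\,\e_k(s;\d\bm u)$ plays precisely the role of the exploration noise $e(t)$ in \cite{jiang2014robust}. Thus the matrices $\Theta_k,\Xi_k$ in \eqref{thetak}--\eqref{xik} are literally the data matrices of the standard algorithm with this particular exploration.

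Second, I would verify the two hypotheses needed to invoke \cite[Theorem 2.3.6]{jiang2014robust}. Under the rank condition \eqref{rank_on}, $\Theta_k$ has full column rank $\tfrac{n(n+1)}{2}+mn$ (the $\tfrac{n(n+1)}{2}$ rather than $n^2$ coming from the symmetry constraint $P_k=P_k^\top$, which is built into the parametrization of $\mathrm{vec}(P_k)$ over symmetric matrices), so the least-squares system \eqref{adp_0} has a unique solution; moreover, because the exact pair $(P_k,K_{k+1})$ generated by the model-based recursion \eqref{iter-ARE} satisfies \eqref{est_0_adp} by construction of \eqref{est-0}, this unique solution \emph{is} that exact pair. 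Consequently the data-driven iteration reproduces, step by step, the model-based Kleinman-type policy iteration \eqref{iter-ARE}.

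Third, I would invoke the convergence of that model-based iteration, which is already recorded in the Proposition immediately preceding this one (itself following \cite{kleinman1968iterative}): if $A-\tfrac{\lambda}{2}I-BK_0$ is Hurwitz, then $A-\tfrac{\lambda}{2}I-BK_k$ is Hurwitz for all $k$, the $P_k$ are monotonically decreasing and bounded below by $P$, and $(P_k,K_k)\to(P,K)$, the unique stabilizing solution of the ARE \eqref{ARE-0}. Combining this with the identification in the previous paragraph yields both bullet points. I would also note the standing assumptions $(A-\tfrac\lambda2 I,B)$ stabilizable and $(A-\tfrac\lambda2 I,Q^{1/2})$ observable, under which the ARE \eqref{ARE-0} has a unique positive definite stabilizing solution, so that the limit $(P,K)$ is well-defined and coincides with the optimal pair identified in Section~\ref{sec:lq}.

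The main obstacle I anticipate is the careful bookkeeping in the reduction: I must check that the Kronecker-product identity $\mathrm{vec}(ABC)=(C^\top\otimes A)\mathrm{vec}(B)$ applied to \eqref{est_0_adp} does produce exactly the block structure \eqref{thetak}, that the symmetry of $P_k$ is correctly accounted for so that the relevant rank is $\tfrac{n(n+1)}{2}+mn$ and not $n^2+mn$ (otherwise \eqref{rank_on} would be unsatisfiable), and that the term $\int_U\bm u\,\e_k(s;\d\bm u)$ is genuinely measurable/integrable along trajectories so that $\Theta_k,\Xi_k$ are well-defined — the latter is guaranteed since $g_k$ is Gaussian with constant covariance and $x(t)$ evolves by a linear ODE with bounded-mean forcing. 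Once this matching is in place, everything else is a direct citation of \cite{jiang2014robust,kleinman1968iterative} and the preceding proposition, so no genuinely new estimate is required.
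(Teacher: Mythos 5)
Your proposal is correct and follows essentially the same route as the paper: the paper's proof consists of the derivation of the data equation \eqref{adp_0} from \eqref{est-0}--\eqref{est_0_adp} (given in the text preceding the proposition) followed by a direct citation of \cite[Theorem 2.3.6]{jiang2014robust}, together with the Kleinman-type convergence of the model-based recursion \eqref{iter-ARE}. Your write-up just makes explicit the identification of the maximum-entropy probing term with the exploration signal of the standard adaptive DP scheme and the symmetric-vectorization bookkeeping, which the paper leaves implicit.
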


\begin{algorithm}[tb]
	\caption{On-policy maximum entropy method for data-driven  LQ control}
	\label{alg:on-policy}
 Initialize the gain matrix $K_0$ so that $A-BK_0$ is Hurwitz;\\
		\For{$k=0,1,2,\ldots$}{
 Initialize $t_1=0$;\\
		\While{the rank condition \eqref{rank_on} is not satisfied}{
Execute the control $\mathcal{N}(-K_kx(t_i),\alpha R^{-1})$ and collect controlled trajectory data;\\
Construct the $i$th row of $\Theta_k$ and $\Xi_k$ in \eqref{thetak} and \eqref{xik}; \\
Set $t_{i+1} = t_i+\delta t$;
}
Find the pair $(P_k,K_{k+1})$ by solving \eqref{adp_0};\\
 Stop if $|P_k-P_{k+1}|<\varepsilon$, where $\varepsilon$ is a predefined threshold;
}
\end{algorithm}

This proposition indicates the convergence property that the optimal pair $(P, K)$ can be learned using our method. Furthermore, the gain matrices $K_k$ constructed at any intermediate steps guarantee the exponential stability of $e^{-\frac{\lambda t}{2}} x(t)$. 

All the steps in the data-driven control method  are summarized in
Algorithm \ref{alg:on-policy}, which is an unapproximated version. 
Practical systems may not take an input in the form of probability density. If that is the case, we can sample a control input from the distribution $\mathcal{N}(-K_kx(t_i),\alpha R^{-1})$ and exert it to the system. The integrals in \eqref{thetak} and \eqref{xik} can then be approximated accordingly. 
The sampling approach is supported by the discrete-time approximation result in Section~\ref{sec:dt}.

When  constructing the matrices $\Theta_k$ and $\Xi_k$, we use the system state controlled with the most recent gain matrix $K_k$. Thus, in every iteration, new trajectory data must be collected using the current gain matrix $K_k$. 
This implies that Algorithm \ref{alg:on-policy} is an \emph{on-policy} method.
 
We now discuss the difference between  the adaptive DP method in \cite{jiang2014robust} and our method.
The adaptive DP method adopts sinusoidal signals as a heuristic exploration mechanism, and thus uses the input $u = -K_kx +e$, where $e$ is an exploration noise constructed as the sum of sinusoidal signals. 
However, our method uses a principled information theoretic exploration mechanism. 
As a result, our control $\mathcal{N}(-K_k x,\alpha R^{-1})$ itself has an exploration capability and our method does not need to inject a separate artificial noise, which may degrade  the overall performance. 
Our result also confirms that the common practice of using  Gaussian noise in RL is effective in the sense of maximum entropy, if the mean and the covariance matrix are carefully chosen,
 when considering linear-quadratic problems.

\subsection{Off-Policy Method}

The on-policy method in the previous subsection needs trajectory data newly generated using the most recent gain matrix in every iteration. 
To improve sample efficiency, 
we now present an \emph{off-policy} variant of Algorithm~\ref{alg:on-policy}.
Unlike the on-policy method in the previous subsection, we fix a control $g_0$ and use the trajectory data generated under $g_0$ in all iterations.
The closed-loop system with $g_0$ is given by
\begin{equation}\label{linear_off}
	\dot{x}(t)=Ax(t)+B\int_U\bm{u}g_0(t,\bm{u})\d \bm{u}.
\end{equation}
Differentiating $\frac{\d}{\d t}(e^{-\lambda t}x^\top(t) P_k x(t))$ with respect to $t$ and using \eqref{iter-ARE} and \eqref{linear_off}, we obtain
\begin{align}
	\begin{aligned}\label{est}
		&\frac{\d}{\d t}(e^{-\lambda t}x^\top(t) P_kx(t)) \\
		&=-\lambda e^{-\lambda t}x^\top(t) P_k x(t)+e^{-\lambda t}\left(Ax(t)+B\int_U\bm{u}g_0(t,\bm{u})\d \bm{u}\right)^\top P_kx(t) \\
		&\quad +e^{-\lambda t}x^\top(t) P_k\left(Ax(t)+B\int_U \bm{u}g_0(t,\bm{u})\d \bm{u}\right)\\
		& = e^{-\lambda t}x^\top(t) (A^\top P_k +P_kA-\lambda P_k)x(t) +2e^{-\lambda t}\left(\int_U\bm{u}g_0(t,\bm{u})\d \bm{u}\right)^\top B^\top P_kx(t)\\
		&=e^{-\lambda t}x^\top(t) (-Q-K_k^\top RK_k +K_k^\top B^\top P_k + P_kBK_k)x(t)+2e^{-\lambda t}\left(\int_U\bm{u}g_0(t,\bm{u})\d \bm{u}\right)^\top RK_{k+1}x(t)\\
		&=e^{-\lambda t}x^\top(t) (-Q-K_k^\top RK_k +K_k^\top RK_{k+1} +K_{k+1}^\top RK_k)x(t)+2e^{-\lambda t}\left(\int_U\bm{u}g_0(t,\bm{u})\d \bm{u}\right)^\top RK_{k+1}x(t)\\
		&=e^{-\lambda t}x^\top(t) (-Q-K_k^\top RK_k )x(t) +2e^{-\lambda t}x^\top(t) K_k^\top RK_{k+1}x(t)+2e^{-\lambda t}\left(\int_U\bm{u}g_0(t,\bm{u})\d \bm{u}\right)^\top RK_{k+1}x(t)\\
		&=-e^{-\lambda t}x^\top(t) (Q+K_k^\top RK_k)x(t) + 2e^{-\lambda t}\left(K_kx(t) +\int_U\bm{u}g_0(t,\bm{u})\d \bm{u}\right)^\top RK_{k+1}x(t).
	\end{aligned}
\end{align}
Integrating \eqref{est} over $[t_i,t_i+\delta t]$, $i=1,2,\ldots, l$, yields
\begin{align}
	\begin{aligned}\label{est_adp}
		&e^{-\lambda(t_i+\delta t)}x^\top(t_i+\delta t) P_k x(t_i+\delta t) -e^{-\lambda t_i}x(t_i)^\top P_k x(t_i)  \\
		&\qquad -2\int_{t_i}^{t_i+\delta t}e^{-\lambda s} \left(K_kx+\int_U\bm{u} g_0(s,\bm{u})\d \bm{u}\right)^\top RK_{k+1}x(s)\d s\\
		&=-\int_{t_i}^{t_i+\delta t} e^{-\lambda s}x^\top(s)(Q+K_k^\top R K_k)x(s)\d s.
	\end{aligned}
\end{align}
We have $l$ equations of \eqref{est_adp} to calculate the matrices $(P_k, K_{k+1})$ using trajectory data. 
To transform \eqref{est_adp} into a single matrix equation, we introduce the following matrices:
\begin{align}
	\begin{aligned}\label{data}
		&\Delta = \begin{pmatrix}
			e^{-\lambda t}x(t)\otimes x(t)\Big|_{t_1}^{t_1+\delta t},\quad e^{-\lambda t}x(t)\otimes x(t)\Big|_{t_2}^{t_2+\delta t},\quad\ldots,\quad e^{-\lambda t}x(t)\otimes x(t)\Big|_{t_l}^{t_l+\delta t}
		\end{pmatrix}^\top \in \bbr^{l\times n^2},\\
		&I_1 = \begin{pmatrix}
			\int_{t_1}^{t_1+\delta t} e^{-\lambda s} x\otimes x \d s,\quad \int_{t_2}^{t_2+\delta t} e^{-\lambda s} x\otimes x \d s,\quad\ldots,\quad \int_{t_l}^{t_l+\delta t} e^{-\lambda s} x\otimes x \d s
		\end{pmatrix}^\top\in \bbr^{l\times n^2},\\
		&I_2 = \begin{pmatrix}
			\int_{t_1}^{t_1+\delta t} e^{-\lambda s} x\otimes \left(\int_U\bm{u} g_0(s,\bm{u})\d \bm{u}\right) \d s,\quad \ldots,\quad \int_{t_l}^{t_l+\delta t} e^{-\lambda s} x\otimes \left(\int_U\bm{u}g_0(s,\bm{u})\d \bm{u}\right) \d s
		\end{pmatrix}^\top\in \bbr^{l\times mn}.
	\end{aligned}
\end{align}
Then, the $l$ equations in \eqref{est_adp} can be combined to the following linear matrix equation:
\begin{equation}\label{adp}
	\begin{pmatrix}
		\Delta , -2I_1(I_n\otimes K_k^\top R)-2I_2(I_n\otimes R)
	\end{pmatrix}\begin{pmatrix}
		\mbox{vec}(P_k)\\
		\mbox{vec}(K_{k+1})\end{pmatrix} = -I_1\mbox{vec}(Q+K_k^\top RK).
\end{equation}
Again, under a suitable rank condition on the matrices $I_1$ and $I_2$, the matrices $(P_k,K_{k+1})$ satisfying \eqref{adp} can be obtained as a unique solution to the linear equation~\eqref{adp}.
The following proposition follows directly from \cite[Theorem 2.3.12]{jiang2014robust}.

\begin{proposition}[Convergence of off-policy learning]
	Under the condition that
	\begin{equation}\label{rank_off}
	\textup{rank}\left(\begin{pmatrix}
	I_1,~ I_2
	\end{pmatrix}\right) = \frac{n(n+1)}{2}+mn,
	\end{equation}
	there exists a unique pair $(P_k, K_{k+1})$ with $P_k=P_k^\top$ satisfying \eqref{adp}.  
	If, in addition, the initial gain matrix $K_0$ is chosen so that  $A-\frac{\lambda}{2}I-BK_0$ is Hurwitz, then 
	\begin{itemize}
	\item 
	$A-\frac{\lambda}{2}I-BK_{k}$ is Hurwitz;
	
	\item
	the   pair $(P_k, K_{k+1})$  converges to the optimal pair $(P,K)$ as $k\to\infty$.
	\end{itemize}
\end{proposition}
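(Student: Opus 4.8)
The plan is to show that the data-driven linear matrix equation~\eqref{adp} is nothing but an exact, trajectory-based encoding of one step of the model-based policy iteration~\eqref{iter-ARE}, so that once~\eqref{adp} is shown to be uniquely solvable, its solution must coincide with the model-based iterate, whose convergence has already been established earlier in this section (following \cite{kleinman1968iterative}). A preliminary observation is that the behavior policy $g_0$ enters~\eqref{linear_off} and the data matrices~\eqref{data} only through its mean $w(t):=\int_U\bm{u}\,g_0(t,\bm{u})\,\d\bm{u}$; the covariance and entropy of $g_0$ play no role in the subsequent linear algebra, so the situation is structurally identical to the off-policy adaptive dynamic programming scheme of \cite{jiang2014robust} with behavior input $w(t)$, and \cite[Theorem 2.3.12]{jiang2014robust} applies. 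I would make this precise in the three steps below.

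\emph{Consistency.} First I would check that, whenever $A-\tfrac{\lambda}{2}I-BK_k$ is Hurwitz, the pair $(P_k,K_{k+1})$ generated by~\eqref{iter-ARE} with $K_{k+1}=R^{-1}B^\top P_k$ satisfies~\eqref{adp}. This is the computation~\eqref{est}--\eqref{est_adp} read in the forward direction: differentiating $e^{-\lambda t}x^\top(t)P_kx(t)$ along the trajectory of~\eqref{linear_off}, substituting~\eqref{iter-ARE}, integrating over each window $[t_i,t_i+\delta t]$, and applying $\mathrm{vec}(ABC)=(C^\top\otimes A)\mathrm{vec}(B)$ turns the identity into exactly the $i$-th row of~\eqref{adp}. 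Hence~\eqref{adp} is consistent and has at least one symmetric-$P_k$ solution.

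\emph{Uniqueness.} Next I would show that, under the rank condition~\eqref{rank_off}, the homogeneous version of~\eqref{adp} has only the trivial solution among pairs $(\hat P,\hat K)$ with $\hat P=\hat P^\top$. Writing $A_k:=A-\tfrac{\lambda}{2}I-BK_k$ (Hurwitz by the model-based proposition) and using the same Lyapunov-type identity along the fixed trajectory, the homogeneous equations can be rearranged so that all $\Delta$-terms are absorbed into the $I_1$- and $I_2$-columns; concretely one obtains $\begin{pmatrix}I_1 & 2I_2\end{pmatrix}\begin{pmatrix}\mathrm{vec}(\tilde S)\\\mathrm{vec}(T)\end{pmatrix}=0$, where $T:=B^\top\hat P-R\hat K$ and $\tilde S$ is the symmetric matrix $A_k^\top\hat P+\hat P A_k$ modified by a symmetrized $K_k^\top T$ term. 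Since $\tilde S$ is symmetric, the number of free parameters in $(\tilde S,T)$ is $\tfrac{n(n+1)}{2}+mn$, and the rank condition~\eqref{rank_off} forces $\tilde S=0$ and $T=0$. Then $T=0$ gives $\hat K=R^{-1}B^\top\hat P$, which reduces $\tilde S$ to $A_k^\top\hat P+\hat P A_k$, so $A_k^\top\hat P+\hat P A_k=0$; as $A_k$ is Hurwitz, the unique symmetric solution of this Lyapunov equation is $\hat P=0$, whence $\hat K=0$.

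\emph{Conclusion.} Combining the two steps, for each $k$ the data-driven equation~\eqref{adp} has a unique symmetric-$P_k$ solution, and by consistency it equals the model-based iterate $(P_k,K_{k+1})$ of~\eqref{iter-ARE}. The convergence result for that model-based iteration (stated earlier in this section, from \cite{kleinman1968iterative}) then yields that $A-\tfrac{\lambda}{2}I-BK_k$ is Hurwitz for every $k$, $P\preceq P_{k+1}\preceq P_k$, and $(P_k,K_k)\to(P,K)$ with $K=R^{-1}B^\top P$. The main obstacle is the uniqueness step: one must convert the abstract rank condition~\eqref{rank_off} on the collected data matrices into injectivity of the map in~\eqref{adp} restricted to symmetric $P_k$, which requires carefully matching the block structure of $\Delta$, $I_1$ and $I_2$ against the fixed-policy Lyapunov identity and crucially using that $A-\tfrac{\lambda}{2}I-BK_k$ is Hurwitz; everything else is bookkeeping or a direct appeal to \cite{jiang2014robust}.
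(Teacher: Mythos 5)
Your proposal is correct and follows essentially the same route as the paper: the paper derives the consistency computation \eqref{est}--\eqref{est_adp} and then simply invokes \cite[Theorem 2.3.12]{jiang2014robust} for uniqueness under the rank condition and for convergence via the model-based (Kleinman) iteration. Your uniqueness step --- absorbing the $\Delta$-column into $I_1$, $I_2$ via the Lyapunov identity, noting that only the symmetric part of the $I_1$-argument matters so the pair $(\tilde S,T)$ has exactly $\tfrac{n(n+1)}{2}+mn$ degrees of freedom, and then using Hurwitzness of $A-\tfrac{\lambda}{2}I-BK_k$ (maintained inductively) to force $\hat P=0$, $\hat K=0$ --- is precisely the argument of the cited theorem, so you have merely made explicit what the paper outsources to the reference.
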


\begin{algorithm}[tb]
	\caption{Off-policy maximum entropy method for data-driven  LQ control}
	\label{alg:off-policy}
 Initialize the gain matrix $K_0$ so that $A-BK_0$ is Hurwitz.\\
  Initialize $t_1=0$;\\
	\While {the rank condition \eqref{rank_off} is not satisfied}{
 Execute the control $\mathcal{N}(-K_kx(t_i),\alpha R^{-1})$ and collect controlled trajectory data;\\
Construct the $i$th row of $\Delta$, $I_1$ and $I_2$ in \eqref{data};\\
Set $t_{i+1} = t_i+\delta t$;
}
	\For {$k=0,1,2,\ldots$}{
Find the pair $(P_k,K_{k+1})$ by solving \eqref{adp};\\
Stop if $|P_k-P_{k+1}|<\varepsilon$, where $\varepsilon$ is a predefined threshold;
}
\end{algorithm}

The off-policy method also generates gain matrices $K_k$, stabilizing the system (up to the factor $e^{-\frac{\lambda t}{2}}$), and guarantees convergence to the optimal pair $(P, K)$.
Algorithm~\ref{alg:off-policy} describes the off-policy version of our maximum entropy data-driven control.
As in the on-policy case, in practice, we may sample a control input from the normal distribution and numerically approximate the integrals in  \eqref{data}.
While the on-policy method in the previous subsection needs to collect  new trajectory data in every iteration, Algorithm \ref{alg:off-policy} only uses the data collected in the beginning to construct the data matrices $\Delta$, $I_1$ and $I_2$.  Then, it repeatedly uses the same data matrix during the learning process. Thus,  Algorithm \ref{alg:off-policy} is an off-policy method.

\section{Numerical Examples}\label{sec:6}

We provide  numerical examples to demonstrate the performance and the utility of our maximum entropy optimal control methods. 
The second numerical experiment concerns  the effectiveness of the generalized Hopf--Lax formula in solving soft HJB equations.
In the second case study, a nonlinear system is controlled using the maximum entropy method with known model information. 
In the third set of experiments, we demonstrate the performance of our data-driven method in the linear-quadratic setting when the model parameters are unknown.

\begin{figure}[h!]
\centering
	\subfigure[]{
		\includegraphics[width=0.5\columnwidth]{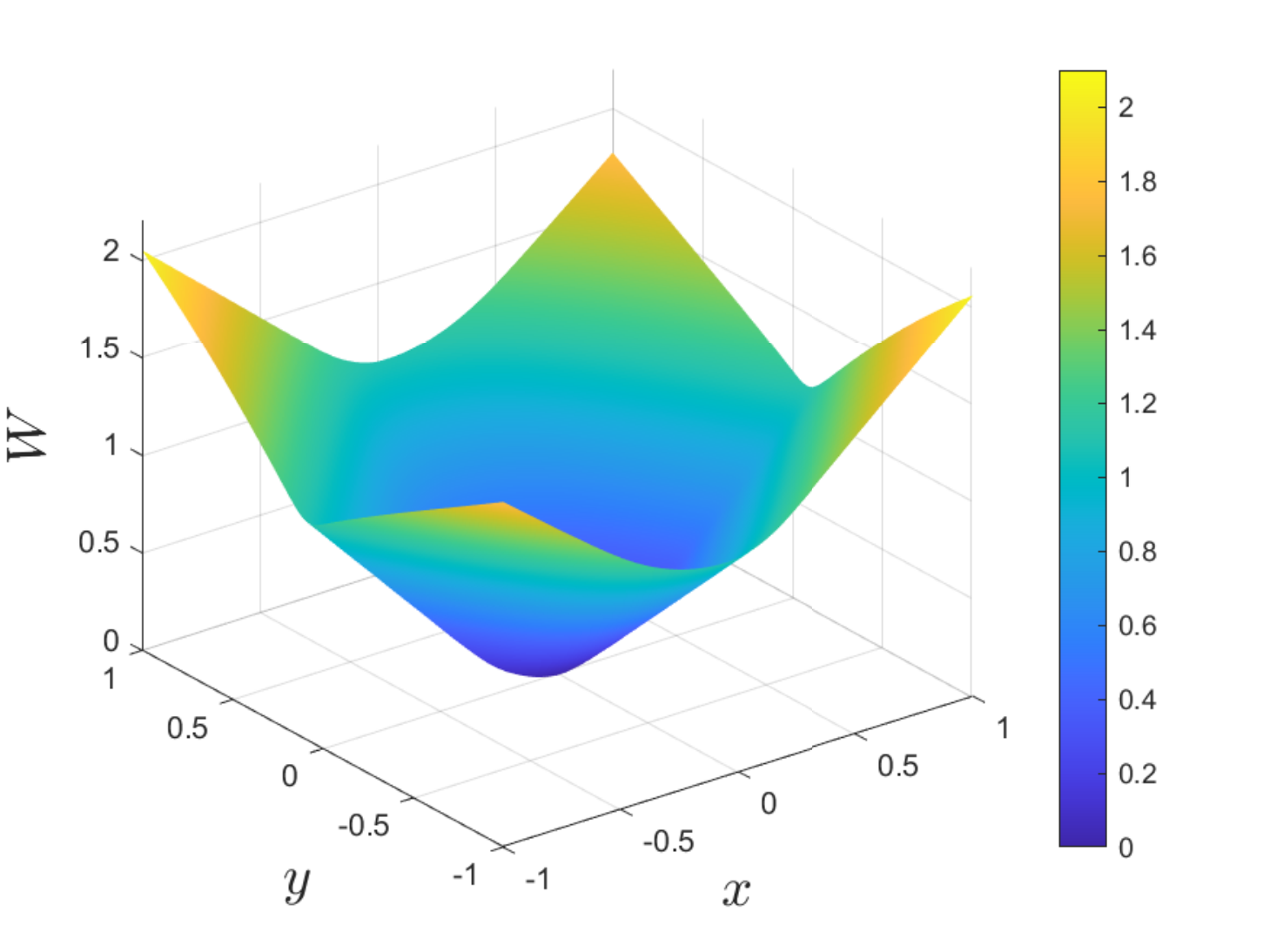}
		\includegraphics[width=0.5\columnwidth]{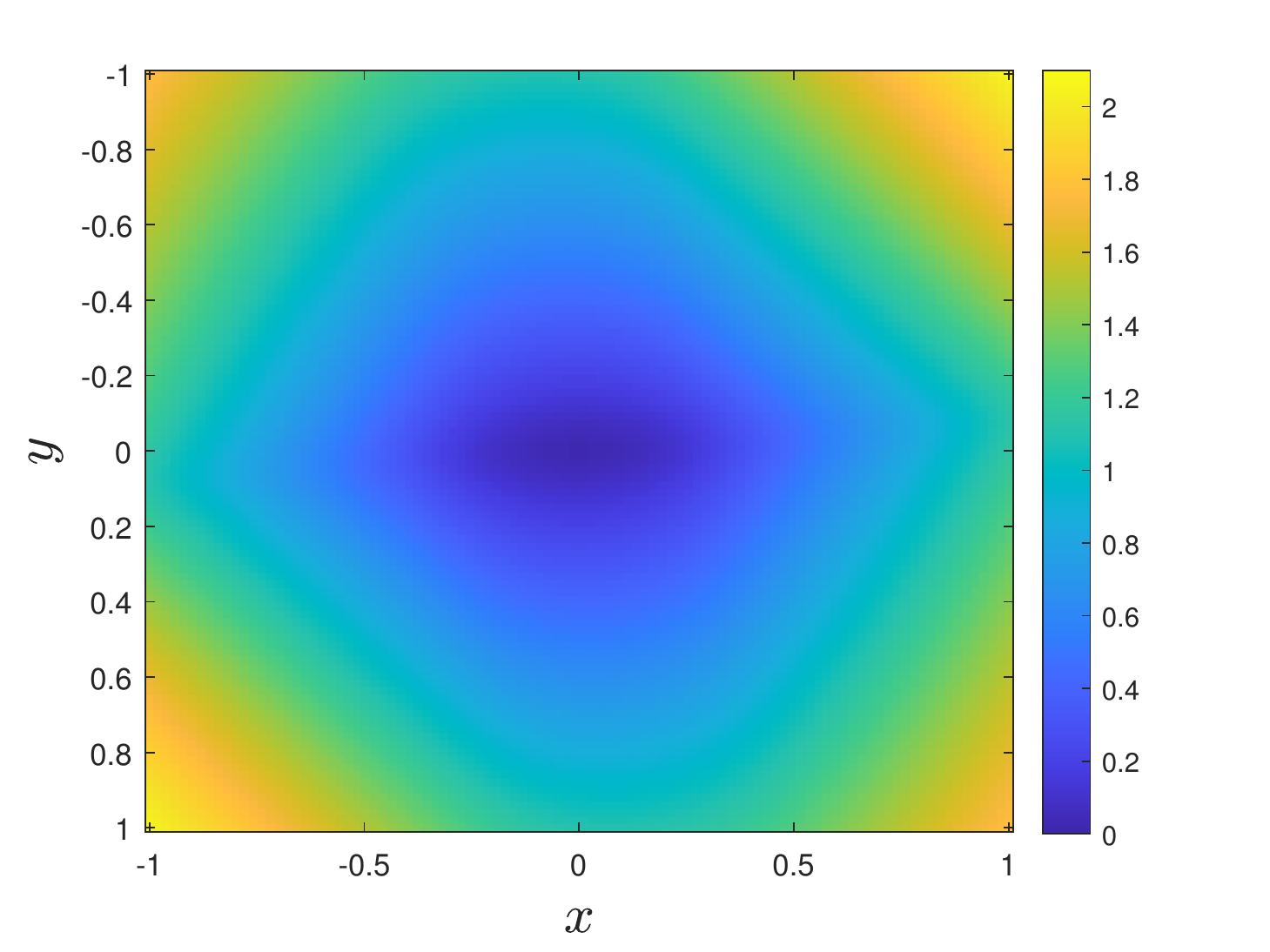}
	}
	\subfigure[]{
		\includegraphics[width=0.5\columnwidth]{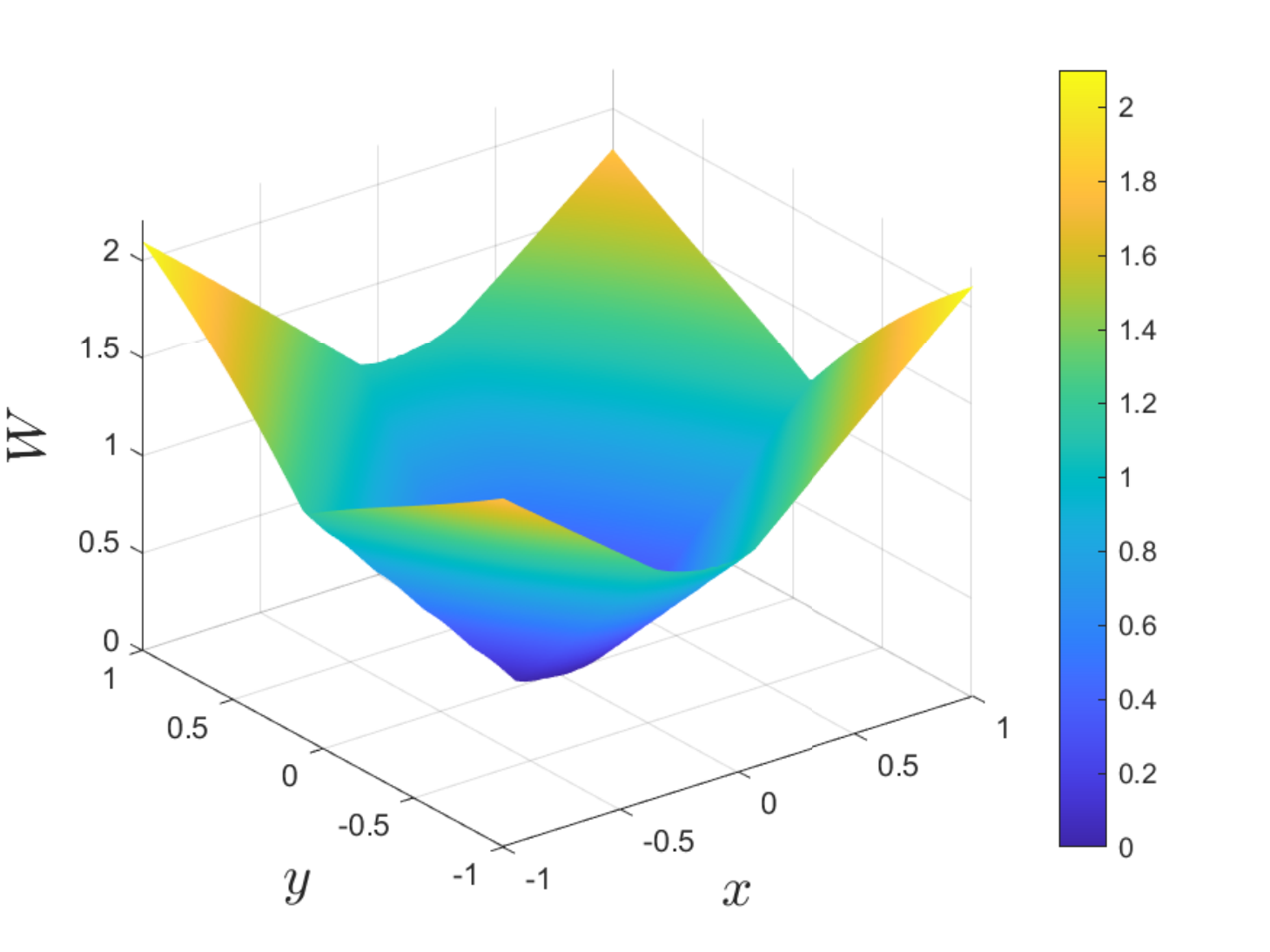}
		\includegraphics[width=0.5\columnwidth]{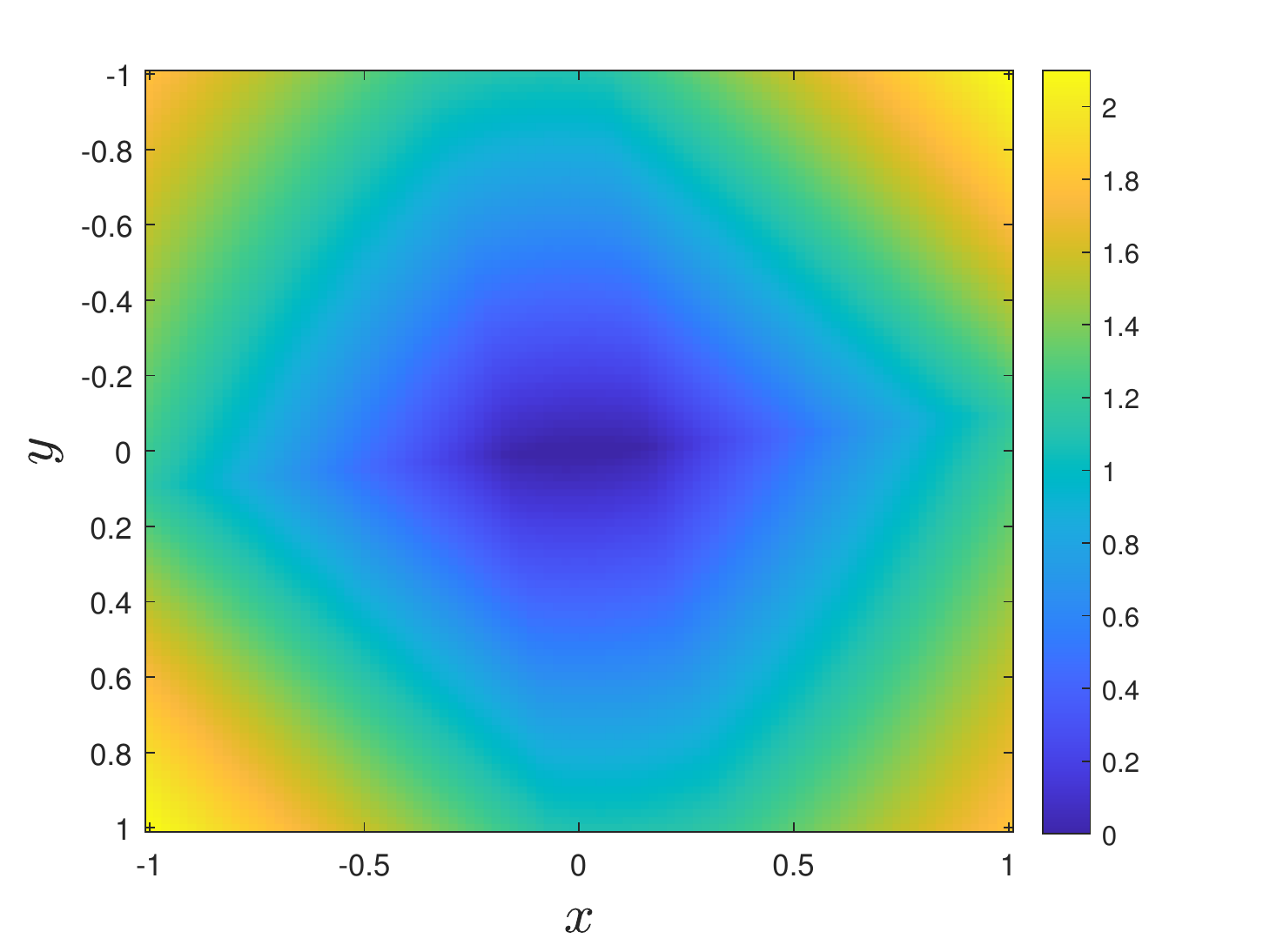}
	}
	\caption{The numerical viscosity solution of the soft HJB equation at $t=0.1$ obtained by (a) the Godunov scheme, and  (b) the grid-free scheme.}
	\label{fig3}
\end{figure}

\begin{figure}[h!]
\centering
	\includegraphics[width=0.6\columnwidth]{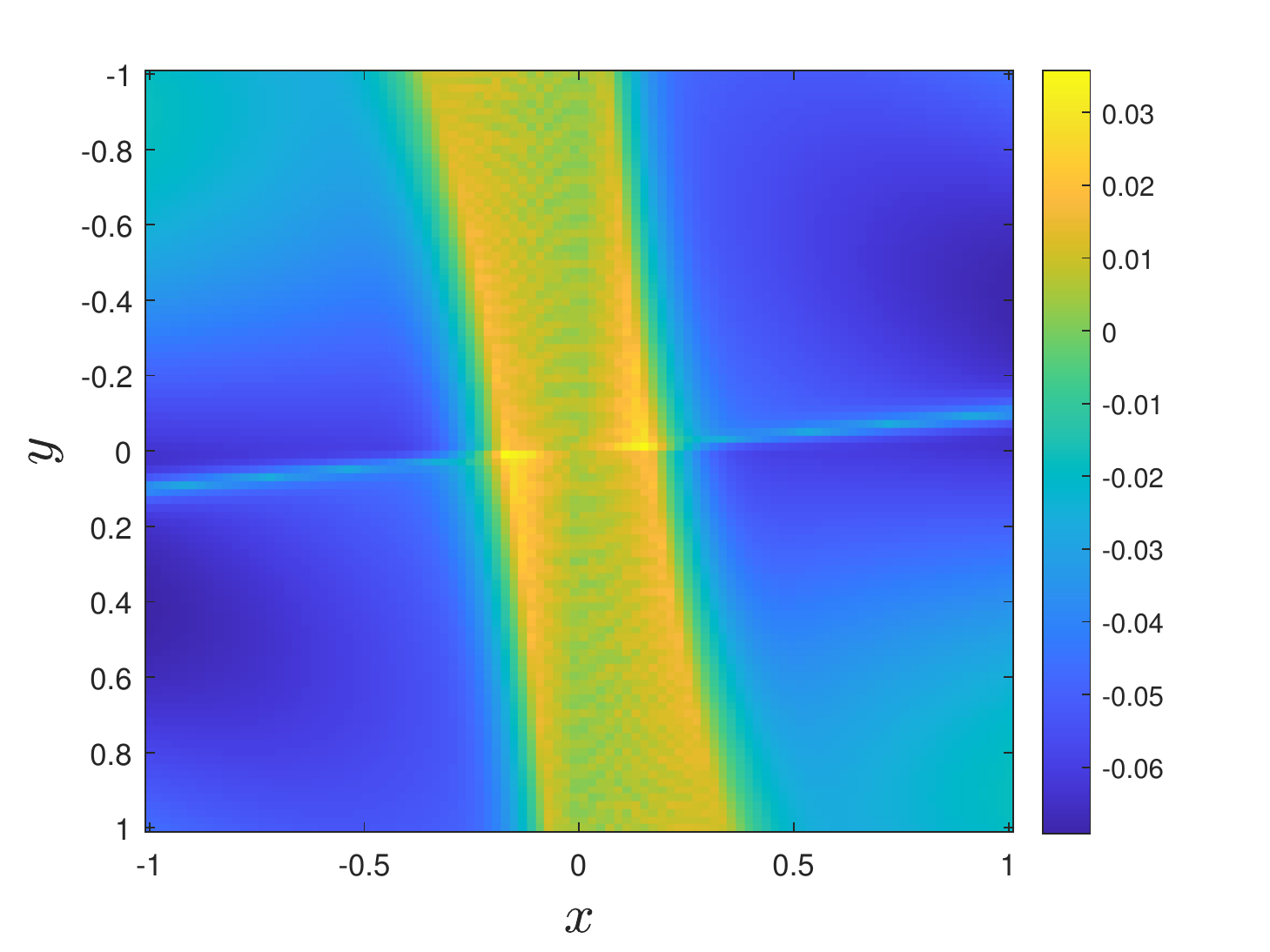}
	\caption{The difference between the two solutions; one obtained by the Godunov scheme, and another obtained by the grid-free scheme.}
	\label{fig5}
\end{figure}

\subsection{Solution of Soft HJB Equations via a Generalized Hopf--Lax Formula}

We first demonstrate that the soft HJB equation \eqref{HJB_soft} can be effectively solved using the generalized Hopf--Lax formula in Section \ref{sec:4.3}. 
We consider a Van der Pole oscillator with
 the following nonlinear vector field and running cost function:
\begin{equation}\label{fxu}
\begin{split}
&f(\bm{x},\bm{u}) = \left(x_2,-2(x_1^2-1)x_2-x_1+(2+\sin(x_1x_2))\left(\bm{u}+\frac{1}{3}\bm{u}^3+\sin \bm{u}\right)\right) \\
&r(\bm{x},\bm{u}) = |\bm{x}|+|\bm{u}|
\end{split}
\end{equation}
together with the terminal cost function $q(\bm{x}) = \|\bm{x}\|_{1}$. The set of available control $U$ is chosen as $[-1,1]$. Recall that the soft HJB equation can be written as the following initial value problem:
\begin{align}
\begin{aligned}\label{HJB_init}
	&\partial_t W_\alpha+H_{\alpha}(\bm{x},\nabla_{\bm{x}} W_\alpha) = 0,\quad H_{\alpha}(\bm{x},\bm{p}) = \alpha\log\int_U \exp\left(-\frac{\bm{p}\cdot f(\bm{x},\bm{u})+r(\bm{x},\bm{u})}{\alpha}\right)\,\d\bm{u},\\
	&W_\alpha(0,\bm{x})=q(\bm{x}).
\end{aligned}
\end{align}
We compare the numerical viscosity solution obtained by using the grid-free method in \cite{chow2019algorithm} and that computed using  the Godunov monotone scheme~\cite{bardi1991nonconvex,osher1991high}. 
Figure \ref{fig3} shows the results obtained by the two methods;
the overall solution shapes are almost identical. 
The difference between the two solutions is shown in
Figure \ref{fig5}. More precisely, the figure shows the value of $W_\alpha^1-W_\alpha^2$, where $W_\alpha^1$ is the numerical solution obtained by  the Godunov scheme and $W_\alpha^2$ is the solution constructed by the grid-free scheme. The difference is reasonably small;  thus, the grid-free method successfully solves the HJB \eqref{HJB_init}. The maximum of the absolute difference between two solutions is 0.0692, which is 3.29\% of the $\|W_\alpha^2\|_{L^\infty}$. 
As a remark, we note that there are two regions where the difference is relatively large. The first region is the boundary of the domain. In the Godunov scheme,  the extrapolating boundary condition is used, and it may introduce numerical errors. 
The other region is the center of the domain. This is due to the numerical dissipation of the Godunov scheme, where the solution is non-smooth, although it is less diffusive than other monotone schemes (e.g., \cite{osher1984riemann}). This dissipation indicates that the Godunov solution  at this non-smooth region is smoothing out, thereby causing an undesirable overestimation of the numerical solution.

\subsection{Nonlinear Systems with Known Model Parameters}

We now use the grid-free scheme based on the Hopf--Lax formula to solve a nonlinear maximum entropy optimal control problem.
Consider the following modified Van der Pole oscillator~\cite{yang2007adaptive}:
\begin{align}
	\begin{aligned}\label{vdp}
		&\dot{x}_1 = x_2,\\
		&\dot{x}_2 = -2(x_1^2-1)x_2-x_1+(2+\sin(x_1x_2))\left(u+\frac{1}{3}u^3+\sin(u)\right),\\
		&\dot{x}_3 = x_4,\\
		&\dot{x}_4 = -x_3-0.2x_4+x_1
	\end{aligned}
\end{align}
with  initial data $x(0)=(0.05,0.25,0,0.02)$.
The running cost $r(\bm{x},\bm{u})$ and the terminal cost $q(\bm{x})$ are chosen as
\[r(\bm{x},\bm{u}) = \|\bm{x}\|_{1}+|\bm{u}|,\quad q(\bm{x}) = \|\bm{x}\|_1.\]
Again, the set of available control $U$ is $[-1,1]$.  The standard HJB equation is given by
\[\partial_t V -H_0(\bm{x},\nabla_{\bm{x}}V)=0,\quad H_0(\bm{x},\bm{p}):=-\inf_{\bm{u}\in U}\left\{p \cdot f(\bm{x},\bm{u})+r(\bm{x},\bm{u})\right\},\]
where $f(\bm{x},\bm{u})$ is the vector field of \eqref{vdp}. 
Note that the minimization problem in the Hamiltonian $H_0$ is nonconvex due to the nonlinearity of $\bm{u} \to f(\bm{x}, \bm{u})$. 
Thus,  evaluating $H_0$ is computationally challenging.
However, the soft Hamiltonian~\eqref{Ham_soft} is explicitly represented as an integral, which can be computed using existing numerical methods. 
Thus, it is computationally tractable to use the generalized Hopf--Lax formula-based method for solving the corresponding soft HJB equation \eqref{HJB_soft} for maximum entropy control.
 In the experiment,   the temperature parameter was chosen as $\alpha=1$.
\begin{figure}[t]
\centering
	\includegraphics[width=0.6\textwidth]{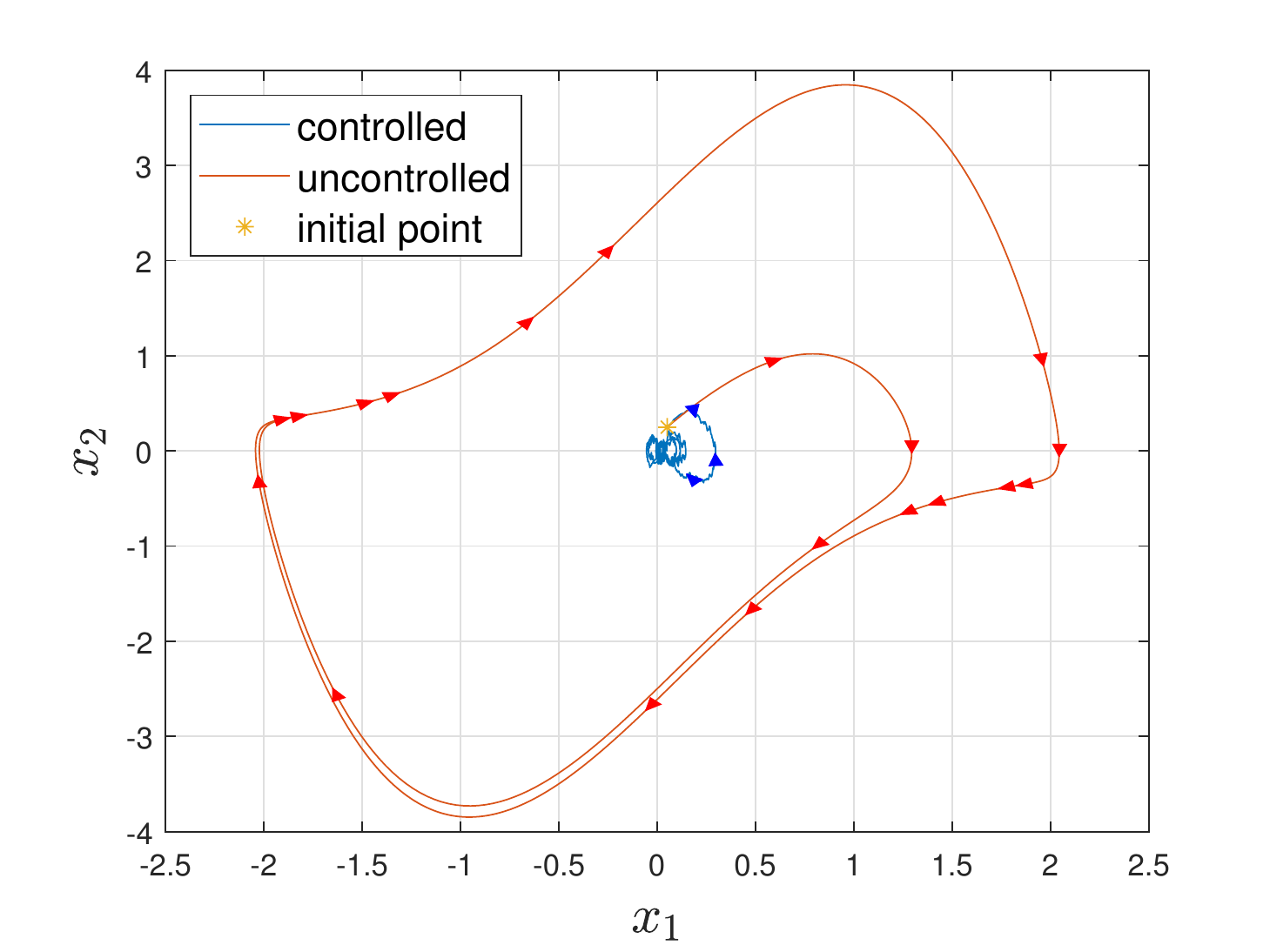}
	\caption{Uncontrolled and controlled $(x_1, x_2)$ trajectories of the Van der Pole oscillator.}
	\label{phase_diag}
\end{figure}

We construct the optimal control policy $g^\star_\alpha$ by solving the converted HJB equation \eqref{HJB-convert} with the generalized Hopf--Lax formula introduced in Section \ref{sec:4.3}. 
As emphasized in Footnote 7,   the generalized Hopf--Lax formula-based method can be unstable when solving HJB equations
for a long period of time due to the issue in the global existence of bi-characteristic curves, regardless of entropy regularization.
To control the nonlinear system \eqref{vdp} for a long period of time, say $T=20$, we construct  maximum entropy suboptimal controls by successively solving the subproblems with  $T' =2.5$ for 8 times. 
Figure \ref{phase_diag} shows the controlled and the uncontrolled trajectories of the first two states. 
As shown in the result, the maximum entropy controller successfully drives the nonlinear system near the origin,
 while the uncontrolled system converges to a limit cycle far from the origin.

\subsection{Linear Systems with Unknown Model Parameters}

In this subsection, we use the data-driven methods in Section~\ref{sec:5}  
to solve linear-quadratic control problems with unknown model parameters.

\subsubsection{On-Policy Method}\label{sec:exp_on}

Consider a linear system of the form:\footnote{The matrices $A$ and $B$ are randomly generated using the internal function \texttt{rss} in MATLAB, which  produces an arbitrary linear system model. The matrix $A$ is then modified by adding a constant multiplication of the identity matrix so that each eigenvalue of $A$ has a real part no greater than $-0.01$. The matrix $B$ is then multiplied by $0.1$. The system matrices used in Section~\ref{sec:exp_on} and~\ref{sec:exp_off} can be downloaded from the following link: \url{http://coregroup.snu.ac.kr/DB/sys_matrix.mat}.}
\[\dot{x}(t) = Ax(t)+Bu(t),\quad x(t)\in \bbr^{10},\quad u(t)\in \bbr^{10}.\]
The matrix $A$ is chosen to be  Hurwitz. Thus, with the initial gain matrix $K_0=0$,  $A-\frac{\lambda}{2}I-BK_0$ is Hurwitz for any $\lambda>0$.
Our specific choice of $A$ has  the eigenvalues at $-9.9067$, $-4.8468$, $-2.4977$, $-2.2825$, $-1.597$ $-1.4836\pm1.1164\textup{i}$, $-0.7143$, $-0.3318$ and $-0.01$.
As shown in  Figure~\ref{traj_on} (a), the system converges to 0  very slowly.
The running cost function is chosen as $r(\bm{x},\bm{u}) = 0.01 |\bm{x}|^2+|\bm{u}|^2$ and the discount factor is set to be $\lambda = 10^{-10}$.

\begin{figure}[tb]
	\centering
	\subfigure[]{
		\includegraphics[width=0.32\columnwidth]{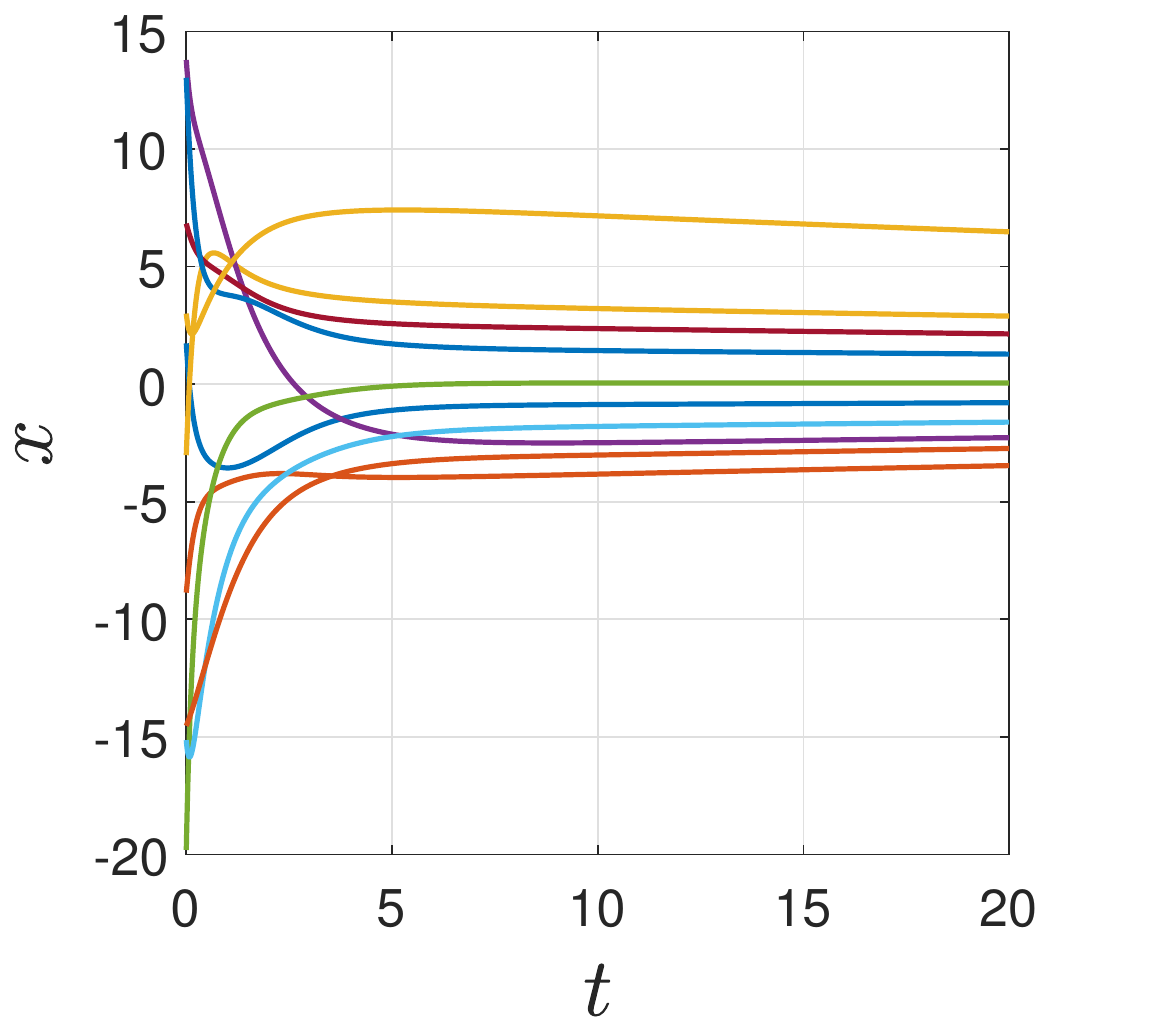}}
	\subfigure[]{
		\includegraphics[width=0.32\columnwidth]{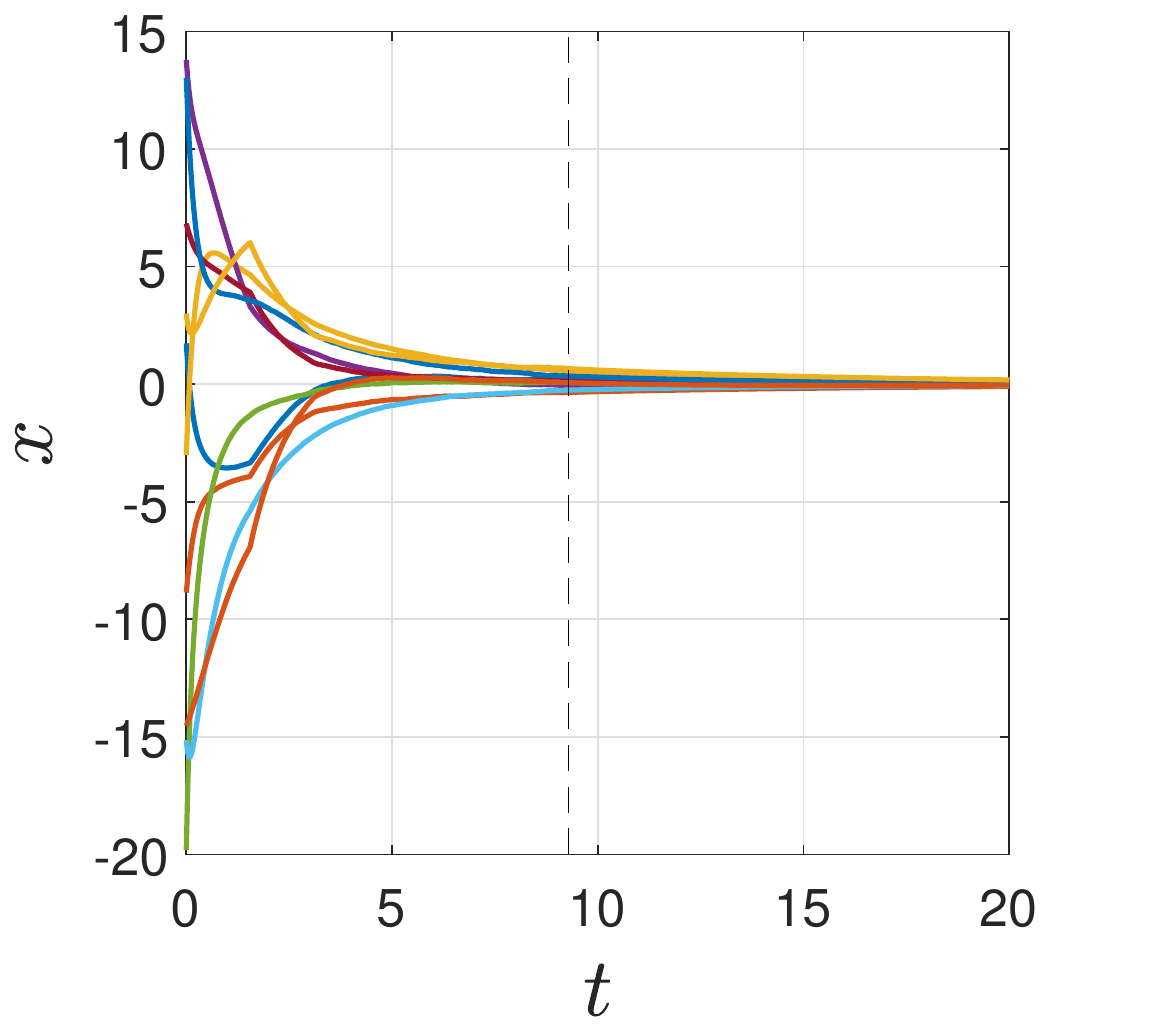}}
	\subfigure[]{
		\includegraphics[width=0.32\columnwidth]{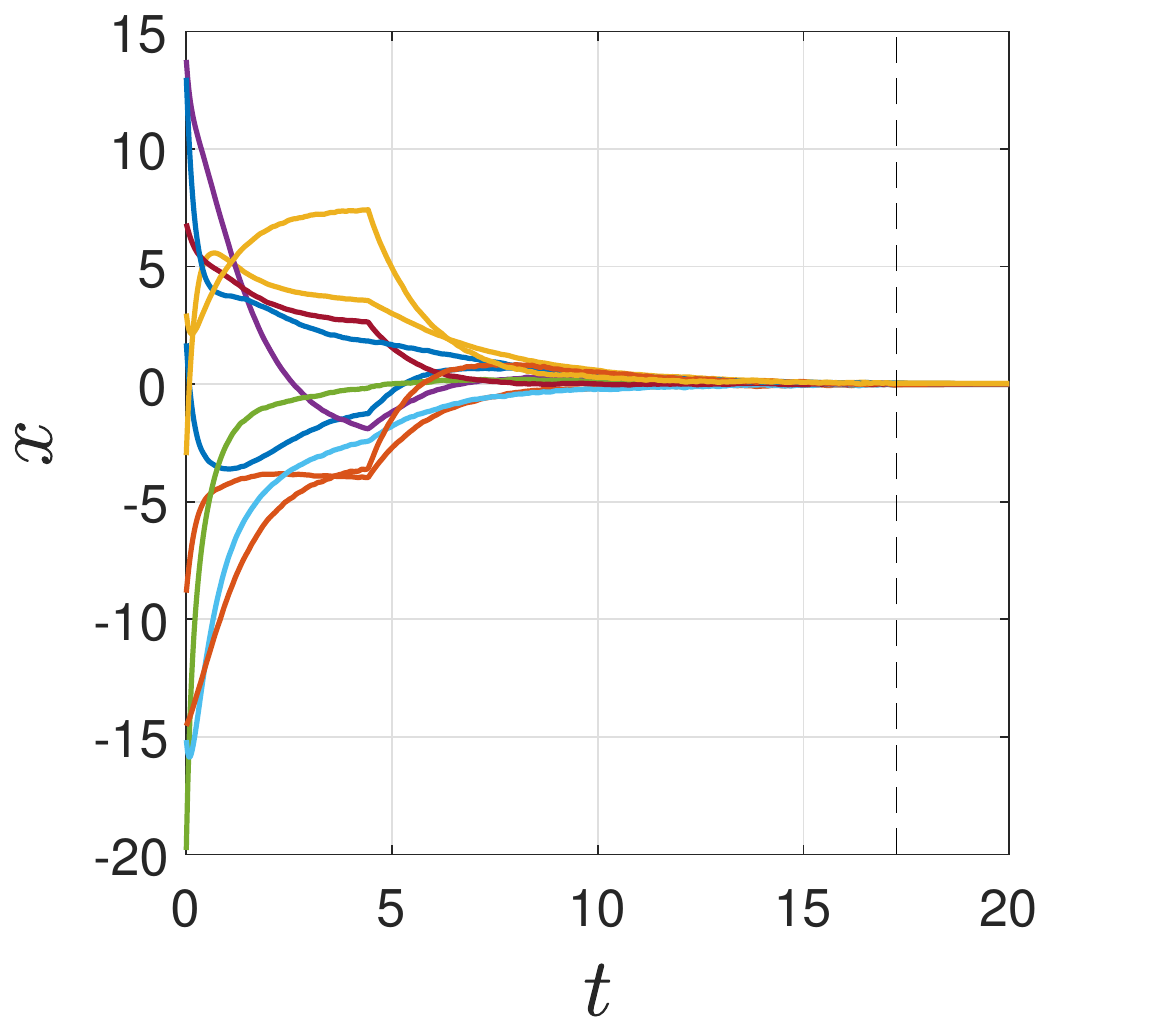}}
	\caption{On-policy case: System trajectories controlled by (a) $u \equiv 0$ (uncontrolled), (b) the maximum entropy method with $\alpha = 1$, and (c) the standard method. The vertical line indicates  when learning is completed.}
	\label{traj_on}
\end{figure}

\begin{table}[tb]
	\caption{On-policy case: Quantitative comparisons between the maximum entropy method and the standard adaptive DP method in \cite{jiang2014robust}.}
	\centering
	\begin{tabular}{ c |  >{\centering\arraybackslash}p{2.9cm} >{\centering\arraybackslash}p{2.9cm} >{\centering\arraybackslash}p{2.9cm}  }
	\hline
		 & \centering Uncontrolled & Max entropy $(\alpha=1)$ & Standard   \\ \hline\hline
Total running cost   & 686.92  & 467.77 & 496.48 \\
		Settling time 	     & $t=206.88$ & $t=6.60$ & $t=8.36$    \\
		Avg. \# of data for rank condition & N/A &155 & 431.5 \\
		Total \# of data  &  N/A  & 930 & 1726         \\
		Learning time   & N/A& $t = 9.30$ & $t = 17.26$ \\
		Computation time (sec)  & N/A& 1.36& 4.84 \\\hline
	\end{tabular}
	\label{tbl}
\end{table}

We first use the on-policy method in Section~\ref{sec:5}  
with $\delta t = 0.01$
to learn the optimal gain matrix $K$. 
We compare our method and the adaptive DP algorithm in \cite{jiang2014robust}, which is also data-driven, with the following sinusoidal exploration noise:
\[
e =  a \sum_{k=1}^{100}\sin(\omega_k t),\quad \omega_k\sim U(-\bar{\omega}, \bar{\omega}),
\]
where $a$ and $\bar{\omega}$ denote the amplitude and the frequency of the sinusoidal exploration, respectively.
We use $\alpha= 1$   for the maximum entropy method and $a=0.5$ and $\bar{\omega}=100$ for the standard adaptive DP method. The  threshold for convergence is set to be $\varepsilon=5\times10^{-1}$. 
Figure~\ref{traj_on} (b) and (c) show 
the system trajectories controlled by 
the two on-policy learning methods.  Both methods successfully learn the optimal gain matrix $K$ after several iterations. 
However, the learning speed of the maximum entropy method is much faster than that of the standard method.
To be precise, our method finishes learning at $t=9.30$ (with the actual total computation time of $1.36$ seconds), while its standard counter part finishes learning at $t=17.26$ (with the actual total computation time of $4.84$ seconds). The dashed vertical lines in Figure \ref{traj_on} indicate the times at which learning is completed. 
 Note also that the trajectories controlled by the on-policy methods are not smooth at which the gain matrices $K_k$ are updated.

\begin{table}[tb]
	\caption{Effect of $\alpha$ on the on-policy method.}
	\centering
	\begin{tabular}{ c | >{\centering\arraybackslash}p{1.5cm} >{\centering\arraybackslash}p{1.5cm} >{\centering\arraybackslash}p{1.5cm} >{\centering\arraybackslash}p{1.5cm} >{\centering\arraybackslash}p{1.5cm}  }
\hline
		$\alpha$   & $1$ & $0.5$ & $0.1$ & $0.05$ & $0.01$ \\ \hline
		Avg. \# of data   & 155 & 155 & 155 & 155& 155.5\\
Total running cost   & 467.77 & 387.49 & 371.59 & 370.72 & 370.93\\\hline
	\hline		
	$\alpha$ & $5\times 10^{-3}$ & $10^{-3}$ & $8\times10^{-4}$ &$6\times 10^{-4}$ & $4\times 10^{-4}$  \\ \hline
		Avg. \# of data   & 158.6 & 190.25 & 197.50 & 277.25 & 548.5 \\
Total running cost & 371.86 & 381.78& 382.93 & 405.07 & 458.22 \\\hline
\end{tabular}
	\label{tbl_alpha_on}
\end{table}

Table \ref{tbl} provides quantitative comparisons of the two methods.
First, our method significantly reduces the total running cost without the entropy term,  accumulated over $[0, 500]$. 
Together with the improved learning speed, this implies that our method better balances the exploration-exploitation tradeoff compared to the standard method.
To see how fast the two methods stabilize the system, 
we also compute the settling time, defined as the earliest time after which the trajectory stays in the interval $[-1, 1]$:
\[\mbox{Settling time}:=\min\left\{t\ge0 ~:~ \max_{1\le i\le 10} |x_i(s)|\le 1,\quad\mbox{for}\quad s\ge t\right\}.\]
As reported in Table~ \ref{tbl},
the settling time of the maximum entropy method is $t=6.60$, while that of the standard method is $t=8.36$.  
This result indicates that our method better stabilizes the system during the learning process compared to  the standard method.

Another remarkable result is the difference in sample efficiency. 
Our method needs 155 data to satisfy the rank condition \eqref{rank_on} in each iteration, while the standard method needs 431.5 data on average, as reported in Table \ref{tbl}.
Interestingly, the smallest number of data required to meet the rank condition is 155. 
This implies that our maximum entropy method optimally performs exploration in the sense of satisfying the rank condition. 
As a result of sample efficiency, our method outperforms the standard method in terms of both learning speed and computation time.

We finally examine the effect of the temperature parameter $\alpha$ in balancing the exploitation-exploration tradeoff. As shown in Table~\ref{tbl_alpha_on},  
the average sample size required to satisfy the rank condition decreases with the temperature parameter $\alpha$. 
This result is consistent with our intuition that a control with higher entropy has a better exploration capability than that with lower entropy. 
As a result, for $\alpha \geq 0.05$, the total running cost  decreases as $\alpha$ decreases or, equivalently, as the entropy of our control diminishes.
In this range, the performance increases as the controller focuses more on exploitation. 
However, for $\alpha \leq 0.05$, the total running cost increases as $\alpha$ decreases.
In this range, the controller needs a better exploration capability to present a better performance. Therefore, there exists an appropriate range of $\alpha$  to balance the exploitation-exploration tradeoff; in our case, $\alpha \approx  0.05$ is a reasonable choice.

\subsubsection{Off-Policy Method} \label{sec:exp_off}

\begin{figure}[tb]
	\centering
	\subfigure[]{
		\includegraphics[width=0.32\columnwidth]{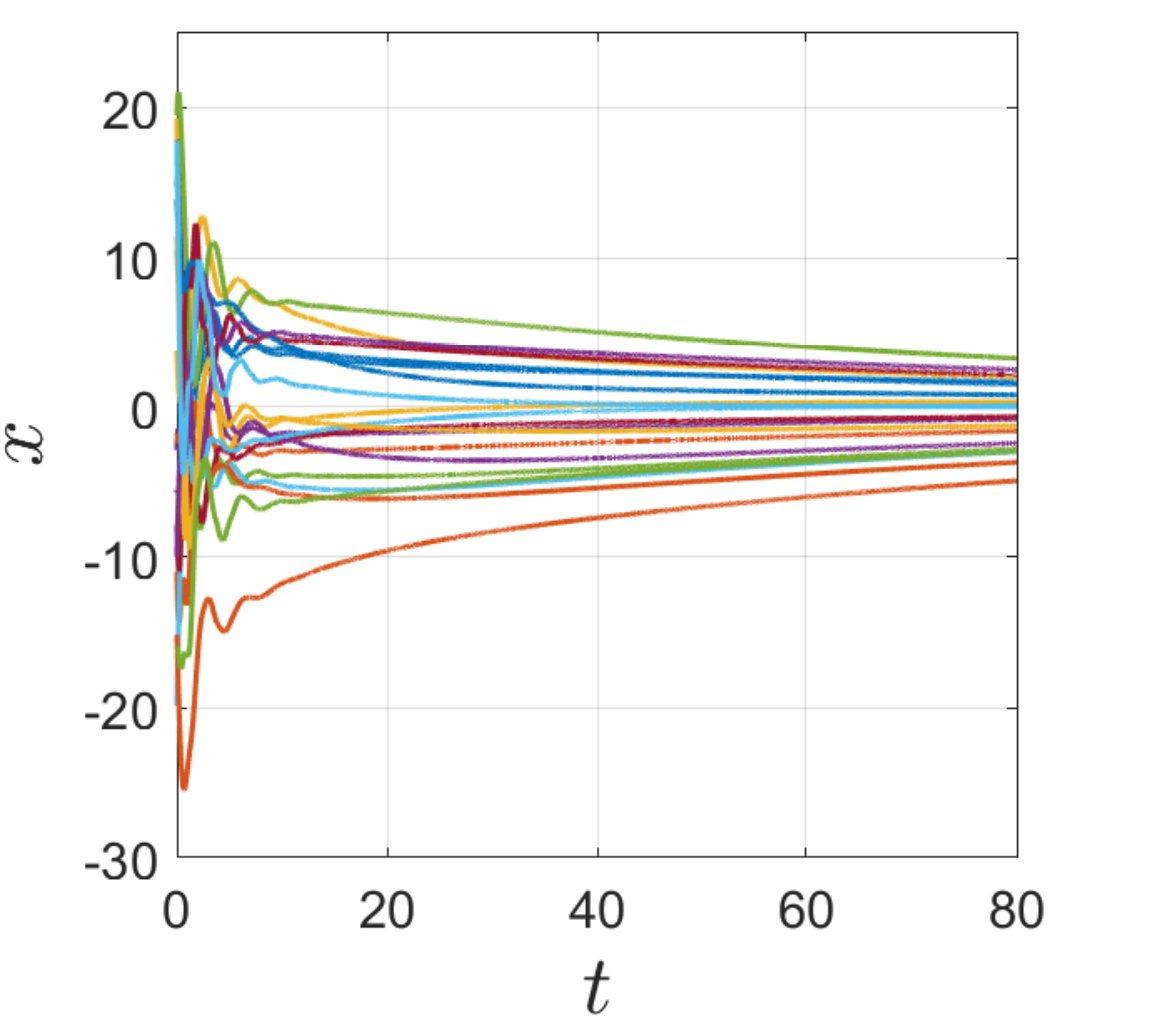}}
	\subfigure[]{
		\includegraphics[width=0.32\columnwidth]{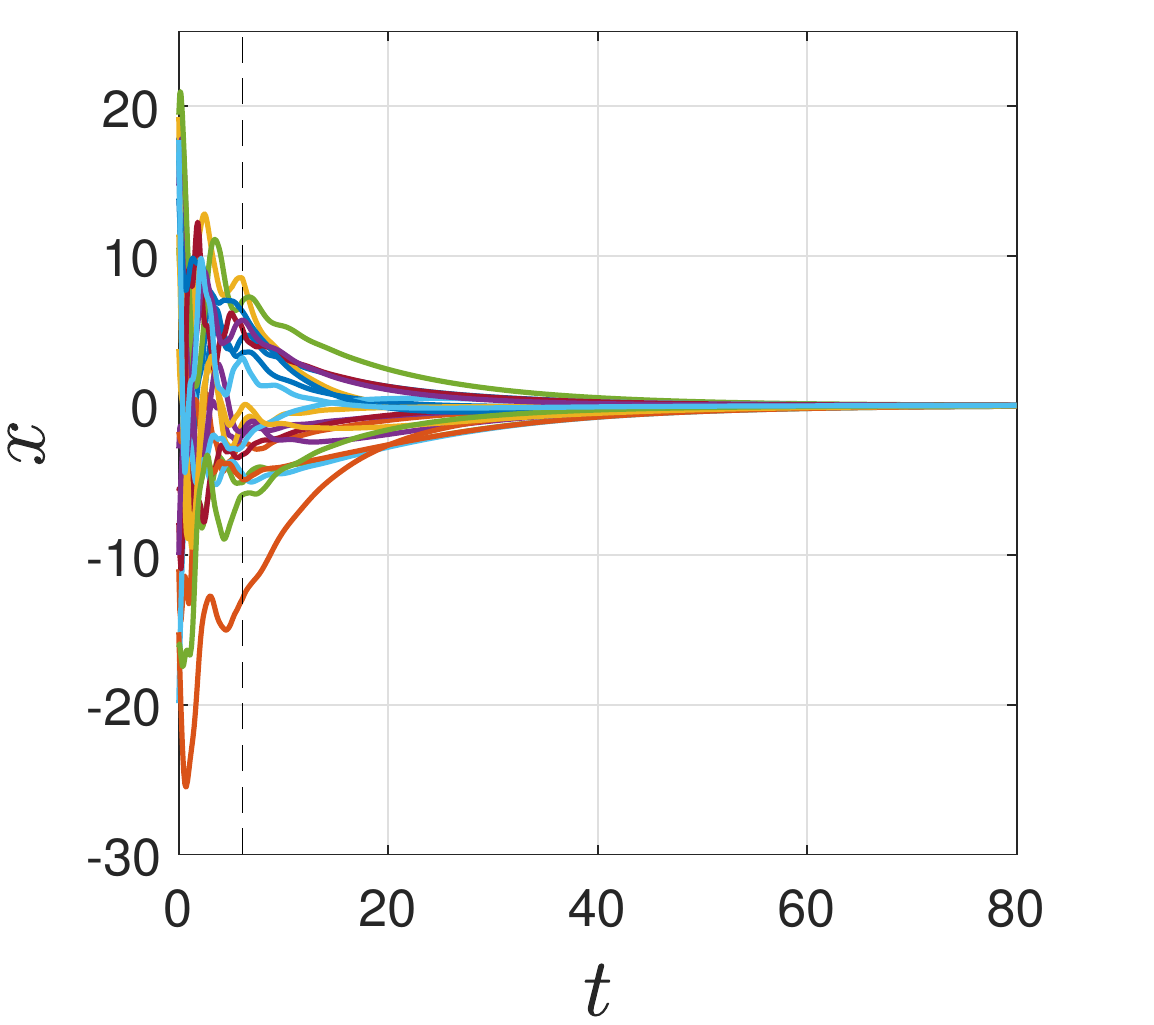}}
	\subfigure[]{
		\includegraphics[width=0.32\columnwidth]{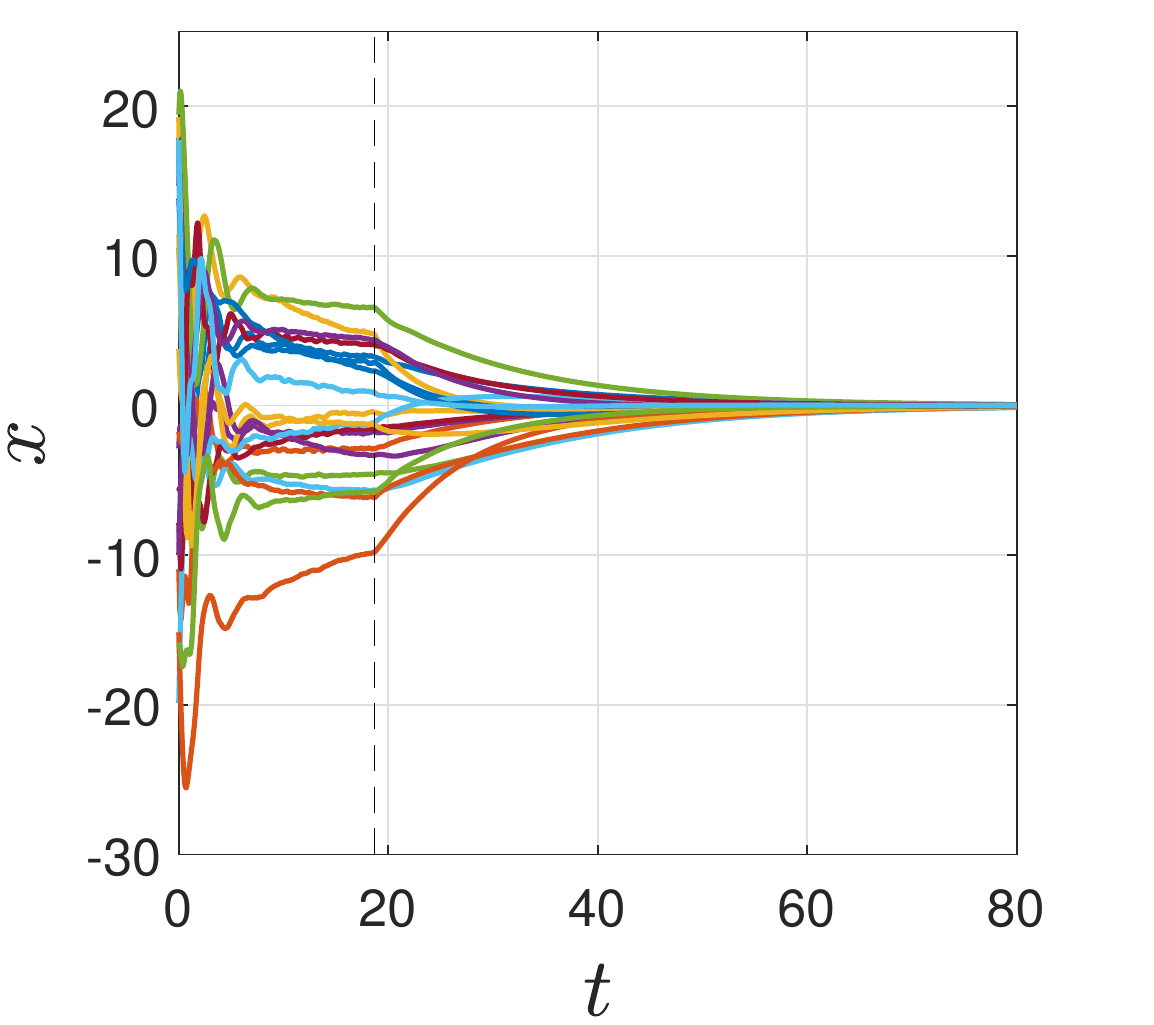}}
	\caption{Off-policy case: System trajectories controlled by (a) $u \equiv 0$ (uncontrolled), (b) the maximum entropy method with $\alpha = 1$, and (c) the standard method. The vertical line indicates  when learning is completed.}
	\label{traj_off}
\end{figure}

\begin{table}[tb]
	\caption{Off-policy case: Quantitative comparisons between the maximum entropy method and the standard adaptive DP method in \cite{jiang2014robust}.}
	\centering
	\begin{tabular}{ c |  >{\centering\arraybackslash}p{2.9cm} >{\centering\arraybackslash}p{2.9cm} >{\centering\arraybackslash}p{2.9cm}  }
	\hline
		 & \centering Uncontrolled & Max entropy $(\alpha=1)$ & Standard   \\ \hline\hline
Total running cost   & 2983.3  & 1258.5 & 1780.4 \\
		Settling time 	     & $t=194.40$ & $t=31.47$ & $t=44.14$    \\
		Total \# of data  &  N/A  & 610 & 1871         \\
		Learning time   & N/A& $t = 6.10$ & $t = 18.71$ \\
		Computation time (sec)  & N/A& 9.74& 93.66 \\\hline
	\end{tabular}
	\label{tbl_off}
\end{table}

We now consider the linear systems with 20-dimensional state and action spaces, i.e., $x(t) \in \bbr^{20}$ and $u(t)\in \bbr^{20}$.
The matrix $A$ is chosen to be Hurwitz;  thus, $K_0=0$ is a valid initial gain matrix.  The eigenvalues of $A$ are  $-3.9597$, $-3.5452$,  $-1.9443$, $-1.7884$, $-1.0196\pm4.1172\textup{i}$, $-0.8337$, $-0.5845$, $-0.4872$, $-0.4231$, $-0.4007\pm 1.9894\textup{i}$, $-0.3531$, $-0.3220$, $-0.2948$, $-0.2543$, $-0.2543$, $-0.2112\pm3.3826\textup{i}$ and $-0.01$.
We used the same running cost $r$,  discount factor $\lambda$, sample time $\delta t$, and sinusoidal noise signal $e$ as those used in the on-policy methods. The  threshold for the stopping criterion was chosen as $\varepsilon= 10^{-3}$. 
Figure~\ref{traj_off} shows the uncontrolled state trajectories and the trajectories controlled by the two off-policy methods. 
As in the on-policy case, our maximum entropy method learns the optimal gain matrix much faster than the standard method does. 
Unlike the on-policy methods, the off-policy counterparts collect data without any update on the gain matrix  until the vertical dashed-lines in Figure~\ref{traj_off}.
At this time instance, the optimal gain matrix is constructed according to Algorithm~\ref{alg:off-policy}
and then applied to the system. 
Thus, the state trajectories are not smooth only at this single time instance, whereas the on-policy methods present many non-smooth instances.

Table~\ref{tbl_off} shows the quantitative results of our experiments in the off-policy case. 
The same performance measures are used as those in the on-policy case. 
As shown in Table~\ref{tbl_off}, the maximum entropy method outperforms its standard counterpart in terms of the total running cost, response speed, sample efficiency, learning speed, and computation time.

The effect of temperature $\alpha$ on the off-policy maximum entropy method is shown in Table~\ref{tbl_alpha_off}. 
The overall tendency is similar to that in the on-policy case. 
As $\alpha$ increases, the exploration capability of our control improves.  Thus, the control with high entropy (large $\alpha$) can learn the optimal gain matrix using a small sample. 
The total running costs indicate the exploration-exploitation tradeoff in the off-policy method as well;
in this case, $\alpha \approx 0.1$ balances the tradeoff reasonably well.

\begin{table}[tb]
	\caption{Effect of $\alpha$ on the off-policy method.}
	\centering
	\begin{tabular}{ c | >{\centering\arraybackslash}p{1.4cm} >{\centering\arraybackslash}p{1.4cm} >{\centering\arraybackslash}p{1.4cm} >{\centering\arraybackslash}p{1.4cm} >{\centering\arraybackslash}p{1.4cm} >{\centering\arraybackslash}p{1.4cm} >{\centering\arraybackslash}p{1.4cm}  }
\hline
		$\alpha$   & $1$ & $0.1$ & $0.01$ & $0.005$ & $0.004$ & $0.003$ & $0.002$  \\ \hline
		Total \# of data   & 610 & 610 & 615 & 687& 742 & 801 & 1537\\
Total running cost   & 1258.5 & 1137.4 & 1141.4  &1175.1 & 1196.5  & 1225.7 & 1479.8\\\hline
	\end{tabular}
	\label{tbl_alpha_off}
\end{table}

\appendix

\section{Interpretation of Relaxed Control Systems}\label{app:relax}

We  provide another interpretation of the dynamical system \eqref{dyn2} with relaxed control in terms of differential inclusions, when the control set $U$ is compact. For any control $\mu\in \mathcal{M}$, the system \eqref{dyn2} is a solution to the following differential inclusion:
	\begin{equation}\label{dyn_DI}
	\dot{x}(t) \in F(x(t)),\quad F(\bm{x})=\overline{\mathrm{conv}[f(\bm{x},U)]},\quad f(\bm{x},U):=\{f(\bm{x},\bm{u})~\mid \bm{u}\in U\},
	\end{equation}
	where $\mathrm{conv}[A]$ denotes the convex hull of  set $A$. To see this, it suffices to show that $\bar{f}:=\int_{U}f(\bm{x},\bm{u})\mu(t,\d \bm{u})\in \overline{\mathrm{conv}[f(\bm{x},U)]}$. Suppose $\bar{f}\in (\overline{\mathrm{conv}[f(\bm{x},U)]})^c$. Note that $\overline{\mathrm{conv}[f(\bm{x},U)]}$ is  a closed convex set, and $\{\bar{f}\}$ is a singleton, which is convex and compact. Thus, there exists a hyperplane $\{\bm{x}~\mid~ a^\top \bm{x}=b\}$ such that
	\[a^\top \bar{f}>b,\quad a^\top f(\bm{x},\bm{u})<b \quad \forall \bm{u}\in U.\]
It follows from the definition of $\bar{f}$ that
	\[a^\top \bar{f} = \int_{U} a^\top f(\bm{x},\bm{u})\,\mu(t,\d \bm{u}) \le\int_{U}b\,\mu(t,\d u) =b,\]
	which is a contradiction. Therefore, $\bar{f}\in \overline{\mathrm{conv}[f(\bm{x},U)]}$.

\section{Lemmas}	\label{app:lem}

In this appendix, we provide the mathematical lemmas used in the paper. Let $X$ be a separable metric space and $\mathcal{P}(X)$ be the set of all Borel probability measure defined on $X$. 

\begin{lemma}[\cite{budhiraja2019analysis}]\label{LA.1}
Let  $\phi:X\to \bbr$ be any bounded measurable function. Then, for any $\mu, \gamma\in\mathcal{P}(X)$,
\[-\log\int_X e^{-\phi(x)}\gamma(\d x) \le \int_X \phi(x)\mu(\d x)+\kl(\mu||\gamma).\]	
Moreover, the equality holds if and only if $\mu(\d x) = \frac{e^{-\phi(x)}\gamma(\d x)}{\int_X e^{-\phi(x)}\gamma(\d x)}$.
\end{lemma}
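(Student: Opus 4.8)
The inequality is a version of the Gibbs variational principle (Donsker--Varadhan), and the cleanest route is to reduce the general claim to the case $\gamma$ is a probability measure with respect to which $\mu$ is absolutely continuous, then compute directly. First I would dispose of the degenerate case: if $\mu \not\ll \gamma$, then $\kl(\mu\|\gamma) = +\infty$ by definition, and since $\phi$ is bounded the left-hand side is finite while the right-hand side is $+\infty$, so the inequality holds trivially (and there is nothing to check for the equality characterization, since equality cannot occur). So assume $\mu \ll \gamma$ and write $h := \frac{\d\mu}{\d\gamma}$, so that $\kl(\mu\|\gamma) = \int_X h \log h \, \d\gamma = \int_X \log h \, \d\mu$.

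The heart of the argument is to introduce the tilted probability measure $\gamma_\phi(\d x) := \frac{e^{-\phi(x)}\gamma(\d x)}{Z}$ where $Z := \int_X e^{-\phi(x)}\gamma(\d x)$; note $Z \in (0,\infty)$ because $\phi$ is bounded, so $e^{-\phi}$ is bounded above and below by positive constants. Then I would rewrite the right-hand side minus the left-hand side as a single KL divergence. Concretely,
\begin{align*}
\int_X \phi \, \d\mu + \kl(\mu\|\gamma) + \log Z
&= \int_X \left( \phi + \log Z + \log \frac{\d\mu}{\d\gamma} \right) \d\mu \\
&= \int_X \log \frac{\d\mu}{\d\gamma_\phi} \, \d\mu = \kl(\mu\|\gamma_\phi),
\end{align*}
where I used $\frac{\d\gamma_\phi}{\d\gamma} = \frac{e^{-\phi}}{Z}$, hence $\frac{\d\mu}{\d\gamma_\phi} = \frac{\d\mu}{\d\gamma}\cdot\frac{Z}{e^{-\phi}} = h \, Z \, e^{\phi}$, and took logs; the manipulation is valid because all densities are strictly positive and $\phi$ is bounded, so there are no integrability pathologies. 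Since $\kl(\mu\|\gamma_\phi) \ge 0$ always (Jensen's inequality applied to the convex function $t \mapsto t\log t$, or to $-\log$), we conclude $\int_X \phi\,\d\mu + \kl(\mu\|\gamma) + \log Z \ge 0$, which is exactly the claimed inequality after rearranging $-\log Z = -\log\int_X e^{-\phi}\,\d\gamma$ to the left.

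For the equality characterization: equality holds iff $\kl(\mu\|\gamma_\phi) = 0$, and the standard fact that KL divergence vanishes iff the two measures coincide gives $\mu = \gamma_\phi$, i.e. $\mu(\d x) = \frac{e^{-\phi(x)}\gamma(\d x)}{\int_X e^{-\phi(x)}\gamma(\d x)}$. Conversely if $\mu = \gamma_\phi$ then the chain of equalities above shows the expression equals $\kl(\mu\|\mu) = 0$. The only genuine subtlety is citing $\kl(\nu\|\rho) = 0 \iff \nu = \rho$ for probability measures on a separable metric space; this is classical (it follows from strict convexity of $t\log t$ and the fact that the Radon--Nikodym derivative must then equal $1$ $\rho$-a.e.), and since the lemma is attributed to \cite{budhiraja2019analysis} I would simply invoke it there. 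I do not anticipate a serious obstacle — the boundedness of $\phi$ removes all the integrability issues that make the general Donsker--Varadhan statement delicate, so the main point is just to organize the telescoping identity cleanly.
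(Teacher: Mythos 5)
Your proposal is correct and follows essentially the same route as the paper: both introduce the tilted measure $\gamma_\phi$ (the paper's $\nu$ with $\frac{\d\nu}{\d\gamma}=e^{-\phi}/\int_X e^{-\phi}\d\gamma$) and rewrite $\int_X\phi\,\d\mu+\kl(\mu\|\gamma)+\log\int_X e^{-\phi}\d\gamma$ as $\kl(\mu\|\gamma_\phi)\ge 0$, with equality iff $\mu=\gamma_\phi$. Your explicit treatment of the case $\mu\not\ll\gamma$ is a minor tidy addition, but the core identity is identical to the paper's.
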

\begin{proof}
	We define a new measure $\nu\in \mathcal{P}(X)$ as
	\[\frac{\d \nu}{\d\gamma}(x) = \frac{e^{-\phi(x)}}{\int_X e^{-\phi(x)}\gamma(\d x)}.\]
	Then, we have
	\begin{align*}
	\int_X \phi(x)\mu(\d x) +\kl(\mu||\gamma) &= \int_X \phi(x)\mu(\d x)+\int_X \log\left(\frac{\d \mu}{\d\gamma}\right)\d \mu\\
	&=\int_X \phi(x)\mu(\d x) +\int_X \log\left(\frac{\d\mu}{\d\nu}\right)\d \mu +\int_X \log\left(\frac{\d\nu}{\d\gamma}\right)\d\mu\\
	&=-\log\int_X e^{-\phi(x)}\gamma(\d x) +\kl(\mu||\nu)\ge-\log\int_X e^{-\phi(x)}\gamma(\d x).
	\end{align*}
The equality of the last inequality holds if and only if $\mu=\nu=\frac{e^{-\phi(x)}\gamma(\d x)}{\int_X e^{-\phi(x)}\gamma(\d x)}$. 
\end{proof}

\begin{lemma}[Lower semicontinuity of relative entropy~\cite{posner1975random}]\label{LA.2}
	Let $\mu, \gamma\in \mathcal{P}(X)$. If $\{\mu_n\}$ is a sequence in $\mathcal{P}(X)$ such that $\mu_n$ weakly converges to $\mu$, then
	\[\kl(\mu||\gamma)\le \liminf_{n\to\infty} \kl(\mu_n||\gamma).\]
\end{lemma}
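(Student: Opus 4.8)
\textbf{Proof proposal for Lemma~\ref{LA.2} (lower semicontinuity of relative entropy).}

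The plan is to represent the KL divergence as a supremum of linear functionals over a suitable class of test functions, each of which is continuous with respect to weak convergence, and then invoke the fact that a pointwise supremum of lower semicontinuous functions is lower semicontinuous. Concretely, I would use the Donsker--Varadhan variational formula: for $\mu,\gamma\in\mathcal P(X)$,
\[
\kl(\mu\|\gamma)=\sup_{\phi\in C_b(X)}\left\{\int_X\phi\,\d\mu-\log\int_X e^{\phi}\,\d\gamma\right\},
\]
where $C_b(X)$ denotes the bounded continuous functions on $X$. This identity is essentially the content of Lemma~\ref{LA.1} (with $\phi$ replaced by $-\phi$), together with the standard fact that the supremum over bounded \emph{continuous} $\phi$ already attains the value $\kl(\mu\|\gamma)$; the ``$\le$'' direction is immediate from Lemma~\ref{LA.1}, and the ``$\ge$'' direction (that continuous test functions suffice, even when $\kl(\mu\|\gamma)=+\infty$) follows by approximating $\log(\d\mu/\d\gamma)$ by bounded continuous functions in the appropriate sense.

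Granting this representation, the argument is short. Fix $\phi\in C_b(X)$. Since $\mu_n\rightharpoonup\mu$ weakly and both $\phi$ and $e^{\phi}$ are bounded continuous, we have $\int_X\phi\,\d\mu_n\to\int_X\phi\,\d\mu$ and $\int_X e^{\phi}\,\d\mu_n$... wait, the second integral is against $\gamma$, which is fixed, so in fact the functional
\[
\mu'\mapsto \int_X\phi\,\d\mu'-\log\int_X e^{\phi}\,\d\gamma
\]
is continuous in $\mu'$ along weakly convergent sequences (only the first term depends on $\mu'$). Hence
\[
\int_X\phi\,\d\mu-\log\int_X e^{\phi}\,\d\gamma
=\lim_{n\to\infty}\left\{\int_X\phi\,\d\mu_n-\log\int_X e^{\phi}\,\d\gamma\right\}
\le\liminf_{n\to\infty}\kl(\mu_n\|\gamma).
\]
Taking the supremum over all $\phi\in C_b(X)$ on the left-hand side yields $\kl(\mu\|\gamma)\le\liminf_{n\to\infty}\kl(\mu_n\|\gamma)$, which is the claim.

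The main obstacle is establishing the Donsker--Varadhan formula with \emph{continuous} test functions rather than merely bounded measurable ones; the inequality $\kl(\mu\|\gamma)\ge\sup_{\phi\in C_b}\{\cdots\}$ is the easy half (Lemma~\ref{LA.1}), but the reverse requires an approximation argument exploiting separability of $X$ (so that $C_b(X)$ is rich enough to approximate $\log(\d\mu/\d\gamma)$ in $L^1(\mu)$, with truncation to handle unboundedness). Since this is a classical result attributed in the excerpt to \cite{posner1975random}, I would simply cite it; alternatively, one can bypass the continuous-test-function subtlety entirely by using the lower semicontinuity of $(\mu,\gamma)\mapsto\kl(\mu\|\gamma)$ on $\mathcal P(X)\times\mathcal P(X)$ as a known fact and specializing to a constant second argument.
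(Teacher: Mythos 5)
Your argument is sound, but note that the paper does not actually prove this lemma: it is stated as a classical result and attributed to \cite{posner1975random}, so there is no in-paper proof to compare against. Your route --- writing $\kl(\mu\|\gamma)=\sup_{\phi\in C_b(X)}\{\int_X\phi\,\d\mu-\log\int_X e^{\phi}\,\d\gamma\}$, observing that for each fixed $\phi$ only the term $\int_X\phi\,\d\mu'$ depends on $\mu'$ and is continuous along weakly convergent sequences, and concluding that the supremum is lower semicontinuous --- is the standard Donsker--Varadhan argument and is correct. The only substantive input you do not establish is the direction of the variational formula asserting that bounded \emph{continuous} test functions already attain $\kl(\mu\|\gamma)$ (Lemma~\ref{LA.1} gives the opposite inequality, for bounded measurable $\phi$); you flag this honestly and would cite it, which puts your proof on essentially the same footing as the paper's outright citation of the lemma, while making the mechanism of lower semicontinuity explicit. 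One caveat: your proposed fallback of invoking ``lower semicontinuity of $(\mu,\gamma)\mapsto\kl(\mu\|\gamma)$ as a known fact and specializing to a constant second argument'' is circular here, since that known fact contains the very statement being proved; drop it or present it as an alternative citation rather than an alternative argument.
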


Using the lower semicontinuity of relative entropy, we can prove that the level set of $\mathcal{P}(X)$ with respect to the relative entropy is compact.

\begin{lemma}[Compactness of level sets~\cite{budhiraja2019analysis}]\label{LB.3}
	For any $\gamma\in \mathcal{P}(X)$ and $M>0$, the level set $\mathcal{P}_M:=\{\mu\in \mathcal{P}(X) \mid \kl(\mu||\gamma)\le M\}$ is compact.
\end{lemma}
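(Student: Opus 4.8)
The plan is to show that $\mathcal{P}_M$ is both closed and relatively compact in $\mathcal{P}(X)$ endowed with the topology of weak convergence, and then conclude via Prokhorov's theorem. Closedness is immediate from the lower semicontinuity of relative entropy (Lemma~\ref{LA.2}): if $\mu_n \in \mathcal{P}_M$ and $\mu_n$ converges weakly to $\mu$, then $\kl(\mu\|\gamma) \le \liminf_{n\to\infty}\kl(\mu_n\|\gamma) \le M$, so $\mu \in \mathcal{P}_M$. The substantive part is relative compactness, and by Prokhorov's theorem it suffices to prove that $\mathcal{P}_M$ is tight, i.e., that for every $\varepsilon>0$ there is a compact set $K\subseteq X$ with $\sup_{\mu\in\mathcal{P}_M}\mu(X\setminus K)\le\varepsilon$.

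To prove tightness I would use the Donsker--Varadhan variational inequality, which is just Lemma~\ref{LA.1} rearranged: for every bounded measurable $\psi:X\to\bbr$ and every $\mu,\gamma\in\mathcal{P}(X)$,
\[
\int_X \psi\,\d\mu \le \kl(\mu\|\gamma) + \log\int_X e^{\psi}\,\d\gamma .
\]
Fix $\varepsilon>0$ and choose $\lambda > (M+\log 2)/\varepsilon$. Since $\gamma$ is a tight probability measure, pick a compact $K$ with $\gamma(X\setminus K)\le e^{-\lambda}$. Applying the inequality with the bounded measurable test function $\psi = \lambda\,\mathbf{1}_{X\setminus K}$ to an arbitrary $\mu\in\mathcal{P}_M$ gives
\[
\lambda\,\mu(X\setminus K) \le M + \log\!\big(\gamma(K) + e^{\lambda}\gamma(X\setminus K)\big) \le M + \log(1 + e^{\lambda}e^{-\lambda}) = M + \log 2 ,
\]
so $\mu(X\setminus K)\le (M+\log 2)/\lambda < \varepsilon$, uniformly over $\mu\in\mathcal{P}_M$. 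Hence $\mathcal{P}_M$ is tight, and by Prokhorov's theorem its closure is (sequentially) compact; combined with closedness, $\mathcal{P}_M$ itself is compact.

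The only delicate point — the ``hard part'' — is the transfer of tightness from the single reference measure $\gamma$ to the entire sublevel set, which is exactly what the exponential weighting of the indicator test function and the precise calibration $\gamma(X\setminus K)\le e^{-\lambda}$ accomplish; everything else is routine bookkeeping. One should also note that $\gamma$ is indeed tight in the settings where this lemma is applied: the relevant space there is $X=[t,T]\times U$ (or $[0,\infty)\times U$) with $U$ compact, which is Polish, so every Borel probability measure on it is tight by Ulam's theorem.
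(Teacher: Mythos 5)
Your proof is correct and follows essentially the same route as the paper: the Donsker--Varadhan inequality (Lemma~\ref{LA.1}) applied to a scaled indicator of a compact complement yields uniform tightness of $\mathcal{P}_M$, Prokhorov's theorem gives relative compactness, and lower semicontinuity of the relative entropy (Lemma~\ref{LA.2}) closes the set. The only differences are cosmetic: you phrase the argument as ``tight plus closed'' rather than extracting a convergent subsequence from an arbitrary sequence, and your calibration $\gamma(X\setminus K)\le e^{-\lambda}$ is just a reparametrization of the paper's choice $\phi=\log(1+\tfrac{1}{\e})\mathbf{1}_{K^c}$ with $\gamma(K^c)\le\e$.
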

\begin{proof}
	Let $\{\mu_n\}_{n=1}^\infty$ be a sequence in $\mathcal{P}(X)$ such that $\kl(\mu_n||\gamma)\le M$. We will show that the sequence $\{\mu_n\}$ is tight. It follows from Lemma \ref{LA.1} that for any bounded measurable function $\phi:X\to \bbr$,
	\begin{equation}\label{A-1}
	\int_X \phi(x) \mu_n (\d x) -\log \int_{X}e^{\phi(x)}\d \gamma \le \kl(\mu_n||\gamma)\le M.
	\end{equation}
	Now, let $\delta>0$ be a fixed constant. We choose a sufficiently small $\e$ so that $\frac{M+\log 2}{\log\left(1+\frac{1}{\e}\right)}\le \delta$.	Since the singleton of a measure is tight, there exists a compact subset $K$ of $X$ that $\gamma(K^c)\le \e$. Now, we define a map $\phi$ as
	\[\phi(x):=\begin{cases}
	0,\quad \mbox{if}\quad x\in K,\\
	\log\left(1+\frac{1}{\e}\right)\quad \mbox{if}\quad x\in K^c.
	\end{cases}\]
By \eqref{A-1}, we have
	\[\log\left(1+\frac{1}{\e}\right)\mu_n(K^c) -\log\left(\gamma(K)+\left(1+\frac{1}{\e}\right)\gamma(K^c)\right)\le M
	\]
Rearranging the terms yields
	\begin{align*}\mu_n(K^c)&\le \frac{1}{\log\left(1+\frac{1}{\e}\right)}\left(M+\log\left(\gamma(K)+\left(1+\frac{1}{\e}\right)\gamma(K^c)\right)\right)=\frac{1}{\log\left(1+\frac{1}{\e}\right)}\left(M+\log\left(1+\frac{1}{\e}\gamma(K^c)\right)\right)\\
	&\le \frac{1}{\log\left(1+\frac{1}{\e}\right)}(M+\log 2)\le \delta.
	\end{align*}
This estimate implies that the sequence of measures $\{\mu_n\}_{n=1}^\infty$ is tight. By the Prokhorov's Theorem, there exists a subsequence $\{\mu_{n_k}\}_{k=1}^\infty$ of $\{\mu_n\}$ and $\mu\in \mathcal{P}(X)$ such that $\mu_{n_k}$ weakly converges to $\mu$. Finally, the lower semi-continuity of $\kl(\cdot||\gamma)$ (Lemma \ref{LA.2}) implies that
	\[\kl(\mu||\gamma) \le \liminf_{k\to\infty}\kl(\mu_{n_k}||\gamma)\le M.\]
	Therefore, $\mu\in \mathcal{P}_M$ and this implies that $\mathcal{P}_M$ is compact.
\end{proof}

The following is the disintegration theorem of the measure on product space.

\begin{lemma}[Disintegration theorem~\cite{villani2008optimal}]\label{LB.4} Let $X$ and $Y$ be Polish spaces. If $\pi$ is a probability measure on $X\times Y$ with marginal $\mu$ on $X$, i.e., $\pi(A\times Y) = \mu(A)$ for any measurable set $A\subset X$, then there exists a measurable map $x\mapsto \pi_x$ from $X$ to $\mathcal{P}(Y)$ such that
\[\pi = \int_X (\delta_x \otimes \pi_x)\mu(\d x),\]
or equivalently
\[\int_{X\times Y} \phi(x,y)\pi(\d x,\d y) = \int_X\left(\int_Y \phi(x,y)\pi_x(\d y)\right)\mu(\d x).\]
\end{lemma}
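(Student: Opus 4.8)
The plan is to construct the family $\{\pi_x\}_{x\in X}$ as a \emph{regular conditional probability} of $\pi$ given the first coordinate, and then verify the two displayed identities, which the statement already notes are equivalent reformulations of one another. It suffices to establish the integral form
\[
\int_{X\times Y}\phi(x,y)\,\pi(\d x,\d y)=\int_X\left(\int_Y\phi(x,y)\,\pi_x(\d y)\right)\mu(\d x)
\]
for every bounded Borel $\phi$, since taking $\phi=\mathbf{1}_{A\times B}$ recovers the measure identity $\pi=\int_X(\delta_x\otimes\pi_x)\,\mu(\d x)$. First I would fix, for each bounded Borel $\psi:Y\to\bbr$, the finite signed measure $\nu_\psi(A):=\int_{A\times Y}\psi(y)\,\pi(\d x,\d y)$ on $X$. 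The marginal hypothesis gives $|\nu_\psi(A)|\le\|\psi\|_\infty\,\pi(A\times Y)=\|\psi\|_\infty\,\mu(A)$, so $\nu_\psi\ll\mu$, and the Radon--Nikodym theorem yields a $\mu$-a.e.\ defined density $h_\psi:=\d\nu_\psi/\d\mu$, which plays the role of the conditional expectation $x\mapsto\mathbb{E}_\pi[\psi\mid X=x]$.

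The central difficulty is that each $h_\psi$ is determined only up to a $\mu$-null set, and there are uncountably many $\psi$, so one cannot immediately assert that $\psi\mapsto h_\psi(x)$ is, for a \emph{fixed} $x$, a positive normalized linear functional. To overcome this I would exploit that $Y$ is Polish: choose a countable $\mathbb{Q}$-linear subalgebra $\mathcal{D}\subset C_b(Y)$ that contains the constant $1$, is closed under $\vee,\wedge$ with rational coefficients, and generates the Borel $\sigma$-algebra of $Y$. Selecting a single version of $h_\psi$ for each of the countably many $\psi\in\mathcal{D}$ and discarding the (countable, hence $\mu$-null) union of exceptional sets, I obtain a co-null set $X_0\subseteq X$ on which, for every $x\in X_0$, the map $\psi\mapsto h_\psi(x)$ is $\mathbb{Q}$-linear, nonnegative on nonnegative $\psi$, and sends $1$ to $1$. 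This extends by density to a positive linear functional $L_x$ on $C_b(Y)$ with $L_x(1)=1$. Realizing $L_x$ as integration against a measure $\pi_x$ is the Riesz--Markov step: after extending to a compactification $\hat Y\supseteq Y$ one obtains a Radon probability measure on $\hat Y$, and tightness of the $Y$-marginal of $\pi$ (Borel probability measures on Polish spaces are tight) forces this measure to be concentrated on $Y$ for $\mu$-almost every $x$, giving $\pi_x\in\mathcal{P}(Y)$. For $x\notin X_0$ I set $\pi_x$ equal to an arbitrary fixed probability measure.

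Finally I would verify measurability of $x\mapsto\pi_x$ and the integral identity. By construction $x\mapsto\pi_x(\psi)=h_\psi(x)$ is Borel for $\psi\in\mathcal{D}$, hence for all $\psi\in C_b(Y)$ by density and for all bounded Borel $\psi$ by a monotone class argument, which yields measurability of $x\mapsto\pi_x$ into $\mathcal{P}(Y)$ with the weak topology. For test functions of product form $\phi(x,y)=\mathbf{1}_A(x)\psi(y)$ with $\psi\in\mathcal{D}$, the defining relation $\int_{A\times Y}\psi\,\d\pi=\nu_\psi(A)=\int_A h_\psi\,\d\mu=\int_A\pi_x(\psi)\,\mu(\d x)$ is precisely the desired identity; a standard functional-form (monotone class) argument then extends it from these generators to all bounded Borel $\phi$, completing the proof. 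I expect the reconciliation of the a.e.-defined densities $h_\psi$ into a bona fide probability kernel---simultaneously handling the uncountable family of null sets through the countable generating algebra $\mathcal{D}$ and securing tightness from the Polish structure---to be the main obstacle, while the remaining steps are routine measure theory.
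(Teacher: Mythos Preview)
The paper does not actually prove this lemma: it is stated in Appendix~\ref{app:lem} with a citation to Villani's \emph{Optimal Transport} and no argument is given. So there is no ``paper's own proof'' to compare against.

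Your outline is the standard construction of a regular conditional distribution on Polish spaces and is correct. The key steps---Radon--Nikodym to define $h_\psi$ for each bounded Borel $\psi$, passing to a countable generating $\mathbb{Q}$-algebra $\mathcal{D}\subset C_b(Y)$ to collapse the uncountably many null sets into one, extending $\psi\mapsto h_\psi(x)$ to a positive normalized functional and realizing it as a probability measure via Riesz--Markov on a compactification, and using tightness of the $Y$-marginal to push the mass back onto $Y$ for $\mu$-a.e.\ $x$---are exactly the ingredients one finds in standard references (e.g.\ Dudley, Bogachev, or the appendix to Villani). The monotone class argument for the final integral identity is routine. One small point worth making explicit in a full write-up is the tightness step: from $\pi(X\times(Y\setminus K_n))<1/n$ for suitable compacts $K_n\subset Y$, one gets $\int_X \pi_x(\hat Y\setminus K_n)\,\mu(\d x)<1/n$, and hence $\pi_x(\hat Y\setminus Y)=0$ for $\mu$-a.e.\ $x$; this is what you have in mind, but it deserves a line. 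Otherwise the proposal is sound and nothing is missing.
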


\begin{lemma}\label{stability}
Suppose that Assumption \ref{assumption} holds, and the control set $U$ is compact. Let $x = x(t)$ be the solution to \eqref{dyn2} with control $\mu$ and initial data $\bm{x}$. Then, there exists a constant $C$ such that $x(t)$ satisfies the following stability estimates:
\begin{align*}
	&|x(t)|\le e^{Lt}|\bm{x}| +\frac{C}{L}(e^{Lt}-1),\\
	&|x(t)-x(s)|\le C(t-s),\quad 0\le s\le  t.
\end{align*}
Moreover, let $y=y(t)$ be another solution to \eqref{dyn2} with  initial data ${\bm y}$. Then,
\[|x(t)-y(t)|\le e^{Lt}|\bm{x}-\bm{y}|.\] 
\end{lemma}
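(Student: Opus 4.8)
The plan is to establish the three estimates in order, in each case differentiating a suitable scalar quantity along the trajectory and then invoking Grönwall's inequality. Throughout, set $M := \sup_{\bm{u}\in U}|\bm{u}|<+\infty$, finite since $U$ is compact, and $C_0 := \sup_{\bm{u}\in U}|f(0,\bm{u})|<+\infty$, finite by Assumption~\ref{assumption}~$(i)$ together with compactness of $U$. Since $\mu\in\mathcal{M}$, the map $t\mapsto\int_U f(x(t),\bm{u})\,\mu(t;\d\bm{u})$ is integrable, so $x$ is absolutely continuous and the identities below are to be understood for a.e.\ $t$.

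For the first estimate, I would apply the one-sided Lipschitz bound Assumption~\ref{assumption}~$(iv)$ with $\bm{y}=0$ to obtain $f(\bm{x},\bm{u})\cdot\bm{x}\le L|\bm{x}|^2+C_0|\bm{x}|$ for every $\bm{u}\in U$. To handle the non-differentiability of $t\mapsto|x(t)|$ at its zeros, I introduce the regularization $\phi_\delta(t):=\sqrt{|x(t)|^2+\delta}$ for $\delta>0$; then $\phi_\delta'(t)=\phi_\delta(t)^{-1}\,x(t)\cdot\int_U f(x(t),\bm{u})\,\mu(t;\d\bm{u})\le\phi_\delta(t)^{-1}\bigl(L|x(t)|^2+C_0|x(t)|\bigr)\le L\phi_\delta(t)+C_0$, where the key point is that the pointwise inequality in $\bm{u}$ survives averaging against the probability measure $\mu(t;\cdot)$. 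Grönwall's inequality then gives $\phi_\delta(t)\le e^{Lt}\phi_\delta(0)+\tfrac{C_0}{L}(e^{Lt}-1)$, and letting $\delta\downarrow 0$ yields the claim with $C=C_0$. (If $L\le 0$ the same computation applies after replacing $L$ by an arbitrary positive constant, or by reading $\tfrac{C_0}{L}(e^{Lt}-1)$ in the limiting sense; this does not affect the later uses of the lemma.)

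For the second estimate, I write $x(t)-x(s)=\int_s^t\int_U f(x(\tau),\bm{u})\,\mu(\tau;\d\bm{u})\,\d\tau$ and bound the integrand via Assumption~\ref{assumption}~$(ii)$ by $|f(x(\tau),\bm{u})|\le C(1+|x(\tau)|+M)$. Since $\tau\mapsto|x(\tau)|$ is bounded on $[0,T]$ by the first estimate, the integrand is dominated by a constant depending only on $f$, $T$, $|\bm{x}|$ and $M$, which immediately gives $|x(t)-x(s)|\le C(t-s)$ for $0\le s\le t$.

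For the third estimate, I take $y$ to be the solution of \eqref{dyn2} driven by the \emph{same} relaxed control $\mu$ but with initial data $\bm{y}$, which is the form in which the bound is used in Theorem~\ref{thm:exist_finite} and Lemma~\ref{lem-unif_equi}. Differentiating $\tfrac12|x(t)-y(t)|^2$ and using Assumption~\ref{assumption}~$(iv)$ pointwise in $\bm{u}$ inside the integral against $\mu(t;\cdot)$ yields $\frac{\d}{\d t}\tfrac12|x(t)-y(t)|^2=\int_U\bigl(f(x(t),\bm{u})-f(y(t),\bm{u})\bigr)\cdot(x(t)-y(t))\,\mu(t;\d\bm{u})\le L|x(t)-y(t)|^2$, so Grönwall's inequality gives $|x(t)-y(t)|^2\le e^{2Lt}|\bm{x}-\bm{y}|^2$, i.e.\ $|x(t)-y(t)|\le e^{Lt}|\bm{x}-\bm{y}|$. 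The only genuine technical obstacle is the non-smoothness of $|x(t)|$, circumvented by the $\phi_\delta$ regularization; the remainder is a routine Grönwall argument, the structural observation being that the linearity of all bounds in $f$ lets the one-sided Lipschitz and linear-growth conditions pass through the averaging against $\mu(t;\cdot)$.
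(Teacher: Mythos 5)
Your proof is correct and follows essentially the same route as the paper: a one-sided Lipschitz/Grönwall argument for the growth bound, a direct integral bound using the linear growth of $f$ and compactness of $U$ for the time-Lipschitz estimate, and a Grönwall argument on $|x(t)-y(t)|^2$ (with the same control $\mu$ driving both trajectories) for the stability in the initial data. The only difference is your $\phi_\delta$ regularization of $|x(t)|$, which is a minor technical refinement of the paper's step of dividing by $2|x(t)|$ and does not change the argument.
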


\begin{proof}
We first compute that
	\[\frac{\d |x(t)|^2}{\d t} = 2x(t)\cdot \int_U f(x(t),\bm{u})\mu(t;\d \bm{u})=2\int_U x(t)\cdot f(x(t),\bm{u})\mu(t;\d \bm{u}).\]
By Assumption \ref{assumption},
	\[x(t)\cdot f(x(t),\bm{u}) \le x(t)\cdot f(0,\bm{u})+L|x(t)|^2\le C|x(t)|+L|x(t)|^2,\]
	where $C=\sup_{\bm{u} \in U}|f(0,\bm{u})|$. Therefore, we have
	\[2|x(t)|\frac{\d |x(t)|}{\d t}=\frac{\d|x(t)|^2}{\d t} \le 2C|x(t)|+2L|x(t)|^2,\]
	which implies that
	\[\frac{\d|x(t)|}{\d t} \le C +L|x(t)|.\]
	By the Gr\"onwall--Bellman inequality, we deduce that
	\[|x(t)|\le e^{Lt}|x(0)| +\frac{C}{L}(e^{Lt}-1).\]
	
We now show the second inequality in the statement. By definition,
	\[x(t) = x(s) +\int_s^t \int_Uf(x(\tau),\bm{u})\mu(\tau;\d \bm{u})\,\d\tau.\]
	Since  $x(t)\le e^{Lt}|x(0)|+\frac{C}{L}(e^{Lt}-1)$, which is independent of $\mu$, we obtain
	\[|x(t)-x(s)|\le \int_s^t \int_U|f(x(\tau),\bm{u})|\mu(\tau;\d \bm{u})\d \tau\le C(t-s).\]
	Finally, we notice that
	\begin{align*}
	\frac{\d}{\d t}|x(t)-y(t)|^2&=2(x(t)-y(t))\cdot \int_{U} (f(x(t),\bm{u})-f(y(t),\bm{u}))\mu(t;\d u) \\
	&\le 2L\int_U |x(t)-y(t)|^2\mu(t;\d \bm{u}) = 2L|x(t)-y(t)|^2.
	\end{align*}
	Thus,
	\[\frac{\d}{\d t}|x(t)-y(t)|\le L|x(t)-y(t)|,\]
	and the result follows.
\end{proof}

\bibliographystyle{plain}

\bibliography{reference}

\end{document}